\documentclass[11pt, a4paper,oneside, reqno]{amsart}
\usepackage[english]{babel}
\usepackage{amsmath, amsthm, amsfonts, mathrsfs, amssymb, amscd}
\usepackage{bm}
\usepackage{mathtools}
\mathtoolsset{centercolon}
\usepackage{mathabx}

\usepackage[toc]{appendix}

\usepackage[shortlabels]{enumitem}
\usepackage[colorlinks, citecolor = blue, urlcolor={red}]{hyperref}
\usepackage{geometry}
\geometry{left=2.85cm, right=2.85cm, top=3cm, bottom=3cm}

\allowdisplaybreaks

\newtheorem{theorem}{Theorem}
\newtheorem{corollary}[theorem]{Corollary}
\newtheorem{lemma}[theorem]{Lemma}

\newtheorem{proposition}[theorem]{Proposition}

\theoremstyle{remark} 
\newtheorem{remark}[theorem]{Remark}

\theoremstyle{definition} 
\newtheorem{definition}[theorem]{Definition}

\newtheorem{example}[theorem]{Example}

\numberwithin{theorem}{section}
\numberwithin{equation}{section}

\def\N{{\mathbb N}}

\def\R{{\mathbb R}}



\newcommand{\om}{\omega}


\renewcommand{\Re}{\operatorname{Re}}

\newcommand{\Dir}{{\rm Dir}}

\newcommand{\ii}{{\rm i}} 

\newcommand{\mbb}{\mathbb}

\newcommand{\mc}{\mathcal}

\newcommand{\mrm}{\mathrm}

\newcommand{\Hinf}{H^{\infty}}
\newcommand{\RR}{\mathbb{R}}

\newcommand{\CC}{\mathbb{C}}
\newcommand{\NN}{\mathbb{N}}

\newcommand{\OO}{\mathcal{O}}
\newcommand{\WW}{\mathbb{W}}
\renewcommand{\SS}{\mathcal{S}}

\newcommand{\half}{\frac{1}{2}}
\newcommand{\RRdh}{\RR^d_+}
\newcommand{\RRd}{\RR^d}
\newcommand{\Cc}{C_{\mathrm{c}}}
\renewcommand{\d}{\partial}
\newcommand{\del}{\Delta}

\newcommand{\delDir}{\del_{\operatorname{Dir}}}
\newcommand{\delNeu}{\del_{\operatorname{Neu}}}
\newcommand{\odd}{\operatorname{odd}}
\newcommand{\even}{\operatorname{even}}
\newcommand{\ddtt}{\:\frac{\mathrm{d}t}{t}}
\newcommand{\eps}{\varepsilon}
\newcommand{\ph}{\varphi}
\newcommand{\Gam}{\Gamma}
\newcommand{\gam}{\gamma}

\newcommand{\Neu}{{\rm Neu}}
\newcommand{\Tr}{\operatorname{Tr}}

\newcommand{\loc}{{\rm loc}}
\renewcommand{\tilde}[1]{\widetilde{#1}}

\DeclarePairedDelimiter\abs{\lvert}{\rvert}
\DeclarePairedDelimiter\brac[]

\DeclarePairedDelimiter\ha()

\DeclarePairedDelimiter{\nrm}\lVert\rVert

\newcommand{\nrmb}[1]{\bigl\|#1\bigr\|}

\newcommand{\bracb}[1]{\bigl[#1\bigr]}


\newcommand{\has}[1]{\Bigl(#1\Bigr)}

\newcommand{\dd}{\hspace{2pt}\mathrm{d}}

\DeclareMathOperator{\UMD}{UMD}

\begin{document}

\title[Functional calculus for the Laplacian]{Functional calculus on weighted Sobolev spaces for the Laplacian on the half-space}

\author[N. Lindemulder]{Nick Lindemulder}
\address[Nick Lindemulder]{}
\email{nick.lindemulder@gmail.com}

\author[E. Lorist]{Emiel Lorist}
\address[Emiel Lorist]{Delft Institute of Applied Mathematics\\
Delft University of Technology \\ P.O. Box 5031\\ 2600 GA Delft\\The
Netherlands} \email{e.lorist@tudelft.nl}

\author[F.B. Roodenburg]{Floris Roodenburg}
\address[Floris Roodenburg]{Delft Institute of Applied Mathematics\\
Delft University of Technology \\ P.O. Box 5031\\ 2600 GA Delft\\The
Netherlands} \email{f.b.roodenburg@tudelft.nl}

\author[M.C. Veraar]{Mark Veraar}
\address[Mark Veraar]{Delft Institute of Applied Mathematics\\
Delft University of Technology \\ P.O. Box 5031\\ 2600 GA Delft\\The
Netherlands} \email{m.c.veraar@tudelft.nl}

\makeatletter
\@namedef{subjclassname@2020}{
  \textup{2020} Mathematics Subject Classification}
\makeatother

\subjclass[2020]{Primary: 47A60; Secondary: 35K20, 46E35, 47D03}
\keywords{Functional calculus, Laplace operator, weights, maximal regularity}

\thanks{The first author is supported by the grant OCENW.KLEIN.358 of the Dutch Research Council (NWO). The third and fourth author are supported by the VICI grant VI.C.212.027 of the NWO}

\begin{abstract}
In this paper, we consider the Laplace operator on the half-space with Dirichlet and Neumann boundary conditions. We prove that this operator admits a bounded $\Hinf$-calculus on Sobolev spaces with power weights measuring the distance to the boundary. These weights do not necessarily belong to the class of Muckenhoupt $A_p$ weights.

We additionally study the corresponding Dirichlet and Neumann heat semigroup. It is shown that these semigroups, in contrast to the $L^p$-case, have polynomial growth. Moreover, maximal regularity results for the heat equation are derived on inhomogeneous and homogeneous weighted Sobolev spaces.
\end{abstract}

\maketitle

\setcounter{tocdepth}{1}
\tableofcontents


\section{Introduction and main results}\label{sec:intro}

Since the development of the $\Hinf$-calculus in \cite{CDMY96, Mc86}, motivated by the Kato square root problem (see \cite{AHLT02, MC90}), this holomorphic functional calculus turned out to provide an efficient way to tackle other problems in mathematics as well. For instance, in the theory of (stochastic) partial differential equations ((S)PDEs), the boundedness of the $\Hinf$-calculus plays an important role. It can be used to obtain well-posedness and regularity of the, possibly nonlinear, (S)PDE. There is a vast literature on the $\Hinf$-calculus for sectorial operators, see, e.g., the monographs and lecture notes \cite{Ar02, DHP03, Ha06, HNVW17, HNVW24, KLW23, KW04, N12, PS16}. For applications to PDEs, see, e.g., \cite{BEH20, DDHPV04, DK13, KKW06, KW17, MM18, To18, We06} and to SPDEs, see, e.g., \cite{AV22,DL11, Ho19, NVW12b, NVW12, NVW15b}).

A particular problem that can occur in the analysis of a (stochastic) PDE on a spatial domain $\OO\subseteq \RR^d$ is that the solution or its derivatives may exhibit blow-up behaviour near the boundary $\d\OO$. To solve the PDE on a Sobolev space $W^{k,p}(\OO)$, it is required to impose additional conditions such as smoothness of the domain and/or unnatural boundary conditions for the data.
\begin{enumerate}[(i)]
 \item \emph{Smoothness of the domain:\,} for regularity of second-order elliptic operators on $W^{k,p}(\OO)$ one needs that $\OO$ is a $C^{k+2}$-domain, see \cite[Chapter 9]{KrBook08}. This condition is also present in regularity theory for parabolic SPDEs, see \cite{Kr94b}.
  \item \emph{Unnatural boundary conditions for the data:} as illustrated in \cite[Chapter 9]{KrBook08}, one obtains regularity for second order elliptic operators on $W^{k,p}(\OO)$, however the operator will not be sectorial. To obtain a sectorial operator, additional boundary conditions on the data need to be specified, see \cite{DD11}.  Furthermore, for higher-order regularity for the heat equation $\d_t u-\del u =f$ on $\OO$ with homogeneous Dirichlet boundary conditions, additional conditions on $f$ need to be imposed as well, see, e.g., \cite[Section 7.1.3]{Ev10}.
\end{enumerate}
To circumvent these additional conditions, weighted spaces for the solution are used with a spatial weight of the form $w_{\gam}(x):=\mrm{dist}(x, \d\OO)^{\gam} $ for some suitable $\gam\in\RR$. In this way, the solution is allowed to have a certain blow-up near the boundary. Weights are also commonly used for SPDEs as is motivated in \cite[Examples 1.1 \& 1.2]{Kr94b}.\\

Motivated by the applications to (S)PDEs mentioned above, it is natural to study the $\Hinf$-calculus for differential operators on \emph{inhomogeneous weighted Sobolev spaces}. In this paper, we take up the study of two commonly used elliptic differential operators: the Laplace operator with Dirichlet and Neumann boundary conditions on the half-space $\RRdh$. We consider $\delDir$ on $W^{k,p}(\RRdh, w_{\gam+kp})$ and $\delNeu$ on $W^{k+1,p}(\RRdh, w_{\gam+kp})$ for $k\geq -1$ and $\gam\in (-1,2p-1)\setminus\{p-1\}$. Our main results include the bounded $\Hinf$-calculus for $-\delDir$ and $-\delNeu$, growth bounds on the corresponding semigroups and maximal regularity results on these weighted Sobolev spaces. The mathematical statements of the main theorems are presented in Section \ref{sec:results}.

The study of the $\Hinf$-calculus for differential operators with Dirichlet and Neumann boundary conditions on Sobolev spaces has been taken up before, see \cite{DDHPV04, KKW06}. The $\Hinf$-calculus on weighted Lebesgue spaces is considered for instance in \cite{AM06, AM07, LRW19, Ma04b}. In these papers, the weights belong to the class of Muckenhoupt $A_p$ weights. In \cite{LV18} the first and fourth authors studied the Dirichlet Laplacian on weighted Lebesgue spaces with weights outside the $A_p$ range. The current paper extends the $\Hinf$-calculus results of \cite{LV18} and we present the first theorems on inhomogeneous weighted Sobolev spaces with weights outside the $A_p$ class.\\

Parabolic and elliptic differential equations on weighted spaces have already been studied extensively in the literature, see, e.g., \cite{DK15, DK18, DKZ16, Ki08, KK04, KN14, KN16, Kr99b, Kr01} for the deterministic setting and \cite{KK18, Ki04_divform, Ki04_varcoef, KL99} for the stochastic setting. In these papers, weighted spaces are used to obtain (stochastic) maximal regularity for elliptic operators on \emph{homogeneous weighted Sobolev spaces}.
The new aspects of our approach, compared to the aforementioned works, are the following.

\begin{enumerate}[(i)]
\item We prove the boundedness of the $\Hinf$-calculus, which gives the boundedness of many singular integral operators. In particular, it yields (stochastic) maximal regularity and bounded imaginary powers.
 \item Using a scaling argument, we show that our results on inhomogeneous weighted Sobolev spaces recover maximal regularity for the heat equation on homogeneous spaces as well. In particular, we recover parts of the results for the heat equation in \cite{DK18, DKZ16, Kr99b, Kr01}.  Moreover, we can allow for weights in time.
  \item We can treat power weights with exponents $\gam\in (-1,2p-1)\setminus\{p-1\}$, extending the typical range $\gam\in(p-1,2p-1)$ considered for higher-order regularity in the homogeneous setting with Dirichlet boundary conditions.
  \item The main results are presented for both the Dirichlet and the Neumann Laplacian. The weighted higher-order regularity for the Neumann Laplacian is new. Weighted second order regularity for elliptic and parabolic equations with Neumann boundary conditions is considered in \cite{DK18, DKZ16, KN16}.
\end{enumerate}

In contrast to our study of the Laplacian on $\RRdh$ on inhomogeneous weighted Sobolev spaces, more general elliptic differential operators on domains are considered in the aforementioned works on homogeneous weighted Sobolev spaces. Moreover, negative and fractional smoothness parameters were studied.
In our case, we can also consider the following generalisations.
\begin{enumerate}[(i)]
  \item  The bounded $\Hinf$-calculus obtained for the Laplacian on $\RRdh$ can be transferred to bounded domains using
      perturbation theorems for the $\Hinf$-calculus. In particular, for the Dirichlet Laplacian, we can use the weights to weaken the required smoothness of the boundary from $C^{k+2}$ to $C^{1,\lambda}$ for $\lambda\in[0,1]$ depending on $\gam$. This will be done in a forthcoming paper.
  \item Our results for, e.g., the Dirichlet Laplacian remain true on the complex interpolation space
\begin{equation*}
  W^{k+\theta,p}(\RRdh, w_{\gam+(k+\theta)p}):=[W^{k,p}(\RRdh, w_{\gam+kp}), W^{k+1,p}(\RRdh, w_{\gam+(k+1)p})]_{\theta},
\end{equation*}
with $\theta\in (0,1)$. To characterise this complex interpolation space one can use the characterisations used in \cite{Kr99b, Kr01, Lo00} for homogeneous weighted Sobolev spaces. Similarly, it is expected that negative smoothness can be obtained via duality.

For negative smoothness we do consider $-\delDir$ on $W^{-1,p}(\RRdh, w_{\gam})$ with $\gam\in(-1,p-1)$. In this case, the $\Hinf$-calculus can be derived from the calculus for $-\delDir$ and $-\delNeu$ on $L^p(\RRdh,w_{\gam})$.
  \item For applications to stochastic PDEs it is well known that a bounded $\Hinf$-calculus yields stochastic maximal regularity in the setting without gradient noise, see \cite{NVW12}.  In an upcoming paper, we will study the setting with gradient noise in which the $\Hinf$-calculus plays a crucial role.
\end{enumerate}

\subsection{Main results}\label{sec:results}

We start with the definition of the inhomogeneous weighted Sobolev spaces on $\RRdh$. Throughout this section we assume that $p\in(1,\infty)$ and $\gam\in(-1,\infty)\setminus\{jp-1:j\in\NN_1\}$. Moreover, for $x\in \RRdh$,  we write $x=(x_1,\tilde{x})\in \RR_+\times \RR^{d-1}$. Let $w_{\gam}(x) := \mrm{dist}(x,\d\RRdh)^{\gam} = |x_1|^{\gam}$, then we define the weighted Lebesgue space $L^p(\RRdh,w_{\gam})$ as the space consisting of all strongly measurable $f\colon \RRdh\to \CC$ such that
\begin{equation*}
\|f\|_{L^p(\RRdh,w_{\gam})} := \Big( \int_{\RRdh}|f(x)|^p\:w_{\gam}(x)\dd x \big)^{1/p}<\infty.
\end{equation*}
The associated $k$-th order weighted Sobolev space for $k \in \N_0$ is defined as
\begin{equation*}
W^{k,p}(\RRdh,w_{\gam}) := \left\{ f \in \mc{D}'(\RRdh) : \forall |\alpha| \leq k, \partial^{\alpha}f \in L^p(\RRdh,w_{\gam}) \right\}.
\end{equation*}
For the definition of the weighted Sobolev space with $k=-1$, we refer to Section \ref{subsec:Dirvar}.

For $p\in(1,\infty)$, $k\in\NN_0$ and $\gam\in(-1,\infty)\setminus\{jp-1:j\in\NN_1\}$, we define the following spaces with Dirichlet and Neumann boundary conditions
  \begin{align*}
  W^{k,p}_{\Dir}(\RR_+^d, w_{\gam})&:=\left\{f\in W^{k,p}(\RR_+^d,w_{\gam}): \operatorname{Tr}(f)=0 \text{ if }k>\tfrac{\gam+1}{p}\right\},\\
  W^{k,p}_{\Neu}(\RR_+^d, w_{\gam})&:=\left\{f\in W^{k,p}(\RR_+^d,w_{\gam}): \operatorname{Tr}(\d_1 f)=0 \text{ if }k-1>\tfrac{\gam+1}{p}\right\},
\end{align*}
where $\Tr$ denotes the trace operator. We will elaborate in Section \ref{sec:weighted_Sob_spaces} on the existence of these traces. Equivalently, in many cases, these spaces with boundary conditions can be defined as the closure of test functions.\\

Let $p\in(1,\infty)$, $k\in\NN_0\cup\{-1\}$ and $\gam\in (-1,2p-1)\setminus\{p-1\}$ be such that $\gam+kp>-1$. The Dirichlet and Neumann Laplacian on $\RRdh$ as we will consider in this paper are defined as follows.
\begin{enumerate}[(i)]
    \item The \emph{Dirichlet Laplacian $\delDir$ on $W^{k,p}(\RRdh,w_{\gam+kp})$} is defined by
  \begin{equation*}
    \delDir u := \sum_{j=1}^d \d_j^2 u\quad \text{ with }\quad D(\delDir):=W^{k+2,p}_{\Dir}(\RRdh, w_{\gam+kp}).
  \end{equation*}
    \item  The \emph{Neumann Laplacian $\delNeu$ on $W^{k+1,p}(\RRdh,w_{\gam+kp})$} is defined by
  \begin{equation*}
    \delNeu u := \sum_{j=1}^d \d_j^2 u\quad \text{ with }\quad D(\delNeu):=W^{k+3,p}_{\Neu}(\RRdh, w_{\gam+kp}).
  \end{equation*}
  \end{enumerate}

In addition, we study the corresponding Dirichlet and Neumann heat semigroup. Let $G_z^d:\RRd\to \RR$ be the standard heat kernel on $\RRd$, defined by
\begin{equation}\label{eq:HeatKernelRd}
  G^d_z(x):=\frac{1}{(4\pi z)^{d/2}}e^{\frac{-|x|^2}{4z}},\qquad z\in\CC_+.
\end{equation}
For $z\in\CC_+$ and $x,y\in\RRdh$, we define the kernels
\begin{equation*}
  H^{d,\pm}_z(x,y):=G^d_z(x_1-y_1,\tilde{x}-\tilde{y})\pm G^d_z(x_1+y_1,\tilde{x}-\tilde{y}).
\end{equation*}
The Dirichlet and Neumann heat semigroups $T_{\Dir}$ and $T_\Neu$ are defined by
\begin{equation}\label{eq:Tt_intro}
\begin{aligned}
  T_{\Dir}(z)f(x)&\;:=H^{d,-}_{z} * f(x):=\int_{\RRdh}H^{d,-}_{z}(x,y)f(y)\dd y,\\
  T_{\Neu}(z)f(x)&\;:=H^{d,+}_z * f(x):=\int_{\RRdh}H^{d,+}_z(x,y)f(y)\dd y,
  \end{aligned}
\end{equation}
for any $f\in L^p(\RRdh, w_{\gam})$ with $\gam$ such that the formulas are well defined.\\

The main results of this paper are summarised in the following theorems. We start with the main result for the Dirichlet Laplacian (see Theorems \ref{thm:var_setting}, \ref{thm:sect:Dir}, \ref{thm:Dirsemi} and \ref{thm:calc:Dir}).
\begin{theorem}[Main results for the Dirichlet Laplacian]\label{thm:sect_calculus}
  Let $p\in(1,\infty)$, $k\in\NN_0\cup\{-1\}$ and $\gam\in (-1,2p-1)\setminus\{p-1\}$ be such that $\gam+kp>-1$. Then for $\delDir$ on $W^{k,p}(\RRdh,w_{\gam+kp})$ the following assertions hold for all $\lambda>0$:
  \begin{enumerate}[(i)]
     \item \label{it:sect_calculus1} $\lambda-\Delta_{\operatorname{Dir}}$ is sectorial of angle $\omega(\lambda-\Delta_{\operatorname{Dir}})=0$,
     \item \label{it:sect_calculus3} $(T_\Dir(z))_{z\in\Sigma_{\sigma}}$ with $\sigma\in (0,\frac{\pi}{2})$ is an analytic $C_0$-semigroup on $W^{k,p}(\RR^d_+,w_{\gam+kp})$ which is generated by $\delDir$,
     \item \label{it:sect_calculus2} $\lambda-\Delta_{\operatorname{Dir}}$ has a bounded $\Hinf$-calculus of angle $\omega_{H^{\infty}}(\lambda-\Delta_{\operatorname{Dir}})=0$.
   \end{enumerate}
   In addition, the semigroup $T_\Dir(t)$ on $W^{k,p}(\RRdh, w_{\gam+kp})$ satisfies the following growth properties:
   \begin{enumerate}[resume*]
   \item\label{it:sect_calculus5} if $\gam+kp\in(-1,2p-1)$, then $T_\Dir(t)$ is bounded and assertions \ref{it:sect_calculus1} and \ref{it:sect_calculus2} hold for $\lambda=0$ as well,
     \item\label{it:sect_calculus4} if $\gam+kp>2p-1$, then $T_\Dir(t)$ has polynomial growth and for any $\eps>0$ there are constants $c,C>0$ only depending on $p,k,\gam, \eps $ and $d$, such that
   \begin{equation*}
       c\big(1+t^{\frac{\gam+kp-2p+1}{2p}}\big)\leq\|T_\Dir(t)\|\leq C \big(1+t^{\frac{\gam+kp-2p+1+\eps}{2p}}\big), \qquad t\geq 0.
   \end{equation*}
   \end{enumerate}
\end{theorem}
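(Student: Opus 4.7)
The plan is to prove (i) and (iii) first by lifting a base case from $L^p$ to $W^{k,p}$, to obtain (ii) from (i) by standard semigroup theory, and to sharpen the quantitative content of (iv)--(v) via direct estimates on the Dirichlet heat kernel $H^{d,-}_t$. For the base case $k=0$, I combine the classical $A_p$-theory for $\gamma \in (-1, p-1)$ with the non-Muckenhoupt results of \cite{LV18} for $\gamma \in (p-1, 2p-1)$, giving a bounded $\Hinf$-calculus of angle $0$ for $-\delDir$ on $L^p(\RRdh, w_\gamma)$ across the full range $\gamma \in (-1, 2p-1)\setminus\{p-1\}$. A convenient common framework is the tangential Fourier transform, which reduces matters to a 1D Dirichlet problem on $\RR_+$ with weight $x_1^\gamma$ and then is combined with an operator-valued Mikhlin multiplier theorem in the tangential variables.

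\textbf{Lifting in $k$ and duality.}
For $k \geq 1$, I would induct using the intertwining identities
\begin{align*}
\d_j(\lambda - \delDir) &= (\lambda - \delDir)\d_j, \qquad j \geq 2, \\
\d_1(\lambda - \delDir) &= (\lambda - \delNeu)\d_1,
\end{align*}
valid on a suitable dense core. Tangential derivatives preserve the Dirichlet boundary condition, while the normal derivative $\d_1$ swaps Dirichlet for Neumann; combining these with the analogous base case for $-\delNeu$ transports the $\Hinf$-calculus to $W^{k,p}(\RRdh, w_{\gamma+kp})$. For $k = -1$, I would dualise: $W^{-1,p}(\RRdh, w_\gamma)$ can be identified with the dual of a first-order $W^1$-space in a conjugate weight, and the $\Hinf$-calculus is inherited by the adjoint. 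Once (i) is proved with angle $0$, assertion (ii) follows from the classical fact that a sectorial operator of angle $0$ generates a bounded analytic $C_0$-semigroup, and the identification of this semigroup with $T_\Dir(z)$ is checked by comparing both operators on a core such as $\Cc^\infty(\RRdh)$.

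\textbf{Heat-kernel estimates for growth.}
The starting point for (iv)--(v) is the pointwise bound
\begin{equation*}
0 \le H^{d,-}_t(x,y) \le C\,G^d_{ct}(x-y)\,\min\!\big(1,\tfrac{x_1 y_1}{t}\big),
\end{equation*}
reflecting the vanishing of the Dirichlet kernel at the boundary. For the upper bound in (iv), inserting this into $T_\Dir(t) u(x) = \int H^{d,-}_t(x,y) u(y)\,\mathrm{d}y$, rescaling $y \mapsto \sqrt{t}\,y$, and integrating against the weight $w_{\gamma+kp}$ produces the prefactor $t^{(\gamma+kp-2p+1+\eps)/(2p)}$, with the $\eps$ absorbing a logarithmic failure at the critical scale $\gamma+kp = 2p-1$; when $\gamma+kp \in (-1,2p-1)$ the same integral is finite uniformly in $t$, yielding (v). For the lower bound in (iv), I would test on a function of the form $u(x) = x_1 \chi(x)$ with $\chi \in \Cc^\infty$ supported away from the boundary: a direct computation using $H^{d,-}_t(x,y) = G^d_t(x-y) - G^d_t(x^*-y)$ with $x^* = (-x_1, \tilde{x})$ shows $T_\Dir(t) u(x) \gtrsim x_1\, t^{-d/2-1}$ on $|x| \lesssim \sqrt{t}$, whose $L^p(\RRdh, w_{\gamma+kp})$-norm is comparable to $t^{(\gamma+kp-2p+1)/(2p)}$. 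For $k \geq 1$, the same estimate is applied after differentiating, using the intertwinings from the previous step to preserve the exponent.

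\textbf{Main obstacle.}
The most delicate ingredient is the base case in the non-$A_p$ range $\gamma \in (p-1, 2p-1)$: standard Calder\'on--Zygmund, extrapolation, and square-function techniques do not apply, so the 1D weighted half-line problem must be analysed by exploiting the explicit ODE structure of $-\d_1^2$ with Dirichlet conditions and weight $x_1^\gamma$, and the resulting operator-valued symbols must be controlled uniformly in the tangential frequency. A secondary delicate point, responsible for the exclusion $\gamma = p-1$, is that precisely at this exponent the Dirichlet trace ceases to be continuous on $W^{1,p}(\RR_+, x_1^\gamma)$, and the interplay between weight and boundary condition degenerates.
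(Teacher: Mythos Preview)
Your lifting step has a genuine gap. The intertwining $\d_1(\lambda - \delDir) = (\lambda - \delNeu)\d_1$ is false on $D(\delDir)$: if $u(0,\cdot) = 0$, there is no reason for $\d_1 u$ to satisfy the Neumann condition, since $(\d_1^2 u)(0,\cdot) = (\del u)(0,\cdot) - \sum_{j\ge 2}(\d_j^2 u)(0,\cdot) = (\del u)(0,\cdot)$, which is generically nonzero. The valid intertwining is the reverse one, $\d_1 R(z,\delNeu) = R(z,\delDir)\d_1$; it transports Neumann to Dirichlet and is therefore useless for lifting the Dirichlet calculus through $\d_1$. The paper explicitly flags that $R(z,\delDir)$ and $\d_1$ do \emph{not} commute, and that this is the central difficulty.

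More fundamentally, even your tangential argument fails, and this exposes a structural obstacle you have not addressed. Commuting $\d^{\tilde\alpha}$ through $f(-\delDir)$ still leaves you needing $f(-\delDir)$ bounded on $L^p(\RRdh, w_{\gam+kp})$; for $k \ge 2$ the exponent $\gam + kp$ exceeds $2p-1$, where the $L^p$ base case is outright false (the Dirichlet semigroup is not even well defined on $L^p$ with such a weight). The paper never reduces to $L^p$ with a large weight. The key device is instead the pointwise multiplication $Mu(x) = x_1 u(x)$: it maps $W^{k,p}_0(\RRdh,w_{\gam+kp})$ isomorphically onto $W^{k,p}_0(\RRdh,w_{\gam+(k-1)p})$, shifting the weight exponent \emph{down} by $p$, and $Mu$ automatically satisfies the Dirichlet condition. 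The induction on $k$ then runs through the explicit commutators $[M\d_1^\ell, R(z,\delDir)] = -2R(z,\delDir)\d_1^{\ell+1}R(z,\delDir)$, together with a separate perturbation-in-$\gam$ argument (splitting $f(A)u$ via fractional powers $M^\theta$) for the purely tangential top-order terms. Your heat-kernel strategy for the growth bounds is reasonable at $k=0$, but in higher order it inherits the same weight obstruction; the paper instead derives the upper bound from the elliptic resolvent estimate, sharpened by complex interpolation in $\gam$, combined with the abstract equivalence between resolvent decay and semigroup growth.
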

Theorem \ref{thm:sect_calculus} in the special case $k=0$ has already been established by the first and fourth authors in \cite[Theorem 4.1 \& 5.7]{LV18}. The case $k=0$ is used as the basis for an induction argument to obtain Theorem \ref{thm:sect_calculus} for general $k\in\NN_0$. For this induction argument, we directly use the estimate from the definition of a bounded $\Hinf$-calculus together with perturbation arguments and commutator estimates. The fact that $\d_1$ and $\delDir$ do not commute complicates the analysis. \\

Concerning the Neumann Laplacian we obtain the following results (see Theorems \ref{thm:sect:Neu}, \ref{thm:Neusemi} and \ref{thm:calc:Neu}).
\begin{theorem}[Main results for the Neumann Laplacian]\label{thm:sect_calculus_neu}
  Let $p\in(1,\infty)$, $k\in\NN_0\cup\{-1\}$ and $\gam\in (-1,2p-1)\setminus\{p-1\}$ be such that $\gam+kp>-1$. Then for $\delNeu$ on $W^{k+1,p}(\RRdh,w_{\gam+kp})$ the following assertions hold for all $\lambda>0$:
  \begin{enumerate}[(i)]
     \item \label{it:sect_calculus1_neu} $\lambda-\Delta_{\operatorname{Neu}}$ is sectorial of angle $\omega(\lambda-\Delta_{\operatorname{Neu}})=0$,
     \item \label{it:sect_calculus3_neu} $(T_\Neu(z))_{z\in\Sigma_{\sigma}}$ with $\sigma\in (0,\frac{\pi}{2})$ is an analytic $C_0$-semigroup on $W^{k+1,p}(\RR^d_+,w_{\gam+kp})$ which is generated by $\delNeu$,
     \item \label{it:sect_calculus2_neu} $\lambda-\Delta_{\operatorname{Neu}}$ has a bounded $\Hinf$-calculus of angle $\omega_{H^{\infty}}(\lambda-\Delta_{\operatorname{Neu}})=0$.
   \end{enumerate}
   In addition, the semigroup $T_\Neu(t)$ on $W^{k+1,p}(\RRdh, w_{\gam+kp})$ satisfies the following growth properties:
   \begin{enumerate}[resume*]
   \item\label{it:sect_calculus5_neu} if $\gam+kp\in(-1,p-1)$, then $T_\Neu(t)$ is bounded and assertions \ref{it:sect_calculus1_neu} and \ref{it:sect_calculus2_neu} hold for $\lambda=0$ as well,
     \item\label{it:sect_calculus4_neu} if $\gam+kp>p-1$, then $T_\Neu(t)$ has polynomial growth and for any $\eps>0$ there are constants $c,C>0$ only depending on $p,k,\gam, \eps$ and $d$, such that
   \begin{equation*}
       c\big(1+t^{\frac{\gam+kp-p+1}{2p}}\big)\leq\|T_\Neu(t)\|\leq C \big(1+t^{\frac{\gam+kp-p+1+\eps}{2p}}\big), \qquad t\geq 0.
   \end{equation*}
   \end{enumerate}
\end{theorem}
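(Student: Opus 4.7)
The plan is to follow the same overall strategy used for the Dirichlet Laplacian in Theorem \ref{thm:sect_calculus}: establish a base case and then run an induction on $k$, while in parallel relating the Neumann problem to the Dirichlet problem whenever this simplifies the analysis. The natural base case is $k=-1$, i.e.\ $\delNeu$ on $L^p(\RRdh, w_{\gam-p})$, which by the restriction $\gam+kp>-1$ is only relevant for $\gam\in(p-1,2p-1)$. In this range $\gam-p\in(-1,p-1)$, so $w_{\gam-p}\in A_p(\RR^d)$ after even reflection $\extH f(x_1,\tilde{x}):=f(\abs{x_1},\tilde{x})$. The generator of the Neumann heat semigroup then corresponds, via $\extH$, to the full-space Laplacian on $L^p(\RRd,|x_1|^{\gam-p})$, whose bounded $\Hinf$-calculus of angle $0$ is classical in the $A_p$-setting. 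For $\gam\in(-1,p-1)$ (so $k\geq 0$), the same reflection argument works on $L^p(\RRdh, w_{\gam})$ and gives the base case $k=0$ in the bounded weight regime.

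The induction step from $k$ to $k+1$ proceeds as follows. Assume we have a bounded $\Hinf$-calculus for $-\delNeu$ on $W^{k+1,p}(\RRdh, w_{\gam+kp})$. To pass to $W^{k+2,p}(\RRdh, w_{\gam+(k+1)p})$, the key observation is that $u \in W^{k+3,p}_{\Neu}(\RRdh, w_{\gam+(k+1)p})$ iff $u\in W^{k+1,p}(\RRdh,w_{\gam+kp})$ with $\partial_1 u \in W^{k+2,p}_{\Dir}(\RRdh, w_{\gam+(k+1)p})$ and the remaining tangential derivatives in the appropriate Neumann class. Since $\delNeu$ and $\d_j$ commute for $j\geq 2$, and since differentiating the Neumann problem in $x_1$ turns it into a Dirichlet problem (as $\d_1(\delNeu u) = \delDir(\d_1 u)$ on the Neumann domain), one can directly reduce the $\Hinf$-bound for $-\delNeu$ on the higher-order weighted Sobolev space to already-known $\Hinf$-bounds: the inductive hypothesis for $\delNeu$ combined with Theorem \ref{thm:sect_calculus} for $\delDir$ applied to $\d_1 u$ on $W^{k+2,p}(\RRdh,w_{\gam+(k+1)p})$. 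This yields (i)--(iii) together with the boundedness statement (iv) whenever the base weight lies in $(-1,p-1)$.

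For the growth bounds (v), the upper bound will come from a direct kernel estimate: splitting $H^{d,+}_t(x,y) = G^d_t(x_1-y_1,\tilde{x}-\tilde{y}) + G^d_t(x_1+y_1,\tilde{x}-\tilde{y})$, the first summand is controlled as in the translation-invariant case and the second produces the boundary contribution, which by a weighted Young/$L^p\to L^p$ bound against $w_{\gam+kp}$ yields the $t^{(\gam+kp-p+1+\eps)/(2p)}$ factor; the shift by $p$ compared to the Dirichlet threshold $2p-1$ reflects the fact that the Neumann kernel does not vanish at the boundary. The matching lower bound follows by testing $T_\Neu(t)$ against a smooth bump supported close to $\d\RRdh$, scaled so that the $L^p(w_{\gam+kp})$-norm of the image saturates the weight integral near $x_1=0$. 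Growth of higher derivatives is then obtained by commuting $\d_j$ through the semigroup (trivially for $j\geq 2$, and via the Dirichlet heat semigroup for $j=1$, whose growth is given by Theorem \ref{thm:sect_calculus} \ref{it:sect_calculus4}).

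The main obstacle, as in the Dirichlet case, is the induction step: although $\d_1$ intertwines $\delNeu$ and $\delDir$ formally, on the level of weighted Sobolev spaces one has to verify that the derivative exchange preserves the correct boundary conditions (the trace $\Tr(\d_1 u)=0$ is only imposed when $k-1>\frac{\gam+1}{p}$, so several regimes must be distinguished) and that the associated commutator terms, which involve singular coefficients coming from the weight $w_{\gam+kp}$, can be absorbed via a perturbation result for the $\Hinf$-calculus. This careful trace bookkeeping and the estimation of the commutators of $\d_1$ with $\delNeu$ near $\d\RRdh$ form the technical heart of the argument, and are precisely the places where the restriction $\gam\in(-1,2p-1)\setminus\{p-1\}$ becomes essential.
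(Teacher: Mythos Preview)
Your outline captures the right base cases (even reflection to the $A_p$-setting, Theorem~\ref{thm:LVresult_Neumann}) and the crucial intertwining $\partial_1 R(z,\delNeu)=R(z,\delDir)\partial_1$ (Lemma~\ref{lem:commNeu}\ref{it:commNeu4}), which indeed handles all terms carrying at least one normal derivative. But there is a genuine gap for $\gam\in(p-1,2p-1)$: after commuting tangential derivatives $\partial^{\tilde{\alpha}}$ through $f(A_\Neu)$ you are left with $\|f(A_\Neu)\partial^{\tilde{\alpha}}u\|_{L^p(w_{\gam+(k+1)p})}$, and the Neumann $\Hinf$-calculus is \emph{not} available on $L^p(w_{\gam+(k+1)p})$ since $\gam+(k+1)p>p-1$ lies outside the $A_p$-range. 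Your induction on $k$ does not reach this space either, because it stays on the scale $W^{k+1,p}(w_{\gam+kp})$ with fixed $\gam$. The paper closes this gap with two devices you do not make explicit: (a) the multiplication operator $M$ converts $Mf(A_\Neu)u$ into $f(A_\Dir)Mu$ modulo a controllable remainder (Lemma~\ref{lem:comm_est1_Neumann}), so the lower-order part reduces directly to the \emph{Dirichlet} calculus at shifted weight $\gam-p\in(-1,p-1)$; and (b) the top-order purely tangential part is treated by a tailored perturbation argument in the weight exponent (Lemma~\ref{lem:tilde_der_f(A)}), built on the kernel representation and the splitting $f(A)u = x_1^{-(k+1)\theta}f(A)(\psi(x_1,\cdot)g)+\text{lower order}$ with $\psi(x_1,y_1)=(x_1^\theta/y_1^\theta-1)^{k+1}$, rather than an off-the-shelf $\Hinf$-perturbation theorem. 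Consequently the case $\gam\in(p-1,2p-1)$ is obtained \emph{directly} from the Dirichlet result without a Neumann induction on $k$; only afterwards is $\gam\in(-1,p-1)$ handled, precisely via your intertwining idea combined with the already-established case $\gam+p\in(p-1,2p-1)$.

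For the upper growth bound in \ref{it:sect_calculus4_neu} the paper does not use direct kernel estimates on $W^{k+1,p}(w_{\gam+kp})$ (which are delicate once $\gam+kp\notin(-1,p-1)$). Instead, the resolvent bound from Proposition~\ref{prop:sect_est_neu} is sharpened by complex interpolation in $\gam$ (Propositions~\ref{prop:compl_int_gam} and~\ref{prop:sect_est_neu_improved}) and then converted to semigroup growth via Lemma~\ref{lem:growth_semigroup_abstract}; this is where the $\eps$ in the exponent originates. Your lower-bound strategy (test against a bump near $\partial\RRdh$) matches the paper's argument exactly.
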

Theorem \ref{thm:sect_calculus_neu} will be derived in a similar way as Theorem \ref{thm:sect_calculus}, but additionally, we can also use the results from Theorem \ref{thm:sect_calculus}. This is illustrated by the fact that $\d_1 R(\lambda,\delNeu)f=R(\lambda, \delDir)\d_1 f$ for appropriate $f$.

\begin{remark}\label{rem:Bc}  \hspace{2em}
  \begin{enumerate}[(i)]
  \item Theorems \ref{thm:sect_calculus} and \ref{thm:sect_calculus_neu} also hold for vector-valued weighted Sobolev spaces, which is the setting used throughout this paper.
    \item \label{it:rem:Bc} Theorems \ref{thm:sect_calculus} and \ref{thm:sect_calculus_neu} are false for $\gam\notin(-1,2p-1)$. If $\gam\geq 2p-1$, then the Laplacian is not sectorial (see Example \ref{ex:no_sect}). If $\gam\leq -1$, then the Laplacian will only be sectorial if additional boundary conditions are included in the domain of the operator, as is illustrated in \cite[Section 5.5]{LV18}.
\item By complex interpolation in the parameter $\gam$, we can obtain the statements in Theorem \ref{thm:sect_calculus} for $\gam=p-1$, although, the domain will need to be changed.
    The existing interpolation theory for weighted spaces with boundary conditions is inadequate here.
  \item  The growth of the Dirichlet and Neumann heat semigroup is a result of the behaviour of the weight away from the boundary $x_1=0$. Replacing the weight $w_{\gam+kp}$ by a weight which behaves like $w_{\gam+kp}$ near zero and becomes constant at infinity, or, considering a bounded domain, would result in a bounded semigroup.
      We refer to  \cite{Kr99c} and \cite{MT23} for related results on the Dirichlet heat semigroup on weighted spaces.
    \item It is an open question if the condition $\eps>0$ in the upper bound for the semigroups is optimal or that actually $\eps=0$ holds.
  \end{enumerate}
\end{remark}

As a consequence of Theorems \ref{thm:sect_calculus} and \ref{thm:sect_calculus_neu}, we obtain maximal regularity for the Laplace and heat equation. These results will be presented in Section \ref{sec:MR}.

\subsection{Outline} The outline of this paper is as follows. After introducing some preliminary results in Section \ref{sec:prelim}, we study Sobolev spaces with power weights and their properties in Section \ref{sec:weighted_Sob_spaces}. In Section \ref{sec:easy_cases} we collect some known and straightforward results concerning the $\Hinf$-calculus for the Laplacian on lower-order weighted Sobolev spaces. Then, in Section \ref{sec:sect_semigroup} we study the sectoriality of the Laplacian on higher-order weighted Sobolev spaces and in Section \ref{sec:semigroup} we prove growth estimates for the corresponding heat semigroups. Section \ref{sec:calculus} deals with the $\Hinf$-calculus of the Dirichlet and Neumann Laplacian. Finally, in Section \ref{sec:MR} we derive elliptic and parabolic maximal regularity results on inhomogeneous and homogeneous spaces.

\section{Preliminaries}\label{sec:prelim}

\subsection{Notation}\label{subsec:notation}
We denote by $\NN_0$ and $\NN_1$ the set of natural numbers starting at $0$ and $1$, respectively. For $a\in\RR$ we use the notation $(a)_+=a$ if $a\geq 0$ and $(a)_+=0$ otherwise.

For $d\in\NN_1$ the half-space is given by $\RRdh=\RR_+\times\RR^{d-1}$, where $\RR_+=(0,\infty)$ and for $x\in \RRdh$ we write $x=(x_1,\tilde{x})$ with $x_1\in \RR_+$ and $\tilde{x}\in \RR^{d-1}$. For $\gam\in\RR$ and $x\in\RRdh$ we define the power weight $w_{\gam}(x):=\mrm{dist}(x,\d\RRdh)^\gam=|x_1|^\gam$.

For two topological vector spaces $X$ and $Y$, the space of continuous linear operators is $\mc{L}(X,Y)$ and $\mc{L}(X):=\mc{L}(X,X)$. Unless specified otherwise, $X$ will always denote a Banach space with norm $\|\cdot\|_X$ and the dual space is $X':=\mc{L}(X,\CC)$.

For a linear operator $A:X\supseteq D(A)\to X$ on a Banach space $X$ we denote by $\sigma(A)$ and $\rho(A)$  the spectrum and resolvent set, respectively. For $\lambda\in\rho(A)$, the resolvent operator is given by $R(\lambda,A)=(\lambda-A)^{-1}\in \mc{L}(X)$.

We write $f\lesssim g$ (resp. $f\gtrsim g$) if there exists a constant $C>0$, possibly depending on parameters which will be clear from the context or will be specified in the text, such that $f\leq Cg$ (resp. $f\geq Cg$). Furthermore, $f\eqsim g$ means $f\lesssim g$ and $g\lesssim  f$.\\

For an open and non-empty $\OO\subseteq \RR^d$ and $k\in\NN_0\cup\{\infty\}$, the space $C^k(\OO;X)$ denotes the space of $k$-times continuously differentiable functions from $\OO$ to some Banach space $X$. In the case $k=0$ we write $C(\OO;X)$ for $C^0(\OO;X)$. 

Let $\Cc^{\infty}(\OO;X)$ be the space of compactly supported smooth functions on $\OO$ equipped with its usual inductive limit topology. The space of $X$-valued distributions is given by $\mc{D}'(\OO;X):=\mc{L}(\Cc^{\infty}(\OO);X)$. Moreover, $\Cc^{\infty}(\overline{\OO};X)$ is the space of smooth functions with its support in a compact set contained in $\overline{\OO}$.

 We denote the Schwartz space by $\SS(\RRd;X)$ and $\SS'(\RRd;X):=\mc{L}(\SS(\RRd);X)$ is the space of $X$-valued tempered distributions. For $f\in \SS'(\RRd;X)$ its Fourier transform is denoted by $\mc{F} f= \widehat{f}$ and its inverse as $\mc{F}^{-1}f$. For $\OO\subseteq \RR^d$ we define $\SS(\OO;X):=\{u|_{\OO}: u\in\SS(\RRd;X)\}$.

Finally, for $\theta\in(0,1)$ and a compatible couple $(X,Y)$ of Banach spaces, the complex interpolation space is denoted by $[X,Y]_{\theta}$.

\subsection{Sectorial operators}\label{subsec:sect}
For $\om\in(0,\pi)$, let $\Sigma_{\om}=\{z\in\CC\setminus\{0\}:|\operatorname{arg}(z)|<\om\}$ be the sector in the complex plane.
\begin{definition}\label{def:sect}
  An injective, closed linear operator $(A,D(A))$ with dense domain and dense range on a Banach space $X$ is called \emph{sectorial} if there exists a $\om\in (0,\pi)$ such that $\sigma(A)\subseteq\overline{\Sigma_{\om}}$ and
  \begin{equation}\label{eq:sect_est}
    \sup_{\lambda\in \CC\setminus \overline{\Sigma_{\om}}}\|\lambda R(\lambda,A)\|<\infty.
  \end{equation}
  Furthermore, the angle of sectoriality $\om(A)$ is defined as the infimum over all possible $\om$.
\end{definition}
\begin{remark}\label{rem:Inj_XUMD} Let $A$ be a linear operator on a Banach space $X$ satisfying \eqref{eq:sect_est}, then $A$ is closed. Moreover, if $X$ is reflexive, then $A$ has dense domain. Furthermore, if $A$ has dense range, then $A$ is injective. See \cite[Propositions 10.1.7(3) and 10.1.9]{HNVW17}.
\end{remark}
We state a lemma that relates the growth of the semigroup $e^{-zA}$ to the estimate on the resolvent of $A$. For related results of this type, we refer to \cite{Roz2017}.

\begin{lemma}\label{lem:growth_semigroup_abstract}
  Let $A$ be a linear operator on a Banach space $X$ and let $\alpha\geq 1$. The following are equivalent.
  \begin{enumerate}[(i)]
    \item \label{it:lem:growth1} There exist $\om\in(0,\frac{\pi}{2})$ and $C_1>0$ such that $\sigma(A)\subseteq \overline{\Sigma_{\om}}$ and
         \begin{equation*}
            \|(\lambda + A)^{-1}\|_{\mc{L}(X)}\leq C_1 \Big(\frac{1}{|\lambda|^{\alpha}}+ \frac{1}{|\lambda|}\Big),\qquad \lambda\in \Sigma_{\pi-\om}.
         \end{equation*}
    \item\label{it:lem:growth2} There exist $\eta\in(0,\frac{\pi}{2})$ and $C_2>0$ such that $-A$ generates an analytic $C_0$-semigroup on $\Sigma_{\eta}$ and
    \begin{equation*}
        \|e^{-zA}\|_{\mc{L}(X)}\leq C_2(|z|^{\alpha-1}+1),\qquad z\in \Sigma_{\eta}.
    \end{equation*}
  \end{enumerate}
\end{lemma}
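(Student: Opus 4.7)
The statement is an equivalence between a polynomial growth bound on an analytic semigroup generated by $-A$ and a two-term resolvent estimate combining standard sectoriality ($|\lambda|^{-1}$) at infinity with a weaker singularity ($|\lambda|^{-\alpha}$) at the origin. The plan is to prove each direction by passing between semigroup and resolvent through, respectively, a Dunford-type contour integral and a rotated Laplace transform.

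For (i)~$\Rightarrow$~(ii), I would define
\begin{equation*}
  e^{-zA} := \frac{1}{2\pi i}\int_{\Gamma_{|z|^{-1}}} e^{z\lambda}(\lambda+A)^{-1}\dd\lambda, \qquad z \in \Sigma_\eta,
\end{equation*}
where $\Gamma_R$ is a Hankel-type contour consisting of two rays $\{re^{\pm i\phi} : r \geq R\}$ with $\frac{\pi}{2} < \phi < \pi - \om$ joined by the circular arc $\{Re^{i\theta} : -\phi \leq \theta \leq \phi\}$, suitably oriented. Then $\Gamma_{|z|^{-1}} \subseteq \Sigma_{\pi - \om}$, so the resolvent bound from (i) applies on the entire contour. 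Standard arguments (Cauchy's theorem for contour independence, the resolvent identity for the semigroup law, and the usual approximation for strong continuity and generator identification) then yield analyticity of $z \mapsto e^{-zA}$ on $\Sigma_\eta$ for every $\eta < \phi - \frac{\pi}{2}$. The key estimate splits along the contour: on the rays, the substitution $s = |z|r$ gives
\begin{equation*}
\int_1^\infty e^{-cs}\bigl(|z|^{\alpha-1}s^{-\alpha} + s^{-1}\bigr)\dd s \;\lesssim\; |z|^{\alpha-1} + 1,
\end{equation*}
for some $c=c(\phi,z)>0$ coming from $\Re(z\lambda)\leq -c|z|r$; on the arc, $|e^{z\lambda}|\leq e$ (since $|z\lambda|=1$), $|\lambda|^{-\alpha}+|\lambda|^{-1} = |z|^\alpha+|z|$, and arclength $2\phi/|z|$ combine to give $\lesssim |z|^{\alpha-1}+1$. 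Adding the two pieces yields the bound in (ii).

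For (ii)~$\Rightarrow$~(i), the polynomial bound $\|e^{-tA}\| \leq C_2(t^{\alpha-1}+1)$ implies that the exponential growth bound of the real-time semigroup is at most $0$, so for $\lambda>0$ the classical Laplace identity $(\lambda+A)^{-1} = \int_0^\infty e^{-\lambda t}e^{-tA}\dd t$ holds. For general $\lambda \in \Sigma_{\frac{\pi}{2}+\eta - \delta}$ with $\delta \in (0,\eta)$, I rotate the contour to $z = se^{i\nu}$ with $\nu \in (-\eta,\eta)$ chosen so that $|\arg(\lambda e^{i\nu})| \leq \frac{\pi}{2} - \frac{\delta}{2}$. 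Cauchy's theorem, together with the polynomial growth control on $\|e^{-zA}\|$ on intermediate rays, shows that the rotated integral still represents $(\lambda+A)^{-1}$, and
\begin{equation*}
  \|(\lambda+A)^{-1}\| \leq C_2\int_0^\infty e^{-|\lambda|\sin(\delta/2)\,s}\bigl(s^{\alpha-1}+1\bigr)\dd s \;\lesssim\; |\lambda|^{-\alpha} + |\lambda|^{-1}.
\end{equation*}
Setting $\om := \frac{\pi}{2}-\eta+\delta$ then yields both $\sigma(A)\subseteq \overline{\Sigma_\om}$ and the required resolvent estimate on $\Sigma_{\pi-\om}$.

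The main technical obstacle lies in (i)~$\Rightarrow$~(ii): since $\alpha \geq 1$, the $|\lambda|^{-\alpha}$ singularity of the resolvent at the origin rules out any contour passing through $0$, because $|e^{z\lambda}|$ is only $1$ there and contributes no decay. Matching the arc radius precisely to $|z|^{-1}$ is the device that balances the two regimes of the resolvent bound and converts the near-zero singularity into the $|z|^{\alpha-1}$ part of the semigroup growth estimate; nothing analogous is needed for the converse, where the good decay of $e^{-\lambda z}$ for large $\Re(\lambda e^{i\nu})$ handles everything.
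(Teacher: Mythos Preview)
Your proof is correct, but for the direction (i)~$\Rightarrow$~(ii) you take a genuinely different route from the paper. You construct the semigroup directly via a Dunford integral over a $|z|$-dependent Hankel contour, matching the arc radius to $|z|^{-1}$ to balance the two regimes of the resolvent bound. The paper instead observes that for each $\mu>0$ the shifted operator $\mu+A$ satisfies the \emph{standard} sectorial estimate $\|(\lambda+\mu+A)^{-1}\|\leq K(\mu^{1-\alpha}+1)/|\lambda|$ on $\Sigma_{\pi-\om}$ (using $|\lambda|,\mu\lesssim_\om|\lambda+\mu|$), so that classical analytic semigroup theory gives $\|e^{-z(\mu+A)}\|\leq K_1(\mu^{1-\alpha}+1)$ for free, including the semigroup law, strong continuity, and generator identification. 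Specialising to $\mu=1/|z|$ and writing $e^{-zA}=e^{z/|z|}e^{-z(\mu+A)}$ then yields the desired bound in one line. The advantage of the paper's argument is that it outsources all the ``standard arguments'' you invoke to existing theory; your approach is more self-contained but you must in principle verify those properties under the weaker resolvent hypothesis (this is routine but not quite immediate, since the usual textbook proofs assume $\|(\lambda+A)^{-1}\|\leq M/|\lambda|$). For (ii)~$\Rightarrow$~(i), your rotated Laplace transform argument is essentially the same as the paper's, which first treats $\lambda\in\Sigma_\om$ via the real Laplace transform and then extends by rotating the operator $e^{i\theta}A$.
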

\begin{proof}
  We first prove that \ref{it:lem:growth1} implies \ref{it:lem:growth2}. Let $\mu>0$, then there exists a constant $K$, independent of $\lambda$ and $\mu$, such that
  \begin{equation*}
    \|(\lambda +\mu + A)^{-1}\|_{\mc{L}(X)}\leq C_1\Big(\frac{1}{|\lambda+\mu|^{\alpha}}+ \frac{1}{|\lambda+\mu|}\Big)\leq \frac{K}{|\lambda|}\Big(\frac{1}{\mu^{\alpha-1}}+1\Big),
  \end{equation*}
  since $\mu\leq k_0|\lambda+\mu|$ and $|\lambda|\leq k_1|\lambda+\mu|$ with constants $k_0,k_1>0$ only depending on $\om$. By standard theory for analytic semigroups (see \cite[Theorem G.5.2]{HNVW17}) it follows
  \begin{equation*}
    \|e^{-z(\mu+A)}\|_{\mc{L}(X)}\leq K_1 \Big(\frac{1}{\mu^{\alpha-1}}+1\Big),\qquad  z\in \Sigma_{\frac{\pi}{2}-\om}.
  \end{equation*}
  The result follows upon taking $\mu=1/|z|$ because
  \begin{equation*}
    \|e^{-zA}\|_{\mc{L}(X)}=|e^{\frac{z}{|z|}}|\|e^{-z(\frac{1}{|z|}+A)}\|_{\mc{L}(X)}\leq C_2(|z|^{\alpha-1}+1).
  \end{equation*}
  We now prove that \ref{it:lem:growth2} implies \ref{it:lem:growth1}. First, let $\lambda\in \Sigma_{\om}$ for $\om\in(0,\frac{\pi}{2})$. By the Laplace transform we obtain
  \begin{align*}
   \|(\lambda+A)^{-1}\|_{\mc{L}(X)}& =\Big\| \int_0^\infty e^{-\lambda t}e^{-tA}\dd t\Big\|_{\mc{L}(X)}
   \leq C_2 \int_0^\infty e^{-\Re \lambda t}(t^{\alpha-1}+1)\dd t \\&= K \Big(\frac{1}{(\Re \lambda)^\alpha}+\frac{1}{\Re \lambda}\Big)
   \leq K_1 \Big(\frac{1}{|\lambda|^\alpha}+\frac{1}{|\lambda|}\Big),
  \end{align*}
  where the constants $K$ and $K_1$ only depend on $\om$ and $C_2$.
  By considering the rotated operator $e^{\ii\theta}A$ with $|\theta|<\eta$, we obtain \ref{it:lem:growth1}.
\end{proof}

\subsection{The holomorphic functional calculus}\label{subsec:func_cal}
We first introduce the following Hardy spaces. Let $\om\in(0,\pi)$, then $H^1(\Sigma_{\om})$ is the space of all holomorphic functions $f:\Sigma_{\om}\to \CC$ such that
\begin{equation*}
  \|f\|_{H^1(\Sigma_{\om})}:=\sup_{|\nu|<\om}\|t\mapsto f(e^{\ii\nu}t)\|_{L^1(\RR_+,\frac{\mathrm{d}t}{t})}<\infty.
\end{equation*}
Moreover, let $H^{\infty}(\Sigma_{\om})$ be the space of all bounded holomorphic functions on the sector with norm
\begin{equation*}
  \|f\|_{H^{\infty}(\Sigma_{\om})}:=\sup_{z\in\Sigma_{\om}}|f(z)|.
\end{equation*}

\begin{definition}
  Let $A$ be a sectorial operator on a Banach space $X$ and let $\om\in(\om(A),\pi)$, $\nu\in(\om(A), \om)$ and $f\in H^{1}(\Sigma_{\om})$. We define the operator
  \begin{equation*}
    f(A):=\frac{1}{2\pi \ii}\int_{\partial \Sigma_{\nu}}f(z)R(z,A)\dd z,
  \end{equation*}
  where $\d \Sigma_{\nu}$ is traversed downwards. The operator $A$ has a \emph{bounded $\Hinf(\Sigma_{\om})$-calculus} if there exists a $C>0$ such that
  \begin{equation*}
    \|f(A)\|\leq C\|f\|_{\Hinf(\Sigma_{\om})}\quad \text{ for all }f\in H^1(\Sigma_{\om})\cap\Hinf(\Sigma_{\om}).
  \end{equation*}
  Furthermore, the angle of the $\Hinf$-calculus $\om_{\Hinf}(A)$ is defined as the infimum over all possible $\om>\om(A)$.
\end{definition}
For details on the $\Hinf$-calculus we refer to \cite{Ha06} and \cite[Chapter 10]{HNVW17}.

\begin{remark}\label{rem:inv_calc}
  If the sectorial operator $A$ is in addition invertible, then the behaviour of the function $f\in H^{1}(\Sigma_{\om})$ in the neighbourhood of $0$ is immaterial. By Cauchy's theorem, we can equivalently define
  \begin{equation*}
  f(A)=\frac{1}{2\pi \ii}\int_{\Gam_{\nu}}f(z)R(z,A)\dd z,
\end{equation*}
where $\Gam_{\nu}$ is the boundary of $\Sigma_{\nu}\setminus B(0,\delta)$ with $\delta>0$ small enough such that $B(0,\delta)\subseteq \rho(A)$. See \cite[Section 2.5.1]{Ha06} for more details.
\end{remark}

For the main results of this paper as presented in Section \ref{sec:results}, we can also use vector-valued spaces and to this end, we need the $\UMD$ condition.
Recall that a Banach space $X$ satisfies the geometric condition $\UMD$ (unconditional martingale differences) if and only if the Hilbert transform extends to a bounded operator on $L^p(\RR;X)$. We list the following relevant properties of $\UMD$ spaces.
\begin{enumerate}[(i)]
\item Hilbert spaces are $\UMD$ spaces.
  \item If $p\in(1,\infty)$, $(S,\Sigma,\mu)$ is a $\sigma$-finite measure space and $X$ is a $\UMD$ Banach space, then $L^p(S;X)$ is a $\UMD$ Banach space.
  \item $\UMD$ Banach spaces are reflexive.
\end{enumerate}
In particular, $X=\CC$ is a $\UMD$ Banach space. For more details on the $\UMD$ property we refer to \cite[Chapter 4 \& 5]{HNVW16}.\\

We state the following lemma on the $\Hinf$-calculus for the Laplacian on Sobolev and Bessel potential spaces with weights $w\in A_p(\RR^d)$. This result is folklore and follows easily from the $\Hinf$-calculus on $L^p(\RRd,w;X)$ and lifting.

Let $s\in\RR$, then the Bessel potential operator is
$$J_sf=(1-\del)^{\frac{s}{2}}f:=\mc{F}^{-1}\big((1+|\cdot|^2)^{\frac{s}{2}}\mc{F} f\big),\quad f\in \SS'(\RRd;X).$$
For $p\in(1,\infty)$, $s\in\RR$ and $w\in A_p(\RR^d)$ (see Section \ref{sec:weighted_Sob_spaces}), the weighted Bessel potential space $H^{s,p}(\RRd, w;X)\subseteq \SS'(\RRd;X)$ is defined as the space consisting of all $f\in \SS'(\RRd;X)$ such that $J_sf\in L^p(\RRd, w;X)$ and
\begin{equation*}
  \|f\|_{H^{s,p}(\RRd, w;X)}:=\|J_sf\|_{L^p(\RRd, w;X)}<\infty.
\end{equation*}
\begin{lemma}\label{lem:calc_RRd_Wkp}
  Let $p\in(1,\infty)$, $s\in\RR$, $w\in A_p(\RR^d)$ and let $X$ be a $\UMD$ Banach space. Then $-\del$ on $H^{s,p}(\RRd,w;X)$ with domain $H^{s+2,p}(\RRd,w;X)$ has a bounded $\Hinf$-calculus of angle $\om_{\Hinf}(-\del)=0$.
  In particular, if $k\in\NN_0$, then the same statement holds for $-\del$ on $W^{k,p}(\RRd, w;X)$ with domain $W^{k+2,p}(\RRd,w;X)$.
\end{lemma}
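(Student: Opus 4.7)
The plan is to reduce everything to the $L^p$ case and then lift. I will proceed in three steps.

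\textbf{Step 1: The case $s=0$.} For $\omega \in (0,\pi)$ and $\phi \in H^1(\Sigma_\omega)\cap H^\infty(\Sigma_\omega)$, the function $m(\xi) := \phi(|\xi|^2)$ is holomorphic on a conic neighbourhood of $(0,\infty)^d$ and satisfies the Mihlin-type estimates
\begin{equation*}
  \sup_{\xi \in \RR^d \setminus \{0\}} |\xi|^{|\alpha|} |\partial^\alpha m(\xi)| \leq C_{\alpha,\omega} \|\phi\|_{H^\infty(\Sigma_\omega)}, \qquad \alpha \in \NN_0^d,
\end{equation*}
with constants independent of $\phi$. Since $X$ is a UMD Banach space and $w \in A_p(\RR^d)$, the weighted vector-valued Mihlin multiplier theorem (see, e.g.\ \cite[Theorem 5.5.10 and Proposition 3.2.12]{HNVW16} together with the $A_p$-extrapolation/Rubio de Francia argument, or the variant in \cite{HNVW24}) yields
\begin{equation*}
  \|\phi(-\Delta)f\|_{L^p(\RR^d,w;X)} = \|\mathcal{F}^{-1}(m\,\mathcal{F} f)\|_{L^p(\RR^d,w;X)} \leq C\|\phi\|_{H^\infty(\Sigma_\omega)}\|f\|_{L^p(\RR^d,w;X)}.
\end{equation*}
Sectoriality of $-\Delta$ on $L^p(\RR^d,w;X)$ with angle $0$ is obtained similarly (the resolvent symbol $(\lambda+|\xi|^2)^{-1}$ is a Mihlin multiplier uniformly in $\lambda \in \CC \setminus \overline{\Sigma_\omega}$ for any $\omega > 0$), and injectivity and dense range follow from standard Fourier-analytic arguments. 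This gives a bounded $H^\infty$-calculus of angle $\omega_{H^\infty}(-\Delta) = 0$ on $L^p(\RR^d,w;X)$.

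\textbf{Step 2: General $s \in \RR$ via lifting.} The Bessel potential $J_s = (1-\Delta)^{s/2}$ is, by definition, an isometric isomorphism
\begin{equation*}
  J_s : H^{s,p}(\RR^d,w;X) \longrightarrow L^p(\RR^d,w;X),
\end{equation*}
and since $J_s$ and $-\Delta$ are both Fourier multipliers, they commute on $\SS'(\RR^d;X)$. Consequently $-\Delta$ on $H^{s,p}(\RR^d,w;X)$ with domain $H^{s+2,p}(\RR^d,w;X)$ is similar to $-\Delta$ on $L^p(\RR^d,w;X)$ with domain $H^{2,p}(\RR^d,w;X)$ via $J_s$. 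Boundedness of the $H^\infty$-calculus and the angle are preserved under such similarity transformations (see \cite[Proposition 10.2.20]{HNVW17}), so the result on $H^{s,p}$ follows immediately from Step 1.

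\textbf{Step 3: The Sobolev space statement.} For $k \in \NN_0$, $w \in A_p(\RR^d)$ and $X$ a UMD space, the Riesz transforms $R_j = \partial_j (-\Delta)^{-1/2}$ are bounded on $L^p(\RR^d,w;X)$ (weighted vector-valued Mihlin again, or \cite[Theorem 5.5.1]{HNVW16}), which yields the norm equivalence
\begin{equation*}
  W^{k,p}(\RR^d,w;X) = H^{k,p}(\RR^d,w;X)
\end{equation*}
with equivalent norms. Thus the $W^{k,p}$ statement is a direct consequence of Step 2 applied with $s = k$.

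The only non-routine input is the weighted vector-valued Mihlin multiplier theorem on $L^p(\RR^d,w;X)$ for $w \in A_p$ and UMD $X$; this is the hard ingredient but it is by now standard and available in the references above. Everything else is formal.
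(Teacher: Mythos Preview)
Your proof is correct and follows essentially the same three-step strategy as the paper: establish the $L^p$ case (the paper cites \cite[Proposition~3.6(b)]{MV15} directly, while you spell out the Mihlin argument behind it), lift to $H^{s,p}$ via the isometry $J_s$, and then invoke $W^{k,p}=H^{k,p}$ (the paper cites \cite[Proposition~3.2]{MV15}, you use Riesz transforms). The only cosmetic difference is that you unpack the black boxes the paper leaves as citations.
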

\begin{proof}
  Let $\om\in(0,\pi)$. The case $s=0$ follows from \cite[Proposition 3.6(b)]{MV15}, i.e.,  there is a constant $C>0$ such that for $f\in H^1(\Sigma_{\om})\cap \Hinf(\Sigma_{\om})$
  \begin{equation}\label{eq:calcLp}
  \|f(-\del)u\|_{L^p(\RRd,w;X)}\leq C \|f\|_{\Hinf(\Sigma_{\om})}\|u\|_{L^p(\RRd,w;X)},\qquad u\in L^p(\RRd,w;X).
  \end{equation}
  Moreover, the $\Hinf$-calculus of the Laplacian is given by (see \cite[Theorem 10.2.25]{HNVW17})
  $$f(-\del)u=\mc{F}^{-1}\big(f(|\cdot|^2)\mc{F}u\big),\qquad u\in L^p(\RRd,w;X).$$
  Therefore, by \eqref{eq:calcLp} and lifting we obtain for $u\in H^{s,p}(\RRd, w;X)$
  \begin{align*}
    \|f(-\del)u\|_{H^{s,p}(\RRd,w;X)}& =\|J_sf(-\del)u\|_{L^p(\RRd, w;X)}=\|f(-\del)J_su\|_{L^p(\RRd, w;X)} \\
     & \leq C \|f\|_{\Hinf(\Sigma_{\om})}\|J_su\|_{L^p(\RRd, w;X)}= C \|f\|_{\Hinf(\Sigma_{\om})}\|u\|_{H^{s,p}(\RRd, w;X)},
     \end{align*}
     which proves boundedness of the $\Hinf$-calculus on $H^{s,p}(\RRd, w;X)$.
  The last statement about the $\Hinf$-calculus on $W^{k,p}(\RRd,w;X)$ follows from \cite[Proposition 3.2]{MV15}.
\end{proof}
We note that the $\UMD$ condition on $X$ is necessary for the $\Hinf$-calculus on $L^p(\RRd;X)$, see \cite[Section 10.5]{HNVW17}.

\section{Weighted Sobolev spaces}\label{sec:weighted_Sob_spaces}
In this section, we introduce the inhomogeneous weighted Sobolev spaces and derive certain properties that will be used throughout this paper.

\subsection{Inhomogeneous Sobolev spaces with weights}
Let $\OO\in \{\RRd,\RRdh\}$. We call a locally integrable function $w:\OO\to \RR_+$ a \emph{weight}.
For $p \in [1,\infty)$, $w$ a weight and $X$ a Banach space, we define the weighted Lebesgue space $L^p(\OO,w;X)$ as the Bochner space consisting of all strongly measurable $f\colon \OO\to X$ such that
\begin{equation*}
\|f\|_{L^p(\OO,w;X)} := \Big(\int_{\OO}\|f(x)\|^p_X\:w(x)\dd x \Big)^{1/p}<\infty.
\end{equation*}
An important class of weights is the class of \emph{Muckenhoupt $A_p$ weights}. For $p\in(1,\infty)$ and a weight $w:\OO\to \RR_+$ we have $w\in A_p(\OO)$, if
\begin{equation*}
    [w]_{A_p(\OO)}:=\sup_{B}\Big(\frac{1}{|B|}\int_B w(x)\dd x \Big) \Big(\frac{1}{|B|}\int_{B}w(x)^{-\frac{1}{p-1}}\dd x\Big)^{p-1}<\infty,
\end{equation*}
where the supremum is taken over all balls $B\subseteq \OO$. A weight is called even if $w(-x_1,\tilde{x})=w(x_1,\tilde{x})$ for $(x_1,\tilde{x})\in\RRdh$.
We have $w\in A_p(\RRd)$ and $w$ is even if and only if $w\in A_p(\RRdh)$. We refer to \cite[Chapter 7]{Gr14_classical_3rd} for more details on Muckenhoupt weights and their properties.\\

For $\gam \in\RR$ we define the spatial power weight $w_{\gam}$ on $\RRdh$ by
\begin{equation*}
w_{\gam}(x) := \mrm{dist}(x,\d\RRdh)^{\gam} = |x_1|^{\gam}, \qquad x \in \RRdh.
\end{equation*}
For $\OO\in \{\RRd,\RRdh\}$ it holds that $|x_1|^{\gam}$ is in $A_p(\OO)$ if and only if $\gam\in(-1,p-1)$, see \cite[Example 7.1.7]{Gr14_classical_3rd} or \cite{DILTV19}.\\

We now turn to the definition of inhomogeneous weighted Sobolev spaces. Let $p\in(1,\infty)$ and $\OO\in\{\RRd,\RRdh\}$. Let $w$ be a weight such that $w^{-\frac{1}{p-1}}\in L^1_{\loc}(\OO)$. For $k\in\NN_0$ and $X$ a Banach space, we define the
$k$-th order weighted Sobolev space as
\begin{equation*}
W^{k,p}(\OO,w;X) := \left\{ f \in \mc{D}'(\OO;X) : \forall |\alpha| \leq k, \partial^{\alpha}f \in L^p(\OO,w;X) \right\}
\end{equation*}
equipped with the canonical norm.

The local $L^1$ condition for $w^{-\frac{1}{p-1}}$ ensures that all the derivatives $\d^{\alpha}f$ are locally integrable (by H\"older's inequality), so that the weighted Sobolev space is well defined.
The condition $w^{-\frac{1}{p-1}}\in L^1_{\loc}(\RRd)$ holds in particular if $w\in A_p(\RRd)$ or $w=w_{\gam}$ with $\gam\in(-\infty,p-1)$. The condition $w^{-\frac{1}{p-1}}\in L^1_{\loc}(\RRdh)$ holds if $w\in A_p(\RRdh)$ or $w=w_{\gam}$ with $\gam\in\RR$. For $\gam\geq p-1$ one has to be careful with defining the weighted Sobolev spaces on the full space because functions might not be locally integrable near $x_1=0$, but on $\RRdh$ we can allow for any $\gam\in\RR$, see \cite{KO1984}.

Moreover, we recall from \cite[Lemma 3.1]{LV18} that for $p\in(1,\infty)$ and $w$ such that $w^{-\frac{1}{p-1}}\in L^1_{\loc}(\RR_+)$, we have the Sobolev embedding
\begin{equation}\label{eq:embedding_Ap_C}
  W^{1,p}(\RR_+, w; X)\hookrightarrow C([0,\infty);X).
\end{equation}
We will frequently make use of Hardy's inequality, see for instance \cite[Lemma 3.2]{LV18}.
\begin{lemma}[Hardy's inequality on $\RR_+$]\label{lem:Hardy}
  Let $p\in(1,\infty)$ and let $X$ be a Banach space. Let $u\in W^{1,p}(\RR_+,w_{\gam};X)$ and assume either
\begin{enumerate}[(i)]
\item $\gam<p-1$ and $u(0)=0$, or,
\item $\gam>p-1$.
\end{enumerate}Then
  \begin{equation*}
    \|u\|_{L^p(\RR_+,w_{\gam-p};X)}\leq C_{p,\gam}\|u'\|_{L^p(\RR_+,w_{\gam};X)}.
  \end{equation*}
\end{lemma}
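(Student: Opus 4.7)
My plan is to reduce both cases to the classical scalar Hardy inequality on $\RR_+$. Setting $g(t) := \|u'(t)\|_X$, which lies in $L^p(\RR_+, w_\gam)$ by hypothesis, I will estimate $\|u(x)\|_X$ pointwise by a one-sided integral of $g$, thereby turning the vector-valued statement into a scalar inequality.

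For case (i), the condition $\gam < p-1$ yields $w_\gam^{-1/(p-1)} \in L^1_{\loc}([0,\infty))$, so the Sobolev embedding \eqref{eq:embedding_Ap_C} shows $u \in C([0,\infty); X)$. The boundary condition $u(0) = 0$ together with the fundamental theorem of calculus (applied via Bochner integration) gives $u(x) = \int_0^x u'(t)\dd t$, whence $\|u(x)\|_X \leq \int_0^x g(t)\dd t$. The claim then reduces to the standard scalar bound
$$
\int_0^\infty \Big(\int_0^x g(t)\dd t\Big)^p x^{\gam-p}\dd x \leq C_{p,\gam} \int_0^\infty g(t)^p t^\gam \dd t,
$$
valid for nonnegative $g$ when $\gam < p-1$, and proved, e.g., by Minkowski's integral inequality after the substitution $t = sx$.

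For case (ii), the assumption $\gam > p-1$ makes $t \mapsto t^{-\gam/(p-1)}$ integrable near infinity, so H\"older's inequality gives convergence of $\int_x^\infty g(t)\dd t$ for every $x > 0$. This yields $\|u(x) - u(y)\|_X \leq \int_x^y g(t)\dd t \to 0$ as $x, y \to \infty$, so $u$ has a limit at infinity; this limit must be zero, since otherwise $\|u(x)\|_X$ would be bounded below by a positive constant for large $x$, contradicting $u \in L^p(\RR_+, w_\gam; X)$ together with $\int_N^\infty t^\gam \dd t = \infty$. Hence $u(x) = -\int_x^\infty u'(t)\dd t$ and $\|u(x)\|_X \leq \int_x^\infty g(t)\dd t$, and one invokes the dual scalar Hardy inequality
$$
\int_0^\infty \Big(\int_x^\infty g(t)\dd t\Big)^p x^{\gam-p}\dd x \leq C_{p,\gam} \int_0^\infty g(t)^p t^\gam \dd t,
$$
which holds precisely when $\gam > p-1$, again by Minkowski's inequality.

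The only delicate step is the pointwise integral representation of $u$ in terms of $u'$: case (i) requires the trace $u(0)$ to exist, which is handled by \eqref{eq:embedding_Ap_C}, while case (ii) requires the decay $u(x) \to 0$ at infinity, which follows from the weight condition together with the $L^p$-integrability of $u$. With these representations in place, the vector-valued inequality follows at once from the classical scalar Hardy estimates applied to $g$.
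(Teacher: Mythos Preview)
Your proof is correct. The paper does not actually prove this lemma; it merely states it with a reference to \cite[Lemma~3.2]{LV18}, so there is no in-paper argument to compare against. Your approach---reducing to the scalar Hardy inequality via $g(t)=\|u'(t)\|_X$, establishing the integral representations $u(x)=\int_0^x u'$ (case~(i)) and $u(x)=-\int_x^\infty u'$ (case~(ii)), and then invoking the classical one-sided Hardy estimates proved by the Minkowski substitution $t=sx$---is exactly the standard route and is carried out cleanly. The only points requiring care, namely the existence of $u(0)$ in case~(i) via \eqref{eq:embedding_Ap_C} and the vanishing of $u$ at infinity in case~(ii) via H\"older plus the divergence of $\int^\infty t^\gam\,dt$, are both handled correctly.
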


Using Hardy's inequality and \eqref{eq:embedding_Ap_C}, we can define weighted Sobolev spaces with zero boundary conditions.
For $p\in(1,\infty)$, $k\in\NN_0$, $\gam\in (-1,\infty)\setminus\{jp-1:j\in\NN_1\}$ and $X$ a Banach space, we define the following spaces with vanishing traces
\begin{equation}\label{eq:SobBC}
    \begin{aligned}
W^{k,p}_{0}(\RR_+^d, w_{\gam}; X)&:=\left\{f\in W^{k,p}(\RR_+^d,w_{\gam};X): \operatorname{Tr}( \d^{\alpha}f)=0 \text{ if }k-|\alpha|>\tfrac{\gam+1}{p}\right\}, \\
  W^{k,p}_{\Dir}(\RR_+^d, w_{\gam}; X)&:=\left\{f\in W^{k,p}(\RR_+^d,w_{\gam};X): \operatorname{Tr}(f)=0 \text{ if }k>\tfrac{\gam+1}{p}\right\},\\
  W^{k,p}_{\Neu}(\RR_+^d, w_{\gam}; X)&:=\left\{f\in W^{k,p}(\RR_+^d,w_{\gam};X): \operatorname{Tr}(\d_1 f)=0 \text{ if }k-1>\tfrac{\gam+1}{p}\right\}.
\end{aligned}
\end{equation}
For $w\in A_p(\RRdh)$ the Sobolev spaces with boundary conditions are
\begin{equation}\label{eq:SobBCAp}
    \begin{aligned}
W^{k,p}_{0}(\RR_+^d, w; X)&:=\left\{f\in W^{k,p}(\RR_+^d,w;X): \operatorname{Tr}( \d^{\alpha}f)=0 \text{ if }|\alpha|\leq k-1\right\}, \\
  W^{k,p}_{\Dir}(\RR_+^d, w; X)&:=\left\{f\in W^{k,p}(\RR_+^d,w;X): \operatorname{Tr}(f)=0 \text{ if }k\geq 1\right\},\\
  W^{k,p}_{\Neu}(\RR_+^d, w; X)&:=\left\{f\in W^{k,p}(\RR_+^d,w;X): \operatorname{Tr}(\d_1 f)=0 \text{ if }k\geq 2\right\}.
\end{aligned}
\end{equation}
We check that all the traces in the above definitions exist. For the traces in \eqref{eq:SobBCAp} this follows from \eqref{eq:embedding_Ap_C}. For \eqref{eq:SobBC}, let $f\in W^{k,p}(\RRdh, w_{\gam};X)$ and $\alpha\in\NN_0^d$ be such that $k-|\alpha|>\tfrac{\gam+1}{p}$. Then for $\gam\in (jp-1,(j+1)p-1)$ with $j\in \NN_0$, note that $k-|\alpha|\geq j+1$ and thus by Hardy's inequality (Lemma \ref{lem:Hardy}) and \eqref{eq:embedding_Ap_C}
\begin{equation*}
\begin{aligned}
   W^{k-|\alpha|,p}(\RRdh,w_{\gam};X)&\hookrightarrow  W^{k-|\alpha|,p}(\RR_+,w_{\gam};L^{p}(\RR^{d-1};X))\\
  &\hookrightarrow W^{j+1,p}(\RR_+, w_{\gam};L^p(\RR^{d-1};X))\\
  &\hookrightarrow W^{1,p}(\RR_+, w_{\gam-jp};L^p(\RR^{d-1};X))
  \hookrightarrow C([0,\infty);L^p(\RR^{d-1};X)).
\end{aligned}
\end{equation*}
We conclude that $\d^{\alpha} f \in C([0,\infty); L^p(\RR^{d-1};X))$ and thus the spaces in \eqref{eq:SobBC} are well defined.

\begin{remark}\label{rem:W=W_0}
An important observation is that for $\gam\in (-1,\infty)\setminus\{jp-1:j\in\NN_1\}$ we have by definition
    \begin{align*}
W^{k,p}_\Dir(\RRdh, w_{\gam};X)&=W^{k,p}_0(\RRdh, w_{\gam};X)=W^{k,p}(\RRdh,w_{\gam};X) && \text{ if } \gam> kp-1,\\
W^{k,p}_\Dir(\RRdh, w_{\gam};X)&=W^{k,p}_0(\RRdh, w_{\gam};X)&&\text{ if } \gam>(k-1)p-1.
    \end{align*}
Although we will not consider weights $w_{\gam}$ with $\gam\leq -1$ we can nonetheless define
\begin{equation*}
  W^{k,p}_\Dir(\RRdh, w_{\gam};X)=W^{k,p}_0(\RRdh, w_{\gam};X)=W^{k,p}(\RRdh,w_{\gam};X),
\end{equation*}
see \cite[Lemma 3.1(2)]{LV18}.
\end{remark}

\subsection{Properties of weighted Sobolev spaces}
We collect certain properties of weighted Sobolev spaces for later reference.
We start with the following weighted Sobolev embeddings, which are a direct consequence of Lemma \ref{lem:Hardy}, see also \cite[Section 8.8]{Ku85}.
\begin{corollary}[Hardy's inequality on $\RRdh$]\label{cor:Sob_embRRdh}
  Let $p\in(1,\infty)$, $k\in\NN_1$, $\gam\in \RR$ and let $X$ be a Banach space. Then
        \begin{align*}
     W_0^{k,p}(\RRdh,w_{\gam};X)&\hookrightarrow W^{k-1,p} (\RRdh,w_{\gam-p};X) &&\text{ if }\gam<p-1,\\
     W^{k,p}(\RRdh,w_{\gam};X)&\hookrightarrow W^{k-1,p} (\RRdh,w_{\gam-p};X) &&\text{ if }\gam>p-1,\\
      W_0^{k,p}(\RRdh,w_{\gam};X)&\hookrightarrow W_0^{k-1,p} (\RRdh,w_{\gam-p};X)&&\text{ if }\gam\notin\{jp-1:j\in\NN_1\}.
  \end{align*}
\end{corollary}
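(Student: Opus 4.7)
The plan is to reduce the statement, in each of the three cases, to the one-dimensional vector-valued Hardy inequality of Lemma \ref{lem:Hardy} applied in the normal direction $x_1$, and then to integrate out the tangential variables via Fubini.

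Fix a multi-index $\alpha \in \NN_0^d$ with $|\alpha| \leq k-1$ and write $\beta = \alpha + e_1$, so $|\beta| \leq k$. By Fubini, for a.e.\ $\tilde{x} \in \RR^{d-1}$ the slice $x_1 \mapsto \d^{\alpha} f(x_1, \tilde{x})$ is in $L^p(\RR_+, w_{\gam}; X)$, and its distributional $x_1$-derivative is $\d^{\beta} f(\cdot,\tilde{x}) \in L^p(\RR_+, w_{\gam}; X)$, so the slice lies in $W^{1,p}(\RR_+, w_{\gam}; X)$. For case (ii), $\gam > p-1$, so Lemma \ref{lem:Hardy}(ii) applied to each slice gives $\|\d^{\alpha} f(\cdot,\tilde{x})\|_{L^p(\RR_+, w_{\gam-p}; X)} \leq C_{p,\gam} \|\d^{\beta} f(\cdot,\tilde{x})\|_{L^p(\RR_+, w_{\gam}; X)}$; raising to the $p$-th power and integrating over $\tilde{x}$ yields $\|\d^{\alpha} f\|_{L^p(\RRdh, w_{\gam-p}; X)} \leq C_{p,\gam} \|f\|_{W^{k,p}(\RRdh, w_{\gam}; X)}$. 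Summing over $|\alpha| \leq k-1$ gives case (ii).

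Case (i) is handled in exactly the same way, except that we must first check the vanishing trace hypothesis of Lemma \ref{lem:Hardy}(i). Since $\gam < p-1$, we have $(\gam+1)/p < 1$, hence for every $|\alpha| \leq k-1$ we have $k-|\alpha| \geq 1 > (\gam+1)/p$, and by the definition of $W^{k,p}_0(\RRdh, w_{\gam}; X)$ it follows that $\Tr(\d^{\alpha} f) = 0$ for a.e.\ $\tilde{x}$. Lemma \ref{lem:Hardy}(i) is then applicable on each slice, and the argument concludes as before.

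For case (iii), the embedding into the unweighted target $W^{k-1,p}(\RRdh, w_{\gam-p}; X)$ follows from case (i) if $\gam < p-1$ and from case (ii) if $\gam > p-1$ (the excluded values $\gam = jp-1$ with $j \geq 1$ are ruled out by hypothesis, and the value $\gam = p-1$ is excluded by $\gam \notin \{jp-1: j \in \NN_1\}$). It remains to verify the trace conditions defining $W^{k-1,p}_0(\RRdh, w_{\gam-p}; X)$. The condition $(k-1) - |\alpha| > (\gam-p+1)/p$ is equivalent to $k - |\alpha| > (\gam+1)/p$, which is precisely the zero-trace condition inherited from $f \in W^{k,p}_0(\RRdh, w_{\gam}; X)$, so the target trace conditions hold automatically. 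In the subrange $\gam < p-1$ one has $\gam - p \leq -1$, and by the convention in Remark \ref{rem:W=W_0} the $W_0$-space coincides with the full Sobolev space, so case (i) already provides the desired embedding.

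No serious obstacle is expected; the only care needed is in bookkeeping the trace conditions in case (iii) and invoking the extension convention of Remark \ref{rem:W=W_0} when $\gam - p \leq -1$.
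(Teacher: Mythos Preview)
Your proof is correct and follows exactly the approach the paper has in mind: the paper merely states that the corollary is ``a direct consequence of Lemma~\ref{lem:Hardy}'' and gives no further details, so your slicing-plus-Fubini argument is precisely the intended execution. The bookkeeping of trace conditions in case (iii), including the appeal to Remark~\ref{rem:W=W_0} when $\gam-p\leq -1$, is handled correctly.
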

We continue with a density result. To deal with the Neumann boundary condition we do not only need approximation by compactly supported functions, but density of functions of which only certain derivatives have compact support, is required as well. For $j\in\NN_0$ and $X$ a Banach space, define
\begin{equation}\label{eq:setDense}
  C^{\infty}_{{\rm c},j}(\overline{\RRdh};X):=\{f\in \Cc^{\infty}(\overline{\RRdh};X): \d_1^j f\in \Cc^{\infty}(\RRdh;X)\}.
\end{equation}
Note that the condition $\d_1^j f\in \Cc^{\infty}(\RRdh;X)$ implies that $\d^{\alpha} f\in \Cc^{\infty}(\RRdh;X)$ for all $\alpha=(\alpha_1,\tilde{\alpha})\in\NN_0\times \NN_0^{d-1}$ with $\alpha_1\geq j$.

\begin{lemma}\label{lem:density}
  Let $p\in (1,\infty)$, $j,k\in \NN_0$ such that $k\geq j$ and  $\gam>(k-j)p-1$, and let $X$ be a Banach space. Then $C^{\infty}_{{\rm c},j}(\overline{\RRdh};X)$ is dense in $W^{k,p}(\RRdh,w_{\gam};X)$.
  If, in addition, $X$ is reflexive and $k\geq j+1$, then the statement holds for $\gam=(k-j)p-1$ as well.
\end{lemma}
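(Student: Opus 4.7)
The plan is to proceed via two reductions. First, reduce to approximating elements of $\Cc^{\infty}(\overline{\RRdh};X)$ by invoking the standard density of this space in $W^{k,p}(\RRdh, w_\gam;X)$, which can be obtained by extension via even reflection to $\RRd$, mollification with a symmetric mollifier, and truncation in the $x_1$-direction. The second and main step is, given $g \in \Cc^{\infty}(\overline{\RRdh};X)$, to construct explicit approximants $g_\eps \in C^{\infty}_{{\rm c},j}(\overline{\RRdh};X)$ converging to $g$ as $\eps \downarrow 0$.

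For the construction, fix a smooth cutoff $\chi\colon [0,\infty)\to [0,1]$ with $\chi = 0$ on $[0,1]$ and $\chi = 1$ on $[2,\infty)$, set $\chi_\eps(\cdot) := \chi(\cdot/\eps)$, and define the Taylor-type remainder
\begin{equation*}
  r_\eps(x_1,\tilde x):=\int_0^{x_1}\frac{(x_1-s)^{j-1}}{(j-1)!}\,\d_1^j g(s,\tilde x)\,(1-\chi_\eps(s))\ud s.
\end{equation*}
Fix $\psi \in \Cc^{\infty}([0,\infty))$ equal to $1$ on a neighbourhood of the projection of $\supp g$ onto the $x_1$-axis, and set $g_\eps := g - \psi r_\eps$. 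Then $g_\eps \in \Cc^{\infty}(\overline{\RRdh};X)$, and since $\psi \equiv 1$ near $x_1 = 0$, differentiating $j$ times in $x_1$ yields $\d_1^j g_\eps = \d_1^j g \cdot \chi_\eps$ on a neighbourhood of the boundary. In particular $\d_1^j g_\eps$ vanishes on $[0,\eps]$, so $g_\eps \in C^{\infty}_{{\rm c},j}(\overline{\RRdh};X)$.

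To verify $g_\eps \to g$ in $W^{k,p}(\RRdh,w_\gam;X)$, estimate $\|\d^\alpha(\psi r_\eps)\|_{L^p(\RRdh,w_\gam;X)}$ for $|\alpha|\leq k$. Tangential derivatives pass through the integral harmlessly, reducing matters to bounds on $\d_1^\ell r_\eps$ for $\ell \leq k$. For $\ell \leq j$, the explicit formula together with splitting the integration region into $\{x_1 \leq 2\eps\}$ and $\{x_1 > 2\eps\} \cap \supp\psi$ produces $\|\d_1^\ell r_\eps\|_{L^p(w_\gam)}^p \lesssim \eps^{(j-\ell)p + \gam + 1} + \eps^p$. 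For $\ell > j$, Leibniz applied to $\d_1^j r_\eps = \d_1^j g\cdot (1-\chi_\eps)$ produces the singular factors $\d_1^i\chi_\eps = O(\eps^{-i})$ localised to $x_1 \in [\eps,2\eps]$ for $1 \leq i \leq \ell - j$, yielding weighted $L^p$-contributions of order $\eps^{\gam-ip+1}$. The worst case, $\ell = k$ and $i = k - j$, forces precisely the hypothesis $\gam > (k-j)p-1$ for all terms to vanish as $\eps \downarrow 0$.

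For the borderline case $\gam = (k-j)p-1$ with $X$ reflexive, the same estimates yield uniform boundedness of $\{g_\eps\}_{\eps > 0}$ in the reflexive space $W^{k,p}(\RRdh,w_\gam;X)$ (reflexivity follows from reflexivity of $X$ and $p\in(1,\infty)$). Extracting a weakly convergent subsequence $g_{\eps_n} \rightharpoonup h$ and noting that dominated convergence gives $r_\eps \to 0$ pointwise on $\RRdh$, hence $g_\eps \to g$ pointwise and in $L^p_{\mathrm{loc}}(\RRdh)$, one identifies $h = g$. Mazur's lemma then produces convex combinations of the $g_{\eps_n}$ converging strongly to $g$ in $W^{k,p}(\RRdh,w_\gam;X)$, placing $g$ in the closure of $C^{\infty}_{{\rm c},j}(\overline{\RRdh};X)$. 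The main technical obstacle is the careful Leibniz bookkeeping in the third step, needed to extract the sharp threshold $\gam > (k-j)p-1$; the endpoint case is then handled cleanly by the reflexivity/Mazur argument once uniform boundedness is in hand.
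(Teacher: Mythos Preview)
Your proposal is correct and follows essentially the same strategy as the paper: reduce to $g\in \Cc^{\infty}(\overline{\RRdh};X)$, then build approximants by replacing the $j$-th normal derivative in a Taylor remainder by its product with a boundary cutoff, and finally handle the endpoint $\gam=(k-j)p-1$ via reflexivity and a weak-to-strong closure argument. The Leibniz bookkeeping and the identification of the critical exponent $\gam>(k-j)p-1$ are the same.

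Two small differences are worth noting. First, the paper expands around a fixed interior point (namely $x_1=1$) rather than $x_1=0$; this makes $h_n-h$ vanish identically for $x_1\geq 1/n$, so no auxiliary cutoff $\psi$ is needed and the estimates localise automatically to $[0,1/n]$. Your base point $x_1=0$ forces $r_\eps$ to be a nontrivial polynomial in $x_1$ for $x_1>2\eps$, which is why you need $\psi$; this is harmless but adds a layer of cross terms. Second, you do not separately address $j=0$ (where the integral formula for $r_\eps$ is undefined); the paper simply takes $h_n=\phi_n h$ in that case. Finally, your first reduction via ``even reflection and mollification'' is not quite the right mechanism for weights $w_\gam$ with $\gam\geq p-1$ (the even extension need not lie in $W^{k,p}(\RRd,w_\gam;X)$, and mollification need not converge for non-$A_p$ weights); the paper instead cites the weighted density result from Kufner's book. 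The conclusion you need there is standard, but the justification should be adjusted.
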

\begin{remark}
In particular, $\Cc^{\infty}(\RRdh;X)=C^{\infty}_{{\rm c},0}(\overline{\RRdh};X)$ is dense in $W^{k,p}(\RRdh,w_{\gam};X)$ if $\gam> kp-1$, or $\gam\geq kp-1$ when $X$ is reflexive.
  The density of $\Cc^{\infty}(\RRdh;X)$ in $W^{k,p}(\RRdh,w_{\gam};X)$ for $ \gam\in \RR\setminus\{jp-1:j\in \NN_1\}$ with $\gam>kp-1$ also follows from \cite[Proposition 3.8]{LV18} and Remark \ref{rem:W=W_0}.

  In general, it holds that $\Cc^{\infty}(\RRdh;X)$ is dense in $W_0^{k,p}(\RRdh,w_{\gam};X)$ for $\gam\in \RR\setminus\{jp-1:j\in\NN_1\}$, see \cite[Proposition 3.8]{LV18}. However, density of $\Cc^{\infty}(\RRdh;X)$ in $W^{k,p}(\RRdh,w_{\gam};X)$ is not true for all $\gam\in \RR$.
\end{remark}
\begin{proof}
Take $\eps>0$ and fix $f \in W^{k,p}(\RRdh,w_{\gamma};X)$. By  \cite[Theorem 7.2 \& Remark 11.12(iii)]{Ku85}, which also holds in the vector-valued case, and a standard cut-off argument, we find a $g\in \Cc^{\infty}(\overline{\RRdh};X)$ with its support in $[0,R]\times [-R,R]^{d-1}$ for some $R>0$ such that
\begin{equation}\label{eq:density1}
  \|f-g\|_{W^{k,p}(\R^d_+,w_{\gamma};X)}<\eps.
\end{equation}
Let $\phi \in C^\infty(\R_+)$ be such that $\phi =0$ on $[0,\tfrac12]$ and $\phi=1$ on $[1,\infty)$ and set $\phi_n(x_1) := \phi(nx_1)$. We construct a sequence $(g_n)_{n\geq1}$ as follows:
\begin{itemize}
  \item If $j=0$, define $g_n(x):=\phi_n(x_1)g(x)$.
  \item If $j\geq 1$, define
  $$
  g_n(x):= \sum_{m=0}^{j-1} g(1,\tilde{x})\frac{(x_1-1)^m}{m!} + \frac{1}{(j-1)!}\int_1^{x_1} (x_1-t)^{j-1}\phi_n(t)\partial^j_1g(t,\tilde{x})\dd t.
  $$
\end{itemize}
Note that by integration by parts $g_n(x) = g(x)$ for all $x \in \R^d_+$ with $x_1\geq\frac{1}{n}$. For $\abs{\alpha}\leq k$ with $\alpha_1\leq j$ and $x \in \R^d_+$ with $x_1<1$ we have
$$
\abs{\partial^\alpha g_n(x)}\leq C \|g\|_{C^{k+j}(\R^d_+;X)}\|\phi\|_{L^\infty(\R_+)}.
$$ Moreover, we have
\begin{equation}\label{eq:prodrule}
  \partial_1^jg_n(x) = \phi_n(x_1)\partial^j_1g(x),\qquad x \in \R^d_+,
\end{equation}
so that in particular $\partial_1^jg_n \in \Cc^\infty({\R^d_+};X)$. Let $K_R:=[-R,R]^{d-1}$. Using the properties of $g_n$ we obtain
\begin{align*}
  \|g_n&-g\|_{W^{k,p}(\R^d_+,w_{\gamma};X)} \\&\leq \sum_{\abs{\alpha}\leq k} \has{\int_{K_R}\int_0^{\frac1n}\|\partial^\alpha g(x)\|_X^p x_1^{\gamma} \dd x_1\dd \tilde{x}}^{\frac1p}  \\&\hspace{0.5cm}+\sum_{\substack{\abs{\alpha}\leq k\\ \alpha_1\leq j}} \has{\int_{K_R}\int_0^{\frac1n}\|\partial^\alpha g_n(x)\|_X^p x_1^{\gamma} \dd x_1\dd \tilde{x}}^{\frac1p}  \\&\hspace{0.5cm}+ C \sum_{\substack{\abs{\alpha}\leq k\\ \alpha_1> j}} \sum_{m=0}^{\alpha_1-j}  \has{\int_{K_R}\int_0^{\frac1n} n^{mp}|\phi^{(m)}(nx_1)|^p\|\partial^{(\alpha_1-m,\tilde{\alpha})}g(x)\|_X^p x_1^{\gamma} \dd x_1\dd \tilde{x}}^{\frac1p}\\
  & \leq C\,n^{\frac{1}{p}((k-j)p-1-\gamma)} \cdot \tfrac{(2R)^{\frac{d-1}{p}}}{\gamma+1} \|g\|_{C^{k+j}(\R_+^d;X)}\|\phi\|_{C^{k-j}(\R_+)},
\end{align*}
 where the sum for $\alpha_1>j$ comes from taking $\alpha_1-j$ derivatives of \eqref{eq:prodrule} and the product rule.
Hence, as $\gamma>(k-j)p-1$, taking $n$ large enough we obtain using \eqref{eq:density1} that
$$
\|f-g_n\|_{W^{k,p}(\R^d_+,w_{\gamma};X)} \leq \|f-g\|_{W^{k,p}(\R^d_+,w_{\gamma};X)}+ \|g-g_n\|_{W^{k,p}(\R^d_+,w_{\gamma};X)} <2\varepsilon.
$$

Now let $\gam=(k-j)p-1$ and $X$ be a reflexive Banach space. Note that the sequence $(g_n)_{n\geq 1}$ is bounded in the reflexive space $W^{k,p}(\RRdh, w_{\gam};X)$. As a corollary of the Banach-Alaoglu theorem, $(g_n)_{n\geq 1}$ has a weakly convergent subsequence, say $g_{n_\ell}\to \tilde{g}$ weakly in $W^{k,p}(\RRdh, w_{\gam};X)$ as $\ell\to\infty$. Since $g_n\to f$ in $\mc{D}'(\RRdh;X)$ as $n\to\infty$ as well, we find $\tilde{g}=f$ by uniqueness of the limit. With the Hahn-Banach separation theorem, this implies that $$f\in \overline{C^{\infty}_{{\rm c},j}(\overline{\RRdh};X)}^{{\rm weak}}=\overline{C^{\infty}_{{\rm c},j}(\overline{\RRdh};X)}^{\|\cdot\|},$$ where the closures are taken in the weak and norm topology of $W^{k,p}(\RRdh, w_{\gam};X)$, respectively.
\end{proof}

Let $\theta\in \RR$ and define the pointwise multiplication operator
\begin{equation}\label{eq:M}
  M^\theta:\Cc^{\infty}(\RR^d_+;X)\to \Cc^{\infty}(\RR^d_+;X) \quad \text{ by }\quad M^\theta u(x)=x_1^\theta \cdot u(x),\qquad x\in\RRdh.
\end{equation}
By duality the operator extends to $M^\theta:\mc{D}'(\RRdh;X)\to \mc{D}'(\RRdh;X)$ and $M^{-\theta}$ acts as inverse for $M^{\theta}$ on $\mc{D}'(\RRdh;X)$. Moreover, we write $M:=M^1$.
We first study the boundedness properties of the multiplication operator $M^j$ with $j\in \NN_1$.

\begin{lemma}\label{lem:LV3.13_ext}
  Let $p\in (1,\infty)$, $j\in\NN_1$, $k\in\NN_0$ and let $X$ be a Banach space.
  \begin{enumerate}[(i)]
    \item \label{it:lem:LV3.13_ext1} If $\gam\in (jp-1,\infty)$, then
    \begin{equation*}
      M^j:W^{k,p}(\RR^d_+,w_{\gam};X)\to W^{k,p}(\RR^d_+,w_{\gam-jp};X)\quad \text{ is bounded.}
    \end{equation*}
    \item \label{it:lem:LV3.13_ext2} If $\gam\in (-1,\infty)\setminus \{\ell p -1:\ell\in\NN_1\}$, then
    \begin{equation*}
      M^{j}:W^{k,p}_0(\RR^d_+,w_{\gam};X)\to W^{k,p}_0(\RR^d_+,w_{\gam-jp};X)\quad \text{ is an isomorphism.}
    \end{equation*}
  \end{enumerate}
\end{lemma}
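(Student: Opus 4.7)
The plan is to reduce both assertions to iterated Hardy's inequality from Corollary \ref{cor:Sob_embRRdh} via the Leibniz rule, using the key identity $\int |f|^p\, |x_1^a|^p\, w_\gam\,\mathrm{d}x = \int |f|^p\, w_{\gam+ap}\,\mathrm{d}x$ to absorb powers of $x_1$ into the weight.

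For part (i), I expand by Leibniz, for $|\alpha| \leq k$,
\begin{equation*}
\d^\alpha(x_1^j u) = \sum_{\beta_1=0}^{\min(\alpha_1, j)} c_{\alpha,\beta_1}\, x_1^{j-\beta_1}\, \d^{(\alpha_1-\beta_1, \tilde\alpha)} u,
\end{equation*}
so each summand in $L^p(w_{\gam-jp})$-norm equals $\|\d^{(\alpha_1-\beta_1, \tilde\alpha)} u\|_{L^p(w_{\gam-\beta_1 p})}$. Iterating the embedding $W^{k,p}(w_\eta) \hookrightarrow W^{k-1,p}(w_{\eta-p})$ (Corollary \ref{cor:Sob_embRRdh}, second bullet) $\beta_1$ times bounds this by $\|u\|_{W^{k,p}(w_\gam)}$; this is legitimate because each intermediate weight $\gam - m p$ with $0 \leq m \leq \beta_1 - 1 \leq j-1$ exceeds $p-1$ thanks to the hypothesis $\gam > jp - 1$.

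For part (ii), the boundedness of $M^j \colon W^{k,p}_0(w_\gam) \to W^{k,p}_0(w_{\gam-jp})$ follows from the same Leibniz-plus-Hardy argument, but using the third bullet of Corollary \ref{cor:Sob_embRRdh} (the exclusion $\gam \notin \{\ell p-1\}$ ensures that each intermediate weight is admissible, since any violation would force $\gam = ap - 1$ for some $a \geq 1$). The image has the correct vanishing traces because $\Tr(\d^\alpha(x_1^j u)) = 0$ if $\alpha_1 < j$ and equals $c\,\Tr(\d^{(\alpha_1-j,\tilde\alpha)}u)$ otherwise; the required condition $k - |\alpha| > (\gam-jp+1)/p$ reads $k - (|\alpha|-j) > (\gam+1)/p$, which is exactly what $u \in W^{k,p}_0(w_\gam)$ supplies. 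For invertibility, I would prove the analogous bound for $M^{-j}$ on the dense subspace $C_c^\infty(\RRdh;X) \subseteq W_0^{k,p}(w_{\gam-jp};X)$ (density follows from the remark after Lemma \ref{lem:density}, using $\gam - jp \notin \{\ell p - 1\}$). Writing
\begin{equation*}
\d^\alpha(x_1^{-j}v) = \sum_{\beta_1=0}^{\alpha_1} \tilde c_{\alpha,\beta_1}\, x_1^{-j-\beta_1}\, \d^{(\alpha_1-\beta_1, \tilde\alpha)} v,
\end{equation*}
matters reduce to bounding $\|\d^{(\alpha_1-\beta_1, \tilde\alpha)} v\|_{L^p(w_{\gam-(j+\beta_1)p})}$, which I achieve by iterating the $W_0$-Hardy embedding $\beta_1$ times along $W_0^{k,p}(w_{\gam-jp}) \hookrightarrow \cdots \hookrightarrow W_0^{k-\beta_1,p}(w_{\gam-(j+\beta_1)p})$. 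Since $x_1^{-j}v$ is again in $C_c^\infty(\RRdh;X) \subseteq W_0^{k,p}(w_\gam;X)$, extending by density and using continuity of the trace maps gives the bounded inverse.

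The main obstacle is ensuring that the Hardy iterations remain valid throughout: every chain must avoid the entire exceptional set $\{\ell p - 1\}$, which explains why (ii) excludes all of these values rather than just $\gam = p - 1$. A secondary technical point is confirming that $M^{-j}$ genuinely lands in $W^{k,p}_0(w_\gam)$; the density reduction to $C_c^\infty(\RRdh;X)$ neatly avoids a direct Taylor-expansion analysis of the boundary behaviour of $x_1^{-j} v$.
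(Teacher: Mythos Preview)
Your proposal is correct and follows essentially the same route as the paper: Leibniz rule plus iterated Hardy's inequality, with the density of $\Cc^{\infty}(\RRdh;X)$ in $W^{k,p}_0$ used to handle the inverse. The only cosmetic differences are that the paper first reduces to $d=1$ (since $M^j$ commutes with $\d_2,\dots,\d_d$) and invokes Lemma~\ref{lem:Hardy} directly rather than Corollary~\ref{cor:Sob_embRRdh}, and it bypasses your explicit trace verification by noting that $M^{\pm j}$ preserves $\Cc^{\infty}(\RRdh;X)$.
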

\begin{proof}
It suffices to consider $\RR_+$ instead of $\RR^d_+$, since derivatives with respect to $x_i$ with $i\in \{2,\dots,d\}$ commute with $M^j$. For \ref{it:lem:LV3.13_ext1} note that by the product rule
  \begin{equation*}
    (M^j u)^{(i)}=\sum_{n=0}^{\min\{i,j\}} c_{n,i,j} M^{j-n}u^{(i-n)},\qquad i\in\{0,\dots k\}.
  \end{equation*}
  Hence, by Hardy's inequality (Lemma \ref{lem:Hardy} using $\gam>jp-1$)
  \begin{align*}
    \|M^j u\|_{W^{k,p}(\RR_+,w_{\gam-jp};X)} &
      \lesssim  \sum_{i=0}^k\sum_{n=0}^{\min\{i,j\}} \|M^{j-n}u^{(i-n)}\|_{L^p(\RR_+,w_{\gam-jp};X)}  \\
   &=  \sum_{i=0}^k\sum_{n=0}^{\min\{i,j\}} \|u^{(i-n)}\|_{L^p(\RR_+,w_{\gam-np};X)}
      \lesssim \|u\|_{W^{k,p}(\RR_+,w_{\gam};X)}.
  \end{align*}
For \ref{it:lem:LV3.13_ext2}, by density of $\Cc^{\infty}(\RR_+;X)$ in $W_0^{k,p}(\RR_+, w_{\gam};X)$ (see \cite[Proposition 3.8]{LV18}), we have that $$M^j:W^{k,p}_0(\RR_+,w_{\gam};X)\to W^{k,p}_0(\RR_+,w_{\gam-jp};X)$$ is bounded. It remains to show that the inverse is also bounded, whereby density it suffices to consider $u\in \Cc^{\infty}(\RR_+;X)$. Again by the product rule
  \begin{equation*}
    (M^{-j}u)^{(i)}=\sum_{n=0}^ic_{n,i,j}M^{-j-n}u^{(i-n)},
  \end{equation*}
   so that by Hardy's inequality
   \begin{equation*}
\|M^{-j} u\|_{W^{k,p}(\RR_+,w_{\gam};X)} \lesssim \sum_{i=0}^k\sum_{n=0}^i \|M^{-j-n}u^{(i-n)}\|_{L^p(\RR_+,w_{\gam};X)}\lesssim \|u\|_{W^{k,p}(\RR_+,w_{\gam-jp};X)}.\qedhere
   \end{equation*}
\end{proof}

Analogous to Lemma \ref{lem:LV3.13_ext} we obtain the following result for $M^\theta$ with $\theta>0$.
\begin{lemma}\label{lem:M_bound_frac}
  Let $p\in (1,\infty)$, $\theta >0$, $k\in\NN_0$ and let $X$ be a Banach space. Moreover, let $\gam,\gam-\theta p\in (-1,\infty)\setminus \{j p -1:j\in\NN_1\}$. Then
    \begin{equation*}
      M^{\theta}:W^{k,p}_0(\RR^d_+,w_{\gam};X)\to W^{k,p}_0(\RR^d_+,w_{\gam-\theta p};X)\quad \text{ is an isomorphism.}
    \end{equation*}
\end{lemma}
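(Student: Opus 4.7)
The strategy is to mimic the proof of Lemma \ref{lem:LV3.13_ext}, but work throughout on the dense subspace $\Cc^{\infty}(\RRdh;X)$ so that the non-integer exponent $\theta$ creates no issue. The key observation is that $M^{\pm\theta}$ preserves $\Cc^{\infty}(\RRdh;X)$: multiplication by $x_1^{\pm\theta}$ leaves the support unchanged, and compactly supported test functions in $\RRdh$ are bounded away from $\{x_1=0\}$. By \cite[Proposition 3.8]{LV18}, the hypothesis $\gam,\gam-\theta p\in(-1,\infty)\setminus\{jp-1:j\in\NN_1\}$ ensures that $\Cc^{\infty}(\RRdh;X)$ is dense in both $W^{k,p}_0(\RRdh,w_\gam;X)$ and $W^{k,p}_0(\RRdh,w_{\gam-\theta p};X)$.

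Next I would derive the quantitative bound on $\Cc^{\infty}(\RRdh;X)$. Since $M^\theta$ commutes with $\d_j$ for $j\geq 2$, the one-variable Leibniz rule in the $x_1$-direction gives, for $|\alpha|\leq k$,
\[
\d^\alpha(M^\theta \phi)=\sum_{n=0}^{\alpha_1}\binom{\alpha_1}{n}\theta(\theta-1)\cdots(\theta-n+1)\,M^{\theta-n}\,\d^{(\alpha_1-n,\tilde{\alpha})}\phi.
\]
The pointwise identity $\|M^{\theta-n}v\|_{L^p(\RRdh,w_{\gam-\theta p};X)}=\|v\|_{L^p(\RRdh,w_{\gam-np};X)}$ reduces matters to the bound
\[
\|\d^{(\alpha_1-n,\tilde{\alpha})}\phi\|_{L^p(\RRdh,w_{\gam-np};X)}\lesssim \|\phi\|_{W^{k,p}(\RRdh,w_\gam;X)},\qquad 0\leq n\leq \alpha_1\leq k,
\]
which follows by iterating Corollary \ref{cor:Sob_embRRdh} $n$ times to get $W^{k,p}_0(\RRdh,w_\gam;X)\hookrightarrow W^{k-n,p}_0(\RRdh,w_{\gam-np};X)$. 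The iteration is valid because each intermediate exponent $\gam-jp$ with $j=0,\dots,n-1$ avoids the forbidden set $\{\ell p-1:\ell\in\NN_1\}$ exactly under the hypothesis $\gam\notin\{jp-1:j\in\NN_1\}$.

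The identical computation with $\theta$ replaced by $-\theta$ and with the roles of $\gam$ and $\gam-\theta p$ interchanged produces a bound for $M^{-\theta}$ in the reverse direction, now invoking the assumption $\gam-\theta p\notin\{jp-1:j\in\NN_1\}$. Since $M^\theta M^{-\theta}=M^{-\theta}M^\theta=\Id$ on $\mc{D}'(\RRdh;X)$, extending both maps by density through \cite[Proposition 3.8]{LV18} yields mutually inverse bounded operators between $W^{k,p}_0(\RRdh,w_\gam;X)$ and $W^{k,p}_0(\RRdh,w_{\gam-\theta p};X)$, which is the required isomorphism.

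The main technical obstacle I anticipate is verifying that the vanishing traces encoded in the subscript $0$ are preserved under $M^\theta$ for non-integer $\theta$: the trace orders and weight exponents appearing in the definition \eqref{eq:SobBC} do not shift in a compatible way, so a direct check would be awkward. Restricting attention to $\Cc^{\infty}(\RRdh;X)$ sidesteps the issue entirely, since the image automatically lies in $\Cc^{\infty}(\RRdh;X)\subset W^{k,p}_0(\RRdh,w_{\gam-\theta p};X)$; membership in the correct boundary-condition space is then inherited by the limit from the density argument, without ever touching the trace operator explicitly.
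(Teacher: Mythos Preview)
Your proposal is correct and follows essentially the same route as the paper: reduce to test functions in $\Cc^{\infty}(\RRdh;X)$ via \cite[Proposition 3.8]{LV18}, apply the Leibniz formula $(M^{\pm\theta}u)^{(i)}=\sum_{n=0}^{i}c_{n,i,\theta}M^{\pm\theta-n}u^{(i-n)}$, and estimate each term by Hardy's inequality. The paper's write-up is more terse (it reduces to $\RR_+$ and cites Lemma~\ref{lem:Hardy} directly rather than iterating Corollary~\ref{cor:Sob_embRRdh}), but your more explicit tracking of the intermediate exponents $\gam-jp$ and your remark on why the trace conditions need not be verified directly are accurate elaborations of the same argument.
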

\begin{proof}
  As in the proof of Lemma \ref{lem:LV3.13_ext} it suffices to consider $\RR_+$ and $u\in \Cc^{\infty}(\RR_+;X)$. The required estimates follow from the formula
  \begin{equation*}
    (M^{\pm\theta} u)^{(i)} =\sum_{n=0}^ic_{n,i,\theta}M^{\pm\theta-n} u^{(i-n)}, \qquad i\in\{0,\dots, k\},
  \end{equation*}
  and Hardy's inequality (Lemma \ref{lem:Hardy}).
\end{proof}

Using Lemma \ref{lem:LV3.13_ext} we obtain a useful characterisation for induction arguments.
\begin{lemma}\label{lem:norms_induction}
  Let $p\in (1,\infty)$, $k\in\NN_1$, $\gam\in ((k-1)p-1,\infty)\setminus\{jp-1: j\geq k\}$ and let $X$ be a Banach space. Then
    \begin{equation*}
    \|f\|_{W^{k,p}(\RRdh, w_{\gam};X)}\eqsim \sum_{|\alpha|\leq 1}\|M \d^{\alpha}f\|_{W^{k-1,p}(\RRdh,w_{\gam-p};X)},\qquad f\in W^{k,p}(\RRdh,w_{\gam};X),
  \end{equation*}
  where the constant only depends on $p,k,\gam, d$ and $X$.
\end{lemma}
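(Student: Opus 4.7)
The plan is to prove both directions of the equivalence separately, each by reduction to the multiplication lemma (Lemma \ref{lem:LV3.13_ext}), using the identification $W^{k-1,p}_0(\RRdh, w_\gam;X) = W^{k-1,p}(\RRdh, w_\gam;X)$ that is available in our range of $\gam$ (Remark \ref{rem:W=W_0}). I focus on $k\geq 1$; the case $k=0$ relies on the definition of $W^{-1,p}$ deferred to Section \ref{subsec:Dirvar} and can be handled analogously (or by duality).

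For the upper bound $\sum_{|\alpha|\leq 1}\|M\d^\alpha f\|_{W^{k-1,p}(\RRdh, w_{\gam-p};X)}\lesssim \|f\|_{W^{k,p}(\RRdh,w_\gam;X)}$, when $k\geq 2$ the assumption $\gam > (k-1)p-1$ yields $\gam > p-1$, so Lemma \ref{lem:LV3.13_ext}(i) with $j=1$ directly bounds $\|M\d^\alpha f\|_{W^{k-1,p}(\RRdh, w_{\gam-p};X)}$ by $\|\d^\alpha f\|_{W^{k-1,p}(\RRdh,w_\gam;X)}$, which in turn is controlled by $\|f\|_{W^{k,p}(\RRdh,w_\gam;X)}$. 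For $k=1$ the step is even more elementary, since the pointwise identity $|x_1|^p w_{\gam-p}(x) = w_\gam(x)$ gives $\|Mg\|_{L^p(\RRdh,w_{\gam-p};X)} = \|g\|_{L^p(\RRdh, w_\gam;X)}$ for any measurable $g$.

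For the lower bound, I estimate $\|\d^\beta f\|_{L^p(\RRdh, w_\gam;X)}$ for each $|\beta|\leq k$. If $\beta=0$, the same pointwise identity yields $\|f\|_{L^p(\RRdh, w_\gam;X)} = \|Mf\|_{L^p(\RRdh, w_{\gam-p};X)} \leq \|Mf\|_{W^{k-1,p}(\RRdh, w_{\gam-p};X)}$. If $|\beta|\geq 1$, I write $\beta = e_i + \beta'$ with $|\beta'|\leq k-1$ and set $h:=\d_i f \in W^{k-1,p}(\RRdh, w_\gam;X)$. Since $\gam > (k-1)p-1$, the trace conditions in \eqref{eq:SobBC} at order $k-1$ with weight $w_\gam$ are vacuous, so in fact $h\in W^{k-1,p}_0(\RRdh, w_\gam;X)$. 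Moreover, any value $\gam = \ell p - 1$ with $\ell\in\NN_1$ satisfying $\gam > (k-1)p-1$ would force $\ell\geq k$, which is excluded by assumption. Hence the isomorphism statement in Lemma \ref{lem:LV3.13_ext}(ii) applies and gives
\begin{equation*}
\|h\|_{W^{k-1,p}(\RRdh, w_\gam;X)} \lesssim \|Mh\|_{W^{k-1,p}(\RRdh, w_{\gam-p};X)},
\end{equation*}
from which $\|\d^\beta f\|_{L^p(\RRdh, w_\gam;X)}\leq \|h\|_{W^{k-1,p}(\RRdh, w_\gam;X)}\lesssim \|M\d_i f\|_{W^{k-1,p}(\RRdh, w_{\gam-p};X)}$.

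The main subtlety is precisely the verification that the excluded set $\{jp-1:j\geq k\}$ in our hypothesis matches what is required to invoke Lemma \ref{lem:LV3.13_ext}(ii), combined with the identification $W^{k-1,p}_0 = W^{k-1,p}$ on the source side of the isomorphism. Once these two observations are in place, the argument is a routine assembly of the pieces.
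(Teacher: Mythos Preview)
Your proof is correct and follows essentially the same approach as the paper: both rely on the isomorphism in Lemma~\ref{lem:LV3.13_ext}\ref{it:lem:LV3.13_ext2} together with the identification $W^{k-1,p}_0 = W^{k-1,p}$ from Remark~\ref{rem:W=W_0} (valid since $\gam>(k-1)p-1$). The paper handles both directions and all $|\alpha|\leq 1$ uniformly via the two-sided isomorphism estimate for $M$, whereas you split into cases ($k=1$ vs.\ $k\geq 2$ for the upper bound, $\beta=0$ vs.\ $|\beta|\geq 1$ for the lower bound) and use Lemma~\ref{lem:LV3.13_ext}\ref{it:lem:LV3.13_ext1} for the upper bound when $k\geq 2$; this is a harmless organizational difference.
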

\begin{proof}
From Lemma \ref{lem:LV3.13_ext} we have that
  \begin{equation*}
    M:W_0^{k-1 ,p}(\RRdh,w_{\gam};X)\to W_0^{k-1,p}(\RRdh,w_{\gam-p};X)
  \end{equation*}
  is an isomorphism and hence for $g\in W_0^{k-1,p}(\RRdh,w_{\gam};X)$ it holds
  \begin{equation*}
  \begin{aligned}
    \|Mg\|_{W^{k-1,p}(\RRdh,w_{\gam-p};X)}&\lesssim \|g\|_{W^{k-1,p}(\RRdh,w_{\gam};X)}\quad \text{ and } \\
    \|g\|_{W^{k-1,p}(\RRdh,w_{\gam};X)}& = \|M^{-1}M g\|_{W^{k-1,p}(\RRdh,w_{\gam};X)}\lesssim \|Mg\|_{W^{k-1,p}(\RRdh,w_{\gam-p};X)}.
  \end{aligned}
  \end{equation*}
  Therefore, Remark \ref{rem:W=W_0} gives that for $f\in W^{k,p}(\RRdh, w_{\gam};X)$, we have
  \begin{align*}
    \|f\|_{W^{k,p}(\RRdh, w_{\gam};X)} &= \sum_{|\alpha|\leq 1}\|\d^{\alpha}f\|_{W^{k-1,p}(\RRdh, w_{\gam};X)}
      \eqsim \sum_{|\alpha|\leq 1} \|M \d^{\alpha}f\|_{W^{k-1,p}(\RRdh, w_{\gam-p};X)}.\qedhere
  \end{align*}
\end{proof}

We close this section with a complex interpolation result in the parameter $\gam$, which will only play a role in Section \ref{subsec:growth} to improve the estimate on the growth of the heat semigroup.
Complex interpolation of $L^p$-spaces with a change of measure dates back to Stein and Weiss \cite{St56, SW58}.
Related results on weighted Sobolev spaces can be found in \cite{CE19,KK23, Lo82}.
\begin{proposition}\label{prop:compl_int_gam}
  Let $p \in (1,\infty)$, $k\in\NN_0$ and let $X$ be a Banach space. Let $-1< \gam_0<\gam<\gam_1<\infty$ be such that $\gamma =(1-\theta) \gamma_0+\theta \gamma_1>kp-1$ for some $\theta \in (0,1)$. Then
  \begin{align*}
    \bracb{W^{k,p}(\R^d_+,w_{\gamma_0};X),W^{k,p}(\R^d_+,w_{\gamma_1};X)}_{\theta} = W^{k,p}(\R^d_+,w_\gamma;X).
  \end{align*}
\end{proposition}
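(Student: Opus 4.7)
The plan is to prove the two inclusions separately. For the easy inclusion $[W^{k,p}(\RRdh, w_{\gam_0}; X), W^{k,p}(\RRdh, w_{\gam_1}; X)]_\theta \hookrightarrow W^{k,p}(\RRdh, w_\gam; X)$, I would apply bilinear complex interpolation: for every multi-index $|\alpha| \leq k$, the derivative $\d^\alpha$ is bounded from $W^{k,p}(\RRdh, w_{\gam_j}; X)$ into $L^p(\RRdh, w_{\gam_j}; X)$ for $j\in\{0,1\}$, and the classical Stein--Weiss identity $[L^p(w_{\gam_0};X), L^p(w_{\gam_1};X)]_\theta = L^p(w_\gam;X)$ (valid for any pair of $\sigma$-finite measure spaces, see \cite{SW58, St56}) transfers boundedness to the interpolation space, yielding the desired bound on each $\d^\alpha f$.

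For the reverse inclusion I would follow Calder\'on's complex interpolation method with an explicit admissible function. By Lemma \ref{lem:density}, the space $\Cc^{\infty}(\overline{\RRdh}; X)$ is dense in $W^{k,p}(\RRdh, w_\gam; X)$ because $\gam > kp - 1$, so it suffices to treat $f$ in this dense class and conclude by a standard density argument. For such $f$, set
\begin{equation*}
F(z)(x) := e^{\delta (z - \theta)^2}\, x_1^{(\theta - z)(\gam_1 - \gam_0)/p}\, f(x), \qquad z \in \overline{S},\ x \in \RRdh,
\end{equation*}
with $\overline{S} := \{z\in\CC : 0 \leq \Re z \leq 1\}$ and a small $\delta > 0$. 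The Gaussian factor $e^{\delta(z-\theta)^2}$ provides decay as $|\Im z|\to\infty$, while the power of $x_1$ is engineered so that on the two sides of the strip $|F(j+it)|^p\, w_{\gam_j}$ reduces to $|f|^p\, w_\gam$ up to a bounded multiplicative constant; in particular $F(\theta) = f$.

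The key analytic estimate is the boundary bound. Leibniz's rule applied to $\d^\alpha F(z)$ produces, for $|\alpha| \leq k$ and $\alpha = (\alpha_1, \tilde\alpha)$, a finite sum of terms of the form $c_m(z)\, x_1^{(\theta - z)(\gam_1 - \gam_0)/p - m}\, \d^{(\alpha_1 - m, \tilde\alpha)} f(x)$ with $0 \leq m \leq \alpha_1$. Taking the $L^p(w_{\gam_j}; X)$-norm for $\Re z = j$, the weight exponent telescopes to $\gam - mp$. Hardy's inequality (Lemma \ref{lem:Hardy}, regime $\gam > p-1$), iterated $m$ times in $x_1$ via Fubini, yields
\begin{equation*}
\nrm{\d^{(\alpha_1 - m, \tilde\alpha)} f}_{L^p(\RRdh, w_{\gam - mp}; X)} \lesssim \nrm{\d^{(\alpha_1, \tilde\alpha)} f}_{L^p(\RRdh, w_\gam; X)} \leq \nrm{f}_{W^{k,p}(\RRdh, w_\gam; X)},
\end{equation*}
where each application of Hardy's inequality requires $\gam > ip - 1$ for some $i \leq m \leq k$, which is guaranteed by $\gam > kp - 1$. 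Summing these contributions gives $\nrm{F(j + it)}_{W^{k,p}(\RRdh, w_{\gam_j}; X)} \lesssim \nrm{f}_{W^{k,p}(\RRdh, w_\gam; X)}$ uniformly in $t\in\RR$ and $j \in \{0,1\}$.

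The main obstacle will be the rigorous verification that $F \colon \overline{S} \to W^{k,p}(\RRdh, w_{\gam_0}; X) + W^{k,p}(\RRdh, w_{\gam_1}; X)$ is continuous and holomorphic in the open strip. The factor $x_1^{(\theta - z)(\gam_1 - \gam_0)/p}$ blows up either at $x_1 = 0$ or at $x_1 = \infty$ depending on $\Re z$, so I would decompose $F(z) = \phi F(z) + (1 - \phi) F(z)$ using a smooth cutoff $\phi$ supported in $\{x_1 \leq 1\}$, placing $\phi F(z) \in W^{k,p}(w_{\gam_1};X)$ and $(1-\phi) F(z) \in W^{k,p}(w_{\gam_0};X)$; the associated bounds follow from the same weighted Hardy estimates as above. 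Holomorphy in $S$ is then checked either by verifying weak holomorphy against test functions or by differentiating under the integral, noting that $\d_z F(z)$ only introduces an additional $\log(x_1)$ factor, which is absorbed by an arbitrarily small loss in the weight exponent (harmless since the Hardy range $\gam > kp - 1$ is open). Calder\'on's definition of the interpolation space then yields $\nrm{f}_{[W^{k,p}(w_{\gam_0};X), W^{k,p}(w_{\gam_1};X)]_\theta} \lesssim \nrm{f}_{W^{k,p}(\RRdh, w_\gam;X)}$, completing the reverse inclusion after density.
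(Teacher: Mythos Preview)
Your proposal is correct and uses the same ingredients as the paper: Stein--Weiss for $k=0$, the derivative map plus interpolation for the easy inclusion, and an explicit analytic multiplication family together with the Leibniz rule and iterated Hardy's inequality (exploiting $\gamma>kp-1$) for the hard inclusion. One notational point: your citation of Lemma~\ref{lem:density} with the condition $\gamma>kp-1$ in fact gives density of $\Cc^\infty(\RRdh;X)$ with compact support in the \emph{open} half-space; for such $f$ your $F(z)$ is itself in $\Cc^\infty(\RRdh;X)\subseteq W^{k,p}(w_{\gamma_0};X)\cap W^{k,p}(w_{\gamma_1};X)$, so the blow-up and cutoff discussion becomes unnecessary and continuity/holomorphy into the sum space is immediate. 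The only organisational difference from the paper is that your exponent $(\theta-z)(\gamma_1-\gamma_0)/p$ arranges $F(\theta)=f$ directly, whereas the paper uses the exponent $-z(\gamma_1-\gamma_0)/p$, applies Voigt's Stein interpolation to land in an auxiliary pair $[W^{k,p}(w_{\beta_0};X),W^{k,p}(w_{\beta_1};X)]_\theta$ with $\beta_j=\gamma+j(\gamma_1-\gamma_0)>kp-1$, and then transfers back to $[W^{k,p}(w_{\gamma_0};X),W^{k,p}(w_{\gamma_1};X)]_\theta$ via the bounded multiplication $g\mapsto x_1^{\theta(\gamma_1-\gamma_0)/p}g$; your route avoids this extra step.
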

\begin{proof}
For $k=0$ the result is a special case of the Stein-Weiss theorem, see, e.g., \cite[Theorem 14.3.1]{HNVW24}. From now on assume that $k\in\NN_1$ and we first prove the inclusion ``$\hookrightarrow$". Let $N:=\sum_{|\alpha|\leq k} 1$ and define the operator
\begin{equation*}
  T:W^{k,p}(\RRdh, w_{\gam_0};X)+W^{k,p}(\RRdh, w_{\gam_1};X)\to L^p(\RRdh,w_{\gam_0};X^N)+L^p(\RRdh,w_{\gam_1};X^N),
\end{equation*}
by $Tf:=(\d^{\alpha}f)_{|\alpha|\leq k}$. It is straightforward to verify that for $j\in\{0,1\}$ the mapping
\begin{equation*}
  T:W^{k,p}(\RRdh, w_{\gam_j};X)\to L^p(\RRdh,w_{\gam_j};X^N)
\end{equation*}
is bounded. Therefore, by properties of the complex interpolation method and the Stein-Weiss theorem, we have that
\begin{equation*}
  T:[W^{k,p}(\RRdh, w_{\gam_0};X),W^{k,p}(\RRdh, w_{\gam_1};X)]_\theta\to L^p(\RRdh,w_{\gam};X^N)
\end{equation*}
is bounded and
\begin{equation*}
  \|f\|_{W^{k,p}(\RRdh,w_{\gam};X)}\lesssim\|Tf\|_{L^p(\RRdh,w_{\gam};X^N)}\lesssim \|f\|_{[W^{k,p}(\RRdh, w_{\gam_0};X),W^{k,p}(\RRdh, w_{\gam_1};X)]_\theta},
\end{equation*}
where the constant only depends on $p,k,\gam,\gam_0,\gam_1$ and $d$.

To prove the other inclusion ``$\hookleftarrow$", note that $ \Cc^\infty(\RRdh;X)$ is dense in $W^{k,p}(\R^d_+,w_{\gamma};X)$ by Lemma \ref{lem:density}. Thus it suffices to show that for all $f \in \Cc^\infty(\RRdh;X)$
\begin{align*}
\|f\|_{\brac{W^{k,p}(\R^d_+,w_{\gamma_0};X),W^{k,p}(\R^d_+,w_{\gamma_1};X)}_{\theta}}\lesssim\|f\|_{ W^{k,p}(\R^d_+,w_\gamma;X)} ,
  \end{align*}
  where the constant may depend on $p,k,\gam, \gam_0,\gam_1$ and $d$. For $j\in\{0,1\}$ we define $\beta_j:=\gam+j(\gam_1-\gam_0)$. Let $ \mathbb{S}:=\{s\in \CC: 0<\Re s< 1\}$ and for $z\in \overline{\mathbb{S}}$ define  $T(z) \colon \Cc^\infty(\R^d_+;X) \to W^{k,p}(\RRdh, w_{\beta_0};X)+W^{k,p}(\RRdh, w_{\beta_1};X)$ by
  \begin{equation*}
    T(z) f(x) := e^{z^2-\theta^2} \cdot x_1^{- \frac{z}{p}\ha{{\gamma_1}-{\gamma_0}} } \cdot f(x),\qquad x \in \R^d_+.
  \end{equation*}
  Then $T(\cdot) f$ is bounded and continuous on $\overline{\mbb{S}}$ and analytic on $\mbb{S}$ for all $f \in \Cc^\infty(\R^d_+;X)$. Let $t \in \R$ and $\alpha=(\alpha_1,\tilde{\alpha})\in \NN_0\times\NN_0^{d-1} $ with $\abs{\alpha} \leq k$. By the product rule and Hardy's inequality (Corollary \ref{cor:Sob_embRRdh} using that $\gam>kp-1$), we have
    \begin{align*}
    \nrmb{\partial^\alpha T(j+\ii t)f}&_{ L^{p}(\R^d_+,w_{\beta_j};X)}
    \leq e \nrm{\partial^{\alpha}f}_{ L^{p}(\R^d_+,w_{\gamma};X)} \\ &+\sum_{n=0}^{\alpha_1-1}c_{n,\alpha_1}e^{1-t^2}\prod_{m=1}^{\alpha_1-n}\Big|\tfrac{1+|t|}{p}|\gam_1-\gam_0|-m+1\Big| \|M^{n-\alpha_1}\d_1^n \d^{\tilde{\alpha}}f\|_{ L^{p}(\R^d_+,w_{\gamma};X)} \\
    \lesssim&\;  \|f\|_{ W^{k,p}(\R^d_+,w_{\gamma};X)}
  \end{align*}
 and therefore for $j\in\{0,1\}$
  \begin{align*}
   \sup_{t \in \R} \, \nrmb{ T(j+\ii t)f}_{ W^{k,p}(\R^d_+,w_{\beta_j};X)} \lesssim  \nrm{ f}_{ W^{k,p}(\R^d_+,w_{\gamma};X)}.
  \end{align*}
   Using that $\Cc^{\infty}(\RRdh;X)$ is dense in $W^{k,p}(\R^d_+,w_{\gamma};X)$, it follows by Stein interpolation \cite[Theorem 2.1]{Vo92} that
  \begin{align*}
   \nrmb{ T(\theta)f}_{\brac{W^{k,p}(\R^d_+,w_{\beta_0};X),W^{k,p}(\R^d_+,w_{\beta_1};X)}_{\theta}} \lesssim \nrm{f}_{ W^{k,p}(\R^d_+,w_{\gamma};X) }.
  \end{align*}
    It remains to show that for $f\in \Cc^{\infty}(\RRdh;X)$ we have
  \begin{equation}\label{eq:compl_int_converse}
    \|f\|_{[W^{k,p}(\R^d_+,w_{\gam_0};X), W^{k,p}(\R^d_+,w_{\gam_1};X)]_\theta}\lesssim \|T(\theta)f\|_{[W^{k,p}(\R^d_+,w_{\beta_0};X), W^{k,p}(\R^d_+,w_{\beta_1};X)]_\theta}.
  \end{equation}
 To this end, let $T^{-1}(\theta):\Cc^{\infty}(\RRdh;X)\to \Cc^{\infty}(\RRdh;X)$ be defined by
 \begin{equation*}
    T^{-1}(\theta) g(x) :=   x_1^{ \frac{\theta}{p}\ha{{\gamma_1}-{\gamma_0}} } \cdot g(x),\qquad x \in \R^d_+.
  \end{equation*}
  Then we claim that for $g\in \Cc^{\infty}(\RRdh;X)$ we have
  \begin{equation*}
    \|T^{-1}(\theta)g\|_{[W^{k,p}(\R^d_+,w_{\gam_0};X), W^{k,p}(\R^d_+,w_{\gam_1};X)]_\theta}\lesssim \|g\|_{[W^{k,p}(\R^d_+,w_{\beta_0};X), W^{k,p}(\R^d_+,w_{\beta_1};X)]_\theta}.
  \end{equation*}
  Applying this to $g:=T(\theta)f\in \Cc^{\infty}(\RRdh;X)$ proves \eqref{eq:compl_int_converse}, which in turn proves the proposition.

  To prove the claim, note that by Lemma \ref{lem:density} (using that $\beta_j>\gam>kp-1$) and properties of the complex interpolation method, it suffices to prove for $g\in \Cc^{\infty}(\RRdh;X)$ and $j\in \{0,1\}$
  \begin{equation*}
    \|T^{-1}(\theta)g\|_{W^{k,p}(\RRdh, w_{\gam_j};X)}\lesssim \|g\|_{W^{k,p}(\RRdh, w_{\beta_j};X)}.
  \end{equation*}
  Note that $\gam_j+\theta(\gam_1-\gam_0)=\beta_j$, so for any $|\alpha|\leq k$ we obtain by Hardy's inequality
  \begin{align*}
    \|\d^{\alpha} T^{-1}(\theta) g \|_{L^p(\RRdh,w_{\gam_j},X)}&\lesssim \Big(\|\d^{\alpha}g\|_{L^p(\RRdh,w_{\beta_j},X)}+\sum_{n=0}^{\alpha_1-1}\|M^{n-\alpha_1}\d_1^n \d^{\tilde{\alpha}}g\|_{L^p(\RRdh,w_{\beta_j},X)}\Big)\\
    &\lesssim \|g\|_{W^{k,p}(\RRdh, w_{\beta_j};X)}.
  \end{align*}
  This proves the claim and finishes the proof.
\end{proof}

\section{The Laplacian on lower-order weighted Sobolev spaces}\label{sec:easy_cases}
We first note some properties of  the Dirichlet and Neumann Laplacian on the half-space in some known and easy cases. That is, we collect the results for the Dirichlet Laplacian on $L^p$ from \cite{LV18} in Section \ref{subsec:strongDir}. Moreover, with a similar reflection technique as in \cite{LV18}, we derive boundedness of the $\Hinf$-calculus for the Neumann Laplacian in the case of $A_p$ weights in Section \ref{subsec:Neu_Ap}. Finally, in Section \ref{subsec:Dirvar} we consider the weak setting for the Dirichlet Laplacian corresponding to $k=-1$ in Theorem \ref{thm:sect_calculus}.\\

Throughout this paper the Dirichlet and Neumann Laplacian will be defined as follows.
\begin{definition}\label{def:delRRdh}
Let $p\in(1,\infty)$, $\gam\in (-1,2p-1)\setminus\{p-1\}$, $k\in\NN_0\cup\{-1\}$ be such that $\gam+kp>-1$ and let $X$ be a Banach space.
The Dirichlet and Neumann Laplacian on $\RRdh$ are defined as follows.
\begin{enumerate}[(i)]
    \item The \emph{Dirichlet Laplacian $\delDir$ on $W^{k,p}(\RRdh,w_{\gam+kp};X)$} is defined by
  \begin{equation*}
    \delDir u := \sum_{j=1}^d \d_j^2 u\quad \text{ with }\quad D(\delDir):=W^{k+2,p}_{\Dir}(\RRdh, w_{\gam+kp};X).
  \end{equation*}
   For $k=-1$ we elaborate on the definition of $W^{-1,p}(\RRdh,w_{\gam};X)$ with $\gam\in(-1,p-1)$ in Section \ref{subsec:Dirvar}.
    \item  The \emph{Neumann Laplacian $\delNeu$ on $W^{k+1,p}(\RRdh,w_{\gam+kp};X)$} is defined by
  \begin{equation*}
    \delNeu u := \sum_{j=1}^d \d_j^2 u\quad \text{ with }\quad D(\delNeu):=W^{k+3,p}_{\Neu}(\RRdh, w_{\gam+kp};X).
  \end{equation*}
  \end{enumerate}
  Moreover, recall that the semigroups $T_{\Dir}$ and $T_\Neu$ are as defined in \eqref{eq:Tt_intro}.
\end{definition}

\subsection{The strong setting for the Dirichlet Laplacian}\label{subsec:strongDir}
The $\Hinf$-calculus for the Dirichlet Laplacian on $L^p(\RRdh, w_{\gam};X)$ with $\gam\in (-1,2p-1)\setminus\{p-1\}$ is already obtained in \cite{LV18}. We will use this theorem (see Theorem \ref{thm:LVresult} below) in Sections \ref{sec:sect_semigroup} and \ref{sec:calculus} as the basis for an induction argument to show that the Dirichlet Laplacian is sectorial and has a bounded $\Hinf$-calculus on $W^{k,p}(\RRdh, w_{\gam+kp};X)$ with $k\geq 1$ as well.
\begin{theorem}[{\cite[Theorems 4.1 \& 5.7]{LV18}}]\label{thm:LVresult}
  Let $p\in (1,\infty)$, $w\in A_p(\RRdh)$ or $w=w_{\gam}$ with $\gamma\in (-1,2p-1)\setminus\{p-1\}$ and let $X$ be a $\UMD$ Banach space. Let $\delDir$ on $L^p(\RRdh, w;X)$ be as in Definition \ref{def:delRRdh} with $k=0$ and let $T_{\Dir}$ on $L^p(\RRdh, w;X)$ be as in \eqref{eq:Tt_intro}.
  Then, the following assertions hold for all $\lambda\geq 0$:
  \begin{enumerate}[(i)]
    \item $\lambda-\delDir$ is a sectorial operator of angle $\om(\lambda-\delDir)=0$,
    \item\label{it:LVresult2} $(T_{\Dir}(z))_{z\in\Sigma_{\sigma}}$ with $\sigma\in(0,\frac{\pi}{2})$  is a bounded analytic $C_0$-semigroup on $L^p(\RR^d_+,w;X)$ which is generated by $\delDir$,
    \item $\lambda-\delDir$ has a bounded $\Hinf$-calculus of angle $\om_{H^{\infty}}(\lambda-\delDir)=0$.
  \end{enumerate}
\end{theorem}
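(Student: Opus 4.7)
The natural strategy is reflection. Let $E_{\odd}: L^p(\RRdh, w; X) \to L^p(\RRd, \tilde{w}; X)$ denote odd extension in $x_1$, where $\tilde{w}$ is the even extension of $w$ to $\RRd$. If $w \in A_p(\RRdh)$, or equivalently $w = w_\gam$ with $\gam \in (-1, p-1)$, then $\tilde{w} \in A_p(\RRd)$. Under $E_\odd$, the operator $\delDir$ corresponds to the full-space Laplacian $\del$ acting on the closed complemented subspace of functions odd in $x_1$; indeed, odd functions of sufficient regularity vanish on $\{x_1 = 0\}$, and $\del$ preserves oddness. Since Lemma \ref{lem:calc_RRd_Wkp} gives the bounded $\Hinf$-calculus of angle $0$ for $-\del$ on $L^p(\RRd, \tilde{w}; X)$, this calculus transfers to $-\delDir$. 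The semigroup representation $T_\Dir(z)f = H^{d,-}_z * f$ in \eqref{eq:Tt_intro} coincides with the restriction of the full-space Gaussian semigroup to odd extensions, which yields \ref{it:LVresult2} in the $A_p$ case.

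For $\gam \in (p-1, 2p-1)$ the extension $|x_1|^\gam$ fails to lie in $A_p(\RRd)$, so the reflection argument collapses. The plan is to work directly on $\RRdh$ using the cancellation in $H^{d,-}_z(x,y) = G^d_z(x_1-y_1, \tilde{x}-\tilde{y}) - G^d_z(x_1+y_1, \tilde{x}-\tilde{y})$. A mean value estimate on the two Gaussian pieces yields the refined bound
\[
  \absb{H^{d,-}_z(x,y)} \lesssim \min\cbraceb{1, \tfrac{x_1 y_1}{|z|}} G^d_{cz}(x-y), \qquad x,y \in \RRdh,\ z \in \Sigma_\sigma,
\]
and this extra boundary-vanishing factor is what compensates for the failure of $A_p$. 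Splitting the convolution into a near-boundary region $\{x_1 y_1 \lesssim |z|\}$ and a far-from-boundary region, and applying Schur-type estimates against the power weight $w_\gam$, one verifies that $(T_\Dir(z))_{z \in \Sigma_\sigma}$ is a uniformly bounded analytic $C_0$-semigroup on $L^p(\RRdh, w_\gam; X)$ for each $\sigma < \pi/2$. Lemma \ref{lem:growth_semigroup_abstract} then produces sectoriality of $\lambda - \delDir$ of angle $0$ for every $\lambda \geq 0$, which yields (i) and (ii) in the non-$A_p$ range.

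The hardest step is upgrading from sectoriality to the bounded $\Hinf$-calculus in the non-$A_p$ regime, since the standard weighted Mikhlin and Calder\'on-Zygmund machinery on $L^p(\RRd, \tilde{w})$ is unavailable. I would aim for $R$-boundedness of the rescaled resolvent family by passing the kernel estimate above through a Rademacher-averaged Schur argument, and then invoke the equivalence between $R$-sectoriality of angle $0$ and bounded $\Hinf$-calculus of angle $0$ on $\UMD$ spaces, combined with a square-function characterisation adapted to the boundary-vanishing Dirichlet kernel. This is the step where the specific structure of $H^{d,-}_z$ against $w_\gam$ must be exploited in full; any attempt to reduce to a classical Muckenhoupt framework will fail throughout the interval $(p-1, 2p-1)$.
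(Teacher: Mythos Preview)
Your outline for the $A_p$ case and for sectoriality/semigroup bounds in the range $\gam\in(p-1,2p-1)$ matches the sketch the paper gives after the statement (odd reflection when $w\in A_p$; use the first-order boundary zero of $H^{d,-}_z$ to extend the range of $\gam$).

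The genuine gap is the last paragraph. There is no ``equivalence between $R$-sectoriality of angle $0$ and bounded $\Hinf$-calculus of angle $0$ on $\UMD$ spaces'': $R$-sectoriality is strictly weaker, even on $L^p$, by the Kalton--Lancien counterexamples. A Rademacher-averaged Schur bound yields at most $R$-boundedness of $\{\lambda R(\lambda,\delDir)\}$, which does not suffice, and appealing to ``a square-function characterisation adapted to the boundary-vanishing Dirichlet kernel'' is a restatement of the target rather than a method, since those quadratic estimates are essentially equivalent to the calculus itself. The route actually taken in \cite{LV18}, visible in this paper through the proof of Lemma~\ref{lem:tilde_der_f(A)} (explicitly modelled on \cite[Proposition~5.3]{LV18}), is a perturbation in the weight exponent: choose $\theta>0$ so that $\gam-\theta p$ lies in the $A_p$ interval $(-1,p-1)$ and split $f(-\delDir)u(x)$ into $M^{-\theta}f(-\delDir)M^\theta u$ plus a remainder whose integral kernel carries the factor $(x_1^\theta/y_1^\theta-1)$. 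The main piece is controlled by the already established $\Hinf$-calculus on $L^p(\RRdh,w_{\gam-\theta p};X)$ together with the $M^{\pm\theta}$ isomorphisms. The remainder is expressed via the Laplace transform of $T_\Dir$; after integrating out $t$ and the tangential variables one is left with a one-dimensional integral operator whose $L^p(w_\gam)$-boundedness is a direct finiteness check (the integral involving $\ell^-$ at the end of the proof of Lemma~\ref{lem:tilde_der_f(A)}), and it is exactly the first-order boundary zero of $H^{d,-}_t$ that makes this integral converge for all $\gam<2p-1$. So the calculus is \emph{transferred} from the $A_p$ regime by a multiplicative splitting with explicitly bounded error, not built up from $R$-bounds.
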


\begin{remark}[The case $\gam=p-1$]\label{rem:p-1}
  We will not consider the special case $\gam=p-1$ since Hardy's inequality fails in this case. Nonetheless, Theorem \ref{thm:LVresult} remains true if one considers $\delDir$ on $L^p(\RRdh,w_{p-1};X)$ with the different domain
  \begin{equation*}
    D(\delDir):=\overline{\big\{f\in \Cc^{\infty}(\overline{\RRdh};X): f|_{\d\RRdh}=0\big\}}^{W^{2,p}(\RRdh,w_{p-1};X)}.
  \end{equation*}
  This can be proved using \cite[Propositions 5.1 \& 5.3]{LV18} which do hold for $\gam=p-1$, and interpolation (Proposition \ref{prop:compl_int_gam}).
\end{remark}

The proof of Theorem \ref{thm:LVresult} for $w\in A_p(\RRdh)$, which includes $w_{\gam}$ with $\gam\in(-1,p-1)$, goes via an odd reflection to $\RRd$ and using the $\Hinf$-calculus of $\del$ on $L^p(\RRd, w;X)$, see \cite[Theorem 4.1]{LV18}. Since the kernel of the Dirichlet heat semigroup has a zero of order one at the boundary, the range for $\gam$ can be extended.

\subsection{The Neumann Laplacian in the \texorpdfstring{$A_p$}{Ap} setting}\label{subsec:Neu_Ap}
Let $p\in(1,\infty)$ and $w\in A_p(\RRdh)$ or, equivalently, $w\in A_p(\RRd)$ and $w$ is even. Using a similar reflection technique as in the proof of Theorem \ref{thm:LVresult}, we prove that the Neumann Laplacian has a bounded $\Hinf$-calculus on $L^p(\RRdh,w;X)$ and $W^{1,p}(\RRdh, w;X)$. To reflect the Neumann boundary condition we need an even extension in the first variable.

For $f\in L^p(\RRdh,w;X)$ and $y=(y_1,\tilde{y})\in \RR\times \RR^{d-1}$ we define the odd and even extensions by
\begin{equation}\label{eq:odd_ext}
  \begin{aligned}
  E_{\odd}f(y)&:=f_{\odd}(y):=\operatorname{sign}(y_1)f(|y_1|,\tilde{y})\\
  E_{\even}f(y)&:=f_{\even}(y):=f(|y_1|,\tilde{y}).
\end{aligned}
\end{equation}
Moreover, for $k\in\NN_0$ let $W^{k,p}_{\odd}(\RRd,w;X)$ and $W^{k,p}_{\even}(\RRd,w;X)$ be the closed subspaces of all odd and even functions in $W^{k,p}(\RRd,w;X)$, respectively.

We have the following lemma on odd and even extensions.
\begin{lemma}\label{lem:even_ext}
  Let $p\in(1,\infty)$, $w\in A_p(\RRd)$ be even and let $X$ be a Banach space. Then
  \begin{align*}
  E_{\odd}&: W^{k,p}_{\Dir}(\RRdh, w;X)\to W^{k,p}_{\odd}(\RRd,w;X), &&k\in\{0,1,2\},\\
    E_{\even}&: W^{k,p}_{\Neu}(\RRdh, w;X)\to W^{k,p}_{\even}(\RRd,w;X), &&k\in\{0,1,2,3\},
  \end{align*}
  are isomorphisms and
  \begin{align*}
   \|u\|_{W^{k,p}(\RRdh,w;X)}&\leq \|E_{\odd}u \|_{W^{k,p}(\RRd,w;X)}\leq 2^{\frac{1}{p}}\|u\|_{W^{k,p}(\RRdh,w;X)},&&k\in\{0,1,2\},\\
    \|u\|_{W^{k,p}(\RRdh,w;X)}&\leq \|E_{\even} u \|_{W^{k,p}(\RRd,w;X)}\leq 2^{\frac{1}{p}}\|u\|_{W^{k,p}(\RRdh,w;X)}&&k\in\{0,1,2,3\}.
  \end{align*}
\end{lemma}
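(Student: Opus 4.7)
The plan is to reduce everything to weak-derivative identities for the two extensions and then iterate. For $k=0$ the evenness of $w$ immediately gives $\|E_\star u\|_{L^p(\RRd,w;X)}^p = 2\|u\|_{L^p(\RRdh,w;X)}^p$ (with $\star \in \{\odd,\even\}$) and both maps are clearly bijective, so the real content is for $k\geq 1$.

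First I would establish the two fundamental identities
\begin{align*}
\partial_1 E_{\odd} u &= E_{\even}(\partial_1 u)\quad \text{weakly, provided } \operatorname{Tr}(u) = 0,\\
\partial_1 E_{\even} u &= E_{\odd}(\partial_1 u)\quad \text{weakly, for all } u \in W^{1,p}(\RRdh, w;X),
\end{align*}
together with the trivial $\partial_i E_\star u = E_\star(\partial_i u)$ for $i\neq 1$. To prove these I would test against $\phi \in \Cc^{\infty}(\RRd)$, split the integral over $\{y_1>0\}$ and $\{y_1<0\}$, substitute $y_1 \mapsto -y_1$ in the second piece, and apply integration by parts on $\RRdh$. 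The latter is valid for $u \in W^{1,p}(\RRdh,w;X)$ because $w \in A_p(\RRdh)$ together with \eqref{eq:embedding_Ap_C} yields a bounded trace operator. The resulting boundary contributions either cancel by symmetry (even case) or vanish by the Dirichlet condition (odd case).

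For higher $k$ I would iterate these identities. For $E_{\odd}$ on $W^{2,p}_{\Dir}(\RRdh,w;X)$: apply the odd identity to $u$ and then the even identity to $\partial_1 u \in W^{1,p}(\RRdh,w;X)$ (no extra condition needed), obtaining $\partial_1^2 E_{\odd}u = E_{\odd}(\partial_1^2 u)$. For $E_{\even}$ on $W^{k,p}_{\Neu}(\RRdh,w;X)$ with $k \in \{2,3\}$: apply the even identity to $u$ and then the odd identity to $\partial_1 u$, whose trace vanishes precisely by the Neumann condition; for $k=3$ one further application of the even identity to $\partial_1^2 u$ closes the iteration. Mixed derivatives are handled by commuting $\partial_i$ ($i\neq 1$) with $E_\star$. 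The stated norm equivalences then follow at once since $\|\partial^{\alpha} E_\star u\|_{L^p(\RRd,w;X)}^p = 2\|\partial^{\alpha} u\|_{L^p(\RRdh,w;X)}^p$ for every $|\alpha|\leq k$.

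For the bijection, given $v$ in the target space I would set $u := v|_{\RRdh}$ and observe that $E_\star u = v$ by (anti)symmetry of $v$. The required boundary conditions on $u$ come for free: for odd $v \in W^{k,p}(\RRd,w;X)$ with $k\geq 1$, the trace of $u$ equals $v|_{\partial \RRdh}=0$ since odd $W^{1,p}$-functions vanish on the symmetry hyperplane by continuity in the $x_1$ variable; for even $v$ with $k\geq 2$, $\partial_1 v$ is odd and lies in $W^{k-1,p}(\RRd,w;X)$, so $\operatorname{Tr}(\partial_1 u)=0$ by the same argument. The main technical care, which I expect to be the only non-trivial obstacle, will be in the iterated integration by parts at $k=2,3$: one must verify at each step that the intermediate derivative lies in a space on which the trace is defined, which is exactly what \eqref{eq:embedding_Ap_C} provides in the $A_p$ setting.
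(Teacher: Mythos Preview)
Your proposal is correct and follows essentially the same approach as the paper: both arguments hinge on the weak-derivative identities $\partial_1 E_{\even}u = E_{\odd}(\partial_1 u)$ (no trace condition needed) and $\partial_1 E_{\odd}u = E_{\even}(\partial_1 u)$ (requiring $\Tr u=0$), iterated to reach the desired $k$. The only minor difference is in the surjectivity direction: the paper verifies the boundary condition on $v|_{\RRdh}$ by approximating $v$ with smooth functions, symmetrizing the approximants, and invoking continuity of the trace operator, whereas you argue directly from continuity in $x_1$ via \eqref{eq:embedding_Ap_C}; both routes work.
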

\begin{remark}
  It should be noted that Lemma \ref{lem:even_ext} does not hold for odd extensions and $k\geq 3$, or even extensions and $k\geq 4$ unless additional boundary conditions are specified (cf. Remark \ref{rem:Bc}\ref{it:rem:Bc}).
\end{remark}
\begin{proof} The statement for the odd extensions is proved in \cite[Lemma 4.2]{LV18}, thus it remains to prove the statement for the even extensions.
  It is straightforward to verify the case $k=0$. For $k=1$ the Neumann trace does not exist and therefore, arguing similar as in \cite[Lemma 4.2]{LV18}, we obtain for $u\in W^{1,p}(\RRdh,w;X)$ the formula
  \begin{equation}\label{eq:dalpha}
    \d^{\alpha}u_{\even}(x)=\operatorname{sign}(x_1)^{\alpha_1}(\d^{\alpha}u)(|x_1|,x),
  \end{equation}
  where $\alpha=(\alpha_1,\tilde{\alpha})$ and $|\alpha|\leq 1$. This proves that $u_{\even}\in W^{1,p}(\RRd,w;X)$ and that the required estimates hold. This finishes the case $k=1$.

  Note that $\operatorname{sign}(x_1)$ in \eqref{eq:dalpha} is not differentiable so we cannot differentiate once more. Instead, if $u\in W^{2,p}_{\Neu}(\RRdh,w;X)$, then $\d_1 u\in W^{1,p}_{\Dir}(\RRdh,w;X)$ and we can use that $\d_1u_{\even}=(\d_1 u)_{\odd} $ to obtain
  \begin{align*}
    \|u_{\even}&\|_{W^{2,p}(\RRd,w;X)} \\ &=\|u_{\even}\|_{W^{1,p}(\RRd,w;X)}+\|(\d_1u)_{\odd}\|_{W^{1,p}(\RRd,w;X)} +\sum_{j=2}^d \|\d_j u_{\even}\|_{W^{1,p}(\RRd,w;X)} \\
    & \leq 2^{\frac{1}{p}}\big(\|u\|_{W^{1,p}(\RRdh,w;X)}+\|\d_1u\|_{W^{1,p}(\RRdh,w;X)}+\sum_{j=2}^d \|\d_j u\|_{W^{1,p}(\RRdh,w;X)}\big)\\
    &=  2^{\frac{1}{p}} \|u\|_{W^{2,p}(\RRdh,w;X)},
  \end{align*}
  using the estimates for the odd and even extension with $k=1$. Similarly, for $k=3$, the estimate follows using the estimates for the odd and even extensions with $k=2$.

  Conversely, let $k\in\{2,3\}$ be fixed and let $u_{\even}\in W^{k,p}(\RRd,w;X)$. Then by \cite[Lemma 3.6]{LV18} there exists a sequence $(u_n)_{n\geq 1}\subseteq \Cc^{\infty}(\RRd;X)$ such that $u_n\to u_{\even}$ in $W^{k,p}(\RRd,w;X)$ as $n\to\infty$. In addition, $u_n(-\cdot,\cdot)\to u_{\even}(-\cdot,\cdot)=u_{\even}(\cdot,\cdot)$ in $W^{k,p}(\RRd,w;X)$. This implies that the sequence
  $v_n:=\half(u_n+u_n(-\cdot,\cdot))$ satisfies
  \begin{equation*}
    v_n\in \Cc^{\infty}(\RRd;X),\quad (\d_1v_n)(0,\cdot)=0\quad\text{ and }\quad v_n\to u_{\even}\text{ in }W^{k,p}(\RRd,w;X).
  \end{equation*}
  Continuity of the trace implies that $u=u_{\even}|_{\RRdh}$ satisfies $\Tr (\d_1u)=0 $. This proves that $u\in W^{k,p}_{\Neu}(\RRdh,w;X)$ since the norm estimates are again clear.
  \end{proof}

Using the even extensions we can obtain the $\Hinf$-calculus for the Laplacian on $\RRdh$ with Neumann boundary conditions.
\begin{theorem}\label{thm:LVresult_Neumann}
  Let $p\in (1,\infty)$, $w\in A_p(\RRdh)$ and let $X$ be a $\UMD$ Banach space.
  For $\ell\in\{0,1\}$, define the Neumann Laplacian $\delNeu$ on $W^{\ell,p}(\RRdh, w;X)$ with domain $D(\delNeu)=W^{\ell+2,p}_{\Neu}(\RRdh,w;X)$ and let $T_{\Neu}$ on $W^{\ell,p}(\RRdh, w;X)$ be as in \eqref{eq:Tt_intro}.
   The following assertions hold for all $\lambda\geq 0$:
  \begin{enumerate}[(i)]
    \item $\lambda-\delNeu$ is a sectorial operator of angle $\om(\lambda-\delNeu)=0$,
        \item\label{it:LVresult_Neumann2}   $(T_{\Neu}(z))_{z\in\Sigma_{\sigma}}$ with $\sigma\in(0,\frac{\pi}{2})$  is a bounded analytic $C_0$-semigroup on $W^{\ell,p}(\RR^d_+,w;X)$ which is generated by $\delNeu$,
    \item $\lambda-\delNeu$ has a bounded $\Hinf$-calculus of angle $\om_{H^{\infty}}(\lambda-\delNeu)=0$.
  \end{enumerate}
\end{theorem}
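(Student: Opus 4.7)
The plan is to mirror the proof of the Dirichlet case (Theorem \ref{thm:LVresult}) by replacing the odd reflection by the even one. The two key ingredients are Lemma \ref{lem:calc_RRd_Wkp}, which provides a bounded $\Hinf$-calculus of angle $0$ for $-\Delta$ on $W^{\ell,p}(\RRd, w; X)$ for any $\ell \in \NN_0$ and $w \in A_p(\RRd)$, and Lemma \ref{lem:even_ext}, which realises the even extension $E_\even$ as an isomorphism $W^{k,p}_\Neu(\RRdh, w; X) \to W^{k,p}_\even(\RRd, w; X)$ for $k \in \{0,1,2,3\}$, the range of $k$ exactly matching the two choices $\ell \in \{0,1\}$ we need.

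First I would observe that the reflection $R f(x_1, \tilde{x}) := f(-x_1, \tilde{x})$ is an isometric involution of $W^{\ell,p}(\RRd, w; X)$ (since $w$ is even) which commutes with $-\Delta$. Hence $R$ commutes with every operator in the $\Hinf$-calculus, in particular with the resolvents $(\lambda + \Delta)^{-1}$ and with the heat semigroup. Consequently the closed subspace $W^{\ell,p}_\even(\RRd, w; X) = \ker(I - R)$ is invariant under the calculus, so the restriction $A_\even$ of $-\Delta$ to this subspace, with domain $W^{\ell+2,p}_\even(\RRd,w;X)$, inherits sectoriality of angle $0$, a bounded $\Hinf$-calculus of angle $0$, and generates a bounded analytic $C_0$-semigroup.

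Next I would verify the intertwining identity $E_\even \delNeu u = \Delta (E_\even u)$ for $u$ in the domain of $\delNeu$. Since tangential derivatives commute with $E_\even$ in an obvious manner, this reduces to $\d_1^2 (E_\even u) = E_\even (\d_1^2 u)$, which follows from the formula $\d_1 (E_\even u) = E_\odd (\d_1 u)$ established in the proof of Lemma \ref{lem:even_ext}, together with the Neumann condition $\Tr(\d_1 u) = 0$ that ensures no boundary distribution appears. Combined with the fact that $E_\even$ is an isomorphism between the relevant $W^{\ell,p}$- and $W^{\ell+2,p}$-spaces, this transfers sectoriality, analyticity and the bounded $\Hinf$-calculus from $A_\even$ to $-\delNeu$ on $W^{\ell,p}(\RRdh, w;X)$ for $\lambda = 0$; the cases $\lambda > 0$ then follow by a trivial shift.

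It remains to identify the generated semigroup with $T_\Neu$ from \eqref{eq:Tt_intro}. The semigroup of $-\Delta$ on $W^{\ell,p}(\RRd, w;X)$ is convolution with the Gaussian $G^d_t$, and splitting the integral over $\{y_1 > 0\}$ and $\{y_1 < 0\}$ and changing variables $y_1 \mapsto -y_1$ in the latter gives
\[
(G^d_t * E_\even u)(x) = \int_{\RRdh} \bigl(G^d_t(x_1 - y_1, \tilde{x} - \tilde{y}) + G^d_t(x_1 + y_1, \tilde{x} - \tilde{y})\bigr) u(y)\,\ddn y = T_\Neu(t) u(x),
\]
which proves assertion (ii) and, by analytic continuation in $z \in \Sigma_\sigma$, the full semigroup statement. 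I expect the main technical obstacle to be the careful distributional verification of the intertwining identity in the case $\ell = 1$, where one must rule out boundary distributions supported on $\{x_1 = 0\}$ arising from $\d_1^3(E_\even u)$; this is exactly where the precise Neumann condition encoded in $W^{3,p}_\Neu$ is used, and it is essentially absorbed into Lemma \ref{lem:even_ext}.
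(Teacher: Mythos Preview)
Your proposal is correct and follows essentially the same route as the paper: the paper's proof simply states that the argument is ``completely similar'' to the Dirichlet case in \cite[Section 4]{LV18}, replacing the odd extension by the even one from Lemma~\ref{lem:even_ext} and invoking Lemma~\ref{lem:calc_RRd_Wkp} for the $\Hinf$-calculus on $W^{\ell,p}(\RRd,w;X)$. Your outline spells out precisely that argument, including the invariance of the even subspace under the calculus, the intertwining identity, and the identification of the semigroup via the kernel formula.
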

\begin{proof}
  This proof goes via a reflection argument from $\RRdh$ to $\RRd$ and using the $\Hinf$-calculus for $\del$ on $L^p(\RR^d,w;X)$ and $W^{1,p}(\RR^d,w;X)$, see Lemma \ref{lem:calc_RRd_Wkp}. The argument is completely similar as in \cite[Section 4]{LV18} for the Dirichlet case if one considers the even extension from Lemma \ref{lem:even_ext} instead of the odd extension.
\end{proof}

We state two corollaries of Theorems \ref{thm:LVresult} and \ref{thm:LVresult_Neumann} concerning the Dirichlet and Neumann resolvent equation. Similar results on $L^p(\RRdh,w;\CC)$ with $w\in A_p(\RRdh)$ are already contained in \cite{DK18, DKZ16}.
\begin{corollary}[\cite{LV18}, Corollary 4.3 \& 5.8]\label{cor:LVresult_MR}
  Let $p\in(1,\infty)$, $w\in A_p(\RRdh)$ or $w=w_{\gam}$ with $\gam\in (-1,2p-1)\setminus\{p-1\}$ and let $X$ be a $\UMD$ Banach space.
  Then for all $f\in L^p(\RR^d_+,w;X)$ and $\lambda\in\Sigma_{\pi-\om}$ with $\om\in(0,\pi)$ there exists a unique $u\in W^{2,p}_{\Dir}(\RR^d_+,w;X)$ such that $\lambda u-\delDir u=f$. Moreover, this solution satisfies
  \begin{equation*}
    \sum_{|\beta|\leq 2}|\lambda|^{1-\frac{|\beta|}{2}}\|\d^{\beta}u\|_{L^p(\RR^d_+,w;X)}\leq C \|f\|_{L^p(\RR^d_+,w;X)},
  \end{equation*}
  where the constant $C$ only depends on $p, w, \om, d$ and $X$.
\end{corollary}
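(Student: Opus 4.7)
The plan is to derive the corollary directly from the sectoriality established in Theorem \ref{thm:LVresult}, together with a rescaling argument. Since $-\delDir$ is sectorial of angle $0$ on $L^p(\RRdh, w; X)$, for any $\omega \in (0,\pi)$ the sector $\Sigma_{\pi-\omega}$ lies in $\rho(\delDir)$ with uniform bound $|\lambda|\,\|R(\lambda, \delDir)\|_{\mathcal{L}(L^p(\RRdh, w; X))} \leq C_\omega$. This immediately yields existence and uniqueness of $u := R(\lambda, \delDir)f \in W^{2,p}_{\Dir}(\RRdh, w; X)$, the $|\beta|=0$ estimate $|\lambda|\,\|u\|_{L^p} \lesssim \|f\|_{L^p}$, and from the equation $\delDir u = \lambda u - f$ also the bound $\|\delDir u\|_{L^p} \leq (C_\omega+1)\|f\|_{L^p}$.

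To pass from the Laplacian to the full $W^{2,p}$-norm, I would invoke the open mapping theorem: Theorem \ref{thm:LVresult} identifies $D(\delDir)$ with $W^{2,p}_{\Dir}(\RRdh, w; X)$ as sets, and both the graph norm and the $W^{2,p}$-norm make this space complete, so the identity is a topological isomorphism. This produces
\begin{equation*}
\sum_{|\beta| \leq 2}\|\partial^\beta u\|_{L^p(\RRdh, w; X)} \lesssim \|u\|_{L^p(\RRdh, w; X)} + \|\delDir u\|_{L^p(\RRdh, w; X)} \lesssim \|f\|_{L^p(\RRdh, w; X)},
\end{equation*}
which already gives the desired estimate without the sharp $|\lambda|$-weights, covering in particular the $|\beta|=1$ term that is otherwise awkward to obtain directly in the weighted setting (it avoids having to invoke moment inequalities for sectorial operators or a Kato-type identification of $D((-\delDir)^{1/2})$ with a weighted first-order space).

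The correct powers of $|\lambda|$ then come from a rescaling argument. Writing $\lambda = |\lambda|e^{i\theta}$ with $|\theta| < \pi-\omega$ and setting $v(y) := u(y/\sqrt{|\lambda|})$, $g(y) := |\lambda|^{-1}f(y/\sqrt{|\lambda|})$, one finds $e^{i\theta}v - \delDir v = g$ on $\RRdh$ with the Dirichlet condition preserved. For $w = w_\gamma$ the dilation scales $L^p(\RRdh, w_\gamma; X)$-norms by an explicit power of $|\lambda|$; for general $w \in A_p(\RRdh)$ the pulled-back weight $w(\delta\,\cdot)$ has the same $A_p$-constant, so the sectoriality and closed-graph estimates transfer with constants uniform in the dilation. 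Applying the bounded-$|\lambda|$ estimate to $v$ (at $|\lambda_0|=1$, uniformly in $\theta$ since the sectoriality constant depends only on $\omega$) and unwinding $\partial^\beta u(x) = |\lambda|^{|\beta|/2}(\partial^\beta v)(\sqrt{|\lambda|}\,x)$ yields exactly the factors $|\lambda|^{1-|\beta|/2}$. The main subtlety is uniformity in $\theta$ and in the scaling parameter, which rests on the dilation-invariance of $[w]_{A_p}$ and the angle-uniformity of the sectoriality constant from Theorem \ref{thm:LVresult}.
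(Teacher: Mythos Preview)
Your argument is correct, and since the paper does not supply its own proof here (it simply cites \cite[Corollary~4.3 \& 5.8]{LV18}), there is nothing to compare against directly; your route via sectoriality, the closed graph theorem, and rescaling is the standard one and is essentially what is behind the cited result.

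One point deserves care in the general $A_p$ case. For the power weight $w_\gamma$ your rescaling is exact: the dilated weight is a constant multiple of $w_\gamma$, so you are literally back in the same space and the closed-graph constant is unchanged. For a general $w\in A_p(\RRdh)$, however, the open mapping theorem by itself only gives you \emph{some} constant depending on the particular weight $w$, with no a priori control in terms of $[w]_{A_p}$ alone. Your rescaling replaces $w$ by $w(\,\cdot/\sqrt{|\lambda|}\,)$, and while $[w(\,\cdot/\sqrt{|\lambda|}\,)]_{A_p}=[w]_{A_p}$, you still need the graph-norm equivalence constant to depend on $w$ only through $[w]_{A_p}$ for the argument to close uniformly in $\lambda$. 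This is in fact true---the reflection to $\RR^d$ together with the weighted Mihlin theorem behind Lemma~\ref{lem:calc_RRd_Wkp} gives constants depending only on $[w]_{A_p}$---but it does not follow from the abstract open mapping theorem and should be stated as an input rather than deduced from it. You flag this dependence at the end, so you are aware of the issue; just make explicit that it is the quantitative $A_p$-dependence of the underlying Calder\'on--Zygmund/Mihlin estimate, not the soft closed-graph argument, that supplies the uniformity.
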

For the Neumann Laplacian we have the following result.
\begin{corollary}\label{cor:LVresult_MR_neumann}
  Let $p\in(1,\infty)$, $\ell\in\{0,1\}$, $w\in A_p(\RRdh)$ and let $X$ be a $\UMD$ Banach space.
  Then for all $f\in W^{\ell,p}(\RR^d_+,w;X)$ and $\lambda\in\Sigma_{\pi-\om}$ with $\om\in(0,\pi)$ there exists a unique $u\in W^{\ell+2,p}_{\Neu}(\RR^d_+,w;X)$ such that $\lambda u-\delNeu u=f$. Moreover, this solution satisfies
  \begin{equation*}
    \sum_{|\beta|\leq 2}|\lambda|^{1-\frac{|\beta|}{2}}\|\d^{\beta}u\|_{W^{\ell,p}(\RR^d_+,w;X)}\leq C \|f\|_{W^{\ell,p}(\RR^d_+,w;X)},
  \end{equation*}
  where the constant $C$ only depends on $p, \ell, w, \om, d$ and $X$.
\end{corollary}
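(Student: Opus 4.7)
The plan is to deduce Corollary~\ref{cor:LVresult_MR_neumann} from Theorem~\ref{thm:LVresult_Neumann} in exactly the same spirit as Corollary~\ref{cor:LVresult_MR} is deduced from Theorem~\ref{thm:LVresult}. Existence and uniqueness of $u\in W^{\ell+2,p}_\Neu(\RRdh,w;X)$ satisfying $\lambda u-\delNeu u=f$, together with the bound
\[
|\lambda|\,\|u\|_{W^{\ell,p}(\RRdh,w;X)}\leq C\,\|f\|_{W^{\ell,p}(\RRdh,w;X)},
\]
are immediate from the sectoriality of $-\delNeu$ with angle $0$ on $W^{\ell,p}(\RRdh,w;X)$ from Theorem~\ref{thm:LVresult_Neumann}; this is precisely \eqref{eq:sect_est} applied to $(\lambda-\delNeu)^{-1}$ and takes care of the $|\beta|=0$ contribution.

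To obtain the $|\beta|=2$ estimate I rewrite $\delNeu u=\lambda u-f$, which yields $\|\delNeu u\|_{W^{\ell,p}}\leq (C+1)\|f\|_{W^{\ell,p}}$, and then invoke an elliptic regularity bound
\[
\|\nabla^{2}u\|_{W^{\ell,p}(\RRdh,w;X)}\lesssim \|\delNeu u\|_{W^{\ell,p}(\RRdh,w;X)}+\|u\|_{W^{\ell,p}(\RRdh,w;X)}.
\]
I plan to prove this through the even extension $E_\even$ of Lemma~\ref{lem:even_ext}, which maps $W^{\ell+2,p}_\Neu(\RRdh,w;X)$ isomorphically onto $W^{\ell+2,p}_\even(\RRd,w;X)$ for $\ell\in\{0,1\}$ (since then $\ell+2\leq 3$) and intertwines $\delNeu$ with $\del$, i.e.\ $\del(E_\even u)=E_\even(\delNeu u)$. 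The corresponding $\RRd$-bound follows from Lemma~\ref{lem:calc_RRd_Wkp}: the second-order multipliers $\d^{\beta}(1-\del)^{-1}$ with $|\beta|\leq 2$ are bounded on $W^{\ell,p}(\RRd,w;X)$ for $w\in A_p(\RRd)$. Combining gives $\|\nabla^2 u\|_{W^{\ell,p}}\lesssim\|f\|_{W^{\ell,p}}$.

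For the intermediate term $|\lambda|^{1/2}\|\nabla u\|_{W^{\ell,p}}$ I use the weighted Gagliardo--Nirenberg-type interpolation
\[
\|\nabla u\|_{W^{\ell,p}(\RRdh,w;X)}\leq \eps\,\|\nabla^{2}u\|_{W^{\ell,p}(\RRdh,w;X)}+C\,\eps^{-1}\|u\|_{W^{\ell,p}(\RRdh,w;X)},\qquad \eps>0,
\]
which again reduces to the full-space setting via $E_\even$ and, on $\RRd$, follows from the moment inequality for the sectorial operator $-\del$ with bounded $\Hinf$-calculus on $W^{\ell,p}(\RRd,w;X)$ (equivalently, from boundedness of the first-order Riesz-type operators $\d_j(1-\del)^{-1/2}$ provided by Lemma~\ref{lem:calc_RRd_Wkp}). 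Choosing $\eps=|\lambda|^{-1/2}$ gives $|\lambda|^{1/2}\|\nabla u\|_{W^{\ell,p}}\lesssim \|\nabla^{2}u\|_{W^{\ell,p}}+|\lambda|\,\|u\|_{W^{\ell,p}}\lesssim \|f\|_{W^{\ell,p}}$, completing the argument.

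The only delicate step is the clean transfer of the Neumann boundary condition across the boundary through the even extension; this is precisely what Lemma~\ref{lem:even_ext} provides, and its restriction to orders $k\leq 3$ is exactly why the corollary is stated only for $\ell\in\{0,1\}$.
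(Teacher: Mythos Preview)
Your overall strategy---reduce to $\RRd$ via the even extension and use the full-space calculus---is exactly the one the paper has in mind (it simply cites the identical argument for the Dirichlet case in \cite[Corollary~4.3]{LV18}). However, your execution of the $|\beta|=2$ step has a genuine gap: the elliptic regularity bound you invoke is the \emph{inhomogeneous} one,
\[
\|\nabla^{2}u\|_{W^{\ell,p}}\lesssim \|\delNeu u\|_{W^{\ell,p}}+\|u\|_{W^{\ell,p}},
\]
coming from $\d^{\beta}(1-\del)^{-1}$. Combined with $\|u\|\leq C|\lambda|^{-1}\|f\|$, this only yields $\|\nabla^{2}u\|\lesssim (1+|\lambda|^{-1})\|f\|$, which blows up as $|\lambda|\to 0$ and so does not give a constant depending only on $\om$ as required. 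The same defect then propagates into your $|\beta|=1$ estimate.

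The fix is minor but essential: use the \emph{homogeneous} second-order Riesz transforms $\d^{\beta}(-\del)^{-1}$ for $|\beta|=2$, which are bounded on $W^{\ell,p}(\RRd,w;X)$ by the $\Hinf$-calculus of $-\del$ (Lemma~\ref{lem:calc_RRd_Wkp}); this gives $\|\nabla^{2}v\|\lesssim\|\del v\|$ without the lower-order term, and the rest of your argument goes through. Cleaner still---and this is presumably the argument in \cite[Corollary~4.3]{LV18}---is to note that for each $|\beta|\leq 2$ the symbol $\xi\mapsto \lambda^{1-|\beta|/2}(i\xi)^{\beta}(\lambda+|\xi|^{2})^{-1}$ satisfies Mihlin's condition with a bound depending only on $\om$ (rescale $\xi\mapsto|\lambda|^{1/2}\xi$). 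The vector-valued weighted Mihlin theorem then gives uniform boundedness of $|\lambda|^{1-|\beta|/2}\d^{\beta}R(\lambda,\del)$ on $W^{\ell,p}(\RRd,w;X)$ directly, and Lemma~\ref{lem:even_ext} transfers this to $\RRdh$; no separate interpolation for $|\beta|=1$ is needed.
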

\begin{proof}
This follows from Theorem \ref{thm:LVresult_Neumann} using the same argument as in \cite[Corollary 4.3]{LV18}.
\end{proof}

\subsection{The weak setting for the Dirichlet Laplacian}\label{subsec:Dirvar}
In this section, we prove Theorem \ref{thm:sect_calculus} with $k=-1$, which corresponds to the weak setting for the Dirichlet Laplacian. This follows from combining the results for the Dirichlet and Neumann Laplacian in the strong setting. \\

The space $W^{-1,p}(\RRdh,w;X)$ with $w\in A_p(\RRdh)$ consists of all $f\in \mc{S}'(\RRdh;X)$ such that
\begin{equation}\label{eq:W-1_2}
  f=f_0+\sum_{j=1}^d\d_jf_j\quad\text{ with }\quad f_0,f_j\in L^p(\RRdh,w;X)\text{ for all }j\in\{1,\dots,d\}
\end{equation}
equipped with the norm
\begin{equation*}
  \|f\|_{W^{-1,p}(\RRdh,w;X)}:=\inf\Big\{\sum_{j=0}^d \|f_j\|_{L^p(\RRdh, w;X)}: \eqref{eq:W-1_2} \text{ holds}\Big\}.
\end{equation*}
As usual, the derivatives are understood in the sense of distributions, i.e., \eqref{eq:W-1_2} reads
\begin{equation*}
  f(\ph) = f_0(\ph) - \sum_{j=1}^d f_j(\d_j\ph),\qquad \ph\in \mc{S}(\RRdh;X).
\end{equation*}
 For $f\in W^{-1,p}(\RRdh,w;X)$ with $w\in A_p(\RRdh)$ we define the Dirichlet heat semigroup as
\begin{equation}\label{eq:defTk=-1}
  (T_\Dir(z)f)(\ph):= f(T_\Dir(z)\ph),\qquad \ph\in \mc{S}(\RRdh;X).
\end{equation}
This definition coincides with the definition for functions $f\in L^p(\RRdh,w;X)$. Moreover, for $f \in W^{-1,p} (\RRdh,w;X)$ and $\ph\in \mc{S}(\RRdh;X)$ it holds that
\begin{equation}\label{eq:Tt_k=-1}
  \begin{aligned}
  (T_{\Dir}(z)f)(\ph) &= (T_{\Dir}(z)f_0)(\ph)+(\d_1 T_{\Neu}(z)f_1)(\ph)+\sum_{j=2}^d (\d_j T_{\Dir}(z)f_j)(\ph)\\
  &=(T_{\Dir}(z)f_0)(\ph)-( T_{\Neu}(z)f_1)(\d_1\ph)-\sum_{j=2}^d ( T_{\Dir}(z)f_j)(\d_j\ph).
\end{aligned}
\end{equation}

Using Corollaries \ref{cor:LVresult_MR} and \ref{cor:LVresult_MR_neumann} we can derive elliptic regularity for the Dirichlet Laplacian on $W^{-1,p}(\RRdh,w;X)$. For a similar result on $\RRd$ without weights, we refer to \cite[Section 4.4]{KrBook08}.
\begin{proposition}
  Let $p\in(1,\infty)$, $w\in A_p(\RRdh)$ and let $X$ be a $\UMD$ Banach space.
  Then for all $f_j\in L^p(\RRdh,w;X)$ with $j\in\{0,\dots, d\}$ and $\lambda\in\Sigma_{\pi-\om}$ with $\om\in(0,\pi)$, there exists a unique $u\in W^{1,p}_{\Dir}(\RR^d_+,w;X)$ such that $\lambda u-\delDir u=f_0+\sum_{j=1}^d\d_jf_j$. Moreover, this solution satisfies
  \begin{equation*}
    \sum_{|\beta|\leq 1}|\lambda|^{\half-\frac{|\beta|}{2}}\|\d^{\beta}u\|_{L^p(\RR^d_+,w;X)}\leq C |\lambda|^{-\half}\|f_0\|_{L^{p}(\RR^d_+,w;X)}+C\sum_{j=1}^d\|f_j\|_{L^{p}(\RR^d_+,w;X)},
  \end{equation*}
  where the constant $C$ only depends on $p, w, \om, d$ and $X$.
\end{proposition}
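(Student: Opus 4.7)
My plan is to construct the solution explicitly by reducing to the strong resolvent estimates from Corollaries \ref{cor:LVresult_MR} and \ref{cor:LVresult_MR_neumann}. Set $v_0 := R(\lambda,\delDir)f_0$, $v_1 := R(\lambda,\delNeu)f_1$ and $v_j := R(\lambda,\delDir)f_j$ for $j\in\{2,\dots,d\}$, and define
$$u := v_0 + \d_1 v_1 + \sum_{j=2}^d \d_j v_j.$$
The structural point is the matching of boundary conditions. The term $v_0\in W^{2,p}_\Dir$ has zero Dirichlet trace; for $v_1\in W^{2,p}_\Neu$, the Neumann condition gives $\Tr(\d_1v_1)=0$; and for $j\geq 2$ the tangential derivative $\d_j$ commutes with the trace on $\d\RRdh$, so $v_j\in W^{2,p}_\Dir$ yields $\d_j v_j\in W^{1,p}_\Dir$. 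Hence $u\in W^{1,p}_\Dir(\RRdh,w;X)$. Since $\d_j$ commutes with the constant-coefficient operator $\lambda-\del$ in $\mc{D}'(\RRdh;X)$, applying $\lambda-\del$ to $u$ yields $\lambda u-\del u = f_0+\sum_{j=1}^d \d_j f_j$ in $W^{-1,p}$.

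The quantitative estimate follows by applying the two corollaries component by component. Corollary \ref{cor:LVresult_MR} applied to $v_0$ gives $|\lambda|\|v_0\|_{L^p}+|\lambda|^{1/2}\|\grad v_0\|_{L^p}\lesssim \|f_0\|_{L^p}$; factoring out $|\lambda|^{-1/2}$ shows that $v_0$ contributes exactly the $|\lambda|^{-1/2}\|f_0\|_{L^p}$ term. For each $j\geq 1$, the piece $\d_j v_j$ contributes
$$|\lambda|^{1/2}\|\d_j v_j\|_{L^p}+\sum_{k=1}^d\|\d_k\d_j v_j\|_{L^p}\;\lesssim\;|\lambda|^{1/2}\|\grad v_j\|_{L^p}+\|\d^2 v_j\|_{L^p}\;\lesssim\;\|f_j\|_{L^p}$$
by Corollary \ref{cor:LVresult_MR} (or \ref{cor:LVresult_MR_neumann} when $j=1$), which is exactly the scaling claimed in the proposition.

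For uniqueness, I use a duality argument against the adjoint problem. Suppose $u\in W^{1,p}_\Dir(\RRdh,w;X)$ satisfies $\lambda u-\del u=0$ in $\mc{D}'(\RRdh;X)$. Since $X$ is $\UMD$ so is $X'$, and since $w\in A_p(\RRdh)$ the dual weight $w':=w^{-1/(p-1)}$ lies in $A_{p'}(\RRdh)$. Given $g\in \Cc^\infty(\RRdh;X')$, Corollary \ref{cor:LVresult_MR} produces $v:=R(\overline\lambda,\delDir)g\in W^{2,p'}_\Dir(\RRdh,w';X')$. Using the pairing $\langle u,g\rangle:=\int_{\RRdh}\langle u(x),g(x)\rangle_{X,X'}\ud x$, which is continuous on $L^p(\RRdh,w;X)\times L^{p'}(\RRdh,w';X')$ by H\"older, and integrating by parts twice with all boundary contributions vanishing because both $u$ and $v$ have zero Dirichlet trace, we obtain
$$\langle u,g\rangle = \langle u,(\overline\lambda-\del)v\rangle = \overline\lambda\langle u,v\rangle + \langle \grad u,\grad v\rangle = \langle(\lambda-\del)u,v\rangle_{W^{-1,p},W^{1,p'}_\Dir} = 0.$$
Varying $g$ over $\Cc^\infty(\RRdh;X')$, which separates points of $L^p(\RRdh,w;X)$, forces $u=0$.

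The main obstacle is the careful bookkeeping of tangential versus normal derivatives in the construction: $\d_1$ converts the Neumann regularity of $v_1$ into the Dirichlet trace property of $\d_1 v_1$, while for $j\geq 2$ the tangential $\d_j$ directly preserves the Dirichlet condition. A secondary technical point is the justification of the integration by parts in the weighted vector-valued duality pairing, which rests on density of $\Cc^\infty$ in the relevant weighted Sobolev spaces (Lemma \ref{lem:density}) together with the fact that $L^p(\RRdh,w;X)$ and $L^{p'}(\RRdh,w';X')$ are dual under the unweighted integral pairing for $w\in A_p(\RRdh)$.
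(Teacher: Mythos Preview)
Your construction of $u=v_0+\sum_{j=1}^d\d_jv_j$ via the strong Dirichlet and Neumann resolvents, together with the componentwise estimate, is exactly the paper's argument.

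Where you diverge is in the uniqueness proof. The paper observes that if $u\in W^{1,p}_\Dir(\RRdh,w;X)$ satisfies $\lambda u-\del u=0$, then the odd extension $u_{\odd}$ lies in $W^{1,p}(\RRd,w;X)\hookrightarrow\SS'(\RRd;X)$ by Lemma~\ref{lem:even_ext} and solves $\lambda u_{\odd}-\del u_{\odd}=0$ on the full space, whence $u_{\odd}=0$ by Fourier transform. Your duality argument is a legitimate alternative, but there is a small slip: with the bilinear $X$--$X'$ pairing the correct auxiliary function is $v=R(\lambda,\delDir)g$, not $R(\overline\lambda,\delDir)g$, so that both the second and third equalities in your displayed chain read $\lambda\langle u,v\rangle+\langle\nabla u,\nabla v\rangle$; as written the third equality fails for non-real $\lambda$. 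Once corrected, the argument is sound, relying on $X'$ being $\UMD$, $w^{-1/(p-1)}\in A_{p'}$, density of $\Cc^\infty(\RRdh;X)$ in $W^{1,p}_\Dir$, and the identification of $W^{1,p'}_\Dir(\RRdh,w';X')$ with a subspace of $(W^{-1,p}(\RRdh,w;X))'$. The paper's odd-extension route is shorter and uses only elementary tools specific to the half-space, whereas your duality approach is more portable to domains without a reflection symmetry at the cost of invoking the full weighted vector-valued dual theory.
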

\begin{proof}
  For the existence and the estimate it suffices to consider the equations
  \begin{equation}\label{eq:v_j_-1}
      \begin{aligned}
    \lambda v_0 -\delDir v_0&= f_0,& \quad  v_0(0,\cdot)&=0, \\
    \lambda v_1-\delNeu v_1 &=f_1,&\quad  (\d_1v_1)(0,\cdot)&=0, \\
    \lambda v_j-\delDir  v_j&=f_j, &\quad  v_j(0,\cdot)&=0\;\text{ for }\;j\in\{2,\dots,d\}.
  \end{aligned}
  \end{equation}
  Corollaries \ref{cor:LVresult_MR} and \ref{cor:LVresult_MR_neumann} yield solvability of the above equations and the estimates
  \begin{equation}\label{eq:W-1_sect1}
    \sum_{|\beta|\leq 1}|\lambda|^{\half-\frac{|\beta|}{2}}\|\d^{\beta}v_0\|_{L^p(\RR^d_+,w;X)}\leq C |\lambda|^{-\half}\|f_0\|_{L^p(\RR^d_+,w;X)}
  \end{equation}
  and
   \begin{equation}\label{eq:W-1_sect2}
    \sum_{1\leq |\beta|\leq 2}|\lambda|^{1-\frac{|\beta|}{2}}\|\d^{\beta}v_j\|_{L^p(\RR^d_+,w;X)}\leq C \|f_j\|_{L^p(\RR^d_+,w;X)},\quad j\in\{1,\dots,d\}.
  \end{equation}
  Then $u:=v_0+\sum_{j=1}^d\d_j v_j \in W^{1,p}_\Dir(\RRdh,w;X)$ satisfies $\lambda u -\delDir u =f_0+\sum_{j=1}^d\d_jf_j$ and
  \begin{align*}
 \sum_{|\beta|\leq 1}|\lambda|^{\half-\frac{|\beta|}{2}}\|\d^{\beta}u\|_{L^p(\RR^d_+,w;X)}&\leq \sum_{|\beta|\leq 1}|\lambda|^{\half-\frac{|\beta|}{2}}\Big(\|\d^{\beta}v_0\|_{L^p(\RR^d_+,w;X)}+\sum_{j=1}^d\|\d^{\beta}\d_jv_j\| _{L^p(\RR^d_+,w;X)}\Big)\\
 &\leq C |\lambda|^{-\half}\|f_0\|_{L^{p}(\RR^d_+,w;X)}+C\sum_{j=1}^d\|f_j\|_{L^{p}(\RR^d_+,w;X)},
  \end{align*}
  using \eqref{eq:W-1_sect1} and \eqref{eq:W-1_sect2}. Finally, since $\lambda u -\delDir u =0$ has a solution $u\in W^{1,p}_\Dir(\RRdh,w;X)$, we obtain by Lemma \ref{lem:even_ext} that $u_{\odd}\in W^{1,p}(\RR^d,w;X)\hookrightarrow \mc{S}'(\RR^d;X)$ and satisfies $\lambda u_{\odd}-\del u_{\odd}=0$ on $\RR^d$. By employing the Fourier transform it follows that $u_{\odd}=0$ and thus $u=0$ as well. This proves the uniqueness and finishes the proof.
\end{proof}

Sectoriality and boundedness of the $\Hinf$-calculus for $-\delDir$ on $W^{-1,p}(\RRdh,w;X)$ can be derived from the same properties of $-\delDir$ and $-\delNeu$ on $L^p(\RRdh,w;X)$.
We close this section with the main result for the Dirichlet Laplacian in the weak setting. 
\begin{theorem}\label{thm:var_setting}
  Let $p\in (1,\infty)$, $w\in A_p(\RRdh)$ and let $X$ be a $\UMD$ Banach space.
  Define the Dirichlet Laplacian $\delDir$ on $W^{-1,p}(\RRdh, w;X)$ with domain $D(\delDir)=W^{1,p}_{\Dir}(\RRdh,w;X)$ and let $T_{\Dir}$ on $W^{-1,p}(\RRdh, w;X)$ be as in \eqref{eq:defTk=-1}.
   The following assertions hold for all $\lambda\geq 0$:
  \begin{enumerate}[(i)]
    \item\label{it:W-1_sect} $\lambda-\delDir$ is a sectorial operator of angle $\om(\lambda-\delDir)=0$,
        \item \label{it:W-1_semi} $(T_{\Dir}(z))_{z\in\Sigma_{\sigma}}$ with $\sigma\in(0,\frac{\pi}{2})$ is a bounded analytic $C_0$-semigroup on $W^{-1,p}(\RR^d_+,w;X)$ which is generated by $\delDir$,
    \item \label{it:W-1_calc}$\lambda-\delDir$ has a bounded $\Hinf$-calculus of angle $\om_{H^{\infty}}(\lambda-\delDir)=0$.
  \end{enumerate}
\end{theorem}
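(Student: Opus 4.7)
The plan is to lift all three assertions from the strong $L^p$-setting (Theorems \ref{thm:LVresult} and \ref{thm:LVresult_Neumann}) to $W^{-1,p}$ via the resolvent representation implicit in the preceding proposition. For $f \in W^{-1,p}(\RRdh,w;X)$ with decomposition $f = f_0 + \sum_{j=1}^d \d_j f_j$, $f_j \in L^p(\RRdh, w;X)$, the auxiliary system \eqref{eq:v_j_-1} and the uniqueness in the proposition yield
\begin{equation*}
R(\lambda, \delDir) f = R(\lambda, \delDir|_{L^p}) f_0 + \d_1 R(\lambda, \delNeu|_{L^p}) f_1 + \sum_{j=2}^d \d_j R(\lambda, \delDir|_{L^p}) f_j, \qquad \lambda \in \Sigma_{\pi-\om}.
\end{equation*}
This identity is verified using the intertwinings $\delDir \d_1 = \d_1 \delNeu$ on $W^{2,p}_\Neu(\RRdh,w;X)$ and $\delDir \d_j = \d_j \delDir$ on $W^{2,p}_\Dir(\RRdh,w;X)$ for $j \geq 2$; together with Lemma \ref{lem:even_ext} these show that each summand on the right lies in $W^{1,p}_\Dir = D(\delDir)$. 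This formula is the workhorse for all three parts.

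For \ref{it:W-1_sect}, the estimates \eqref{eq:W-1_sect1}-\eqref{eq:W-1_sect2} supply $\|\lambda v_0\|_{L^p} \leq C \|f_0\|_{L^p}$ and $\|\lambda v_j\|_{L^p} \leq C \|f_j\|_{L^p}$ for $j \geq 1$. Since $\lambda u = \lambda v_0 + \sum_{j=1}^d \d_j (\lambda v_j)$ is an admissible $W^{-1,p}$-decomposition, taking the infimum over decompositions of $f$ gives $\|\lambda R(\lambda,\delDir)f\|_{W^{-1,p}} \leq C \|f\|_{W^{-1,p}}$ uniformly for $\lambda \in \Sigma_{\pi-\om}$ with any $\om > 0$. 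Reflexivity of $W^{-1,p}(\RRdh,w;X)$, as a quotient of a direct sum of copies of the reflexive space $L^p(\RRdh,w;X)$, combined with Remark \ref{rem:Inj_XUMD} then supplies dense domain, while injectivity follows from the Fourier-transform argument at the end of the preceding proposition's proof; this establishes sectoriality of angle $0$. Part \ref{it:W-1_semi} then follows from standard analytic semigroup theory for sectorial operators of angle less than $\pi/2$ (\cite[Theorem G.5.2]{HNVW17}), while the identification of the generated semigroup with $T_\Dir$ defined in \eqref{eq:defTk=-1}-\eqref{eq:Tt_k=-1} is obtained by Laplace inversion: the right-hand side of \eqref{eq:Tt_k=-1} has Laplace transform equal to the resolvent formula above (using Theorems \ref{thm:LVresult}\ref{it:LVresult2} and \ref{thm:LVresult_Neumann}\ref{it:LVresult_Neumann2} termwise), which uniquely determines the semigroup.

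Finally, for \ref{it:W-1_calc} and $\phi \in H^1(\Sigma_\om) \cap \Hinf(\Sigma_\om)$ with $\om > 0$, inserting the resolvent formula into the Dunford integral defining $\phi(-\delDir)$ on $W^{-1,p}$ and interchanging the contour integral with the closed operators $\d_j$ yields
\begin{equation*}
\phi(-\delDir) f = \phi(-\delDir|_{L^p}) f_0 + \d_1 \phi(-\delNeu|_{L^p}) f_1 + \sum_{j=2}^d \d_j \phi(-\delDir|_{L^p}) f_j.
\end{equation*}
Applying the $\Hinf$-bounds from Theorems \ref{thm:LVresult} and \ref{thm:LVresult_Neumann} to each term and taking the infimum over decompositions of $f$ gives $\|\phi(-\delDir) f\|_{W^{-1,p}} \leq C \|\phi\|_{\Hinf} \|f\|_{W^{-1,p}}$, and the extension to general $\phi \in \Hinf(\Sigma_\om)$ proceeds via the standard $\Hinf$-convergence lemma. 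The principal technical hurdle will be the rigorous justification of this componentwise decomposition of $\phi(-\delDir)$, in particular commuting the contour integral with the unbounded $\d_j$; this is handled by approximating $f$ by distributions whose $L^p$-components are in $\Cc^\infty(\overline{\RRdh};X)$, invoking closedness of $\d_j$, and exploiting the $H^1$-integrability of $\phi$ along $\d\Sigma_\nu$ for dominated convergence.
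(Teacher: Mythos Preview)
Your proposal is correct and follows essentially the same approach as the paper: both proofs exploit the resolvent decomposition $R(\lambda,\delDir)f = R(\lambda,\delDir|_{L^p})f_0 + \d_1 R(\lambda,\delNeu|_{L^p})f_1 + \sum_{j\geq 2}\d_j R(\lambda,\delDir|_{L^p})f_j$ to reduce all three assertions to the known $L^p$-results for $\delDir$ and $\delNeu$, then pass to the infimum over admissible decompositions. The paper is slightly terser---it writes the $H^\infty$-estimate directly without spelling out the commutation of $\d_j$ with the Dunford integral, and it handles the passage from $\lambda=0$ to $\lambda>0$ by citing \cite[Propositions 16.2.1 and 16.2.6]{HNVW24} rather than arguing reflexivity and injectivity explicitly---but the substance is identical.
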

 \begin{proof}
   Let $f\in W^{-1,p}(\RRdh,w;X)$ such that $f=f_0+\sum_{j=1}^d\d_jf_j$ with $f_j\in L^p(\RRdh,w;X)$ for $j\in\{0,\dots,d\}$. Moreover, let $v_j$ be the solutions to the equations in \eqref{eq:v_j_-1}.

   To prove \ref{it:W-1_sect}, note that by Corollaries \ref{cor:LVresult_MR} and \ref{cor:LVresult_MR_neumann}, we obtain for $z\in \Sigma_{\pi-\om}$ with $\om\in(0,\pi)$
   \begin{equation*}
     |z|\|R(z,\delDir)f\|_{W^{-1,p}(\RRdh,w;X)}\leq |z|\sum_{j=0}^d\|v_j\|_{L^p(\RRdh,w;X)}\leq C\sum_{j=0}^d \|f_j\|_{L^p(\RRdh,w;X)}.
   \end{equation*}
   Taking the infimum over all possible $f_j$ yields that $-\delDir$ is sectorial of angle 0. Together with \cite[Proposition 16.2.1]{HNVW24} this proves \ref{it:W-1_sect}.

We continue with the proof of \ref{it:W-1_semi}. By \ref{it:W-1_sect} and \cite[Theorem G.5.2]{HNVW17} we have that $\delDir$ generates the bounded analytic $C_0$-semigroup $(S(z))_{z\in \Sigma_{\sigma}}$ and
\begin{equation}\label{eq:inv_laplace_trnsfrm_W-1}
  S(z) f =\frac{1}{2\pi \ii}\int_{\Gam}e^{z s}R(s, \delDir)f\dd s,\qquad z\in \Sigma_\sigma, \;\; f\in W^{-1,p}(\RRdh, w;X),
\end{equation}
where $\Gam$ is the upwards orientated boundary of $\Sigma_{\sigma'}\setminus \overline{B(0,r)}$ for some $r>0$ and $\sigma' \in (\frac{\pi}{2}+|\arg z|,\frac{\pi}{2}+\sigma)$.

We show that $S(z)=T_\Dir(z)$ on $W^{-1,p}(\RRdh,w;X)$ for $z\in \Sigma_{\sigma}$.  It follows from \eqref{eq:inv_laplace_trnsfrm_W-1}, Theorems \ref{thm:LVresult}\ref{it:LVresult2} and \ref{thm:LVresult_Neumann}\ref{it:LVresult_Neumann2}, commuting $\d_1$ and the resolvent, and \eqref{eq:Tt_k=-1} that
\begin{align*}
  S(z) f &=\frac{1}{2\pi \ii}\int_{\Gam}e^{z s}\Big(R(s,\delDir)f_0+\d_1R(s,\delNeu)f_1+\sum_{j=2}^d\d_jR(s,\delDir)f_j\Big)\dd s\\
  &= T_\Dir (z)f_0+ \d_1 T_\Neu(z) f_1 + \sum_{j=2}^d \d_j T_\Dir(z)f_j= T_\Dir(z)f,
\end{align*}
which completes the proof of \ref{it:W-1_semi}.

Finally, to prove \ref{it:W-1_calc} let $\om\in(0,\pi)$ and $\phi\in H^1(\Sigma_{\om})\cap \Hinf(\Sigma_{\om})$. By Theorems \ref{thm:LVresult} and \ref{thm:LVresult_Neumann}, we obtain
\begin{align*}
\|\phi(-\delDir) f \|_{W^{-1,p}(\RRdh,w;X)}\leq&\; \|\phi(-\delDir)f_0\|_{L^p(\RRdh,w;X)}+ \|\phi(-\delNeu)f_1\|_{L^p(\RRdh,w;X)}\\
&\;+\sum_{j=2}^d \|\phi(-\delDir)f_j\|_{L^p(\RRdh,w;X)}\\
\leq&\; C \|\phi\|_{\Hinf(\Sigma_\om)}\sum_{j=0}^d \|f_j\|_{L^p(\RRdh,w;X)}.
\end{align*}
Taking the infimum over all possible $f_j$ yields that $-\delDir$ has a bounded $\Hinf$-calculus of angle 0. Together with \cite[Proposition 16.2.6]{HNVW24} this proves \ref{it:W-1_calc}.
\end{proof}

\section{Sectoriality of the Dirichlet and Neumann Laplacian}\label{sec:sect_semigroup}
In this section we study the sectoriality of $-\delDir$ and $-\delNeu$ on weighted Sobolev spaces of arbitrary order $k\in\NN_0$. The results of this section are summarised in the following theorems.

\begin{theorem}[Sectoriality of $\lambda-\delDir$]\label{thm:sect:Dir}
  Let $p\in(1,\infty)$, $k\in\NN_0$, $\gam\in (-1,2p-1)\setminus\{p-1\}$ and let $X$ be a $\UMD$ Banach space. Let $\delDir$ on $W^{k,p}(\RRdh,w_{\gam+kp};X)$ be as in Definition \ref{def:delRRdh}.
  Assume that either
  \begin{enumerate}[(i)]
    \item  $\gam+kp\in (-1,2p-1)$ and $\lambda\geq0$, or,
    \item  $\gam+kp>2p-1$ and $\lambda>0$.
  \end{enumerate}
  Then $\lambda-\delDir$ is sectorial of angle $\om(\lambda-\delDir)=0$.
\end{theorem}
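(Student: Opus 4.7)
Proof plan. The proof is by induction on $k$, with the base case $k=0$ supplied by Theorem \ref{thm:LVresult}. For the inductive step I assume that $\lambda - \delDir$ is sectorial of angle $0$ on $W^{k-1,p}(\RRdh, w_{\gam+(k-1)p}; X)$ under the hypothesis at level $k-1$ (which is implied by the hypothesis at level $k$, since $\lambda>0$ is stronger than $\lambda\geq 0$ and the inductive shift $\gam+kp \mapsto \gam+(k-1)p$ stays within the admissible parameter range by Hardy's inequality; see Corollary \ref{cor:Sob_embRRdh}). Fix $f \in W^{k,p}(\RRdh,w_{\gam+kp};X)$ and $\mu$ in the complement of $[0,\infty)$. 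Since $W^{k,p}(w_{\gam+kp})\hookrightarrow W^{k-1,p}(w_{\gam+(k-1)p})$ by Hardy, the inductive hypothesis produces a unique $u \in W^{k+1,p}_{\Dir}(w_{\gam+(k-1)p})$ solving $(\lambda - \delDir - \mu) u = f$, and the inductive sectoriality plus the maximal regularity it entails gives
\[
|\mu|\,\|u\|_{W^{k-1,p}(w_{\gam+(k-1)p})} + \|u\|_{W^{k+1,p}(w_{\gam+(k-1)p})} \lesssim \|f\|_{W^{k,p}(w_{\gam+kp})},
\]
uniformly in $\mu$ in the sector.

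To upgrade this to the $W^{k,p}(w_{\gam+kp})$ scale, I invoke the equivalent norm from Lemma \ref{lem:norms_induction},
\[
\|u\|_{W^{k,p}(w_{\gam+kp})} \eqsim \|Mu\|_{W^{k-1,p}(w_{\gam+(k-1)p})} + \sum_{j=1}^d \|M\d_j u\|_{W^{k-1,p}(w_{\gam+(k-1)p})},
\]
and estimate each term separately via the resolvent equations satisfied by the auxiliary functions. The commutator identity $\del(Mv) = M\del v + 2\d_1 v$, combined with $[\d_j, M]=0$ for $j\geq 2$ and $[\d_1,M]=\mathrm{Id}$, yields
\begin{align*}
(\lambda-\delDir-\mu)(Mu) &= Mf - 2\d_1 u, \\
(\lambda-\delDir-\mu)(M\d_j u) &= M\d_j f - 2\d_1\d_j u, \qquad j\in\{2,\ldots,d\}, \\
(\lambda-\delDir-\mu)(M\d_1 u) &= M\d_1 f - 2\d_1^2 u.
\end{align*}
Each auxiliary function lies in the Dirichlet domain $W^{k+1,p}_{\Dir}(w_{\gam+(k-1)p})$ at level $k-1$: the trace conditions are automatic because $M = x_1$ vanishes at the boundary, and the required Sobolev regularity follows from $u \in W^{k+1,p}_{\Dir}(w_{\gam+(k-1)p})$ together with the mapping properties of $M$ in Lemma \ref{lem:LV3.13_ext}. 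Applying the inductive sectoriality to each line, and noting that the factors $\|Mf\|, \|M\d_j f\|, \|M\d_1 f\|$ in $W^{k-1,p}(w_{\gam+(k-1)p})$ are all dominated by $\|f\|_{W^{k,p}(w_{\gam+kp})}$ via Lemma \ref{lem:norms_induction}, gives
\[
|\mu|\,\|u\|_{W^{k,p}(w_{\gam+kp})} \lesssim \|f\|_{W^{k,p}(w_{\gam+kp})} + \|u\|_{W^{k+1,p}(w_{\gam+(k-1)p})} \lesssim \|f\|_{W^{k,p}(w_{\gam+kp})},
\]
where the perturbation terms $\|\d_1 u\|, \|\d_1\d_j u\|, \|\d_1^2 u\|$ in $W^{k-1,p}(w_{\gam+(k-1)p})$ have been absorbed into $\|u\|_{W^{k+1,p}(w_{\gam+(k-1)p})}$. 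This is the desired sectoriality estimate; the full domain regularity $u \in W^{k+2,p}_{\Dir}(w_{\gam+kp})$ then follows a posteriori by writing $\delDir u = (\lambda-\mu)u - f \in W^{k,p}(w_{\gam+kp})$ and applying elliptic regularity at the level of $L^p(w_{\gam+kp})$ from Theorem \ref{thm:LVresult} (with the appropriate shift in the bad-weight regime $\gam+kp>2p-1$, where $\lambda>0$ guarantees invertibility).

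The main obstacle is the commutator $[\delDir, M] = 2\d_1$, which causes the first-order perturbations $\d_1 u, \d_1\d_j u, \d_1^2 u$ to appear as sources in the resolvent equations for the auxiliary functions. The analysis is delicate because $\d_1$ does not preserve the Dirichlet boundary condition and does not commute with $\delDir$, so one cannot iterate the sectoriality estimate directly on differentiated equations. The crucial structural observation is that these perturbations, although seemingly of the same order as $u$ itself, admit the \emph{$\mu$-uniform} bound $\|u\|_{W^{k+1,p}(w_{\gam+(k-1)p})} \lesssim \|f\|_{W^{k,p}(w_{\gam+kp})}$ from the inductive maximal regularity of the resolvent; this allows the perturbations to be treated as lower-order data without destroying the $|\mu|^{-1}$ decay required for sectoriality. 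Verifying the Dirichlet trace conditions for $Mu, M\d_1 u, M\d_j u$ at the intermediate level $k-1$ is a further bookkeeping step, simplified by the vanishing of $M$ on $\d\RRdh$.
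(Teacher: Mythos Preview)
Your induction strategy is close in spirit to the paper's, but the argument as written has a genuine gap at the step you yourself flag as crucial. You claim that ``inductive sectoriality plus the maximal regularity it entails'' gives the $\mu$-uniform bound
\[
|\mu|\,\|u\|_{W^{k-1,p}(w_{\gamma+(k-1)p})} + \|u\|_{W^{k+1,p}(w_{\gamma+(k-1)p})} \lesssim \|f\|_{W^{k,p}(w_{\gamma+kp})}.
\]
Sectoriality of $\lambda - \delDir$ at level $k-1$ does give $|\mu|\|u\|_{W^{k-1,p}} \leq C\|f\|_{W^{k-1,p}}$ and $\|(\lambda - \delDir)u\|_{W^{k-1,p}} \leq (1+C)\|f\|_{W^{k-1,p}}$ uniformly, but feeding these into the graph--norm equivalence on $D(\delDir)=W^{k+1,p}_\Dir$ only yields $\|u\|_{W^{k+1,p}} \lesssim (1 + |\mu|^{-1})\|f\|$, which degenerates as $|\mu| \to 0$. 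Plugging this into your final inequality gives $|\mu|\|u\|_{W^{k,p}(w_{\gamma+kp})} \lesssim (1 + |\mu|^{-1})\|f\|$, so nothing is gained.

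The paper repairs this by strengthening the inductive statement: rather than sectoriality alone, it proves the full elliptic regularity estimate (Proposition~\ref{prop:sect_est})
\[
\sum_{|\beta|\leq 2}|\nu|^{1-\frac{|\beta|}{2}}\|\partial^\beta u\|_{W^{k,p}(w_{\gamma+kp})} \leq C\,g_{k,\gamma}(\nu)\,\|f\|_{W^{k,p}(w_{\gamma+kp})},
\]
tracking the blow-up factor $g_{k,\gamma}(\nu)$ explicitly. Sectoriality of $\lambda - \delDir$ is then a corollary because $\nu = \lambda - \mu$ stays bounded away from zero when $\lambda > 0$ and $\mu \in \CC\setminus\overline{\Sigma_\omega}$. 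The inductive step does not use a single $M$ to drop one level; instead one applies $M^{k+1}$ to reduce all the way to the base case $k=0$ (Corollary~\ref{cor:LVresult_MR}), and the commutator terms are then bounded via the induction hypothesis applied both at $\gamma$ and at $\gamma+p$. The latter, combined with Hardy's inequality to convert first-derivative terms into second-derivative terms with shifted weight, is what makes the $\lambda = 0$ case in (i) go through---a point your scheme does not address.

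There is a second, more technical gap: to apply the inductive resolvent to $Mu$, $M\partial_j u$, $M\partial_1 u$, you need these to lie in $W^{k+1,p}_\Dir(w_{\gamma+(k-1)p})$. The trace condition is clear, but the regularity is not: for instance $\|Mu\|_{L^p(w_{\gamma+(k-1)p})} = \|u\|_{L^p(w_{\gamma+kp})}$, which is exactly what you are trying to establish and does not follow from $u \in W^{k+1,p}(w_{\gamma+(k-1)p})$ alone (Lemma~\ref{lem:LV3.13_ext} shifts the weight, it does not preserve it). The paper sidesteps this circularity by first taking $f \in \Cc^\infty(\RRdh;X)$, so that the solution from Lemma~\ref{lem:Schwartz} is Schwartz and all auxiliary functions are automatically smooth, deriving the a priori estimate for such $f$, and then closing by density (Lemma~\ref{lem:density}). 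Your final appeal to ``elliptic regularity at the level of $L^p(w_{\gamma+kp})$ from Theorem~\ref{thm:LVresult}'' also fails when $\gamma+kp>2p-1$, since that theorem only covers weights in $(-1,2p-1)$.
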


\begin{theorem}[Sectoriality of $\lambda-\delNeu$]\label{thm:sect:Neu}
  Let $p\in(1,\infty)$, $k\in\NN_0\cup\{-1\}$, $\gam\in (-1,2p-1)\setminus\{p-1\}$ and let $X$ be a $\UMD$ Banach space. Let $\delNeu$ on $W^{k+1,p}(\RRdh,w_{\gam+kp};X)$ be as in Definition \ref{def:delRRdh}.
  Assume that either
  \begin{enumerate}[(i)]
    \item  $\gam+kp\in (-1,p-1)$ and $\lambda\geq0$, or,
    \item $\gam+kp>p-1$ and $\lambda>0$.
  \end{enumerate}
   Then $\lambda-\delNeu$ is sectorial of angle $\om(\lambda-\delNeu)=0$.
\end{theorem}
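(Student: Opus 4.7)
The plan is to induct on $k$, using Theorem \ref{thm:sect:Dir} (proved in parallel in the same section) as the engine via the intertwining identity
\begin{equation*}
\d_1 R(\mu,\delNeu)f \;=\; R(\mu,\delDir)\,\d_1 f.
\end{equation*}
This identity is built into the definitions: if $u \in W^{k+3,p}_\Neu(\RRdh, w_{\gam+kp};X)$, then $v := \d_1 u$ lies in $W^{k+2,p}_\Dir(\RRdh, w_{\gam+kp};X)$ because the Neumann trace condition $\Tr(\d_1 u)=0$ becomes the Dirichlet trace condition on $v$, and $\del$ commutes with $\d_1$. The base cases are handed to us by Theorem \ref{thm:LVresult_Neumann}: $k = -1$ with $\gam \in (p-1, 2p-1)$ puts $w_{\gam-p}$ in $A_p(\RRdh)$, and $k = 0$ with $\gam \in (-1, p-1)$ puts $w_\gam$ in $A_p(\RRdh)$. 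Both supply sectoriality of angle $0$ on $L^p$ and $W^{1,p}$ respectively, including $\lambda = 0$.

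For the inductive step, fix $k \geq 0$ with $\gam + kp > p-1$ (the remaining non-$A_p$ regime), assume Theorem \ref{thm:sect:Dir} at the same $(\gam,k)$, and take $\mu$ in a suitable sector and $f \in W^{k+1,p}(\RRdh, w_{\gam+kp};X)$. First I would set $v := R(\mu,\delDir)\d_1 f \in W^{k+2,p}_\Dir(w_{\gam+kp};X)$ via Theorem \ref{thm:sect:Dir}. This already controls every $\d^\beta u$ with $\beta_1 \geq 1$, since $\d^\beta u = \d^{\beta-e_1} v$, with the resolvent bound $|\mu| \|v\|_{W^{k,p}(w_{\gam+kp})} \lesssim \|f\|_{W^{k+1,p}(w_{\gam+kp})}$. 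For the remaining tangential terms, I would invoke Lemma \ref{lem:norms_induction} to rewrite
\begin{equation*}
\|u\|_{W^{k+1,p}(w_{\gam+kp})} \;\eqsim\; \sum_{|\alpha| \leq 1} \|M\d^\alpha u\|_{W^{k,p}(w_{\gam+(k-1)p})},
\end{equation*}
and exploit the fact that $Mu$ vanishes at $\d\RRdh$ and satisfies the Dirichlet equation $\mu(Mu) - \delDir(Mu) = Mf - 2v$ (from the product rule $\del(Mu) = M\del u + 2\d_1 u$). Theorem \ref{thm:sect:Dir} applied on $W^{k,p}(w_{\gam+(k-1)p};X)$ then controls $Mu$; the terms $M\d_j u = \d_j(Mu)$ for $j \geq 2$ are tangential derivatives of $Mu$ and inherit the same bound, while $M\d_1 u = Mv$ is controlled from $v$ using Lemma \ref{lem:LV3.13_ext}. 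Existence of $u$ is recovered by integrating $v$ in $x_1$ and inverting the boundary equation $\mu\, u|_{x_1=0} - \del_{\tilde x}\, u|_{x_1=0} = f|_{x_1=0} + (\d_1 v)|_{x_1=0}$ via the $\Hinf$-calculus of $-\del_{\tilde x}$ on $L^p(\RR^{d-1};X)$ from Lemma \ref{lem:calc_RRd_Wkp}.

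The hard part will be the bookkeeping around the weight shift $\gam+kp \mapsto \gam+(k-1)p$ forced by Lemma \ref{lem:norms_induction}: at each inductive step one must verify that the shifted exponent avoids the forbidden values $\{jp-1 : j \geq k\}$, stays above $-1$, and that the Dirichlet sectoriality of Theorem \ref{thm:sect:Dir} is in force on the right lower-order space; these checks are compatible with $\gam \in (-1, 2p-1) \setminus \{p-1\}$ but require care at the endpoints. A secondary subtlety is uniqueness of $u$: applying $\d_1$ reduces it to injectivity of $\mu - \delDir$ on $W^{k,p}(w_{\gam+kp};X)$, but the boundary trace $u|_{x_1=0}$ must be recovered separately via the tangential equation on $\d\RRdh$.
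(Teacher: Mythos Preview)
Your weight-shift bookkeeping does not merely ``require care at the endpoints'' --- it fails outright when $\gam \in (-1, p-1)$ and $k \geq 1$. Applying Lemma~\ref{lem:norms_induction} lands you on $W^{k,p}(\RRdh, w_{\gam+(k-1)p};X)$, and to invoke Theorem~\ref{thm:sect:Dir} there you would need the Dirichlet Laplacian with base parameter $\gam - p \in (-p-1, -1)$, which is outside the admissible range $(-1, 2p-1)\setminus\{p-1\}$ (and indeed sectoriality fails there without extra boundary conditions, cf.\ Remark~\ref{rem:Bc}\ref{it:rem:Bc}). The paper's Proposition~\ref{prop:sect_est_neu} therefore treats the two $\gam$-ranges separately. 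For $\gam \in (p-1, 2p-1)$ it proceeds in the spirit of your $Mu$ idea (multiplying by $M^{k+2}$ and invoking the $L^p$-level Dirichlet estimate Corollary~\ref{cor:LVresult_MR} at $\gam - p \in (-1, p-1)$). For $\gam \in (-1, p-1)$ it takes a different route: with $v_j = \d_j u$, only $v_1$ is Dirichlet while $v_2,\dots,v_d$ remain Neumann, and one reduces to the \emph{Neumann} result at $(\gam + p, k-1)$ already established in the first case, together with Proposition~\ref{prop:sect_est} for $v_1$.

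Your existence argument is also broken in the regime where your estimates do go through: for $\gam \in (p-1, 2p-1)$ neither $f \in W^{k+1,p}(\RRdh, w_{\gam+kp};X)$ nor $\d_1 v \in W^{k+1,p}(\RRdh, w_{\gam+kp};X)$ admits a trace at $x_1 = 0$ (the threshold $k+1 > (\gam+kp+1)/p$ is equivalent to $\gam < p-1$), so the boundary equation you write down is undefined. The paper avoids this by first solving the Neumann resolvent for $f \in C^\infty_{{\rm c},1}(\overline{\RRdh};X)$ via even extension to $\RR^d$ (Lemma~\ref{lem:Schwartz_neu}), proving the estimates on that dense class, and then closing by density (Lemma~\ref{lem:density}).
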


Theorems \ref{thm:sect:Dir} and \ref{thm:sect:Neu} will be proved using elliptic regularity for the corresponding resolvent equations. The elliptic regularity for the Dirichlet and Neumann resolvent equation is shown using an induction argument on $k$ in Sections \ref{subsec:sectDir} and \ref{subsec:sectNeu}, respectively. Finally, in Section \ref{subsec:sect:proof} the elliptic regularity is used to finish the proofs of Theorems \ref{thm:sect:Dir} and \ref{thm:sect:Neu}.

\subsection{Elliptic regularity for the Dirichlet Laplacian}\label{subsec:sectDir}
We start with a preliminary result on the solvability of the resolvent equation if the right hand side is compactly supported.
Recall that we defined the Schwartz space on $\RRdh$ as $\SS(\RRdh;X):=\{u|_{\RRdh}:u\in \SS(\RRd;X)\}$.
\begin{lemma}\label{lem:Schwartz}
   Let $X$ be a Banach space. Then for all $f\in \Cc^{\infty}(\RRdh;X)$ and $\lambda\in \Sigma_{\pi-\om}$ with $\om\in(0,\pi)$ there exists a unique $u\in \SS(\RRdh;X)$ such that
  \begin{equation*}
    \lambda u - \del u=f, \quad u(0, \cdot)=0.
  \end{equation*}
  \end{lemma}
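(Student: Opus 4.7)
The plan is to reduce the problem to one on $\RRd$ by odd reflection and then invoke the Fourier transform. I would set $f_{\odd}(x_1,\tilde{x}) := \operatorname{sign}(x_1)\, f(|x_1|,\tilde{x})$ as in \eqref{eq:odd_ext}. Since $\supp f$ is a compact subset of $\RRdh$, it is bounded away from $\{x_1=0\}$, so $f_{\odd} \in \Cc^\infty(\RRd;X) \subseteq \SS(\RRd;X)$. Then I would define
\[
v := \mc{F}^{-1}\bigl( m \cdot \widehat{f_{\odd}}\bigr), \qquad m(\xi) := (\lambda + |\xi|^2)^{-1}.
\]
Because $\lambda \in \Sigma_{\pi-\om}$, we have $\lambda \notin (-\infty,0]$, so $\lambda + |\xi|^2 \neq 0$ on $\RRd$; a short induction gives that every $\d^\alpha m$ is bounded on $\RRd$, so multiplication by $m$ preserves $\SS(\RRd;X)$. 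Hence $v \in \SS(\RRd;X)$ solves $\lambda v - \del v = f_{\odd}$ on $\RRd$.

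Next I would verify that $v$ is odd in $x_1$. The reflected function $\tilde v(x_1,\tilde x) := -v(-x_1,\tilde x)$ belongs to $\SS(\RRd;X)$ and solves the same equation since $f_{\odd}$ is odd in $x_1$; uniqueness of Schwartz solutions, obtained by applying the Fourier transform to the difference, forces $\tilde v = v$. In particular $v(0,\cdot)=0$, and $u := v|_{\RRdh}$ lies in $\SS(\RRdh;X)$ and satisfies $\lambda u - \del u = f$ with $u(0,\cdot)=0$.

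For uniqueness of $u$, suppose $u \in \SS(\RRdh;X)$ satisfies $\lambda u - \del u = 0$ and $u(0,\cdot)=0$. By differentiating $u(0,\cdot)=0$ in the tangential directions and iterating the identity
\[
\d_1^2 u = \lambda u - \sum_{j=2}^d \d_j^2 u,
\]
one obtains $\d_1^{2k} u(0,\cdot) = 0$ for every $k\in\NN_0$. Consequently the odd extension $u_{\odd}$ remains in $\SS(\RRd;X)$ and solves $\lambda u_{\odd} - \del u_{\odd} = 0$ on $\RRd$; the Fourier transform yields $u_{\odd}=0$, hence $u=0$.

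The only nontrivial points are the bookkeeping items that $m$ acts as a multiplier on $\SS(\RRd;X)$ and that the odd extension of a Schwartz function whose even normal derivatives all vanish at $\{x_1=0\}$ is again Schwartz; both reduce to a routine induction on the order of differentiation, which is the mildest obstacle in the argument.
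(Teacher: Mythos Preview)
Your existence argument is essentially identical to the paper's: odd extension of $f$, Fourier multiplier $m(\xi)=(\lambda+|\xi|^2)^{-1}$, oddness of the resulting Schwartz solution by symmetry and uniqueness on $\RRd$, and restriction to $\RRdh$. You supply a few more details (that $m$ maps $\SS$ to $\SS$) than the paper, but the route is the same.

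For uniqueness on $\RRdh$ your approach differs. The paper simply invokes Corollary~\ref{cor:LVresult_MR}, which gives uniqueness in $W^{2,p}_{\Dir}(\RRdh,w;X)$ but is stated under the standing assumption that $X$ is $\UMD$. You instead argue directly: from $u(0,\cdot)=0$ and the relation $\d_1^2 u=\lambda u-\sum_{j\geq 2}\d_j^2 u$ you deduce $\d_1^{2k}u(0,\cdot)=0$ for all $k$, so the odd extension $u_{\odd}$ is Schwartz on $\RRd$, solves $\lambda u_{\odd}-\del u_{\odd}=0$, and vanishes by Fourier inversion. This is more elementary and, notably, valid for an arbitrary Banach space $X$, matching the lemma's hypothesis exactly; the paper's citation formally needs $\UMD$. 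The trade-off is that your route requires the small smoothness bookkeeping you flag at the end, while the paper's citation is a one-liner once the $L^p$ theory is in place.
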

\begin{proof}
  Let $f_{\odd}$ be the odd extension of $f$ as in \eqref{eq:odd_ext}.
Then $f_{\odd}\in \Cc^{\infty}(\RR^d;X)\subseteq \SS(\RR^d;X)$ and taking the Fourier transform of the resolvent equation $\lambda u-\del u=f_{\odd}$ on $\RRd$ yields a solution $u_{\odd}\in \SS(\RR^{d};X)$ given by
\begin{equation*}
  u_{\odd}(x_1,\tilde{x}) = \Big(\mc{F}^{-1}\big[\xi\mapsto \tfrac{(\mc{F} f_{\odd})(\xi)}{\lambda+|\xi|^2}\big]\Big)(x_1,\tilde{x}),\quad \xi\in\RRd,\; \lambda\in \Sigma_{\pi-\om}.
\end{equation*}
Note that $-u_{\odd}(-x_1,\tilde{x})$ also solves the resolvent equation on $\RR^d$. By uniqueness of the solution we obtain $u_{\odd}(x_1,\tilde{x})=-u_{\odd}(-x_1,\tilde{x})$ and therefore $u_{\odd}(0,\tilde{x})=0$.
Thus the restriction $u:=u_{\odd}|_{\RRdh}$ solves $\lambda u-\del u=f$ on $\RRdh$ with $u(0,\cdot)=0$. The uniqueness follows from Corollary \ref{cor:LVresult_MR}.
\end{proof}

Recall that the pointwise multiplication operator $M$ is defined in \eqref{eq:M}. By $[A_1,A_2]=A_1A_2-A_2A_1$ we denote the commutator of two operators $A_1$ and $A_2$. For $a\in\RR$ we use the notation $(a)_+=a$ if $a\geq 0$ and $(a)_+=0$ otherwise. Moreover, note that for all $u\in\SS(\RRdh;X)$ we have the commutation relation
 \begin{equation}\label{eq:comm_Md_del}
    [M^j\d^{\alpha},\del]u=-j(j-1)M^{j-2}\d^{\alpha}u-2jM^{j-1}\d_1\d^{\alpha}u,\qquad j\in\NN_0,\;\;\alpha\in\NN_0^d.
  \end{equation}

The next proposition provides the key argument for proving the sectoriality of the Dirichlet Laplacian on $W^{k,p}(\RRdh, w_{\gam+kp};X)$ for $k\geq 1$.
A version of the proposition below with improved growth $g_{k,\gam}$ on the right hand side of the estimate will be obtained in Section \ref{subsec:growth}.

\begin{proposition}\label{prop:sect_est}
 Let $p\in(1,\infty)$, $k\in \NN_0$, $\gam\in (-1,2p-1)\setminus\{p-1\}$, $\om\in(0,\pi)$ and let $X$ be a $\UMD$ Banach space. Let $\delDir$ on $W^{k,p}(\RRdh, w_{\gam+kp};X)$ be as in Definition \ref{def:delRRdh}. Then for all $f\in W^{k,p}(\RRdh,w_{\gam+kp};X)$ and $\lambda\in \Sigma_{\pi-\om}$, there exists a unique $u\in W^{k+2,p}_{\Dir}(\RRdh,w_{\gam+kp};X)$ such that $\lambda u-\delDir u =f$. Moreover, this solution satisfies
 \begin{equation}\label{eq:est_sect_prop}
  \sum_{|\beta|\leq 2} |\lambda|^{1-\frac{|\beta|}{2}}\|\d^{\beta}u\|_{W^{k,p}(\RRdh,w_{\gam+kp};X)} \leq C g_{k,\gam}(\lambda)  \|f\|_{W^{k,p}(\RRdh,w_{\gam+kp};X)},
\end{equation}
where
\begin{equation}\label{eq:K_IH}
  g_{k,\gam}(\lambda) := \begin{cases}
       1+ |\lambda|^{-\frac{(k-1)_+}{2}} & \mbox{if }\gam\in (-1,p-1) \\
        1+ |\lambda|^{-\frac{k}{2}} & \mbox{if }\gam\in (p-1,2p-1)
      \end{cases},
\end{equation}
and the constant $C$ only depends on $p, k,\gam, \om,d$ and $X$.
\end{proposition}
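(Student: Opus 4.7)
The plan is to proceed by induction on $k \in \NN_0$ with $\gam$ fixed, the base case $k=0$ being precisely Corollary \ref{cor:LVresult_MR} (which corresponds to $g_{0,\gam}(\lambda)=2$). In the inductive step from $k-1$ to $k$, existence will follow from density: Lemma \ref{lem:density} gives that $\Cc^\infty(\RRdh;X)$ is dense in $W^{k,p}(\RRdh, w_{\gam+kp};X)$, and for such smooth data Lemma \ref{lem:Schwartz} produces a Schwartz solution which automatically belongs to $W^{k+2,p}_\Dir(\RRdh, w_{\gam+kp};X)$; the a priori estimate will then allow passage to the limit. Uniqueness will be handled by iterating Hardy's inequality (Corollary \ref{cor:Sob_embRRdh}) to embed $W^{k+2,p}_\Dir(\RRdh, w_{\gam+kp};X)$ into $W^{2,p}_\Dir(\RRdh, w_\gam;X)$, reducing the question to uniqueness in Corollary \ref{cor:LVresult_MR}.

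The core task is the a priori estimate for a Schwartz solution $u$ of $\lambda u - \delDir u = f$ with $f\in \Cc^\infty(\RRdh;X)$. Applying Lemma \ref{lem:norms_induction} to each $\d^\beta u$ with $|\beta|\leq 2$ gives the norm equivalence
\begin{equation*}
\|\d^\beta u\|_{W^{k,p}(\RRdh, w_{\gam+kp};X)} \eqsim \sum_{|\alpha|\leq 1} \|M\d^{\alpha+\beta} u\|_{W^{k-1,p}(\RRdh, w_{\gam+(k-1)p};X)},
\end{equation*}
reducing the problem to estimating $\|M\d^\gamma u\|_{W^{k-1,p}(w_{\gam+(k-1)p})}$ for $|\gamma|\leq 3$ with appropriate powers of $|\lambda|$. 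The key observation is that for each $j\in\{0,1,\ldots,d\}$ (with $\d_0:=\id$), the function $v_j := M\d_j u$ vanishes on $\d\RRdh$ and, by the commutator identity \eqref{eq:comm_Md_del}, satisfies the Dirichlet resolvent equation
\begin{equation*}
(\lambda - \del)v_j = M\d_j f - 2\d_1\d_j u.
\end{equation*}
The inductive hypothesis at level $k-1$ with weight $w_{\gam+(k-1)p}$ then yields control of $\sum_{|\beta|\leq 2}|\lambda|^{1-|\beta|/2}\|\d^\beta v_j\|_{W^{k-1,p}(w_{\gam+(k-1)p})}$ in terms of $g_{k-1,\gam}(\lambda)$ times the $W^{k-1,p}$-norm of the right-hand side; unfolding $\d^\beta v_j$ via the product rule recovers all the quantities $\|M\d^\gamma u\|_{W^{k-1,p}(w_{\gam+(k-1)p})}$ for $|\gamma|\leq 3$ up to strictly lower-order corrections.

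It remains to bound the right-hand sides of the equations for the $v_j$. The data contribution $\|M\d_j f\|_{W^{k-1,p}(w_{\gam+(k-1)p})}$ will be controlled by $C\|f\|_{W^{k,p}(w_{\gam+kp})}$ via Lemma \ref{lem:LV3.13_ext}, and the commutator term $\|\d_1\d_j u\|_{W^{k-1,p}(w_{\gam+(k-1)p})}$ will be handled by applying the inductive hypothesis directly to $u$ (using the Hardy embedding to compare $\|f\|_{W^{k-1,p}(w_{\gam+(k-1)p})}$ with $\|f\|_{W^{k,p}(w_{\gam+kp})}$, valid since $\gam+kp>p-1$ for $k\geq 1$). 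The main obstacle will be the careful bookkeeping of the $|\lambda|$-dependent growth: naively combining two applications of the inductive hypothesis produces a factor $g_{k-1,\gam}(\lambda)^2$, which is strictly worse than $g_{k,\gam}(\lambda)$ for large $k$. To recover exactly $g_{k,\gam}$, one must exploit the sharper $|\lambda|^{-1/2}$ decay that the inductive estimate provides for first-order derivatives of $u$, and track the two ranges $\gam\in(-1,p-1)$ and $\gam\in(p-1,2p-1)$ separately, where the definition of $g_{k,\gam}(\lambda)$ differs.
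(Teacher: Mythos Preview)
Your overall plan (induction on $k$, Schwartz solutions for smooth data via Lemma \ref{lem:Schwartz}, density via Lemma \ref{lem:density}, uniqueness via iterated Hardy reducing to Corollary \ref{cor:LVresult_MR}) matches the paper. The difficulty is entirely in the $\lambda$-bookkeeping of the a priori estimate, and here your one-step reduction $k\to k-1$ via a single factor of $M$ does not close.

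The problem is the commutator term in the equation for $v_j=M\partial_j u$ with $j\geq 1$: it is $-2\partial_1\partial_j u$, a \emph{second}-order derivative. The induction hypothesis at level $k-1$ bounds $\|\partial_1\partial_j u\|_{W^{k-1,p}(w_{\gamma+(k-1)p})}$ by $Cg_{k-1,\gamma}(\lambda)\|f\|_{W^{k-1,p}}$ with no $|\lambda|$-gain, so after applying the same hypothesis to the equation for $v_j$ you collect $g_{k-1,\gamma}(\lambda)^2$, not $g_{k,\gamma}(\lambda)$. For $\gamma\in(p-1,2p-1)$ and $k\ge 2$ one has $g_{k-1,\gamma}(\lambda)^2\sim |\lambda|^{-(k-1)}$ as $|\lambda|\to 0$, strictly worse than the target $g_{k,\gamma}(\lambda)\sim|\lambda|^{-k/2}$; the induction therefore breaks already at $k=2$. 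Your proposed rescue --- exploiting the $|\lambda|^{-1/2}$ decay of first-order derivatives --- only helps for $j=0$ (whose commutator $-2\partial_1 u$ is first order), not for $j\geq 1$. And no uniform-in-$\lambda$ bound is possible here: by Theorem \ref{thm:Dirsemi}\ref{it:Dirsemi3} the Dirichlet heat semigroup is unbounded on $W^{k,p}(\RRdh,w_{\gam+kp};X)$ once $\gam+kp>2p-1$, so the resolvent genuinely blows up as $|\lambda|\to 0$.

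The paper avoids the squaring by a different reduction: instead of one factor of $M$, it sets $v_\alpha=M^{k+1}\partial^\alpha u$ for all $|\alpha|\le k+1$ and applies the \emph{base case} $k=0$ (Corollary \ref{cor:LVresult_MR}) to each $v_\alpha$, which carries only a constant factor. The commutator terms from $[M^{k+1}\partial^\alpha,\Delta]$ are then controlled by $\sum_{1\le|\delta|\le 2}\|\partial^\delta u\|_{W^{k,p}(w_{\gam+kp})}$, and only at this single point is the induction hypothesis at level $k$ invoked, producing a factor $(1+|\lambda|^{-1/2})g_{k,\gam}(\lambda)\le Cg_{k+1,\gam}(\lambda)$ --- linear rather than quadratic growth of the exponent. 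For $\gamma\in(-1,p-1)$ the paper additionally applies the induction hypothesis with $\gamma$ replaced by $\gamma+p\in(p-1,2p-1)$ (see \eqref{eq:est_sect_low}) to handle the first-order terms via Hardy; this shift in $\gamma$ is essential to get $g_{k,\gamma}=1+|\lambda|^{-(k-1)_+/2}$ rather than $1+|\lambda|^{-k/2}$, and your proposal does not mention it.
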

\begin{proof} For any $k\in\NN_1$ the uniqueness of $u\in W^{k+2,p}_{\Dir}(\RRdh,w_{\gam+kp};X)$ is clear from Corollary \ref{cor:LVresult_MR} and Hardy's inequality (Corollary \ref{cor:Sob_embRRdh}).
The proof for the existence and the estimate goes by induction on $k\geq 0$. The case $k=0$ is stated in Corollary \ref{cor:LVresult_MR}. Assume that the statement of the proposition holds for a fixed $k\in\NN_0$ and for all $\gam\in (-1,2p-1)\setminus\{p-1\}$.
It remains to prove the statement for $k+1$. Throughout the proof, $C$ denotes a constant only depending on $p,k,\gam,\om, d$ and $X$, which may change from line to line.

First, take $f\in \Cc^{\infty}(\RRdh;X)$. Lemma \ref{lem:Schwartz} implies that
\begin{equation}\label{eq:sect_resol_eq_short}
  \lambda u -\delDir u=f, \qquad u(0,\cdot)=0,
\end{equation}
has a unique solution $u\in\SS(\RRdh;X)$.
For $|\alpha|\leq k+1$, let $v_{\alpha}=M^{k+1}\d^{\alpha}u$. Then we have $v_{\alpha}(0,\cdot)=0$ and by \eqref{eq:comm_Md_del} and \eqref{eq:sect_resol_eq_short}
\begin{equation}\label{eq:sect_resol_eq2_short}
  \lambda v_{\alpha} -\delDir v_{\alpha} = M^{k+1}\d^{\alpha} f -k(k+1)M^{k-1}\d^{\alpha}u - 2(k+1)M^k\d_1\d^{\alpha}u.
\end{equation}
Since $f\in \Cc^{\infty}(\RRdh;X)$ and $u\in\SS(\RRdh;X)$, we find that the right hand side of \eqref{eq:sect_resol_eq2_short} is an element $L^p(\RRdh,w_{\gam};X)$. Therefore, the case $k=0$ (see Corollary \ref{cor:LVresult_MR}) implies that $v_{\alpha} \in W^{2,p}_{\Dir} (\RRdh,w_{\gam};X)$
and by Hardy's inequality (if $k\geq 1$) it holds that
\begin{equation}\label{eq:Dir_est1}
  \begin{aligned}
    \sum_{|\beta|\leq 2}|\lambda|^{1-\frac{|\beta|}{2}}\|&\d^{\beta}v_{\alpha}\|_{L^p(\RR^d_+,w_{\gam};X)} \\ \leq &\;  C \|M^{k+1}\d^{\alpha}f\|_{L^p(\RR^d_+,w_{\gam};X)}\\
    &+C\big( k\|M^{k-1}\d^{\alpha}u\|_{L^p(\RRdh,w_{\gam};X)} +\|M^k\d_1\d^{\alpha}u\|_{L^p(\RRdh,w_{\gam};X)}\big) \\
    \leq &\;  C \|f\|_{W^{k+1,p}(\RR^d_+,w_{\gam+(k+1)p};X)}+C\sum_{1\leq |\delta|\leq 2}\|\d^{\delta} u\|_{W^{k,p}(\RRdh,w_{\gam+kp};X)}.
  \end{aligned}
\end{equation}
  If $\gam\in(-1,p-1)$, then note that $\gam+(k+1)p\in((k+1)p-1,(k+2)p-1)$. With Hardy's inequality (Lemma \ref{lem:Hardy}) and the induction hypothesis twice (once with $\gam$ replaced by $\gam+p$), we find
  \begin{equation}\label{eq:est_sect_low}
      \begin{aligned}
 \sum_{1\leq |\delta|\leq 2}&\|\d^{\delta} u\|_{W^{k,p}(\RRdh,w_{\gam+kp};X)}\\
  \leq &\; C\sum_{|\delta|=1}\sum_{|\beta|\leq k}\|\d_1\d^{\beta}\d^{\delta} u\|_{L^p(\RRdh,w_{\gam+p+kp};X)} + \sum_{|\delta|=2}\|\d^{\delta} u\|_{W^{k,p}(\RRdh,w_{\gam+kp};X)} \\
 \leq &\;C \sum_{|\delta|=2}\big(\|\d^{\delta} u\|_{W^{k,p}(\RRdh,w_{\gam+p+kp};X)} + \|\d^{\delta} u\|_{W^{k,p}(\RRdh,w_{\gam+kp};X)} \big)\\
 \leq &\; C \big(1+ |\lambda|^{-\frac{k}{2}}\big)\|f\|_{W^{k,p}(\RRdh, w_{\gam+p+kp};X)}\\
 &\qquad+ C \big(1+ |\lambda|^{-\frac{(k-1)_+}{2}}\big)\|f\|_{W^{k,p}(\RRdh, w_{\gam+kp};X)}\\
 \leq&\; C \big(1+ |\lambda|^{-\frac{k}{2}}\big)\|f\|_{W^{k+1,p}(\RRdh, w_{\gam+(k+1)p};X)},
  \end{aligned}
  \end{equation}
  using Hardy's inequality (Corollary \ref{cor:Sob_embRRdh}) once more in the last estimate.
  If $\gam\in(p-1,2p-1)$, then the induction hypothesis and Hardy's inequality yield
  \begin{equation}\label{eq:est_sect_high}
      \begin{aligned}
 \sum_{1\leq |\delta|\leq 2}\|\d^{\delta} u\|_{W^{k,p}(\RRdh,w_{\gam+kp};X)}
 &\leq C \big(1+ |\lambda|^{-\frac{1}{2}}\big)g_{k,\gam}(\lambda)\|f\|_{W^{k,p}(\RRdh, w_{\gam+kp};X)}\\
 &\leq C \big(1+ |\lambda|^{-\frac{k+1}{2}}\big)\|f\|_{W^{k+1,p}(\RRdh, w_{\gam+(k+1)p};X)}.
  \end{aligned}
  \end{equation}
  From the above estimates, the definition of $v_{\alpha}$ and the fact that $|\alpha|\leq k+1$ was arbitrary, we further estimate \eqref{eq:Dir_est1} as
  \begin{equation}\label{eq:sect_es_short}
    \sum_{|\beta|\leq 2}\sum_{|\alpha|\leq k+1}|\lambda|^{1-\frac{|\beta|}{2}}\|\d^{\beta}M^{k+1}\d^{\alpha}u\|_{L^p(\RR^d_+,w_{\gam};X)}\leq C g_{k+1,\gam}(\lambda)\|f\|_{W^{k+1,p}(\RR^d_+,w_{\gam+(k+1)p};X)}.
  \end{equation}
From \eqref{eq:sect_es_short} we obtain
  \begin{equation*}\label{eq:sect_est2}
      \begin{aligned}
    \sum_{|\beta|\leq 2}&|\lambda|^{1-\frac{|\beta|}{2}} \|\d^{\beta}u\|_{W^{k+1,p}(\RRdh,w_{\gam+(k+1)p};X)}    \\
    \leq&\; \sum_{|\beta|\leq 2} \sum_{|\alpha|\leq k+ 1}|\lambda|^{1-\frac{|\beta|}{2}} \Big(\|\d^{\beta}M^{k+1}\d^{\alpha}u\|_{L^p(\RRdh,w_{\gam};X)}
    + \|[M^{k+1},\d_1^{|\beta|}]\d^{\alpha}u\|_{L^p(\RRdh,w_{\gam};X)}\Big)\\
    \leq&\; Cg_{k+1,\gam}(\lambda) \|f\|_{W^{k+1,p}(\RR^d_+,w_{\gam+(k+1)p};X)} + C\sum_{j\in\{1,2\}}\sum_{j-1\leq |\delta|\leq j}|\lambda|^{1-\frac{j}{2}} \|\d^{\delta}u\|_{W^{k,p}(\RRdh, w_{\gam+kp};X)},
  \end{aligned}
  \end{equation*}
where in the last step we also used Hardy's inequality and
  \begin{align*}
    [M^{k+1},\d_1]\d^{\alpha} u &= -(k+1)M^{k}\d^{\alpha} u,\\
    [M^{k+1},\d_1^2]\d^{\alpha} u &= -k(k+1)M^{k-1}\d^{\alpha} u-2(k+1)M^k\d_1 \d^{\alpha} u.
  \end{align*}
To estimate the last sums, consider the cases $\gam\in(-1,p-1)$ and $\gam\in(p-1,2p-1)$ separately. Then arguing similar as in \eqref{eq:est_sect_low} and \eqref{eq:est_sect_high}, respectively, gives
  \begin{align}\label{eq:est_kgeq1_2}
 \sum_{j\in\{1,2\}}\sum_{j-1\leq |\delta|\leq j}|\lambda|^{1-\frac{j}{2}} \|\d^{\delta}u\|_{W^{k,p}(\RRdh, w_{\gam+kp};X)}\leq Cg_{k+1,\gam}(\lambda) \|f\|_{W^{k+1,p}(\RRdh, w_{\gam+(k+1)p};X)}.
  \end{align}
This proves the required estimate for $f\in \Cc^{\infty}(\RRdh;X)$.

  To conclude, let $f\in W^{k+1,p}(\RRdh,w_{\gam+(k+1)p};X)$. Then by Lemma \ref{lem:density} there exists a sequence $(f_n)_{n\geq 1}\subseteq \Cc^{\infty}(\RRdh;X)$ such that
   \begin{equation*}
     f_n\to f\quad  \text{in }W^{k+1,p}(\RRdh,w_{\gam+(k+1)p};X)\quad\text{ as }n\to \infty.
   \end{equation*}
    Every $f_n$ defines a $u_n\in W^{k+3,p}_{\Dir}(\RRdh,w_{\gam+(k+1)p};X)$ such that $\lambda u_n-\delDir u_n = f_n$ and
\begin{equation*}
  \|u_n-u_m\|_{W^{k+3,p}_{\Dir}(\RRdh,w_{\gam+(k+1)p};X)}\lesssim \|f_n-f_m\|_{W^{k+1,p}(\RRdh,w_{\gam+(k+1)p};X)}\to 0\quad \text{as }m,n\to \infty.
\end{equation*}
Completeness of $W^{k+3,p}_{\Dir}(\RRdh,w_{\gam+(k+1)p};X)$ implies that $u_n$ converges to some $$u  \in W^{k+3,p}_{\Dir}(\RRdh,w_{\gam+(k+1)p};X).$$ Moreover,
\begin{align*}
  \|(\lambda-\delDir)(u_n-u)\|_{ W^{k+1,p}(\RRdh,w_{\gam+(k+1)p};X)}&\lesssim \|u_n-u\|_{ W^{k+3,p}_{\Dir}(\RRdh,w_{\gam+(k+1)p};X)}\to 0,
\end{align*}
as $n\to \infty$. Therefore, $(\lambda-\delDir)u=\lim_{n\to\infty} (\lambda-\delDir)u_n =\lim_{n\to \infty} f_n =f$.
Thus, for any $f\in W^{k+1,p}(\RRdh,w_{\gam+(k+1)p};X)$ there exists a $u\in W^{k+3,p}_{\Dir}(\RRdh,w_{\gam+(k+1)p};X)$ solving $\lambda u-\delDir u =f$ and the required estimate holds. This finishes the proof.
\end{proof}

\subsection{Elliptic regularity for the Neumann Laplacian}\label{subsec:sectNeu}
We continue with elliptic regularity for the Laplace operator with Neumann boundary conditions.
Analogous to Lemma \ref{lem:Schwartz} the Neumann resolvent equation can easily be solved if the right hand side $f$ is in  $C^{\infty}_{{\rm c},1}(\overline{\RRdh};X)$, i.e., $\d^{\alpha} f\in \Cc^{\infty}(\RRdh;X)$ for $\alpha=(\alpha_1,\tilde{\alpha})\in \NN_1\times\NN_0^{d-1}$, see \eqref{eq:setDense}.
\begin{lemma}\label{lem:Schwartz_neu}
   Let $X$ be a Banach space. Then for all $f\in C^{\infty}_{{\rm c},1}(\overline{\RRdh};X)$ and $\lambda\in \Sigma_{\pi-\om}$ with $\om\in(0,\pi)$ there exists a unique $u\in \SS(\RRdh;X)$ such that
  \begin{equation*}
    \lambda u - \del u=f, \quad (\d_1 u)(0, \cdot)=0.
  \end{equation*}
  \end{lemma}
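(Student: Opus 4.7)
The plan is to mirror the proof of Lemma \ref{lem:Schwartz}, but using an \emph{even} extension in place of the odd one, exploiting exactly the reason that the class $C^{\infty}_{{\rm c},1}(\overline{\RRdh};X)$ was introduced.

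First I would observe that the hypothesis $\d_1 f \in \Cc^{\infty}(\RRdh;X)$ means that $\d_1 f$ vanishes in an open neighbourhood of $\d\RRdh$. Hence $f(x_1,\tilde{x})=f(0,\tilde{x})$ for $x_1$ small, and more generally every odd-order $x_1$-derivative of $f$ vanishes identically near $\{x_1=0\}$. Consequently the even reflection
\begin{equation*}
  f_{\even}(x_1,\tilde{x}):=f(|x_1|,\tilde{x}),\qquad (x_1,\tilde{x})\in\RR\times\RR^{d-1},
\end{equation*}
belongs to $\Cc^{\infty}(\RRd;X)\subseteq \SS(\RRd;X)$. (This is the step where membership of $C^\infty_{{\rm c},1}$—rather than merely $\Cc^\infty(\overline{\RRdh};X)$—is essential, and it is the only nontrivial observation in the argument.)

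Next I would solve the full-space resolvent equation $\lambda v-\del v=f_{\even}$ on $\RRd$ by Fourier transform, yielding the unique Schwartz solution
\begin{equation*}
  v(x)= \mc{F}^{-1}\Bigl[\xi\mapsto \tfrac{(\mc{F}f_{\even})(\xi)}{\lambda+|\xi|^2}\Bigr](x),\qquad \lambda\in\Sigma_{\pi-\om}.
\end{equation*}
Because $f_{\even}$ is even in $x_1$, the function $(x_1,\tilde{x})\mapsto v(-x_1,\tilde{x})$ solves the same equation and, by uniqueness on $\RRd$, coincides with $v$. Thus $v$ is even in $x_1$, and smoothness of $v$ forces $(\d_1 v)(0,\tilde{x})=0$.

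Finally, setting $u:=v|_{\RRdh}$ produces an element of $\SS(\RRdh;X)$ that solves $\lambda u-\del u=f$ with $(\d_1u)(0,\cdot)=0$. Uniqueness is immediate from Corollary \ref{cor:LVresult_MR_neumann} (applied with, say, $w\equiv 1\in A_p(\RRdh)$ and $\ell=0$), since any other Schwartz Neumann solution lies in $W^{2,p}_{\Neu}(\RRdh;X)$ and the difference must vanish.
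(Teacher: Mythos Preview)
Your proposal is correct and follows essentially the same approach as the paper: the paper's proof simply says to argue as in Lemma~\ref{lem:Schwartz} using an even extension instead of an odd one, noting that $f_{\even}\in\Cc^\infty(\RRd;X)$, and you have supplied precisely those details. Your explicit justification of why $f_{\even}$ is smooth (using that $\d_1 f$ vanishes near the boundary) and your appeal to Corollary~\ref{cor:LVresult_MR_neumann} for uniqueness are exactly the natural Neumann analogues of the steps in the Dirichlet case.
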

\begin{proof}
This follows similarly as in Lemma \ref{lem:Schwartz} using an even extension from $\RRdh$ to $\RRd$ instead of an odd extension. Note that this extension satisfies $f_{\even}\in \Cc^{\infty}(\RRd;X)\subseteq \SS(\RRd;X)$.
\end{proof}

The following proposition contains the key result for proving the sectoriality of the Neumann Laplacian. A version of the proposition below with improved growth $g_{k+1,\gam}$ on the right hand side of the estimate will be stated in Section \ref{subsec:growth}.
\begin{proposition}\label{prop:sect_est_neu}
 Let $p\in(1,\infty)$, $k\in \NN_0\cup\{-1\}$, $\gam\in (-1,2p-1)\setminus\{p-1\}$ such that $\gam+kp>-1$, $\om\in(0,\pi)$ and let $X$ be a $\UMD$ Banach space. Let $\delNeu$ on $W^{k+1,p}(\RRdh, w_{\gam+kp};X)$ be as in Definition \ref{def:delRRdh}. Then for all $f\in W^{k+1,p}(\RRdh,w_{\gam+kp};X)$ and $\lambda\in \Sigma_{\pi-\om}$, there exists a unique $u\in W^{k+3,p}_{\Neu}(\RRdh,w_{\gam+kp};X)$ such that $\lambda u-\delNeu u =f$. Moreover, this solution satisfies
 \begin{equation*}
  \sum_{|\beta|\leq 2} |\lambda|^{1-\frac{|\beta|}{2}}\|\d^{\beta}u\|_{W^{k+1,p}(\RRdh,w_{\gam+kp};X)} \leq C g_{k+1,\gam}(\lambda)  \|f\|_{W^{k+1,p}(\RRdh,w_{\gam+kp};X)},
\end{equation*}
where (cf. \eqref{eq:K_IH})
\begin{equation*}
  g_{k+1,\gam}(\lambda) = \begin{cases}
       1 +|\lambda|^{-\frac{k}{2}}& \mbox{if }\gam\in (-1,p-1) \\
       1 +|\lambda|^{-\frac{k+1}{2}} & \mbox{if }\gam\in (p-1,2p-1)
      \end{cases},
\end{equation*}
and the constant $C$ only depends on $p, k,\gam, \om,d$ and $X$.
\end{proposition}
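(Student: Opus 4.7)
The proof follows the pattern of Proposition \ref{prop:sect_est}, with Corollary \ref{cor:LVresult_MR_neumann} supplying the base case and Lemma \ref{lem:Schwartz_neu} replacing Lemma \ref{lem:Schwartz}. Proposition \ref{prop:sect_est} itself is invoked to close the one gap where the $A_p$ Neumann theory is unavailable.

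Uniqueness is immediate: if $u\in W^{k+3,p}_\Neu(\RRdh, w_{\gam+kp};X)$ solves $\lambda u-\delNeu u=0$, then $v:=\d_1 u\in W^{k+2,p}_\Dir(\RRdh, w_{\gam+kp};X)$ solves $\lambda v-\delDir v=0$, so by Proposition \ref{prop:sect_est} one has $\d_1 u=0$. The function $u$ thus depends only on $\tilde{x}$, and since $\gam+kp>-1$ makes $w_{\gam+kp}$ non-integrable on $\RR_+$, membership in $L^p(\RRdh, w_{\gam+kp};X)$ forces $u=0$.

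For existence and the estimate, I would induct on $k\in\{-1\}\cup\NN_0$. Two configurations fall inside the $A_p$ range and follow immediately from Corollary \ref{cor:LVresult_MR_neumann}: the case $k=-1$ (which forces $\gam\in(p-1,2p-1)$, hence $w_{\gam-p}\in A_p$, with $\ell=0$), and the case $k=0$ with $\gam\in(-1,p-1)$ (taking $\ell=1$). The inductive step from $k$ to $k+1$ with $k\geq 0$ then mirrors the argument of Proposition \ref{prop:sect_est} verbatim: approximate $f$ by $f_n\in C^\infty_{{\rm c},1}(\overline{\RRdh};X)$ via Lemma \ref{lem:density}, produce the Schwartz solution $u_n$ via Lemma \ref{lem:Schwartz_neu}, and set $v_\alpha:=M^{k+1}\d^\alpha u_n$ for $|\alpha|\leq k+2$. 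Since $k+1\geq 1$, $v_\alpha$ has vanishing trace at $x_1=0$, so Corollary \ref{cor:LVresult_MR} applies to
\begin{equation*}
\lambda v_\alpha-\delDir v_\alpha = M^{k+1}\d^\alpha f_n-k(k+1)M^{k-1}\d^\alpha u_n-2(k+1)M^k\d_1\d^\alpha u_n
\end{equation*}
produced by the commutator identity \eqref{eq:comm_Md_del}. The leading RHS contribution yields $\|f_n\|_{W^{k+2,p}(\RRdh,w_{\gam+(k+1)p};X)}$, while the two lower-order contributions are absorbed using the induction hypothesis, applied with $\gam$ shifted by $p$ in the low-weight case $\gam\in(-1,p-1)$ and applied directly (combined with Corollary \ref{cor:Sob_embRRdh}) in the high-weight case $\gam\in(p-1,2p-1)$, following the exact two-case split used in Proposition \ref{prop:sect_est}. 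The commutator correction $[M^{k+1},\d_1^{|\beta|}]\d^\alpha u_n$ together with Hardy's inequality converts the $L^p(w_\gam)$-estimates on $\d^\beta v_\alpha$ into the target $W^{k+2,p}(w_{\gam+(k+1)p})$-bounds on $\d^\beta u_n$, and density plus completeness extend the estimate to all $f\in W^{k+2,p}(\RRdh, w_{\gam+(k+1)p};X)$.

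The main obstacle is the remaining base case $k=0$ with $\gam\in(p-1,2p-1)$, which lies outside the $A_p$ Neumann theory and for which the multiplier $M^{k+1}=\Id$ fails to annihilate the boundary value of $u$. My remedy is to instead set $v_\alpha:=M\d^\alpha u$ for $|\alpha|\leq 1$: the factor $M$ vanishes at the boundary, so $v_\alpha(0,\cdot)=0$, and the commutator identity rewrites the resolvent equation as $\lambda v_\alpha-\delDir v_\alpha=M\d^\alpha f-2\d_1\d^\alpha u$, whose RHS lies in $L^p(\RRdh, w_{\gam-p};X)$ since $w_{\gam-p}\in A_p$. Corollary \ref{cor:LVresult_MR} then yields the Dirichlet estimate on $v_\alpha$, with the lower-order term $\|\d_1\d^\alpha u\|_{L^p(w_{\gam-p})}$ controlled by the already-established $k=-1$ case applied to $f\in L^p(\RRdh,w_{\gam-p};X)$ (the embedding following from Hardy's inequality, Lemma \ref{lem:Hardy}). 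The product rule and a further application of Hardy's inequality reassemble these $L^p(w_{\gam-p})$-estimates into the desired $W^{3,p}_\Neu(\RRdh,w_\gam;X)$-bound on $u$ with the growth $g_{1,\gam}(\lambda)=1+|\lambda|^{-1/2}$ predicted by the statement.
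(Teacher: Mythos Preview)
Your proof is correct, and the core mechanism---reducing the Neumann resolvent equation to a Dirichlet problem via the multiplier $M$ and the commutator identity \eqref{eq:comm_Md_del}---is the same as the paper's. Your treatment of the base case $k=0$, $\gamma\in(p-1,2p-1)$ is in fact exactly the paper's inductive step from $k=-1$ to $k=0$ in its Step~1.

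The organizational difference is this. The paper splits the argument by weight range: Step~1 runs the $M^{k+2}$-multiplier induction (with Dirichlet estimates at $\gamma-p$) only for $\gamma\in(p-1,2p-1)$, and then in Step~2, for $\gamma\in(-1,p-1)$ and $k\geq 1$, it avoids re-running the multiplier argument by instead differentiating: setting $v_j=\partial_j u$ so that $v_1$ solves a Dirichlet problem (handled by Proposition~\ref{prop:sect_est}) and $v_j$ for $j\geq 2$ solve Neumann problems at weight $\gamma+p\in(p-1,2p-1)$ (handled by Step~1). You instead run a single unified induction, mirroring Proposition~\ref{prop:sect_est} directly for both weight ranges, using $M^{k+1}$ with Dirichlet estimates at $\gamma$ and invoking the Neumann induction hypothesis at both $\gamma$ and $\gamma+p$ in the low-weight case. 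Your route is self-contained but re-does bookkeeping that the paper sidesteps; the paper's Step~2 is shorter precisely because it recycles Step~1 and Proposition~\ref{prop:sect_est} wholesale. Your uniqueness argument (via $\partial_1 u=0$ and non-integrability of $w_{\gamma+kp}$ on $\RR_+$) is also correct but different from the paper's, which simply embeds into an $A_p$-weighted space via Hardy's inequality and invokes Corollary~\ref{cor:LVresult_MR_neumann}.
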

\begin{proof}
Throughout the proof, $C$ denotes a constant only depending on $p,k,\gam,\om, d$ and $X$, which may change from line to line. The proof proceeds in two steps. We first prove the case $\gam\in(p-1,2p-1)$ and $k\geq -1$ with induction similar as in the proof of Proposition \ref{prop:sect_est}. Secondly, we prove the case $\gam\in(-1,p-1)$ and $k\geq 0$ using the previous step.\\

\textit{Step 1: the case $\gam\in(p-1,2p-1)$ and $k\geq -1$.} Let $\gam\in(p-1,2p-1)$. The case $k=-1$ is stated in Corollary \ref{cor:LVresult_MR_neumann}.  Assume that the statement of the proposition holds for all $\gam\in (p-1,2p-1)$ and a fixed $k\in\NN_0\cup\{-1\}$. We prove the statement of the proposition for $k+1$.

First of all, note that uniqueness of $u\in W^{k+4,p}_{\Neu}(\RRdh,w_{\gam+(k+1)p};X)$ follows from Corollary \ref{cor:LVresult_MR_neumann} and Hardy's inequality (Corollary \ref{cor:Sob_embRRdh}).
Take $f\in \Cc^{\infty}(\RRdh;X)$ and let $u\in \mc{S}(\RRdh;X)$ be the unique solution to
\begin{equation}\label{eq:Neu_resol}
  \lambda u -\delNeu u = f,\qquad (\d_1 u)(0,\cdot)=0,
\end{equation}
see Lemma \ref{lem:Schwartz_neu}.
For $|\alpha|\leq k+2$ define $v_{\alpha}=M^{k+2}\d^{\alpha}u$.
Then $v_{\alpha}$ satisfies the Dirichlet boundary condition $v_{\alpha}(0,\cdot)=0$ and the equation
\begin{equation*}
  \lambda v_{\alpha} -\delDir v_{\alpha} = M^{k+2}\d^{\alpha} f -(k+1)(k+2)M^{k}\d^{\alpha}u - 2(k+2)M^{k+1}\d_1\d^{\alpha}u.
\end{equation*}
Applying Corollary \ref{cor:LVresult_MR} with $\gam-p\in (-1,p-1)$ yields the estimate \eqref{eq:Dir_est1} with $k$ and $\gam$ replaced by $k+1$ and $\gam-p$, respectively.
The rest of the argument is analogous to the Dirichlet case, see \eqref{eq:est_sect_high}-\eqref{eq:est_kgeq1_2}. For $f\in \Cc^{\infty}(\RRdh;X)$ we obtain the estimate
  \begin{align*}
 \sum_{|\beta|\leq2}|\lambda|^{1-\frac{|\beta|}{2}} \|\d^{\beta}u\|_{W^{k+2,p}(\RRdh, w_{\gam+(k+1)p};X)}&\leq C\big(1+|\lambda|^{-\frac{k+2}{2}}\big) \|f\|_{W^{k+2,p}(\RRdh, w_{\gam+(k+1)p};X)}.
  \end{align*}
By a density argument similar as in the proof of Proposition \ref{prop:sect_est} the induction is finished. This proves the statement of the proposition for $k\in\NN_0\cup\{-1\}$ and $\gam\in(p-1,2p-1)$.\\

\textit{Step 2: the case $\gam\in(-1,p-1)$ and $k\geq 0$.}
Let $\gam\in(-1,p-1)$ and note that for $k=0$ the result in already contained in Theorem \ref{thm:LVresult_Neumann}. Therefore, we let $k\in \NN_1$ from now on. Then uniqueness of $u\in W^{k+3,p}_{\Neu}(\RRdh,w_{\gam+kp};X)$ follows from the uniqueness in $W^{3,p}_{\Neu}(\RRdh,w_{\gam};X)$ (case $k=0$) and Hardy's inequality.

To continue, take $f\in C^{\infty}_{{\rm c},1}(\overline{\RRdh};X)$. Lemma \ref{lem:Schwartz_neu} implies that the resolvent equation \eqref{eq:Neu_resol} has a unique solution $u\in\mc{S}(\RRdh;X)$. Let $j\in \{1,\dots, d\}$ and define $v_j=\d_j u$. If $j=1$, then $v_1$ satisfies
\begin{equation*}\label{eq:v_1}
  \lambda v_1 -\delDir v_1 = \d_1 f,\qquad v_1(0,\cdot)=0.
\end{equation*}
If $j\in\{2,\dots, d\}$, then $v_j$ satisfies
\begin{equation*}\label{eq:v_j}
  \lambda v_j -\delNeu v_j = \d_j f,\qquad (\d_1v_j)(0,\cdot)=0.
\end{equation*}
Applying Step 1 for $k-1$ and $\gam+p\in (p-1,2p-1)$ to estimate $v_j$ for $j\in\{2,\dots, d\}$ and Proposition \ref{prop:sect_est} to estimate $v_1$, yields
\begin{align*}
   \sum_{|\beta|\leq 2} &|\lambda|^{1-\frac{|\beta|}{2}}\|\d^{\beta}u\|_{W^{k+1,p}(\RRdh,w_{\gam+kp};X)}\\
    & =\sum_{|\beta|\leq 2} |\lambda|^{1-\frac{|\beta|}{2}} \big(\|\d^{\beta} u\|_{W^{k,p}(\RRdh, w_{\gam+kp};X)}+ \sum_{j=1}^d \|\d^{\beta} v_j\|_{W^{k,p}(\RRdh, w_{\gam+kp};X)} \big) \\
   &\leq C\big(1+|\lambda|^{-\frac{k}{2}}\big)\big(\|f\|_{W^{k,p}(\RRdh, w_{\gam+kp};X)} + \sum_{j=1}^d\|\d_j f\|_{W^{k,p}(\RRdh, w_{\gam+kp};X)}\big)\\
   &\leq C\big(1+|\lambda|^{-\frac{k}{2}}\big) \|f\|_{W^{k+1,p}(\RRdh, w_{\gam+kp};X)}.
\end{align*}
From Lemma \ref{lem:density} we have that $C^{\infty}_{{\rm c},1}(\overline{\RRdh};X)$ is dense in $W^{k+1,p}(\RRdh,w_{\gam+kp};X)$. Therefore, a similar density argument as in the proof of Proposition \ref{prop:sect_est} yields that for any $f\in W^{k+1,p}(\RRdh,w_{\gam+kp};X)$ there exists a $u\in W^{k+3,p}_{\Neu}(\RRdh, w_{\gam+kp};X)$ solving \eqref{eq:Neu_resol} and the required estimate holds. This finishes the proof.
\end{proof}

\subsection{Sectoriality and the proof of Theorems \ref{thm:sect:Dir} and \ref{thm:sect:Neu}}\label{subsec:sect:proof}
The sectoriality of the Dirichlet and Neumann Laplacian in Theorems \ref{thm:sect:Dir} and \ref{thm:sect:Neu} is now an easy consequence of Propositions \ref{prop:sect_est} and \ref{prop:sect_est_neu}.

\begin{proof}[Proof of Theorems \ref{thm:sect:Dir} and \ref{thm:sect:Neu}] Let $\gam\in (-1,2p-1)\setminus\{p-1\}$ and $k\in\NN_0\cup\{-1\}$ be such that $\gam+kp>-1$. We first consider the Dirichlet Laplacian. Assume that $\gam+kp\in(-1,2p-1)$ and $\lambda\geq 0$, or, $\gam+kp>2p-1$ and $\lambda>0$. We prove that $\lambda-\delDir$ on $W^{k,p}(\RRdh, w_{\gam+kp};X)$ with domain $W^{k+2,p}_{\Dir}(\RRdh, w_{\gam+kp};X)$ is sectorial of angle zero. Note that this operator is closed by the estimate in Proposition \ref{prop:sect_est} (see Remark \ref{rem:Inj_XUMD}).
 The resolvent estimate for $\lambda-\delDir$ now follows from Proposition \ref{prop:sect_est} (corresponding to $\beta=0$ below, the estimates for $|\beta|\in\{1,2\}$ will be needed in Section \ref{sec:calculus}). Indeed, we have
 \begin{equation}\label{eq:resol_est_A}
     \begin{aligned}
    \sup_{z\in \CC\setminus\overline{\Sigma_{\om}}}&\sum_{|\beta|\leq 2}|z|^{1-\frac{|\beta|}{2}}\|\d^{\beta}R(z,\lambda-\delDir)f\|_{W^{k,p}(\RRdh,w_{\gam+kp};X)} \\ 
    &= \sup_{z\in \lambda+\Sigma_{\pi-\om}}\sum_{|\beta|\leq 2}|z-\lambda|^{1-\frac{|\beta|}{2}}\|\d^{\beta}R(z,\delDir)f\|_{W^{k,p}(\RRdh,w_{\gam+kp};X)} \\
     & \leq  C\sup_{z\in \lambda+\Sigma_{\pi-\om}}\sum_{|\beta|\leq 2 }|z|^{1-\frac{|\beta|}{2}}\|\d^{\beta}R(z,\delDir)f\|_{W^{k,p}(\RRdh,w_{\gam+kp};X)}\\
     &\leq C\sup_{z\in \lambda+\Sigma_{\pi-\om}}g_{k,\gam}(z)\|f\|_{W^{k,p}(\RRdh,w_{\gam+kp};X)}=\tilde{C}g_{k,\gam}(\lambda)\|f\|_{W^{k,p}(\RRdh,w_{\gam+kp};X)},
  \end{aligned}
 \end{equation}
  using that for $z-\lambda\in \Sigma_{\pi-\om}$
  \begin{equation*}
    |z-\lambda|\leq \begin{cases}
                |z| & \mbox{if } \om\in[\frac{\pi}{2},\pi) \\
                \frac{|z|}{\sin \om} & \mbox{if }\om\in(0,\frac{\pi}{2}]
              \end{cases}.
  \end{equation*}
  Note that $g_{k,\gam}(\lambda)=1$ if $\gam+kp<2p-1$, so that the constant in the sectoriality estimate is independent of $\lambda$. Thus $-\delDir$ is also sectorial in this case.

Furthermore, by Proposition \ref{prop:sect_est} the mapping $$\lambda-\delDir:W^{k+2,p}_{\Dir}(\RRdh,w_{\gam+kp};X)\to W^{k,p}(\RRdh,w_{\gam+kp};X)$$ is surjective. The sectoriality now follows from Remark \ref{rem:Inj_XUMD}. This finishes the proof of Theorem \ref{thm:sect:Dir}

The proof of Theorem \ref{thm:sect:Neu} for the Neumann Laplacian on $W^{k+1,p}(\RRdh,w_{\gam+kp};X)$ is similar to the Dirichlet case using Proposition \ref{prop:sect_est_neu}.
\end{proof}

\section{The Dirichlet and Neumann heat semigroup}\label{sec:semigroup}
In this section, we study the growth of the Dirichlet and Neumann heat semigroup.
Recall that the odd and even extensions from $\RRdh$ to $\RRd$ are defined as in \eqref{eq:odd_ext}. Let $z\in\CC_+$ and recall from Section \ref{sec:results} that we defined the kernels
\begin{equation}\label{eq:H}
  H^{d,\pm}_z(x,y):=G^d_z(x_1-y_1,\tilde{x}-\tilde{y})\pm G^d_z(x_1+y_1,\tilde{x}-\tilde{y}),\qquad x,y\in\RRdh,
\end{equation}
where $G^d_z$ is the standard heat kernel on $\RRd$, see \eqref{eq:HeatKernelRd}.
The Dirichlet heat semigroup $T_{\Dir}$ is defined for $f\in L^p(\RRdh, w_{\gam};X)$ with $\gam\in (-1,2p-1)\setminus\{p-1\}$ by
\begin{equation}\label{eq:Tt}
  T_{\Dir}(z)f(x):=H^{d,-}_{z} * f(x):=\int_{\RRdh}H^{d,-}_{z}(x,y)f(y)\dd y=\int_{\RR^d}G^d_z(x-y) f_{\odd}(y)\dd y,
\end{equation}
and the Neumann heat semigroup $T_{\Neu}$ is defined for $f\in W^{\ell,p}(\RRdh, w_{\gam};X)$ with $\gam\in (-1,p-1)$ and $\ell\in\{0,1\}$ by
\begin{equation}\label{eq:Tt_Neu}
  T_{\Neu}(z)f(x):=H^{d,+}_z * f(x):=\int_{\RRdh}H^{d,+}_z(x,y)f(y)\dd y=\int_{\RR^d}G^d_z(x-y) f_{\even}(y)\dd y.
\end{equation}
Note that the above definitions of the semigroups are well defined by Theorems \ref{thm:LVresult} and \ref{thm:LVresult_Neumann}.

We prove the following theorems concerning the heat semigroups.
\begin{theorem}[Dirichlet heat semigroup]\label{thm:Dirsemi}
  Let $p\in(1,\infty)$, $k\in\NN_0$, $\gam\in (-1,2p-1)\setminus\{p-1\}$ and let $X$ be a $\UMD$ Banach space. Let $\delDir$ on $W^{k,p}(\RRdh, w_{\gam+kp};X)$ be as in Definition \ref{def:delRRdh}.
Then $(T_{\Dir}(z))_{z\in \Sigma_{\sigma}}$ with $\sigma\in(0,\frac{\pi}{2})$ is an analytic $C_0$-semigroup on $W^{k,p}(\RR^d_+,w_{\gam+kp};X)$ which is generated by $\delDir$.
Moreover, the heat semigroup $T_{\Dir}(t)$ on $W^{k,p}(\RRdh, w_{\gam+kp};X)$ satisfies the following growth properties:
   \begin{enumerate}[(i)]
   \item\label{it:Dirsemi2} if $\gam+kp\in (-1,2p-1)$, then $T_\Dir(t)$ is bounded,
     \item\label{it:Dirsemi3}  if $\gam+kp>2p-1$, then $T_\Dir(t)$ has polynomial growth and for any $\eps>0$ there are constants $c,C>0$ only depending on $p,k,\gam, \eps, d $ and $X$, such that
   \begin{equation*}
       c\big(1+t^{\frac{\gam+kp-2p+1}{2p}}\big)\leq\|T_\Dir(t)\|\leq C \big(1+t^{\frac{\gam+kp-2p+1+\eps}{2p}}\big), \qquad t\geq 0.
   \end{equation*}
   \end{enumerate}
\end{theorem}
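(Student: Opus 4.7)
My plan is to derive the analyticity and $C_0$-property directly from the sectoriality of Theorem \ref{thm:sect:Dir}, obtain the bounded case of (i) as a corollary, establish the upper bound in (ii) by applying Lemma \ref{lem:growth_semigroup_abstract} to a sharpened version of the resolvent estimate of Proposition \ref{prop:sect_est} (sharpened via the complex interpolation of Proposition \ref{prop:compl_int_gam}), and complete the lower bound in (ii) by an explicit kernel expansion. Since Theorem \ref{thm:sect:Dir} gives that $\lambda - \delDir$ is sectorial of angle $0$ (with $\lambda > 0$ always, and $\lambda = 0$ when $\gam+kp \in (-1,2p-1)$), the standard generation theorem \cite[Theorem G.5.2]{HNVW17} yields an analytic semigroup on every $\Sigma_\sigma$ with $\sigma \in (0,\pi/2)$, and the $C_0$-property follows from density of $\Cc^\infty(\RRdh;X) \subseteq D(\delDir)$ in $W^{k,p}(\RRdh, w_{\gam+kp};X)$ (Lemma \ref{lem:density}). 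To identify the abstract semigroup with the kernel-defined $T_\Dir$ of \eqref{eq:Tt_intro}, I would verify by differentiation under the integral sign (applied to $f_{\odd}$ on $\RRd$) that $t \mapsto T_\Dir(t) f$ solves the Cauchy problem for $\delDir$ when $f \in \Cc^\infty(\RRdh;X)$, and extend by uniqueness and density; boundedness for $\gam+kp \in (-1,2p-1)$ is then immediate from sectoriality at $\lambda=0$ and Lemma \ref{lem:growth_semigroup_abstract} with $\alpha = 1$.

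For the upper bound in (ii), Lemma \ref{lem:growth_semigroup_abstract} applied directly to Proposition \ref{prop:sect_est} only yields $\|T_\Dir(t)\| \lesssim 1 + t^{k/2}$ when $\gam \in (p-1,2p-1)$ (and $1 + t^{(k-1)_+/2}$ when $\gam \in (-1,p-1)$), which falls short of the target exponent $(\gam+kp-2p+1+\eps)/(2p)$ by a fractional amount in $\gam$. To close the gap I would sharpen the resolvent estimate by complex interpolation: Proposition \ref{prop:compl_int_gam} represents $W^{k,p}(\RRdh, w_{\gam+kp};X)$ as the complex interpolation space between $W^{k,p}(\RRdh, w_{\gam_0+kp};X)$ and $W^{k,p}(\RRdh, w_{\gam_1+kp};X)$ for $\gam_0, \gam_1$ straddling $\gam$—and, where possible, straddling the threshold $p-1$. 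Interpolating the endpoint resolvent estimates against each other, or against the bounded-semigroup bound of Theorem \ref{thm:sect:Dir} when one endpoint permits it, should produce a refined estimate of the form $\|R(\lambda,\delDir)\|_{W^{k,p}(w_{\gam+kp};X)} \lesssim |\lambda|^{-(\gam+kp+1+\eps)/(2p)}$, which fed back into Lemma \ref{lem:growth_semigroup_abstract} delivers the claimed growth. The $\eps > 0$ reflects that the interpolation endpoints must be chosen strictly inside the open admissible range, since the constants blow up as $\gam$ approaches $-1$, $p-1$, or $2p-1$.

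For the lower bound, I would take $f(x) = g(x_1)\phi(\tilde x)\xi$ with $g \in \Cc^\infty(\RR_+)$ supported in $[1,2]$ satisfying $\int_0^\infty y_1 g(y_1) \dd y_1 \neq 0$, $\phi \in \Cc^\infty(\RR^{d-1})$ chosen essentially constant on scale $\sqrt{t}$, and $\xi \in X\setminus\{0\}$. For $t$ large, Taylor expansion in $y_1$ gives $H^{d,-}_t(x,y) = -2 y_1 \d_1 G^d_t(x_1, \tilde x - \tilde y) + O(y_1^2/t)$, whence $T_\Dir(t)f(x) \approx c\, \d_1 G^1_t(x_1)\,(G^{d-1}_t * \phi)(\tilde x)\,\xi$. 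Using $\d_1 G^1_t(x_1) = -\frac{x_1}{2t}\,G^1_t(x_1)$ and rescaling $x_1 = \sqrt{t}\,u$ yields $\|\d_1 G^1_t\|_{L^p(\RR_+, w_{\gam+kp})} \eqsim t^{(\gam+kp-2p+1)/(2p)}$; higher-order derivatives in $x_1$ contribute smaller powers of $t$, so the $W^{k,p}$-norm is governed by this leading term, and the choice of $\phi$ ensures $\|G^{d-1}_t*\phi\|_{L^p(\RR^{d-1})} \eqsim \|\phi\|_{L^p(\RR^{d-1})}$, so no decay is lost tangentially.

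The main obstacle is the interpolation step. For $k \geq 2$ with $\gam$ near $-1$ both natural endpoints of the interpolation lie already in the growth regime of Proposition \ref{prop:sect_est}, and a geometric interpolation between two already-growing bounds cannot produce an exponent smaller than both. Overcoming this should require a Stein-type analytic family of operators modelled on the multiplication operators $M^z$ used in the proof of Proposition \ref{prop:compl_int_gam}, trading weight exponent against regularity so as to extract the sharp rate claimed by the theorem.
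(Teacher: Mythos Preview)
Your overall architecture matches the paper's: sectoriality $\Rightarrow$ analytic $C_0$-semigroup, improved resolvent estimate via complex interpolation $\Rightarrow$ upper growth bound through Lemma~\ref{lem:growth_semigroup_abstract}, and an explicit test function for the lower bound. Two points deserve comment.

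\textbf{The obstacle you flag has a simpler resolution than you suggest.} For $\gam\in(p-1,2p-1)$ your interpolation plan works exactly as you describe: the paper picks $\gam_0=p-1-\eps\in(-1,p-1)$ and $\gam_1=2p-1-\eps\in(p-1,2p-1)$, so that $g_{k,\gam_0}(\lambda)=1+|\lambda|^{-(k-1)/2}$ and $g_{k,\gam_1}(\lambda)=1+|\lambda|^{-k/2}$ straddle the target; geometric interpolation then gives the exponent $(\gam+kp-2p+1+\eps)/(2p)$. For $\gam\in(-1,p-1)$ with $k\geq 2$ you are right that both natural interpolation endpoints sit in the growth regime, and a direct interpolation cannot beat the worse endpoint. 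But no Stein-type analytic family is needed. Instead, inspect the induction step \eqref{eq:est_sect_low} in the proof of Proposition~\ref{prop:sect_est}: for $\gam\in(-1,p-1)$ the passage from level $k-1$ to level $k$ invokes the induction hypothesis once at $\gam$ and once at $\gam+p\in(p-1,2p-1)$, and it is the latter that governs the growth. Hence once you have the improved bound $h_{k-1,\gam+p,\eps}$ on the $(p-1,2p-1)$ side, you can simply re-run that induction step with the improved input and obtain $h_{k,\gam,\eps}=h_{k-1,\gam+p,\eps}$ on the $(-1,p-1)$ side. This is the content of Proposition~\ref{prop:sect_est_improved} in the paper.

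\textbf{Minor differences.} For the generator identification the paper does not verify the Cauchy problem directly; instead it proves consistency of the resolvents on $W^{k,p}(w_{\gam+kp})$ and $L^p(w_\gam)$ (Lemma~\ref{lem:consistent_resol_RRdh}), and then compares the two inverse Laplace transform representations of the semigroup. Your differentiation-under-the-integral approach would also work, but the consistency route is cleaner because it reuses the $k=0$ result of Theorem~\ref{thm:LVresult}. For the lower bound the paper takes a simpler test function than yours: a cut-off $\zeta$ equal to $1$ on $[0,\tfrac12]$ and supported in $[0,\tfrac34]$, and bounds the $L^p(w_{\gam+kp})$-norm of $T_\Dir(t)\zeta$ from below via the elementary inequality $1-e^{-a}\geq\tfrac12\min\{a,1\}$ followed by the substitution $x\mapsto\sqrt{t}\,x$. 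No Taylor expansion or error control is needed.
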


\begin{theorem}[Neumann heat semigroup]\label{thm:Neusemi}
  Let $p\in(1,\infty)$, $k\in\NN_0\cup\{-1\}$, $\gam\in (-1,2p-1)\setminus\{p-1\}$ such that $\gam+kp>-1$ and let $X$ be a $\UMD$ Banach space. Let $\delNeu$ on $W^{k+1,p}(\RRdh, w_{\gam+kp};X)$ be as in Definition \ref{def:delRRdh}.
Then $(T_{\Neu}(z))_{z\in \Sigma_{\sigma}}$ with $\sigma\in(0,\frac{\pi}{2})$ is an analytic $C_0$-semigroup on $W^{k+1,p}(\RR^d_+,w_{\gam+kp};X)$ which is generated by $\delNeu$.
Moreover, the heat semigroup $T_{\Neu}(t)$ on $W^{k+1,p}(\RRdh, w_{\gam+kp};X)$ satisfies the following growth properties:
   \begin{enumerate}[(i)]
   \item\label{it:Neusemi2}  if $\gam+kp\in (-1,p-1)$, then $T_\Neu(t)$ is bounded,
     \item\label{it:Neusemi3}  if $\gam+kp>p-1$, then $T_\Neu(t)$ has polynomial growth and for any $\eps>0$ there are constants $c,C>0$ only depending on $p,k,\gam,\eps,d $ and $X$, such that
   \begin{equation*}
       c\big(1+t^{\frac{\gam+kp-p+1}{2p}}\big)\leq\|T_\Neu(t)\|\leq C \big(1+t^{\frac{\gam+kp-p+1+\eps}{2p}}\big), \qquad t\geq 0.
   \end{equation*}
   \end{enumerate}
\end{theorem}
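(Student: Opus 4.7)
The plan is to mirror the structure of the proof of Theorem \ref{thm:Dirsemi}, with the additional input of the intertwining $\d_1 T_\Neu(t)=T_\Dir(t)\d_1$ that allows the sharper Dirichlet estimates to be imported. First, combining Theorem \ref{thm:sect:Neu} with Lemma \ref{lem:growth_semigroup_abstract} shows that $\lambda-\delNeu$, for $\lambda>0$ large enough, generates an analytic $C_0$-semigroup on $\Sigma_\sigma$ for every $\sigma\in(0,\tfrac{\pi}{2})$, and dividing out by $e^{-t\lambda}$ transfers the analyticity to $T_\Neu$. To identify this abstract semigroup with the kernel operator from \eqref{eq:Tt_Neu}, I would compute, on the dense subspace $C^\infty_{c,1}(\overline{\RRdh};X)$ of Lemma \ref{lem:density}, the Laplace transform of the kernel operator and recognise it as $R(\lambda,\delNeu)$ via uniqueness of the Neumann resolvent problem (Lemma \ref{lem:Schwartz_neu}); extension by density completes the identification. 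Assertion \ref{it:Neusemi2} then follows immediately: the combined constraints $\gam+kp\in(-1,p-1)$, $\gam\in(-1,2p-1)\setminus\{p-1\}$ and $\gam+kp>-1$ force $k\in\{-1,0\}$, and in both cases $w_{\gam+kp}\in A_p(\RRdh)$, so Theorem \ref{thm:LVresult_Neumann} supplies the uniform-in-$t$ bound.

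For the polynomial upper bound in \ref{it:Neusemi3}, I would induct on $k$ using the norm equivalence
\begin{align*}
\|T_\Neu(t)f\|_{W^{k+1,p}(\RRdh,w_{\gam+kp};X)}\eqsim \|T_\Neu(t)f\|_{L^p(\RRdh,w_{\gam+kp};X)}+\sum_{j=1}^d\|\d_j T_\Neu(t)f\|_{W^{k,p}(\RRdh,w_{\gam+kp};X)}.
\end{align*}
For $j=1$ the intertwining allows replacing $\d_1 T_\Neu(t)f$ by $T_\Dir(t)\d_1 f$, and Theorem \ref{thm:Dirsemi} then provides a growth rate of exponent $(\gam+kp-2p+1+\eps)/(2p)$, strictly smaller than the target. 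For $j\geq 2$, $\d_j$ commutes with $T_\Neu(t)$ and the induction hypothesis applies on the lower-order space. The remaining term $\|T_\Neu(t)f\|_{L^p(w_{\gam+kp};X)}$ constitutes the true base case and is not formally contained in the statement of the present theorem (since $k=-1$ forces $\gam+kp<p-1$); I would establish it by complex interpolation in the weight parameter via Proposition \ref{prop:compl_int_gam}, anchored at the $A_p$ endpoint $\sigma_0\in(-1,p-1)$, bounded by Theorem \ref{thm:LVresult_Neumann}, and at a large $\sigma_1\gg 1$, where a crude polynomial estimate of exponent of order $\sigma_1/(2p)$ is obtained from $T_\Neu(t)f=G_t^d*f_{\mathrm{even}}$, Jensen's inequality and the Gaussian moment bound $\int G_t^d(z)|z_1|^{\sigma_1}\dd z\lesssim t^{\sigma_1/2}$. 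Sending $\sigma_0\uparrow p-1$ and $\sigma_1\uparrow\infty$ in the interpolation identity $[L^p(w_{\sigma_0};X),L^p(w_{\sigma_1};X)]_\theta=L^p(w_{\gam+kp};X)$ yields the sharp exponent $(\gam+kp-p+1+\eps)/(2p)$ for arbitrarily small $\eps>0$.

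The matching lower bound in \ref{it:Neusemi3} I would obtain by a direct test: take $f(x)=\phi(x_1)\mathbf{1}_{[-R,R]^{d-1}}(\tilde{x})$ with $\phi\in\Cc^\infty(\RR_+)$ supported in $[1,2]$ and $R\gg\sqrt{t}$, and observe that the transverse heat propagation leaves the $L^p(\tilde{x})$-norm of $T_\Neu(t)f$ essentially comparable to that of $f$, reducing the ratio $\|T_\Neu(t)f\|_{L^p(w_{\gam+kp};X)}/\|f\|_{L^p(w_{\gam+kp};X)}$ to a one-dimensional Neumann-heat computation against $w_{\gam+kp}$ on $\RR_+$; an explicit evaluation of the weighted $L^p(\RR_+,w_{\gam+kp})$ norm of the resulting $1$-D Gaussian then produces the lower bound $c(1+t^{(\gam+kp-p+1)/(2p)})$. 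The hardest part of the argument will be the crude anchor estimate at $\sigma_1\gg 1$, since outside the $A_p$ range the standard Young-type convolution inequalities fail; arranging a uniform polynomial bound on a subspace dense enough for the interpolation to close is the key technical hurdle, and once it is in place, the interpolation scheme together with the Dirichlet intertwining deliver the entire theorem.
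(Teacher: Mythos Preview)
Your proposal has a fatal gap in what you call the ``true base case''. The anchor estimate at $\sigma_1\gg 1$ is not merely a technical hurdle but is impossible: by the Neumann version of Example~\ref{ex:no_sect} (take $f(x_1)=x_1^{-1}|\log x_1|^{-(p+1)/(2p)}\zeta(x_1)$), for every $\sigma_1\geq p-1$ there exists $f\in L^p(\RR_+,w_{\sigma_1})$ with $T_\Neu(t)f\equiv+\infty$ on $(0,\tfrac12)$ for all $t>0$. Hence $T_\Neu(t)$ does not map $L^p(\RRdh,w_{\sigma_1};X)$ into itself, and no polynomial-in-$t$ operator bound can hold there; approximating this $f$ from below by nonnegative $\Cc^\infty$ functions and applying monotone convergence shows the same obstruction persists on any dense subspace containing $\Cc^\infty(\RRdh;X)$. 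Your Jensen/Gaussian-moment computation already reflects this: it produces $t^{\sigma_1/(2p)}\|f\|_{L^p}$ on the right-hand side, and the unweighted norm is not dominated by $\|f\|_{L^p(w_{\sigma_1})}$ when $\sigma_1>0$. The interpolation scheme therefore has no upper anchor, and since the $L^p$ term recurs at every level of your norm decomposition, the whole induction collapses.

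There is a secondary gap in the inductive step itself: for $\gam\in(p-1,2p-1)$ the ``lower-order space'' $W^{k,p}(\RRdh,w_{\gam+kp};X)$ corresponds to parameters $(k-1,\gam+p)$ with $\gam+p>2p-1$ outside the admissible range, so no induction hypothesis is available; unwinding further only pushes the $j=1$ Dirichlet reference outside the range of Theorem~\ref{thm:Dirsemi} at intermediate Sobolev orders. The paper avoids both issues by never leaving the admissible $\gam$-window: it first proves the (crude) resolvent estimate of Proposition~\ref{prop:sect_est_neu} by an induction that uses the multiplication operator $M$ to shift weights downward, then sharpens the exponent by complex interpolation in $\gam$ \emph{within} $(-1,2p-1)$ (Proposition~\ref{prop:sect_est_neu_improved}), and only then converts to the semigroup growth bound via Lemma~\ref{lem:growth_semigroup_abstract}.
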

From Theorems \ref{thm:sect:Dir} and \ref{thm:sect:Neu} together with \cite[Theorem G.5.2]{HNVW17}, the statements in
 Theorems \ref{thm:Dirsemi}\ref{it:Dirsemi2} and \ref{thm:Neusemi}\ref{it:Neusemi2} immediately follow. In Section \ref{subsec:generator} we prove that the generators of the Dirichlet and Neumann heat semigroup are $\delDir$ and $\delNeu$, respectively. Finally, in Section \ref{subsec:growth} we study the growth of the semigroups and prove Theorems \ref{thm:Dirsemi}\ref{it:Dirsemi2} and \ref{thm:Neusemi}\ref{it:Neusemi2}.

\subsection{Generators of the Dirichlet and Neumann heat semigroup}\label{subsec:generator}
We start with some preliminaries about the consistency of the resolvents and the semigroups. \\

Let $X_0$ and $X_1$ be two compatible Banach spaces and suppose that $B_{0}\in\mc{L}(X_0)$ and $B_{1}\in \mc{L}(X_1)$. Then we call the operators $B_{0}$ and $B_{1}$ \emph{consistent} if
\begin{equation*}
  B_{0}u=B_{1}u\qquad \text{ for all }u\in X_0\cap X_1.
\end{equation*}
For $z\in \Sigma \subseteq \CC$ the two families of operators $B_{0}(z)\in\mc{L}(X_0)$  and $B_{1}(z)\in\mc{L}(X_1)$ are called consistent if $B_{0}(z)$ and $B_{1}(z)$ are consistent for all $z\in \Sigma$.\\

From Theorems \ref{thm:LVresult}\ref{it:LVresult2} and \ref{thm:LVresult_Neumann}\ref{it:LVresult_Neumann2} we obtain an easy corollary on the consistency of the semigroups.
\begin{corollary}\label{cor:cons_semigroupLp}
  Let $j\in\{0,1\}$. Let $p_j\in(1,\infty)$, $\sigma\in (0,\frac{\pi}{2})$ and let $X$ be a $\UMD$ Banach space. Let $z\in \Sigma_{\sigma}$, then the following assertions hold.
  \begin{enumerate}[(i)]
    \item If $\gam_j\in (-1,2p_j-1)\setminus\{p_j-1\}$, then $e^{z\delDir}$ on $L^{p_j}(\RRdh, w_{\gam_j};X)$ for $j\in\{0,1\}$ are consistent.
    \item If $\gam_j\in (-1,p_j-1)$ and $\ell_j\in\{0,1\}$, then $e^{z\delNeu}$ on $W^{\ell_j, p_j}(\RRdh, w_{\gam_j};X)$ for $j\in\{0,1\}$ are consistent.
  \end{enumerate}
\end{corollary}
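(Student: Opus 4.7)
The plan is to exploit the fact that in both Theorems \ref{thm:LVresult} and \ref{thm:LVresult_Neumann} the semigroups have already been identified with the explicit convolutions \eqref{eq:Tt} and \eqref{eq:Tt_Neu}, formulas which are defined pointwise a.e.\ in a way that does not depend on the ambient function space. Once this is recognised, consistency becomes a purely kernel-level statement.

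First I would take $f\in L^{p_0}(\RRdh, w_{\gam_0};X)\cap L^{p_1}(\RRdh, w_{\gam_1};X)$ and verify that the convolution $H^{d,-}_z\ast f$ is well defined as an absolutely convergent Bochner integral for a.e.\ $x\in \RRdh$: this follows from the Gaussian decay of $G^d_z$ combined with $f\in L^{p_j}(\RRdh, w_{\gam_j};X)$ and H\"older's inequality (the dual weight $w_{\gam_j}^{-1/(p_j-1)}$ is locally integrable against a Gaussian for every admissible $\gam_j$). By Theorem \ref{thm:LVresult}\ref{it:LVresult2} the same integral represents $T_\Dir(z)f$ computed in either of the two ambient spaces, so the two representatives coincide pointwise a.e., which gives the desired consistency of $T_\Dir(z)$.

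For the Neumann case I would argue analogously with the kernel $H^{d,+}_z$. When $\ell_0=\ell_1=0$ the argument is a verbatim repetition of the Dirichlet case using $H^{d,+}_z$ in place of $H^{d,-}_z$. If $\ell_j=1$ for some $j$, I would use the trivial embedding $W^{1,p_j}(\RRdh, w_{\gam_j};X)\hookrightarrow L^{p_j}(\RRdh, w_{\gam_j};X)$ to conclude that $f$ still lies in $L^{p_j}(\RRdh, w_{\gam_j};X)$; Theorem \ref{thm:LVresult_Neumann}\ref{it:LVresult_Neumann2} then provides the same convolution representation $T_\Neu(z)f=H^{d,+}_z\ast f$, and pointwise agreement follows as before.

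There is essentially no hard step here; the only point worth pausing on is that the convolution representation really holds on the full range of admissible exponents and weights, which is exactly the content of Theorems \ref{thm:LVresult} and \ref{thm:LVresult_Neumann}. In particular, the consistency statement does not require any additional density or approximation argument, since the integral formulas themselves are intrinsic to $f$ and independent of the space in which the semigroup is analysed.
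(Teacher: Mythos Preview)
Your proposal is correct and matches the paper's approach exactly: the paper simply states that the corollary follows from Theorems \ref{thm:LVresult}\ref{it:LVresult2} and \ref{thm:LVresult_Neumann}\ref{it:LVresult_Neumann2}, and the underlying reason is precisely the one you identify, namely that both theorems give the semigroups via the same explicit kernel formulas \eqref{eq:Tt} and \eqref{eq:Tt_Neu}, which are intrinsic to $f$ and independent of the ambient space.
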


To prove Theorems \ref{thm:Dirsemi} and \ref{thm:Neusemi} we need the semigroups to be consistent on the weighted Sobolev spaces that we consider. To this end, we prove the consistency of the resolvents. We start with the consistency of the resolvents for the Dirichlet Laplacian.
\begin{lemma}[Consistency of Dirichlet resolvents]\label{lem:consistent_resol_RRdh}
  Let $j\in\{0,1\}$. Let $p_j\in(1,\infty)$, $k_j\in \NN_0$, $\gam_j\in (-1,2p_j-1)\setminus\{p_j-1\}$ and let $X$ be a $\UMD$ Banach space. Let $A_j:=\delDir$ on $W^{k_j,p_j}(\RRdh,w_{\gam_j+k_jp_j};X)$
    be as in Definition \ref{def:delRRdh}. Then the resolvents $R(z, A_j-\lambda)$ for $j\in\{0,1\}$ are consistent for all $\lambda\geq0$ and $z\in \Sigma_{\pi-\om}$ with $\om\in(0,\pi)$.
\end{lemma}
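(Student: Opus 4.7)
The strategy is to reduce the statement to the already-established consistency of the Dirichlet Laplacian resolvents on weighted $L^p$-spaces (Theorem~\ref{thm:LVresult} together with Corollary~\ref{cor:cons_semigroupLp}). I would take $f$ in the intersection $W^{k_0, p_0}(\RRdh, w_{\gam_0 + k_0 p_0}; X) \cap W^{k_1, p_1}(\RRdh, w_{\gam_1 + k_1 p_1}; X)$, set $u_j := R(z, A_j - \lambda)f \in W^{k_j+2,p_j}_{\Dir}(\RRdh, w_{\gam_j + k_j p_j}; X)$, and show that each $u_j$ coincides with the Dirichlet Laplacian resolvent acting on the ambient space $L^{p_j}(\RRdh, w_{\gam_j}; X)$.

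The principal step is to iterate Hardy's inequality (Corollary~\ref{cor:Sob_embRRdh}) exactly $k_j$ times. After $m \in \{1,\dots,k_j\}$ iterations, $W^{k_j+2, p_j}(\RRdh, w_{\gam_j + k_j p_j}; X)$ embeds into $W^{k_j+2-m, p_j}(\RRdh, w_{\gam_j + (k_j-m)p_j}; X)$; the hypothesis $\gam_j > -1$ guarantees the condition $\gam_j + (k_j-m+1)p_j > p_j - 1$ required at each step, while $\gam_j \neq p_j - 1$ and $\gam_j < 2p_j - 1$ ensure that no intermediate exponent lands at a singular value $j p_j - 1$. This yields the embeddings
\begin{align*}
W^{k_j, p_j}(\RRdh, w_{\gam_j + k_j p_j}; X) &\hookrightarrow L^{p_j}(\RRdh, w_{\gam_j}; X), \\
W^{k_j+2, p_j}(\RRdh, w_{\gam_j + k_j p_j}; X) &\hookrightarrow W^{2, p_j}(\RRdh, w_{\gam_j}; X).
\end{align*}
Since $\gam_j < 2p_j - 1$, the trace is continuous on the target of the second embedding, so $\Tr u_j = 0$ persists and $u_j \in W^{2, p_j}_{\Dir}(\RRdh, w_{\gam_j}; X)$. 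Writing $B_j$ for the Dirichlet Laplacian on $L^{p_j}(\RRdh, w_{\gam_j}; X)$, the uniqueness statement of Theorem~\ref{thm:LVresult} identifies $u_j = R(z, B_j - \lambda) f$.

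To conclude, I would invoke consistency of the $L^{p_j}$-resolvents: Corollary~\ref{cor:cons_semigroupLp} furnishes consistency of the semigroups $T_\Dir$ on $L^{p_0}(\RRdh, w_{\gam_0}; X) \cap L^{p_1}(\RRdh, w_{\gam_1}; X)$, and the Laplace transform representation $R(z, B_j - \lambda) f = \int_0^\infty e^{-(z+\lambda)t} T_\Dir(t) f \dd t$, valid for $\Re(z + \lambda) > 0$, transfers this to consistency of the resolvents on the intersection. Analytic continuation of the resolvent in $z$ then extends the identity to all $z \in \Sigma_{\pi - \om}$, yielding $u_0 = u_1$. The main obstacle is the bookkeeping in the iterated Hardy step: one must simultaneously verify that every intermediate weight exponent stays in an admissible range and that the Dirichlet boundary condition is preserved by the chain of embeddings. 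Both points are handled cleanly by the standing hypothesis $\gam_j \in (-1, 2p_j - 1) \setminus \{p_j - 1\}$.
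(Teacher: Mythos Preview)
Your proposal is correct and follows essentially the same route as the paper: reduce to the $L^{p_j}$-level via iterated Hardy embeddings (the paper's Corollary~\ref{cor:Sob_embRRdh}) and then invoke consistency of the Dirichlet resolvents on the $L^{p_j}$-spaces. The only cosmetic difference is that the paper cites \cite[Proposition 2.4]{Ar94} as a black box to pass from semigroup consistency to resolvent consistency at the $L^p$-level, whereas you spell this out via the Laplace transform plus analytic continuation; the paper also treats the shift $\lambda>0$ by the identity $R(z,A_j-\lambda)=R(z+\lambda,\delDir)$ after doing $\lambda=0$, while you handle all $\lambda\ge 0$ at once.
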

\begin{proof}
First, let $k_0=k_1=0$ and $\lambda=0$, then by \cite[Proposition 2.4]{Ar94} and Corollaries \ref{cor:LVresult_MR} and \ref{cor:cons_semigroupLp}, we have that the resolvents are consistent.
If $\lambda>0$, then consistency of the resolvents for $\lambda=0$ implies
\begin{equation}\label{eq:resolDirlambda}
  R(z, A_0-\lambda)=R(z+\lambda, \delDir) = R(z,A_1-\lambda),
\end{equation}
since $z+\lambda \in \lambda+\Sigma_{\pi-\om}\subseteq \Sigma_{\pi-\om}$. This proves the case $k_0=k_1=0$.

To prove the general case it now suffices to prove that for fixed $j\in\{0,1\}$ the resolvents on $W^{k_j,p_j}(\RRdh,w_{\gam_j+k_jp_j};X)$ and $L^{p_j}(\RRdh, w_{\gam_j};X)$ are consistent.
By Proposition \ref{prop:sect_est} the resolvent equation
\begin{equation*}
  (z+\lambda)u - \delDir u=f,\qquad z\in \Sigma_{\pi-\om},\;\;f\in L^{p_j}(\RRdh, w_{\gam_j};X)\cap W^{k_j,p_j}(\RRdh, w_{\gam_j+k_jp_j};X)
\end{equation*}
has unique solutions $u_0\in W^{2,p_j}_{\Dir}(\RRdh, w_{\gam_j};X)$ and $u_1\in W^{k_j+2,p_j}_{\Dir}(\RRdh, w_{\gam_j+k_jp_j};X)$. By Hardy's inequality (Corollary \ref{cor:Sob_embRRdh}) it follows that $u_0=u_1$.
This proves the lemma.
\end{proof}

Moreover, we have a similar lemma on the consistency of the Neumann resolvents.
\begin{lemma}[Consistency of Neumann resolvents]\label{lem:consistent_resol_RRdh_Neumann}
  Let $j\in\{0,1\}$. Let $p_j\in(1,\infty)$, $k_j\in \NN_0$, $\ell_j\in\{0,1\}$, $\gam_j\in (-1,p_j-1)$ and let $X$ be a $\UMD$ Banach space. Let $A_j:=\delNeu$ on $W^{k_j+\ell_j,p_j}(\RRdh,w_{\gam_j+k_jp_j};X)$
    be as in Definition \ref{def:delRRdh}. Then the resolvents $R(z, A_j-\lambda)$ for $j\in\{0,1\}$ are consistent for all $\lambda\geq 0$ and $z\in \Sigma_{\pi-\om}$ with $\om\in(0,\pi)$.
\end{lemma}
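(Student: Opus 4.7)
The plan is to follow the three-step strategy used for the Dirichlet case in Lemma \ref{lem:consistent_resol_RRdh}, adapted to the Neumann setting. First I would handle the base case $k_0 = k_1 = 0$ with $\lambda = 0$: the operators $A_j$ then act on $W^{\ell_j, p_j}(\RRdh, w_{\gam_j};X)$ with $\gam_j \in (-1, p_j-1)$ and $\ell_j \in \{0,1\}$. By Theorem \ref{thm:LVresult_Neumann}, each $A_j$ is sectorial and generates the bounded analytic $C_0$-semigroup $T_\Neu$, and by Corollary \ref{cor:cons_semigroupLp}(ii) these semigroups are mutually consistent on $W^{\ell_j, p_j}(\RRdh, w_{\gam_j};X)$ even when $\ell_0 \neq \ell_1$. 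Invoking \cite[Proposition 2.4]{Ar94} (exactly as in the Dirichlet proof) converts semigroup consistency into resolvent consistency for all $z \in \Sigma_{\pi-\om}$.

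Next, the extension to $\lambda > 0$ is immediate from the identity $R(z, A_j - \lambda) = R(z + \lambda, A_j)$, since $z + \lambda \in \Sigma_{\pi-\om}$ whenever $z \in \Sigma_{\pi-\om}$ and $\lambda \geq 0$. This reduces the general shifted case to the base case.

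For general $k_j$, it then suffices by transitivity to show, for each fixed $j \in \{0,1\}$ with $k_j \geq 1$, that the resolvent on $W^{k_j + \ell_j, p_j}(\RRdh, w_{\gam_j + k_j p_j};X)$ is consistent with the one on $W^{\ell_j, p_j}(\RRdh, w_{\gam_j};X)$. Since $\gam_j + k_j p_j > p_j - 1$, iterated application of Hardy's inequality (Corollary \ref{cor:Sob_embRRdh}) yields the embedding $W^{k_j + \ell_j, p_j}(\RRdh, w_{\gam_j + k_j p_j};X) \hookrightarrow W^{\ell_j, p_j}(\RRdh, w_{\gam_j};X)$, so every element of the higher-smoothness space also lies in the base space. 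Given such an $f$, Proposition \ref{prop:sect_est_neu} produces a unique solution $u_1 \in W^{k_j + \ell_j + 2, p_j}_\Neu(\RRdh, w_{\gam_j + k_j p_j};X)$ of $(z+\lambda)u - \delNeu u = f$, while Theorem \ref{thm:LVresult_Neumann} (or Corollary \ref{cor:LVresult_MR_neumann}) produces a unique solution $u_0 \in W^{\ell_j + 2, p_j}_\Neu(\RRdh, w_{\gam_j};X)$. A further application of Hardy's inequality embeds $u_1$ into the space where $u_0$ lives, so uniqueness there forces $u_0 = u_1$, giving the desired consistency.

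The main obstacle, relative to the Dirichlet argument, is keeping track of the additional parameter $\ell_j \in \{0,1\}$: one must confirm that Proposition \ref{prop:sect_est_neu} really applies in both Neumann subcases. For $\ell_j = 1$ one identifies $k = k_j$ and $\gam = \gam_j \in (-1, p_j-1)$ in the Proposition; for $\ell_j = 0$ with $k_j \geq 1$ one instead reindexes as $k = k_j - 1$ and $\gam = \gam_j + p_j \in (p_j - 1, 2p_j - 1)$, noting that $\gam + kp = \gam_j + k_j p_j > -1$. Both parameter choices lie in the admissible range of Proposition \ref{prop:sect_est_neu}, after which the uniqueness-plus-embedding step closes the argument exactly as in the Dirichlet setting.
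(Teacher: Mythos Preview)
Your proposal is correct and follows essentially the same approach as the paper's proof: base case $k_0=k_1=0$ via \cite[Proposition~2.4]{Ar94} together with Corollary~\ref{cor:cons_semigroupLp} and Theorem~\ref{thm:LVresult_Neumann}, extension to $\lambda>0$ by the resolvent shift, and the general case by combining Proposition~\ref{prop:sect_est_neu} with Hardy's inequality to force $u_0=u_1$. Your explicit bookkeeping for the parameter $\ell_j\in\{0,1\}$ (reindexing as $k=k_j$, $\gam=\gam_j$ when $\ell_j=1$, and as $k=k_j-1$, $\gam=\gam_j+p_j$ when $\ell_j=0$) is in fact more detailed than what the paper writes out.
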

\begin{proof}
  By \cite[Proposition 2.4]{Ar94} and Corollaries \ref{cor:LVresult_MR_neumann} and \ref{cor:cons_semigroupLp}, the case $k_0=k_1=0$ with $\lambda=0$ follows. Arguing as in \eqref{eq:resolDirlambda} gives the result for $\lambda>0$ as well.

  To prove the general case, it now suffices to prove that for fixed $j\in\{0,1\}$ the resolvents on $W^{k_j+\ell_j,p_j}(\RRdh,w_{\gam_j+k_jp_j};X)$ and $W^{\ell_j,p_j}(\RRdh, w_{\gam_j};X)$ are consistent.
  This follows similarly as in the proof of Lemma \ref{lem:consistent_resol_RRdh} using Proposition \ref{prop:sect_est_neu} and Hardy's inequality.
\end{proof}

We can now prove that the Dirichlet and Neumann semigroup are generated by $\delDir$ and $\delNeu$, as is stated in Theorems \ref{thm:Dirsemi} and \ref{thm:Neusemi}, respectively.
\begin{proof}[Proof of Theorems \ref{thm:Dirsemi} and \ref{thm:Neusemi}: generator identification]
Let $p\in(1,\infty)$, $k\in\NN_0$ and let $\gam\in(-1,2p - 1)\setminus\{p-1\}$. We first prove that  $(T_{\Dir}(z))_{z\in \Sigma_\sigma}$ with $\sigma\in(0,\frac{\pi}{2})$ is an analytic $C_0$-semigroup on $W^{k,p}(\RRdh, w_{\gam+kp};X)$ generated by $\delDir$.
Let
\begin{equation*}
  A_0:=\delDir \quad \text{on }L^p(\RRdh, w_{\gam};X)\quad\text{ and }\quad  A_1:=\delDir \quad \text{on }W^{k,p}(\RRdh, w_{\gam+kp};X)
\end{equation*}
be as in Definition \ref{def:delRRdh}. By Theorem \ref{thm:sect:Dir} and \cite[Theorem G.5.2]{HNVW17} we have that $A_1-\lambda$ with $\lambda>0$ generates the bounded analytic $C_0$-semigroup $(e^{-\lambda z}S(z))_{z\in \Sigma_{\sigma}}$ and
\begin{equation}\label{eq:inv_laplace_trnsfrm}
  e^{-\lambda z}S(z) f =\frac{1}{2\pi \ii}\int_{\Gam}e^{z s}R(s+\lambda, A_1)f\dd s,\qquad z\in \Sigma_\sigma, \;\; f\in W^{k,p}(\RRdh, w_{\gam+kp};X),
\end{equation}
where $\Gam$ is the upwards orientated boundary of $\Sigma_{\sigma'}\setminus \overline{B(0,r)}$ for some $r>0$ and $\sigma' \in (\frac{\pi}{2}+|\arg z|,\frac{\pi}{2}+\sigma)$.

We show that $S(z)=T_\Dir(z)$ on $W^{k,p}(\RRdh, w_{\gam+kp};X)$ for $z\in \Sigma_{\sigma}$.
By \eqref{eq:inv_laplace_trnsfrm}, Lemma \ref{lem:consistent_resol_RRdh} and Theorem \ref{thm:LVresult}\ref{it:LVresult2} we obtain for $f\in W^{k,p}(\RRdh, w_{\gam+kp};X)$
\begin{align*}
  e^{-\lambda z}S(z) f  & =\frac{1}{2\pi \ii}\int_{\Gam}e^{z s}R(s+\lambda, A_0)f\dd s
    =\frac{1}{2\pi \ii}\int_{\Gam}e^{(s-\lambda) z}R(s, A_0)f\dd s = e^{-\lambda z }T_\Dir(z) f.
\end{align*}
Therefore, $S(z)=T_\Dir(z)$ and thus $T_\Dir(z)$ is the analytic $C_0$-semigroup generated by $A_1$. This completes the Dirichlet case from Theorem \ref{thm:Dirsemi}.\\

We continue with the Neumann Laplacian.
If $\gam\in (-1,p-1)$ and $k\in \NN_0$, then define
\begin{equation*}
  A_0:=\delNeu \quad \text{on }W^{1,p}(\RRdh, w_{\gam};X)\quad\text{ and }\quad  A_1:=\delNeu \quad \text{on }W^{k+1,p}(\RRdh, w_{\gam+kp};X).
\end{equation*}
If $\gam\in (p-1,2p-1)$ and $k\in \NN_0\cup\{-1\}$, then define
\begin{equation*}
  A_0:=\delNeu \quad \text{on }L^p(\RRdh, w_{\gam-p};X)\quad\text{ and }\quad  A_1:=\delNeu \quad \text{on }W^{k+1,p}(\RRdh, w_{\gam+kp};X).
\end{equation*}
For both cases, we can proceed similarly as for the Dirichlet Laplacian using Theorem \ref{thm:sect:Neu}, Lemma \ref{lem:consistent_resol_RRdh_Neumann} and Theorem \ref{thm:LVresult_Neumann}\ref{it:LVresult_Neumann2}. This proves that $(T_{\Neu}(z))_{z\in \Sigma_\sigma}$ with $\sigma\in(0,\frac{\pi}{2})$ is an analytic $C_0$-semigroup on $W^{k+1,p}(\RRdh, w_{\gam+kp};X)$ which is generated by $\delNeu$, as stated in Theorem \ref{thm:Neusemi}.
\end{proof}

\subsection{Growth of the Dirichlet and Neumann semigroups}\label{subsec:growth}
We proceed with investigating the growth of the Dirichlet and Neumann heat semigroup on weighted Sobolev spaces and we prove Theorems \ref{thm:Dirsemi}\ref{it:Dirsemi3} and \ref{thm:Neusemi}\ref{it:Neusemi3}.

Before turning to the growth of the semigroups, we first reconsider the elliptic regularity estimates from Propositions \ref{prop:sect_est} and \ref{prop:sect_est_neu}. In particular, using complex interpolation we can improve these estimates with a sharper function $g_{k,\gam}(\lambda)$ on the right-hand side of the estimates. This will allow us to find sharper growth rates for the semigroups later on.

Let $p\in(1,\infty)$, $k\in \NN_0$, $\gam\in(-1,2p-1)$ and let $\eps>0$. Then we define
\begin{equation}\label{eq:improved_h}
  h_{k,\gam,\eps}(\lambda) := \begin{cases}
       1 & \mbox{if }\gam+kp\in (-1,2p-1) \\
        1+ |\lambda|^{-\frac{\gam+kp-2p+1+\eps}{2p}} & \mbox{if }\gam+kp>2p-1
      \end{cases}.
\end{equation}

For the Dirichlet Laplacian, we obtain the following improved elliptic regularity result.
\begin{proposition}[Elliptic regularity for the Dirichlet Laplacian]\label{prop:sect_est_improved}
 Let $p\in(1,\infty)$, $k\in \NN_0$, $\gam\in (-1,2p-1)\setminus\{p-1\}$, $\eps>0$, $\om\in(0,\pi)$ and let $X$ be a $\UMD$ Banach space. Let $\delDir$ on $W^{k,p}(\RRdh, w_{\gam+kp};X)$ be as in Definition \ref{def:delRRdh}. Then for all $f\in W^{k,p}(\RRdh,w_{\gam+kp};X)$ and $\lambda\in \Sigma_{\pi-\om}$, there exists a unique $u\in W^{k+2,p}_{\Dir}(\RRdh,w_{\gam+kp};X)$ such that $\lambda u-\delDir u =f$. Moreover, this solution satisfies
 \begin{equation*}
  \sum_{|\beta|\leq 2} |\lambda|^{1-\frac{|\beta|}{2}}\|\d^{\beta}u\|_{W^{k,p}(\RRdh,w_{\gam+kp};X)} \leq C h_{k,\gam,\eps}(\lambda)  \|f\|_{W^{k,p}(\RRdh,w_{\gam+kp};X)},
\end{equation*}
where $h_{k,\gam,\eps}$ is defined in \eqref{eq:improved_h} and the constant $C$ only depends on $p, k,\gam, \eps, \om,d$ and $X$.
\end{proposition}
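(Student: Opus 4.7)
The existence and uniqueness of $u$ are inherited verbatim from Proposition \ref{prop:sect_est}; only the sharper resolvent constant requires justification. I would split the argument according to the value of $\alpha:=\gam+kp$. When $\alpha\in(-1,2p-1)$ a direct inspection of \eqref{eq:K_IH} shows that this regime forces $k\in\{0,1\}$ and that $g_{k,\gam}(\lambda)$ is bounded uniformly in $\lambda$, so the bound with $h_{k,\gam,\eps}\equiv 1$ is already delivered by Proposition \ref{prop:sect_est}.

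For the non-trivial regime $\alpha>2p-1$ with $\gam\in(p-1,2p-1)$ I would use a single application of complex interpolation in the weight parameter. Pick $\gam_0=p-1-\delta\in(-1,p-1)$ and $\gam_1=\gam_0+p=2p-1-\delta\in(p-1,2p-1)$ with $\delta\in(0,\eps]$, and write $\gam=(1-\theta)\gam_0+\theta\gam_1$ for the appropriate $\theta\in(0,1)$. Proposition \ref{prop:sect_est} applied to each $\partial^\beta R(\lambda,\delDir)$, $|\beta|\leq 2$, gives endpoint operator bounds on $W^{k,p}(\RRdh,w_{\gam_j+kp};X)$ with growths $g_{k,\gam_0}(\lambda)\lesssim 1+|\lambda|^{-(k-1)/2}$ and $g_{k,\gam_1}(\lambda)\lesssim 1+|\lambda|^{-k/2}$. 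By Proposition \ref{prop:compl_int_gam} the complex interpolation space at level $\theta$ coincides with $W^{k,p}(\RRdh,w_{\gam+kp};X)$, so standard complex interpolation of operators yields the corresponding bound with constant $g_{k,\gam_0}^{1-\theta}g_{k,\gam_1}^{\theta}$. The separation $\gam_1-\gam_0=p$ is chosen so that the interpolated slope in $\gam$ equals $(k/2-(k-1)/2)/p=1/(2p)$, matching exactly the slope of the linear exponent inside $h_{k,\gam,\eps}$, while $\delta\leq\eps$ absorbs the vertical offset. For the remaining sliver $\gam\in(2p-1-\eps,2p-1)$ the target exponent exceeds $k/2$, so no improvement is required and Proposition \ref{prop:sect_est} applies directly.

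The last subcase is $\gam\in(-1,p-1)$ with $k\geq 2$ (forced by $\alpha>2p-1$). Here interpolation in $\gam$ alone is fundamentally insufficient: the log-convex envelope of $g_{k,\cdot}(\lambda)$ remains bounded below by $(k-1)/2$ throughout $(-1,p-1)$, whereas the target exponent can be arbitrarily small. I would proceed by induction on $k$, exploiting that $h_{k,\gam,\eps}$ depends on $(k,\gam)$ only through $\alpha$ and that the reduced pair $(k-1,\gam+p)$ yields the same $\alpha$ with $\gam+p\in(p-1,2p-1)$, which is already covered by the previous subcase. Tangential derivatives $\partial^{\tilde\alpha}$ (with $\tilde\alpha_1=0$) commute with $\delDir$ and preserve the Dirichlet boundary condition, so $\partial^{\tilde\alpha}R(\lambda,\delDir)=R(\lambda,\delDir)\partial^{\tilde\alpha}$; applying the case $k-1$ to $\partial^{\tilde\alpha}f$ for $|\tilde\alpha|\leq 1$ controls every derivative $\partial^\beta R(\lambda,\delDir)f$ with $|\beta|\leq k+2$ containing at least one tangential index. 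Purely-normal derivatives $\partial_1^m R(\lambda,\delDir)f$ are then recovered from the identity $\partial_1^2 R(\lambda,\delDir)f=\lambda R(\lambda,\delDir)f - f - \sum_{j\geq 2}\partial_j^2 R(\lambda,\delDir)f$ applied iteratively; the uniform sectorial estimate $\|\lambda R(\lambda,\delDir)\|_{\mathcal{L}(W^{k,p}(\RRdh,w_{\gam+kp};X))}\leq C$ from Theorem \ref{thm:sect:Dir} guarantees that converting $\partial_1^2$ into $\lambda$-terms and tangential-Laplacian terms introduces no extra powers of $|\lambda|$.

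The main obstacle is precisely this last subcase: a convex-envelope computation makes clear that interpolation in $\gam$ fundamentally cannot lower the bound below $(k-1)/2$ on $(-1,p-1)$, so an external mechanism must be invoked. The combination of tangential commutation with the PDE itself, feeding back inductively into the proposition at a smaller $k$, is the natural substitute, and its success depends crucially on the $k$-level sectoriality being uniform in $\lambda$, as this is exactly what allows the equation to convert normal derivatives into tangential ones and lower-order terms without paying extra powers of $|\lambda|$.
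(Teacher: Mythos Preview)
Your treatment of the case $\gam\in(p-1,2p-1)$ via complex interpolation between $\gam_0\in(-1,p-1)$ and $\gam_1=\gam_0+p\in(p-1,2p-1)$ is exactly the paper's argument, with the same choice of endpoints (the paper takes $\delta=\eps$ directly).

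For $\gam\in(-1,p-1)$, however, there is a genuine gap in your justification. You invoke the bound $\|\lambda R(\lambda,\delDir)\|_{\mc{L}(W^{k,p}(\RRdh,w_{\gam+kp};X))}\leq C$ uniformly in $\lambda$ from Theorem~\ref{thm:sect:Dir}, but that theorem only asserts sectoriality of the \emph{shifted} operator $\lambda_0-\delDir$ when $\gam+kp>2p-1$; the uniform unshifted bound you cite would force boundedness of the Dirichlet heat semigroup and directly contradict Theorem~\ref{thm:Dirsemi}\ref{it:Dirsemi3}. Without this input, converting $\partial_1^2$ into $\lambda$-terms via the equation does introduce dangerous factors of $|\lambda|$ for large $|\lambda|$. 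Your scheme can in fact be rescued by noting that for $|\lambda|\geq 1$ the crude bound $g_{k,\gam}(\lambda)\leq 2$ from Proposition~\ref{prop:sect_est} already dominates $h_{k,\gam,\eps}(\lambda)$, so only $|\lambda|<1$ needs the PDE identity, and there the extra powers of $|\lambda|$ are harmless---but this split is nowhere indicated, and as written the argument is circular.

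The paper avoids this detour entirely. It observes that in the inductive proof of Proposition~\ref{prop:sect_est} the constant $g_{k,\gam}$ for $\gam\in(-1,p-1)$ is dictated by the appearance of $g_{k-1,\gam+p}$ (coming from the estimate in the twin space with weight exponent $\gam+p\in(p-1,2p-1)$, see~\eqref{eq:est_sect_low}). Hence one simply re-runs that induction, feeding in the improved constant $h_{k-1,\gam+p,\eps}$ already obtained in the first subcase in place of $g_{k-1,\gam+p}$, and concludes directly since $h_{k-1,\gam+p,\eps}=h_{k,\gam,\eps}$. This is both shorter and avoids any appeal to sectoriality or to the equation.
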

\begin{proof}
  By Proposition \ref{prop:sect_est} we only have to prove the estimate for $k\in\NN_1$.   First, let  $\gam\in(p-1,2p-1)$  and without loss of generality we may assume $\eps\in (0,2p-1-\gam)$. Define $\gam_0=p-1-\eps\in  (-1,p-1)$ and $\gam_1=2p-1-\eps\in (p-1,2p-1)$. Then $\gam=(1-\theta)\gam_0+\theta\gam_1$ with $\theta=(\gam-p+1+\eps)/p$. Then Proposition \ref{prop:compl_int_gam} twice, properties of the complex interpolation method, Lemma \ref{lem:consistent_resol_RRdh} and the estimate in Proposition \ref{prop:sect_est}, yield
\begin{equation*}
  \begin{aligned}
\sum_{|\beta|\leq 2}|&\lambda|^{1-\frac{|\beta|}{2}}\|\d^{\beta}u\|_{W^{k,p}(\RR^d_+,w_{\gam+kp};X)}\\
 &\leq C \sum_{|\beta|\leq 2}|\lambda|^{1-\frac{|\beta|}{2}}\|\d^{\beta}u\|_{[W^{k,p}(\RR^d_+,w_{\gam_0+kp};X),W^{k,p}(\RR^d_+,w_{\gam_1+kp};X)]_\theta}\\
&\leq C (g_{k,\gam_0}(\lambda))^{1-\theta}(g_{k,\gam_1}(\lambda))^{\theta}\|f\|_{[W^{k,p}(\RR^d_+,w_{\gam_0+kp};X),W^{k,p}(\RR^d_+,w_{\gam_1 +kp};X)]_\theta}\\
&\leq C \big(1+ |\lambda|^{-\frac{k-1}{2}}\big)^{1-\theta}\big(1+ |\lambda|^{-\frac{k}{2}}\big)^{\theta}\|f\|_{W^{k,p}(\RR^d_+,w_{\gam+kp};X)}\\
&\leq C \big(1+ |\lambda|^{-\frac{\gam+kp-2p+1+\eps}{2p}}\big)\|f\|_{W^{k,p}(\RR^d_+,w_{\gam+kp};X)},
\end{aligned}
\end{equation*}
where we used that
\begin{equation*}
  -\frac{k-1}{2}(1-\theta)-\frac{k}{2}\theta=-\half(k-1+\theta)=-\frac{\gam+kp-2p+1+\eps}{2p}.
\end{equation*}

If $\gam\in (-1,p-1)$, then by inspection of the inductive proof of Proposition \ref{prop:sect_est}, we see that on $W^{k,p}(\RRdh, w_{\gam+kp};X)$ the function $g_{k,\gam}$ on the right-hand side of the elliptic regularity estimate \eqref{eq:est_sect_prop} is determined by $g_{k,\gam}(\lambda)=g_{k-1,\gam+p}(\lambda)$. Thus in a similar way, we obtain for $\gam\in (-1,p-1)$ and $k\geq 2$ the estimate
 \begin{equation*}
  \sum_{|\beta|\leq 2} |\lambda|^{1-\frac{|\beta|}{2}}\|\d^{\beta}u\|_{W^{k,p}(\RRdh,w_{\gam+kp};X)} \leq C h_{k-1,\gam+p,\eps}(\lambda)  \|f\|_{W^{k,p}(\RRdh,w_{\gam+kp};X)},
\end{equation*}
and the result follows upon noting that
$h_{k-1,\gam+p,\eps}(\lambda)=h_{k,\gam,\eps}(\lambda)$.
\end{proof}

In a similar way, there is the following improved elliptic regularity result for $\delNeu$.
\begin{proposition}[Elliptic regularity for the Neumann Laplacian]\label{prop:sect_est_neu_improved}
 Let $p\in(1,\infty)$, $k\in \NN_0\cup\{-1\}$ and $\gam\in (-1,2p-1)\setminus\{p-1\}$ be such that $\gam+kp>-1$. Moreover, let $\eps>0$, $\om\in(0,\pi)$ and let $X$ be a $\UMD$ Banach space. Let $\delNeu$ on $W^{k+1,p}(\RRdh, w_{\gam+kp};X)$ be as in Definition \ref{def:delRRdh}. Then for all $f\in W^{k+1,p}(\RRdh,w_{\gam+kp};X)$ and $\lambda\in \Sigma_{\pi-\om}$, there exists a unique $u\in W^{k+3,p}_{\Neu}(\RRdh,w_{\gam+kp};X)$ such that $\lambda u-\delNeu u =f$. Moreover, this solution satisfies
 \begin{equation*}
  \sum_{|\beta|\leq 2} |\lambda|^{1-\frac{|\beta|}{2}}\|\d^{\beta}u\|_{W^{k+1,p}(\RRdh,w_{\gam+kp};X)} \leq C h_{k+1,\gam,\eps}(\lambda)  \|f\|_{W^{k+1,p}(\RRdh,w_{\gam+kp};X)},
\end{equation*}
where $h_{k,\gam,\eps}$ is defined in \eqref{eq:improved_h} and the constant $C$ only depends on $p, k,\gam, \eps, \om,d$ and $X$.
\end{proposition}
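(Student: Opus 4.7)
The plan is to mirror the proof of Proposition \ref{prop:sect_est_improved} closely, replacing the Dirichlet inputs by their Neumann counterparts (Proposition \ref{prop:sect_est_neu} in place of Proposition \ref{prop:sect_est}, and Lemma \ref{lem:consistent_resol_RRdh_Neumann} in place of Lemma \ref{lem:consistent_resol_RRdh}). Existence and uniqueness of $u\in W^{k+3,p}_{\Neu}(\RRdh,w_{\gam+kp};X)$ are already provided by Proposition \ref{prop:sect_est_neu}, so only the sharper growth factor $h_{k+1,\gam,\eps}$ needs to be established. I split the proof into two cases according to the range of $\gam$.

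\textbf{Case $\gam\in(p-1,2p-1)$.} The range $k=-1$ is trivial: Proposition \ref{prop:sect_est_neu} already gives $g_{0,\gam}(\lambda)=1=h_{0,\gam,\eps}(\lambda)$. For $k\geq 0$, I interpolate in the weight exponent. Assume without loss of generality $\eps\in(0,\min\{p,2p-1-\gam\})$ and set $\gam_0:=p-1-\eps\in(-1,p-1)$, $\gam_1:=2p-1-\eps\in(p-1,2p-1)$, so that $\gam=(1-\theta)\gam_0+\theta\gam_1$ with $\theta=(\gam-p+1+\eps)/p\in(0,1)$. Since the hypothesis $\gam+kp>(k+1)p-1$ of Proposition \ref{prop:compl_int_gam} reduces to $\gam>p-1$, that proposition identifies
\begin{equation*}
\bigl[W^{k+1,p}(\RRdh,w_{\gam_0+kp};X),W^{k+1,p}(\RRdh,w_{\gam_1+kp};X)\bigr]_{\theta}=W^{k+1,p}(\RRdh,w_{\gam+kp};X).
\end{equation*}
Using Lemma \ref{lem:consistent_resol_RRdh_Neumann} to regard the resolvent as a single consistent operator and applying complex interpolation to the bounded linear map $f\mapsto(|\lambda|^{1-|\beta|/2}\d^{\beta}u)_{|\beta|\leq 2}$ between the corresponding $X^N$-valued spaces, Proposition \ref{prop:sect_est_neu} on each endpoint yields
\begin{equation*}
\sum_{|\beta|\leq 2}|\lambda|^{1-\frac{|\beta|}{2}}\|\d^{\beta}u\|_{W^{k+1,p}(\RRdh,w_{\gam+kp};X)}\leq C(g_{k+1,\gam_0}(\lambda))^{1-\theta}(g_{k+1,\gam_1}(\lambda))^{\theta}\|f\|_{W^{k+1,p}(\RRdh,w_{\gam+kp};X)}.
\end{equation*}
Substituting $g_{k+1,\gam_0}(\lambda)=1+|\lambda|^{-k/2}$ and $g_{k+1,\gam_1}(\lambda)=1+|\lambda|^{-(k+1)/2}$, and using the identity $\tfrac{k+\theta}{2}=\tfrac{\gam+(k+1)p-2p+1+\eps}{2p}$, the right-hand side is bounded by $Ch_{k+1,\gam,\eps}(\lambda)\|f\|_{W^{k+1,p}(\RRdh,w_{\gam+kp};X)}$.

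\textbf{Case $\gam\in(-1,p-1)$.} For $k=0$ one has $\gam+p\in(p-1,2p-1)\subseteq(-1,2p-1)$, hence $h_{1,\gam,\eps}(\lambda)=1=g_{1,\gam}(\lambda)$, and Proposition \ref{prop:sect_est_neu} already suffices. For $k\geq 1$ I replay Step 2 of the proof of Proposition \ref{prop:sect_est_neu}: taking $v_1=\d_1 u$ and $v_j=\d_j u$ for $j\geq 2$ produces a Dirichlet resolvent equation for $v_1$ and a Neumann resolvent equation for the $v_j$, each posed on $W^{k,p}(\RRdh,w_{\gam+kp};X)$. Since $\gam+p\in(p-1,2p-1)$, I may apply the improved estimate of Proposition \ref{prop:sect_est_improved} to $v_1$ and the already established Case above (with $k$ replaced by $k-1$ and $\gam$ replaced by $\gam+p$) to each $v_j$, both with constant $h_{k,\gam+p,\eps}(\lambda)$. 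The elementary identity $h_{k,\gam+p,\eps}(\lambda)=h_{k+1,\gam,\eps}(\lambda)$ (both growth functions are governed by $\gam+(k+1)p$) concludes the proof.

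\textbf{Main obstacle.} The core difficulty is Case 1, where one must ensure simultaneously that Proposition \ref{prop:compl_int_gam} is applicable (forcing the smallness restriction on $\eps$ together with $\gam_0+kp>-1$, which in turn needs $\eps<(k+1)p$) and that complex interpolation of the resolvent commutes with the sum over $|\beta|\leq 2$ carrying the inhomogeneous weights $|\lambda|^{1-|\beta|/2}$. The latter is resolved by bundling all derivatives into a single product-space target, so that one interpolates a single bounded operator between fixed endpoint spaces, exactly as in the Dirichlet argument of Proposition \ref{prop:sect_est_improved}.
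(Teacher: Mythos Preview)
Your proof is correct and follows essentially the same approach as the paper, which simply states that the argument is analogous to Proposition \ref{prop:sect_est_improved} with Proposition \ref{prop:sect_est_neu} as input. You have faithfully filled in the details: interpolation in the weight exponent for $\gam\in(p-1,2p-1)$, and for $\gam\in(-1,p-1)$ replaying Step 2 of Proposition \ref{prop:sect_est_neu} while invoking the already-improved Dirichlet and Neumann estimates at the lower order (note that for $v_1$ the Dirichlet factor is actually $h_{k,\gam,\eps}\leq h_{k,\gam+p,\eps}$, so your common bound is valid).
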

\begin{proof}
The proof uses Proposition \ref{prop:sect_est_neu} and is analogous to the proof of Proposition \ref{prop:sect_est_improved}.
\end{proof}

With the elliptic regularity estimates from Propositions \ref{prop:sect_est_improved} and \ref{prop:sect_est_neu_improved} at hand, we can establish (almost) optimal growth bounds for the Dirichlet and Neumann heat semigroup.

\begin{proof}[Proof of Theorems \ref{thm:Dirsemi}\ref{it:Dirsemi3} and \ref{thm:Neusemi}\ref{it:Neusemi3}]
We start with the Dirichlet case, i.e., we prove that for $\gam+kp>2p-1$, the semigroup $T_{\Dir}(t)$ on $W^{k,p}(\RRdh, w_{\gam+kp};X)$ grows polynomially and there are constants $c,C>0$  only depending on $p,k,\gam,\eps, d $ and $X$, such that
\begin{equation}\label{eq:upperbound}
       c\big(1+t^{\frac{\gam+kp-2p+1}{2p}}\big)\leq\|T_\Dir(t)\|\leq C \big(1+t^{\frac{\gam+kp-2p+1+\eps}{2p}}\big), \qquad t\geq 0,\,\eps>0.
   \end{equation}

By Proposition \ref{prop:sect_est_improved}, Lemma \ref{lem:growth_semigroup_abstract} and the fact that $\delDir$ generates $T_\Dir$, it follows that $T_\Dir(t)$ grows at most polynomially and the upper bound in \eqref{eq:upperbound} holds.

It remains to show that $T_\Dir(t)$ grows at least polynomially. We provide an example of a function $h\in W^{k,p}(\RRdh, w_{\gam+kp};X)$ such that $\|T_\Dir(t) h\|_{W^{k,p}(\RRdh, w_{\gam+kp};X)}\geq c_{p,k,\gam} t^{\frac{\gam+kp-2p+1}{2p}}$.
By setting $h= \zeta\otimes g$ with $g\in \mc{S}(\RR^{d-1};X)$ if $d\geq 2$, it suffices to find an $\zeta\in W^{k,p}(\RR_+,w_{\gam+kp}) $ such that $\|T_\Dir(t) \zeta\|_{W^{k,p}(\RR_+,w_{\gam+kp})}\geq c_{p,k,\gam} t^{\frac{\gam+kp-2p+1}{2p}}$.

For $\zeta$ it suffices to take the cut-off function defined by
\begin{equation}\label{eq:cut-offzeta}
  \zeta\in C^{\infty}(\RR_+)\quad \text{ such that }\quad \zeta(x)=\begin{cases}
                                                                     1 & \mbox{if } x\leq \half \\
                                                                     0 & \mbox{if } x\geq \frac{3}{4}
                                                                   \end{cases}.
\end{equation}
Note that $\zeta\in W^{k,p}(\RR_+,w_{\gam+kp})$. Let $t\geq 1$ and using that $1-e^{-a}\geq \half\min\{a,1\}$ for $a\geq 0$, we obtain
\begin{align*}
    \|T_\Dir(t) \zeta\|^p_{L^p(\RR_+, w_{\gam+kp})} & \geq \int_{0}^{\infty} x^{\gam+kp}\Big|\int_0^{\half} \frac{1}{\sqrt{4\pi t}}e^{-\frac{|x-y|^2}{4t}}\left(1-e^{-\frac{xy}{t}}\right)\dd y\Big|^p\dd x \\
     &\geq \int_{0}^{\sqrt{t}} x^{\gam+kp}\Big|\int_0^{\half} \half\frac{1}{\sqrt{4\pi t}}e^{-\frac{|x-y|^2}{4t}}\min\big\{1,\frac{xy}{t}\big\}\dd y\Big|^p\dd x\\
& \stackrel{\mathclap{t\geq 1}}{\geq}\; (4\sqrt{\pi})^{-p}  t^{-\frac{3p}{2}}\int_0^{\sqrt{t}}x^{\gam+(k+1)p} \Big|\int_0^{\half} e^{-\frac{|x-y|^2}{4t}}y\dd y\Big|^p\dd x\\
     & \stackrel{\mathclap{x\mapsto \sqrt{t}x}}{=}\;\;\;(4\sqrt{\pi})^{-p}t^{\frac{\gam+kp-2p+1}{2}}\int_0^{1}x^{\gam+(k+1)p} \Big|\int_0^{\half} e^{-\frac{|x-\frac{y}{\sqrt{t}}|^2}{4}}y\dd y\Big|^p\dd x  \\
     &\geq\; (4\sqrt{\pi})^{-p}t^{\frac{\gam+kp-2p+1}{2}}\int_0^{1}x^{\gam+(k+1)p} \Big|\int_0^{\half} e^{-\frac{1}{4}}y\dd y\Big|^p\dd x \\
     &=c_{p,k,\gam}\;t^{\frac{\gam+kp-2p+1}{2}}.
  \end{align*}
  Therefore, if $\gam+kp>2p-1$, then the semigroup $T_\Dir(t)$ on $W^{k,p}(\RRdh,w_{\gam+kp};X)$ grows at least polynomially and $\|T_\Dir(t)\|\geq 1$ for all $t\geq 0$. This implies the lower bound in \eqref{eq:upperbound} and finishes the proof of Theorem \ref{thm:Neusemi}\ref{it:Neusemi2}.\\

  It remains to prove Theorem \ref{thm:Neusemi}\ref{it:Neusemi3}, i.e., for $\gam+kp>p-1$ the semigroup $T_{\Neu}(t)$ on $W^{k+1,p}(\RRdh, w_{\gam+kp};X)$ grows polynomially and there are constants $c,C>0$ only depending on $p,k,\gam, \eps, d $ and $X$, such that
\begin{equation}\label{eq:upperbound_Neu}
       c\big(1+t^{\frac{\gam+kp-p+1}{2p}}\big)\leq\|T_\Neu(t)\|\leq C \big(1+t^{\frac{\gam+kp-p+1+\eps}{2p}}\big), \qquad t\geq 0,\,\eps>0.
   \end{equation}
   The upper bound in \eqref{eq:upperbound_Neu} follows from  Proposition \ref{prop:sect_est_neu_improved}, Lemma \ref{lem:growth_semigroup_abstract} and the fact that $\delNeu$ generates $T_\Neu$. The lower estimate follows from the same example as in the Dirichlet case above. Indeed, we obtain for $t\geq 1$
   \begin{align*}
    \|T_\Neu(t) \zeta\|^p_{L^p(\RR_+, w_{\gam+kp})} & \geq\; (4\sqrt{\pi})^{-p}  t^{-\frac{p}{2}}\int_0^{\sqrt{t}}x^{\gam+kp} \Big|\int_0^{\half} e^{-\frac{|x-y|^2}{4t}}\dd y\Big|^p\dd x\\
     & \stackrel{\mathclap{x\mapsto \sqrt{t}x}}{=}\;\;\;c_{p,k,\gam}t^{\frac{\gam+kp-p+1}{2}}\int_0^{1}x^{\gam+kp} \Big|\int_0^{\half} e^{-\frac{|x-\frac{y}{\sqrt{t}}|^2}{4}}\dd y\Big|^p\dd x  \\
     &=\tilde{c}_{p,k,\gam}\;t^{\frac{\gam+kp-p+1}{2}}.
  \end{align*}
  This finishes the proof of Theorem \ref{thm:Neusemi}\ref{it:Neusemi3}.
\end{proof}

\begin{remark}
  By inspection of the above proof, it follows that the example for the polynomial growth fails if the weight $w_{\gam+kp}$ is replaced by a weight that behaves as $w_{\gam+kp}$ near zero but becomes constant as $x_1\to\infty$. Using such a weight we expect that the corresponding heat semigroup is actually bounded. Similarly, on bounded domains the semigroup will be bounded as well.
\end{remark}

We end this section with an example that shows that the range for $\gam$ in Theorems \ref{thm:sect_calculus} and \ref{thm:sect_calculus_neu} is optimal in the sense that the theorems fail for $\gam\geq 2p-1$. To this end, recall that
\begin{equation}\label{eq:int_log_finite}
  \int_0^{\half}y^{\alpha}|\log(y)|^{\beta}\dd y<\infty
\end{equation}
if and only if $\alpha>-1$ and $\beta\in\RR$,  or, $\alpha=-1$ and $\beta<-1$.

\begin{example}\label{ex:no_sect}
  Let $p\in(1,\infty), k\in\NN_0$ and $\gam\geq 2p-1$. We provide an example of a function $h\in W^{k,p}(\RR^d_+,w_{\gam+kp};X)$ such that $T_\Dir(t)h\notin W^{k,p}(\RR^d_+,w_{\gam+kp};X)$ for all $t>0$. By setting $h= f\otimes g$ with $g\in \mc{S}(\RR^{d-1};X)$ if $d\geq 2$, it suffices to find an $f\in W^{k,p}(\RR_+,w_{\gam+kp}) $ such that $T_\Dir(t)f\notin W^{k,p}(\RR_+,w_{\gam+kp})$.

 Let $\zeta$ be as in \eqref{eq:cut-offzeta} and define
  \begin{equation*}
    f(x):=x^{-2}|\log(x)|^{-\frac{p+1}{2p}}\zeta(x),\qquad x\in\RR_+.
  \end{equation*}
  A straightforward computation shows that for $j\in\{0,\dots, k\}$ we have $f^{(j)}\in L^p(\RR_+,w_{\gam+kp})$ and thus $f\in W^{k,p}(\RR_+,w_{\gam+kp})$. Note that the heat kernel satisfies $H_t^{1,-}\geq 0$, so that for $t>0$ and $x\in(0,\half)$ we obtain
\begin{align*}
  T_\Dir(t)f(x) &\geq
    \frac{1}{\sqrt{4\pi t}}\int_0^{\half} e^{-\frac{|x-y|^2}{4t}}\big(1-e^{-\frac{xy}{t}}\big) y^{-2}|\log(y)|^{-\frac{p+1}{2p}}\dd y \\
   & \geq \frac{1}{\sqrt{4\pi t}} c_{t,x}\int_0^{\half}  y^{-1}|\log(y)|^{-\frac{p+1}{2p}}\dd y \stackrel{\eqref{eq:int_log_finite}}{=}\infty,
\end{align*}
where
\begin{equation*}
  c_{t,x}:=\inf_{y\in(0,\half)}y^{-1}e^{-\frac{|x-y|^2}{4t}}\left(1-e^{-\frac{xy}{t}}\right)>0,\qquad \text{for } x\in\big(0,\tfrac{1}{2}\big) \text{ and }t>0.
\end{equation*}
Therefore, $T_\Dir(t) f(x)=\infty$ on $(0,\half)$ and in particular $T_\Dir(t)f\notin W^{k,p}(\RR_+,w_{\gam+kp})$.

Analogously, for the Neumann Laplacian the example $f(x):=x^{-1}|\log(x)|^{-\frac{p+1}{2p}}\zeta(x)$ shows that the range for $\gam$ in Theorem \ref{thm:sect_calculus_neu} is optimal.
\end{example}

\section{The \texorpdfstring{$\Hinf$-calculus}{holomorphic functional calculus} for the Dirichlet and Neumann Laplacian}\label{sec:calculus}
In this section, we prove the following theorems concerning the $\Hinf$-calculus.

\begin{theorem}[$\Hinf$-calculus for $\lambda-\delDir$]\label{thm:calc:Dir}
  Let $p\in(1,\infty)$, $k\in\NN_0$, $\gam\in (-1,2p-1)\setminus\{p-1\}$ and let $X$ be a $\UMD$ Banach space. Let $\delDir$ on $W^{k,p}(\RRdh,w_{\gam+kp};X)$ be as in Definition \ref{def:delRRdh}.
 Assume that either
   \begin{enumerate}[(i)]
    \item \label{it:calcDir1} $\gam+kp\in (-1,2p-1)$ and $\lambda\geq0$, or,
    \item \label{it:calcDir2}  $\gam+kp>2p-1$ and $\lambda>0$.
  \end{enumerate}
   Then $\lambda-\delDir$ has a bounded $\Hinf$-calculus of angle $\om_{\Hinf}(\lambda-\delDir)=0$.
\end{theorem}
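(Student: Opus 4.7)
The plan is to prove Theorem \ref{thm:calc:Dir} by induction on $k\in\NN_0$, mirroring the induction for sectoriality in Proposition \ref{prop:sect_est}. The base case $k=0$ is the $L^p$-result from Theorem \ref{thm:LVresult}. For the inductive step, the natural reduction is via the norm equivalence of Lemma \ref{lem:norms_induction}: for $v \in W^{k+1,p}(\RRdh, w_{\gam+(k+1)p};X)$,
\[
\|v\|_{W^{k+1,p}(\RRdh, w_{\gam+(k+1)p};X)}\eqsim \sum_{|\alpha|\leq 1}\|M\d^{\alpha}v\|_{W^{k,p}(\RRdh,w_{\gam+kp};X)}.
\]
Applied to $v = f(\lambda-\delDir)u$ with $f \in H^1(\Sigma_\om) \cap \Hinf(\Sigma_\om)$ and $u\in D(\lambda-\delDir)$, this reduces the $\Hinf$-estimate to bounding $\|M\d^\alpha f(\lambda-\delDir)u\|_{W^{k,p}(\RRdh, w_{\gam+kp};X)}$ by $\|f\|_{\Hinf(\Sigma_\om)}\|u\|_{W^{k+1,p}(\RRdh, w_{\gam+(k+1)p};X)}$ for $|\alpha|\leq 1$.

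For each $|\alpha|\leq 1$ the argument splits into three cases. When $|\alpha|=0$, commute $M$ past $f(\lambda-\delDir)$ by writing $Mf(\lambda-\delDir)u = f(\lambda-\delDir)Mu + [M, f(\lambda-\delDir)]u$; the first term is controlled by the induction hypothesis at level $k$ applied to $Mu\in W^{k,p}(\RRdh, w_{\gam+kp};X)$, using Lemma \ref{lem:LV3.13_ext}. When $\alpha = e_j$ with $j\geq 2$, the derivative $\d_j$ preserves the Dirichlet domain and commutes with $\delDir$, hence with the functional calculus, so $\d_j f(\lambda-\delDir)u = f(\lambda-\delDir)\d_j u$ and the bound reduces to the previous case with $\d_j u$ in place of $u$. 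When $\alpha = e_1$, the derivative $\d_1$ does not preserve the Dirichlet domain, and one exploits the transfer identity $\d_1 R(z, \lambda-\delDir)g = R(z, \lambda-\delNeu)\d_1 g$ (on functions $g$ with vanishing trace) to obtain $\d_1 f(\lambda-\delDir)u = f(\lambda-\delNeu)\d_1 u$, transferring the estimate to the Neumann calculus of one order lower.

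The commutator $[M, f(\lambda-\delDir)]$ is controlled via the Cauchy integral representation of the functional calculus. Since $[\delDir, M] = 2\d_1$ on $D(\delDir)$, the identity
\[
[M, R(z,A)] = 2R(z,A)\d_1 R(z,A),\qquad A = \lambda-\delDir,
\]
yields
\[
[M, f(A)]u = \frac{1}{\pi i}\int_{\partial\Sigma_\nu} f(z)\,R(z,A)\d_1 R(z,A)u\, dz.
\]
Its $W^{k,p}(\RRdh, w_{\gam+kp};X)$-norm is then estimated by combining $|f(z)|\leq \|f\|_{\Hinf(\Sigma_\om)}$ along $\partial\Sigma_\nu$ with the resolvent bounds of Proposition \ref{prop:sect_est_improved}, applied successively to $\d_1 R(z,A)u$ and to the outer factor $R(z,A)$. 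The resulting integrand behaves like $|z|^{-1}\,h_{k,\gam,\eps}(z)^{2}$, which is integrable along $\partial\Sigma_\nu$ provided $\lambda>0$ in the regime $\gam+kp>2p-1$, as required.

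The main obstacle is the coupling with the Neumann case through the $\alpha = e_1$ transfer identity: since Theorem \ref{thm:sect_calculus_neu} is itself derived using Dirichlet results, the two $\Hinf$-calculi have to be established by a joint induction on $k$ that alternates between them, using the pair of identities $\d_1 R(z,\lambda-\delDir) = R(z,\lambda-\delNeu)\d_1$ and $\d_1 R(z,\lambda-\delNeu) = R(z,\lambda-\delDir)\d_1$ on appropriate dense subspaces. A secondary difficulty is the weight bookkeeping: depending on whether $\gam+kp$ lies below or above the thresholds $p-1$ (Neumann) and $2p-1$ (Dirichlet), the growth factor $h_{k,\gam,\eps}$ from Proposition \ref{prop:sect_est_improved} changes, and this must be carefully matched with the contour integrability, forcing the spectral shift $\lambda>0$ in the non-bounded-semigroup regime.
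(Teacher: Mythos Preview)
Your inductive framework and the treatment of the $|\alpha|=0$ term are essentially the paper's approach for its term $T_1$. However, the handling of the cases $\alpha=e_j$ with $j\geq 2$ and $\alpha=e_1$ both break down when $\gam\in(p-1,2p-1)$, and this is not a bookkeeping issue but a genuine obstruction.

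For $\alpha=e_j$ with $j\geq 2$, reducing to ``the previous case with $\d_j u$ in place of $u$'' leads, via Lemma~\ref{lem:comm_est1}, to the bound $\|[M,f(A_\Dir)]\d_j u\|_{W^{k,p}(w_{\gam+kp})}\lesssim \|f\|_{\Hinf}\|\d_j u\|_{W^{k,p}(w_{\gam+kp})}$. But $\|\d_j u\|_{W^{k,p}(w_{\gam+kp})}$ contains the top-order term $\|\d^{\alpha+e_j}u\|_{L^p(w_{\gam+kp})}$ with $|\alpha|=k$, which is \emph{not} controlled by $\|u\|_{W^{k+1,p}(w_{\gam+(k+1)p})}$ (there is no Hardy inequality left to trade the weight down by $p$). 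One could try to apply the induction hypothesis at the shifted weight $\gam+p$ instead, but for $\gam\in(p-1,2p-1)$ this puts $\gam+p>2p-1$ outside the admissible range. The paper instead handles the highest-order pure tangential derivatives by a completely different kernel-based perturbation argument in the weight parameter (Lemma~\ref{lem:tilde_der_f(A)}).

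For $\alpha=e_1$, your transfer identity $\d_1 R(z,\lambda-\delDir)u=R(z,\lambda-\delNeu)\d_1 u$ (which does hold on $\Cc^\infty(\RRdh;X)$) forces you to use the Neumann calculus on the space $W^{k,p}(w_{\gam+(k+1)p})$, i.e.\ with Neumann parameter $\gam'=\gam+2p$. For $\gam\in(p-1,2p-1)$ this gives $\gam'>2p-1$, where the Neumann Laplacian is not even sectorial (cf.\ Example~\ref{ex:no_sect}); no joint induction can manufacture a calculus that does not exist. The paper avoids Neumann entirely in the Dirichlet proof: the key observation is that although $\d_1$ does not commute with $R(z,\delDir)$, the operator $\d_1^2$ \emph{does} (Lemma~\ref{lem:comm}\ref{it:comm2}). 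Combined with the commutator identity $[M\d_1,R(z,\delDir)]=-2R(z,\delDir)\d_1^2 R(z,\delDir)$ from Lemma~\ref{lem:comm}\ref{it:comm3}, this leads to the integral $\int_{\Gam_\nu}f(z)R^2(z,A_\Dir)\d_1^2 u\,dz$, which is controlled in Lemma~\ref{lem:comm_est2} via the extended functional calculus for $g(z)=zf'(z)$ --- using the induction hypothesis, not a Neumann transfer.
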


\begin{theorem}[$\Hinf$-calculus for $\lambda-\delNeu$]\label{thm:calc:Neu}
  Let $p\in(1,\infty)$, $k\in\NN_0\cup\{-1\}$, $\gam\in (-1,2p-1)\setminus\{p-1\}$ and let $X$ be a $\UMD$ Banach space. Let $\delNeu$ on $W^{k+1,p}(\RRdh,w_{\gam+kp};X)$ be as in Definition \ref{def:delRRdh}.
 Assume that either
   \begin{enumerate}[(i)]
    \item  $\gam+kp\in (-1,p-1)$ and $\lambda\geq0$, or,
    \item $\gam+kp>p-1$ and $\lambda>0$.
  \end{enumerate}
  Then $\lambda-\delNeu$ has a bounded $\Hinf$-calculus of angle $\om_{\Hinf}(\lambda-\delNeu)=0$.
\end{theorem}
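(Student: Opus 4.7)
The plan is to follow the inductive strategy of Theorem \ref{thm:calc:Dir}, adding as the key new ingredient the bridge identity between the two boundary conditions. Specifically, if $u$ solves the Neumann resolvent equation for $\lambda-\delNeu$ then $\d_1 u$ vanishes on $\d\RRdh$ and solves the corresponding Dirichlet resolvent equation, so $\d_1 R(z,\lambda-\delNeu) f = R(z,\lambda-\delDir)\d_1 f$. Inserted into the Dunford contour integral, this lifts to
\begin{equation*}
\d_1\,\phi(\lambda-\delNeu) f \;=\; \phi(\lambda-\delDir)\,\d_1 f, \qquad \phi \in H^1(\Sigma_\om) \cap \Hinf(\Sigma_\om),
\end{equation*}
which is the main vehicle for transferring normal-derivative estimates from the Neumann calculus to the Dirichlet calculus (Theorem \ref{thm:calc:Dir}).

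Induction runs on $k$. The base cases lie in the Muckenhoupt class: for $k = -1$ one has $\gam \in (p-1,2p-1)$ and $w_{\gam-p} \in A_p(\RRdh)$, and for $k = 0$ with $\gam \in (-1,p-1)$ one has $w_\gam \in A_p(\RRdh)$; both are handled by Theorem \ref{thm:LVresult_Neumann}. For the induction step from $k$ to $k+1$, I would use the equivalent norm from Lemma \ref{lem:norms_induction},
\begin{equation*}
\|u\|_{W^{k+2,p}(\RRdh,w_{\gam+(k+1)p};X)} \;\simeq\; \sum_{|\alpha|\leq 1} \|M\,\d^\alpha u\|_{W^{k+1,p}(\RRdh,w_{\gam+kp};X)},
\end{equation*}
applied to $u = \phi(\lambda - \delNeu) f$. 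A tangential derivative $\d_j$ ($j \geq 2$) commutes with $\delNeu$, so $\d_j u = \phi(\lambda - \delNeu)\d_j f$ is controlled by the induction hypothesis; the normal derivative becomes $\d_1 u = \phi(\lambda - \delDir)\d_1 f$ via the bridge identity and is controlled by Theorem \ref{thm:calc:Dir} on $W^{k+1,p}(\RRdh,w_{\gam+(k+1)p};X)$. The factor $M$ and the $\alpha = 0$ term are absorbed by the multiplier bounds of Lemma \ref{lem:LV3.13_ext}.

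The chief obstacle is that, after the shift $\gam \mapsto \gam+p$ inherent in passing between levels, the Neumann $\Hinf$-calculus required on $W^{k+1,p}(\RRdh,w_{\gam+(k+1)p};X)$ may fall outside the range covered by the induction hypothesis when $\gam \in (p-1,2p-1)$. To resolve this, I would employ a commutator estimate: using $[\lambda-\delNeu, M] = -2\,\d_1$ together with the bridge identity, the Dunford integral yields
\begin{equation*}
[M, \phi(\lambda-\delNeu)] \;=\; -\frac{1}{\pi i}\int_{\Gam}\phi(z)\,R(z,\lambda-\delNeu)\,R(z,\lambda-\delDir)\,\d_1\,dz,
\end{equation*}
whose uniform boundedness in $\|\phi\|_{\Hinf(\Sigma_\om)}$ is reduced to combining the $A_p$-weighted Neumann resolvent bounds of Corollary \ref{cor:LVresult_MR_neumann} with the Dirichlet resolvent estimates of Corollary \ref{cor:LVresult_MR}, the latter valid throughout $\gam \in (-1,2p-1)\setminus\{p-1\}$. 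The angle identity $\om_{\Hinf}(\lambda-\delNeu) = 0$ then follows by letting $\om \downarrow 0$ in the uniform bounds, exactly as in the Dirichlet case.
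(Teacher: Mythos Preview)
Your bridge identity $\d_1 R(z,\delNeu) = R(z,\delDir)\d_1$ is correct and is indeed the backbone of the argument, and you correctly identify that the pure tangential contribution for $\gam\in(p-1,2p-1)$ is the real difficulty. However, your proposed resolution via the commutator does not close.

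Concretely, applying Lemma \ref{lem:norms_induction} reduces the step $k\to k+1$ to controlling $\|M\d_j\phi(A_\Neu)f\|_{W^{k+1,p}(w_{\gam+kp})}$ for $j\geq 2$ when $\gam\in(p-1,2p-1)$. The correct commutation identity (cf.\ Lemma \ref{lem:comm_est1_Neumann}) is
\[
M\phi(A_\Neu)u \;=\; \phi(A_\Dir)Mu \;-\; \frac{1}{\pi i}\int_{\Gam_\nu}\phi(z)\,R(z,A_\Dir)\,\d_1 R(z,A_\Neu)u\,dz,
\]
which mixes the two Laplacians (your formula has the resolvents in the wrong order and is not a pure commutator with $\phi(A_\Neu)$). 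The main term $\phi(A_\Dir)M\d_j f$ is fine via Theorem \ref{thm:calc:Dir}, but the integral error is bounded only by $C\|\phi\|_{\Hinf}\|\d_j f\|_{W^{k+1,p}(w_{\gam+kp})}$. This norm asks for $\d^\beta\d_j f\in L^p(w_{\gam+kp})$ up to $|\beta|=k+1$; Hardy only gives this from $\d_1\d^\beta\d_j f\in L^p(w_{\gam+(k+1)p})$, which is $k+3$ derivatives of $f$ --- one more than $f\in W^{k+2,p}(w_{\gam+(k+1)p})$ provides. So the induction does not close with this tool.

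The paper avoids this trap by not routing the tangential top-order derivatives through Lemma \ref{lem:norms_induction} at all. For $\gam\in(p-1,2p-1)$ it splits $\|\phi(A_\Neu)u\|_{W^{k+1,p}(w_{\gam+kp})}$ into (i) the $W^{k,p}$-part, handled by the $M$-identity above with $u=f$ (where the error \emph{is} controlled by Hardy), (ii) top-order derivatives containing a $\d_1$, handled via the bridge identity and Theorem \ref{thm:calc:Dir}, and (iii) the pure tangential top-order derivatives $\d^{\tilde\alpha}\phi(A_\Neu)u$ with $|\tilde\alpha|=k+1$, handled by Lemma \ref{lem:tilde_der_f(A)}. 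That lemma is a direct kernel estimate (heat-semigroup representation plus a weight-perturbation trick using $(x_1^\theta - M^\theta)^{k+1}$), and it is precisely the ingredient your plan is missing. Without it, the case $\gam\in(p-1,2p-1)$ cannot be reached, and then your treatment of $\gam\in(-1,p-1)$, $k\geq 1$ (which relies on the $\gam+p$-case) collapses as well.
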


In Sections \ref{subsec:prelim_calcDir} and \ref{subsec:prelim_calcNeu} below we prove some preliminary estimates on the Dirichlet and Neumann resolvent, respectively. In Section \ref{subsec:proofscalc} we prove Theorems \ref{thm:calc:Dir} and \ref{thm:calc:Neu}.

\subsection{Preliminary estimates for the Dirichlet resolvent}\label{subsec:prelim_calcDir}
We start with a preliminary lemma on commutators of the Dirichlet resolvent and derivatives.
\begin{lemma}\label{lem:comm}
   Let $p\in(1,\infty)$, $k\in \NN_0$, $\gam\in (-1,2p-1)\setminus\{p-1\}$ and let $X$ be a Banach space. Let $\delDir$ on $W^{k,p}(\RRdh, w_{\gam+kp};X)$ be as in  Definition \ref{def:delRRdh}.
Then for all $u\in \Cc^{\infty}(\RRdh;X)$ and $z\in\rho(\delDir)$ we have
   \begin{enumerate}[(i)]
   \item \label{it:comm1} \makebox[8.7cm][l]{$ [ \d_j^n, R(z,\delDir)]u=0$,}\quad  $n\in \NN_1,\,j\in\{2,\dots,d\}$,
  \item \label{it:comm2}\makebox[8.7cm][l]{$[\d_1^2, R(z,\delDir)]u=0$,}
  \item \label{it:comm3} \makebox[8.7cm][l]{$
    [M\d_1^{\ell}, R(z,\delDir)]u= -2R(z,\delDir)\d_1^{\ell+1}R(z,\delDir)u$,}\quad   $ \ell\in \{0,1\} $.
   \end{enumerate}
\end{lemma}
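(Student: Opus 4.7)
The plan is to fix $u \in \Cc^{\infty}(\RRdh;X)$ and set $v := R(z,\delDir)u$, which by Lemma~\ref{lem:Schwartz} (and uniqueness via Proposition~\ref{prop:sect_est}) lies in $\SS(\RRdh;X)$ and satisfies $(z - \del)v = u$ together with the Dirichlet condition $v(0,\cdot)=0$. The strategy for each of the three identities is the same: apply the differential operator $D$ in question (namely $\d_j^n$, $\d_1^2$, or $M\d_1^\ell$) to $v$, compute $(z-\del)(Dv)$ using $[D,\del]$, and then check that $Dv$ still lies in the domain $W^{2,p}_{\Dir}(\RRdh,w_\gam;X)$ so that we may invert $(z-\del)$ and read off $Dv = R(z,\delDir)(z-\del)(Dv)$. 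The commutator identity then follows by rearranging.

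For \ref{it:comm1}, the tangential derivative $\d_j^n$ commutes with $\del$, and $\d_j^n v(0,\cdot) = \d_j^n(v(0,\cdot)) = 0$, so $(z-\del)(\d_j^n v) = \d_j^n u$ with $\d_j^n v$ in the domain, yielding $\d_j^n R(z,\delDir)u = R(z,\delDir)\d_j^n u$. For \ref{it:comm2}, $\d_1^2$ also commutes with $\del$; the subtle point is the boundary condition. Evaluating the equation at $x_1=0$ gives $\del v(0,\cdot) = zv(0,\cdot) - u(0,\cdot) = 0$, and since $\sum_{j=2}^d \d_j^2 v(0,\cdot) = 0$ by tangential differentiation of the trace, we conclude $\d_1^2 v(0,\cdot)=0$; hence $\d_1^2 v \in D(\delDir)$ and \ref{it:comm2} follows as before.

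For \ref{it:comm3}, the Leibniz rule gives the key identity
\begin{equation*}
\del(M\d_1^\ell v) = M\d_1^\ell \del v + 2\d_1^{\ell+1}v, \qquad \ell\in\{0,1\},
\end{equation*}
so that $(z-\del)(M\d_1^\ell v) = M\d_1^\ell u - 2\d_1^{\ell+1}v$. Since $M\d_1^\ell v$ vanishes at $x_1=0$ (the factor $M$ kills the boundary value), we have $M\d_1^\ell v \in D(\delDir)$, and applying $R(z,\delDir)$ gives
\begin{equation*}
M\d_1^\ell v = R(z,\delDir)M\d_1^\ell u - 2R(z,\delDir)\d_1^{\ell+1}R(z,\delDir)u,
\end{equation*}
which is exactly the stated formula for $[M\d_1^\ell, R(z,\delDir)]u$.

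The main obstacle is checking the domain condition in \ref{it:comm2}: a priori $\d_1^2 v$ need not vanish on the boundary, but the resolvent equation itself forces this via the computation $\d_1^2 v(0,\cdot) = \del v(0,\cdot) - \sum_{j\ge 2}\d_j^2 v(0,\cdot) = 0$. The other steps are straightforward algebraic manipulations, and the smoothness of $v$ guaranteed by Lemma~\ref{lem:Schwartz} ensures that all differentiations are justified pointwise.
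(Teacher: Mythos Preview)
Your proof is correct and follows essentially the same strategy as the paper's: identify $R(z,\delDir)u$ with the Schwartz solution from Lemma~\ref{lem:Schwartz}, compute $(z-\delDir)$ applied to the relevant expression, and invert using the domain condition. The only notable difference is in \ref{it:comm2}: the paper observes algebraically that $\d_1^2 = \delDir - \sum_{j=2}^d \d_j^2$ and reduces directly to \ref{it:comm1} together with the trivial fact $[\delDir,R(z,\delDir)]=0$, thereby bypassing any boundary check; you instead verify the trace condition $\d_1^2 v(0,\cdot)=0$ by hand, using $u(0,\cdot)=0$ (valid since $u\in\Cc^\infty(\RRdh;X)$) and evaluating the equation at $x_1=0$. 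Both routes are sound; the paper's is a touch slicker, while yours makes the underlying boundary structure more explicit.
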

It should be noted that $R(z,\delDir)$ and $\d_1$ do not commute since $\d_1 R(z,\delDir)u$ does not satisfy the Dirichlet boundary condition. This causes various complications in the proof of Theorem \ref{thm:calc:Dir}.
\begin{proof}
  Let $j\in\{2,\dots,d\}$, then using that $\delDir$ and $\d_j$ commute gives
  \begin{align*}
  (z-\delDir)[\d_j, R(z,\delDir)]u&=(z-\delDir)\d_j R(z,\delDir)u-\d_j u\\
  &=\d_j(z-\delDir) R(z,\delDir)u-\d_ju=0,
\end{align*}
  since $\d_j R(z,\delDir)u\in D(\delDir)$ by Lemma \ref{lem:Schwartz}. This proves \ref{it:comm1} for $n=1$ and the general case follows by iteration. Note that $\d_1^2=\delDir-\sum_{j=2}^d\d_j^2$, so \ref{it:comm2} follows from \ref{it:comm1} and the fact that $R(z,\delDir)$ and $\delDir$ commute.

Finally, using \eqref{eq:comm_Md_del} and $M\d_1^{\ell}R(z,\delDir)u\in D(\delDir)$, we compute
\begin{align*}
  (z-\delDir)&[M\d_1^{\ell},R(z,\delDir)]u \\
  & =  zM\d_1^{\ell}R(z,\delDir)u -\delDir M\d_1^{\ell}R(z,\delDir)u -M\d_1^{\ell}u \\\
   &=   zM\d_1^{\ell}R(z,\delDir)u-M\del\d_1^{\ell}R(z,\delDir)u-2\d_1^{\ell+1} R(z,\delDir)u-M\d_1^{\ell}u\\
   &=   M\d_1^{\ell}(z-\delDir)R(z,\delDir)u-2\d_1^{\ell+1} R(z,\delDir)u-M\d_1^{\ell}u\\
   &= -2\d_1^{\ell+1} R(z,\delDir) u,
\end{align*}
which proves \ref{it:comm3}.
\end{proof}

In the following two lemmas, we prove estimates for the commutators in Lemma \ref{lem:comm}\ref{it:comm3}, which are required in the proof of Theorem \ref{thm:calc:Dir}. We start with an estimate for the commutator in Lemma \ref{lem:comm}\ref{it:comm3} with $\ell=0$.
\begin{lemma}\label{lem:comm_est1}
 Let $p\in(1,\infty)$, $k\in \NN_0$, $\gam\in (-1,2p-1)\setminus\{p-1\}$, $\lambda>0$ and let $X$ be a $\UMD$ Banach space. Let $\delDir$ on $W^{k,p}(\RRdh, w_{\gam+kp};X)$ be as in  Definition \ref{def:delRRdh}. Let $0<\nu<\om<\pi$ and let $\Gam_{\nu}$ be the downwards orientated boundary of $\Sigma_{\nu}\setminus B(0, \frac{\lambda}{2})$. Then there exists a $C>0$ such that
  \begin{align*}
    \Big\|\int_{\Gam_\nu}f(z)&R(z,\lambda-\delDir)\d_1R(z,\lambda-\delDir)u \dd z\Big\|_{W^{k,p}(\RRdh, w_{\gam+k p};X)}\\
      \leq &\;C \|f\|_{\Hinf(\Sigma_{\om})}\|u\|_{W^{k,p}(\RRdh, w_{\gam+k p};X)},
  \end{align*}
  for all $f\in H^1(\Sigma_{\om})\cap\Hinf(\Sigma_\om)$ and $u\in W^{k,p}(\RRdh, w_{\gam+k p};X)$. Moreover, the constant $C$ only depends on $p, k, \gam, \lambda, \om, \nu, d $ and $X$.
\end{lemma}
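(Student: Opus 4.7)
The plan is to bound the integrand $R(z,\lambda-\delDir)\d_1 R(z,\lambda-\delDir) u$ pointwise in $z$ with decay of order $|z|^{-3/2}$, and then to integrate this bound against $|f(z)|\leq \nrm{f}_{\Hinf(\Sigma_\om)}$, using that $\int_{\Gam_\nu}|z|^{-3/2}\abs{\dd z}$ is finite because $\Gam_\nu$ is uniformly bounded away from the origin by $\lambda/2>0$. The key ingredient is the resolvent estimate obtained in the proof of Theorem~\ref{thm:sect:Dir}: combining Proposition~\ref{prop:sect_est} with the argument leading to \eqref{eq:resol_est_A} produces a constant $\tilde C>0$ (depending on $p,k,\gam,\lambda,\nu,d$ and $X$) such that, for all $z\in\Gam_\nu$ and $v\in W^{k,p}(\RRdh, w_{\gam+kp};X)$,
\begin{equation*}
|z|\,\n R(z,\lambda-\delDir) v\n_{W^{k,p}(\RRdh, w_{\gam+kp};X)} + |z|^{1/2}\n \d_1 R(z,\lambda-\delDir) v\n_{W^{k,p}(\RRdh, w_{\gam+kp};X)}\leq \tilde C\n v\n_{W^{k,p}(\RRdh, w_{\gam+kp};X)}.
\end{equation*}
To see this, I would decompose $\Gam_\nu$ into the two rays of argument $\pm\nu$, where the sectoriality estimate of Theorem~\ref{thm:sect:Dir} with any $\omega\in(0,\nu)$ applies, and a compact arc on $|z|=\lambda/2$, where the resolvent is continuous and hence uniformly bounded.

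With this in hand, the first step is to apply the derivative estimate (the $|\beta|=1$ contribution) with $v=u$ to place $w := \d_1 R(z,\lambda-\delDir) u$ in $W^{k,p}(\RRdh, w_{\gam+kp};X)$ with norm at most $\tilde C|z|^{-1/2}\n u\n_{W^{k,p}}$, and then to apply the $|\beta|=0$ estimate to $w$ to obtain
\begin{equation*}
\nrmb{R(z,\lambda-\delDir)\d_1 R(z,\lambda-\delDir) u}_{W^{k,p}(\RRdh, w_{\gam+kp};X)}\leq \tilde C^2 |z|^{-3/2}\n u\n_{W^{k,p}(\RRdh, w_{\gam+kp};X)}.
\end{equation*}
Note that $w$ need not satisfy the Dirichlet boundary condition; however, this is immaterial because $R(z,\lambda-\delDir)$ is a bounded operator on the entire space $W^{k,p}(\RRdh, w_{\gam+kp};X)$, mapping it into the Dirichlet domain $W^{k+2,p}_\Dir(\RRdh, w_{\gam+kp};X)$.

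The second step is then routine: bound $|f(z)|\leq \n f\n_{\Hinf(\Sigma_\om)}$ on $\Gam_\nu\subseteq \Sigma_\om$ (since $\nu<\om$), invoke the Bochner triangle inequality, and observe that $\int_{\Gam_\nu}|z|^{-3/2}\abs{\dd z}$ is a finite constant depending only on $\lambda$ and $\nu$—each ray contributes $\int_{\lambda/2}^\infty r^{-3/2}\dd r<\infty$, and the arc contributes a bounded term. Absorbing $\tilde C^2$ and this geometric constant into the final $C$ yields the asserted bound.

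I do not anticipate any genuine obstacle here; the lemma is a direct consequence of the sectoriality already in hand. The only conceptual point worth flagging is that $\d_1 R(z,\lambda-\delDir) u$ typically fails the Dirichlet boundary condition—this is precisely why $\d_1$ does not commute with $R(z,\lambda-\delDir)$ and why the commutator of Lemma~\ref{lem:comm}\ref{it:comm3} is nonzero—yet this nonvanishing trace plays no role in the present argument, since we only need $w\in W^{k,p}(\RRdh, w_{\gam+kp};X)$ in order to apply the resolvent to it once more.
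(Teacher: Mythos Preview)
Your proposal is correct and follows essentially the same route as the paper's proof: both use the resolvent estimates from \eqref{eq:resol_est_A} to obtain $\|R(z,\lambda-\delDir)\|\lesssim |z|^{-1}$ and $\|\d_1 R(z,\lambda-\delDir)\|\lesssim |z|^{-1/2}$ uniformly along $\Gam_\nu$, combine them to get $|z|^{-3/2}$ decay of the integrand, and then integrate against $\|f\|_{\Hinf(\Sigma_\om)}$. Your explicit treatment of the arc at $|z|=\lambda/2$ (via compactness and continuity of the resolvent) and your remark that $\d_1 R(z,\lambda-\delDir)u$ need not satisfy the Dirichlet condition are helpful clarifications that the paper leaves implicit.
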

\begin{proof}
From \eqref{eq:resol_est_A} in the proof of Theorem \ref{thm:sect:Dir} we obtain for any $\sigma\in (0,\pi)$
\begin{equation*}\label{eq:resol_est}
  \sup_{z\in \CC\setminus\overline{\Sigma_\sigma}}|z|\|R(z,\lambda-\delDir)\|<\infty\quad \text{ and } \quad\sup_{z\in \CC\setminus\overline{\Sigma_\sigma}}|z|^{\half}\|\d_1R(z,\lambda-\delDir)\|<\infty.
\end{equation*}
Therefore,
\begin{align*}
  \Big\|\int_{\Gam_\nu}& f(z)R(z,\lambda-\delDir)\d_1R(z,\lambda-\delDir)u\dd z\Big\|_{W^{k,p}(\RRdh, w_{\gam+kp};X)}\\
   & \leq \|f\|_{\Hinf(\Sigma_\om)}\int_{\Gam_\nu}\|zR(z,\lambda-\delDir)\|\|z^{\half}\d_1 R(z,\lambda-\delDir)\|\|u\|_{W^{k,p}(\RRdh, w_{\gam+kp};X)} \frac{|\mathrm{d}z|}{|z|^{\frac{3}{2}}}\\
    &\leq C \|f\|_{\Hinf(\Sigma_\om)}\|u\|_{W^{k,p}(\RRdh, w_{\gam+kp};X)}.\qedhere
\end{align*}
\end{proof}
To continue, we prove an estimate for the commutator in Lemma \ref{lem:comm}\ref{it:comm3} with $\ell=1$ under the additional assumption that $\lambda-\delDir$ has a bounded $\Hinf$-calculus.
\begin{lemma}\label{lem:comm_est2}
  Let $p\in(1,\infty)$, $k\in \NN_0$, $\gam\in (-1,2p-1)\setminus\{p-1\}$, $\lambda>0$ and let $X$ be a $\UMD$ Banach space. Let $\delDir$ on $W^{k,p}(\RRdh, w_{\gam+kp};X)$ be as in  Definition \ref{def:delRRdh}. Let $0<\nu<\om<\sigma<\pi$ and let $\Gam_{\nu}$ be the downwards orientated boundary of $\Sigma_{\nu}\setminus B(0, \frac{\lambda}{2})$. Assume that $\lambda-\delDir$ has a bounded $\Hinf(\Sigma_\om)$-calculus on $W^{k,p}(\RRdh, w_{\gam+kp};X)$. Then there exists a $C>0$ such that
   \begin{align*}
        \Big\|\int_{\Gam_\nu}f(z)R^2(z,\lambda-\delDir)\d_1^2u \dd z\Big\|_{W^{k,p}(\RRdh, w_{\gam+k p};X)} &\leq C \|f\|_{\Hinf(\Sigma_{\sigma})}\|u\|_{W^{k,p}(\RRdh, w_{\gam+k p};X)},
  \end{align*}
  for all $f\in H^1(\Sigma_{\sigma})\cap\Hinf(\Sigma_{\sigma})$ and $u\in W^{k,p}(\RRdh, w_{\gam+k p};X)$. Moreover, the constant $C$ only depends on $p,k,\gam,\lambda, \om, \nu,\sigma, d $ and $X$. In addition, if $k=0$, then the constant $C$ can be taken independent of $\lambda$.
\end{lemma}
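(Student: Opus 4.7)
The plan is to reduce to smooth $u$ by density, commute $\d_1^2$ past $R^2(z,A)$ with $A := \lambda - \delDir$, and then transform the contour integral via a single integration by parts in $z$. For $u \in \Cc^{\infty}(\RRdh;X)$, Lemma \ref{lem:comm}\ref{it:comm2} (applied via the identity $R(z,A) = -R(\lambda-z, \delDir)$) yields $\d_1^2 R^2(z,A)u = R^2(z,A)\d_1^2 u$, hence
\begin{equation*}
\int_{\Gam_\nu} f(z) R^2(z,A)\d_1^2 u\,\mathrm{d}z = \d_1^2 \int_{\Gam_\nu} f(z) R^2(z,A) u\,\mathrm{d}z.
\end{equation*}
Since $R^2(z,A) = -\tfrac{\mathrm{d}}{\mathrm{d}z}R(z,A)$ and $\|f(z)R(z,A)u\|_{W^{k,p}} \lesssim \|f\|_{\Hinf(\Sigma_\sigma)}\|u\|_{W^{k,p}}/|z|$ at infinity along $\Gam_\nu$, one integration by parts gives
\begin{equation*}
\int_{\Gam_\nu} f(z) R^2(z,A) u\,\mathrm{d}z = \int_{\Gam_\nu} f'(z) R(z,A) u\,\mathrm{d}z = 2\pi i\, g(A) A^{-1} u,
\end{equation*}
where $g(z) := zf'(z)$. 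The second identification uses the extended calculus of Remark \ref{rem:inv_calc} (valid since $\lambda > 0$ implies $0 \in \rho(A)$): writing $f'(z) = g(z)/z$ we have $(g/z)(A) = g(A) A^{-1}$. By Cauchy's integral formula applied inside $\Sigma_\sigma$, $g \in \Hinf(\Sigma_\om)$ with $\|g\|_{\Hinf(\Sigma_\om)} \leq C_{\om,\sigma}\|f\|_{\Hinf(\Sigma_\sigma)}$.

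Next, set $v := g(A) A^{-1} u \in D(A) = W^{k+2,p}_\Dir(\RRdh, w_{\gam+kp};X)$. Since $g(A)$ commutes with $A$, one has $Av = g(A) u$, i.e., $v$ solves the Dirichlet resolvent equation $\lambda v - \delDir v = g(A) u$. The elliptic regularity of Proposition \ref{prop:sect_est} combined with the assumed bounded $\Hinf(\Sigma_\om)$-calculus of $A$ on $W^{k,p}(\RRdh, w_{\gam+kp};X)$ gives
\begin{equation*}
\|\d_1^2 v\|_{W^{k,p}(\RRdh,w_{\gam+kp};X)} \leq C\, g_{k,\gam}(\lambda)\,\|g(A) u\|_{W^{k,p}(\RRdh,w_{\gam+kp};X)} \leq C\, g_{k,\gam}(\lambda)\,\|f\|_{\Hinf(\Sigma_\sigma)}\,\|u\|_{W^{k,p}(\RRdh,w_{\gam+kp};X)}.
\end{equation*}
Together with the preceding identity and density (Lemma \ref{lem:density}, which applies since $\gam+kp>kp-1$), this yields the desired estimate on all of $W^{k,p}(\RRdh, w_{\gam+kp};X)$. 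When $k=0$, Corollary \ref{cor:LVresult_MR} supplies the elliptic estimate with a constant independent of $\lambda$ (equivalently, $g_{0,\gam}(\lambda) \equiv 2$), so the final constant can be chosen independently of $\lambda$ in that case.

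The main technical obstacle is the clean identification $\int_{\Gam_\nu} f'(z) R(z,A) u\,\mathrm{d}z = 2\pi i\, g(A) A^{-1} u$, since $f'$ itself does not lie in the basic $H^1 \cap \Hinf$ class and one must invoke the extended functional calculus for invertible sectorial operators (Remark \ref{rem:inv_calc}). A related subtlety is that the commutation $[\d_1^2, R(z,A)]=0$ must propagate through the double resolvent $R^2(z,A)$ and then through the contour integral; for $u \in \Cc^{\infty}(\RRdh;X)$ this follows from Lemma \ref{lem:comm}\ref{it:comm2} applied twice, noting that $R(z,A)u$ is Schwartz with vanishing Dirichlet trace so that the same argument applies, but some care is required to extend the resulting operator identity from $\Cc^{\infty}$ to all of $W^{k,p}$ by density.
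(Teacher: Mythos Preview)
Your approach is correct and follows the same architecture as the paper: both reduce the contour integral to $2\pi i\, g(A)A^{-1}\d_1^2 u$ (equivalently $2\pi i\,\d_1^2 g(A)A^{-1}u$, by Lemma~\ref{lem:comm}\ref{it:comm2}) with $g(z)=zf'(z)\in\Hinf(\Sigma_\om)$, bound $g(A)$ via the assumed calculus together with $\|g\|_{\Hinf(\Sigma_\om)}\lesssim\|f\|_{\Hinf(\Sigma_\sigma)}$, and finish with Proposition~\ref{prop:sect_est} for the $\d_1^2 A^{-1}$ factor.

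The only substantive difference is how the integral identity is established. You integrate by parts to pass from $\int f R^2$ to $\int f' R$ and then invoke the extended calculus to read off $(f')(A)=g(A)A^{-1}$; the paper instead inserts $AA^{-1}$, rewrites the integrand as $f(z)\,A_\Dir R^2(z,A_\Dir)\,[A_\Dir^{-1}\d_1^2 u]$, and proves $\frac{1}{2\pi i}\int_{\Gamma_\nu} f(z)\,A_\Dir R^2(z,A_\Dir)v\,\mathrm{d}z = g(A_\Dir)v$ directly via the regularisers $\zeta_n(z)=\frac{n}{n+z}-\frac{1}{1+nz}$, a Dunford representation of $AR^2(z,A)$, Fubini, and Cauchy's differentiation formula. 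The paper's route is thus self-contained, whereas yours relies on the identity $\frac{1}{2\pi i}\int_{\Gamma_\nu}\tfrac{g(z)}{z}R(z,A)\,\mathrm{d}z = g(A)A^{-1}$ for $g\in\Hinf$ and invertible sectorial $A$; this is standard, but Remark~\ref{rem:inv_calc} alone (which only deforms the contour for $f\in H^1$) does not supply it, so a reference to the extended calculus or a short regulariser argument should be added. For the $\lambda$-independence at $k=0$ you should also note, as the paper does, that the $\Hinf$-calculus constant for $\lambda-\delDir$ on $L^p$ can be taken uniformly in $\lambda$ (via Theorem~\ref{thm:LVresult} and the standard shift result for the calculus).
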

\begin{proof}
  Let $g(z)=zf'(z)$. We first claim that $g\in \Hinf(\Sigma_{\om})$. Indeed, fix a $\sigma'\in (\om,\sigma)$, then by Cauchy's differentiation formula and the substitution $s\mapsto s|z|$
    \begin{align*}
  \sup_{z\in \Sigma_{\om}}|zf'(z)| &\leq \sup_{z\in \Sigma_{\om}}\Big|\int_{\d\Sigma_{\sigma'}}\frac{zf(s)}{(s-z)^2}\dd s \Big| \\
   & \leq \|f\|_{\Hinf(\Sigma_{\sigma})}\sup_{z\in \Sigma_{\om}} \int_{\d\Sigma_{\sigma'}}\Big|\frac{s}{|z|}-\frac{z}{|z|}\Big|^{-2}|z|^{-1}|\mathrm{d}s|\\
   &\leq\|f\|_{\Hinf(\Sigma_{\sigma})}\sup_{z\in \Sigma_{\om}} \int_{\d\Sigma_{\sigma'}}\Big|s-\frac{z}{|z|}\Big|^{-2}|\mathrm{d}s|\\
   &\leq C_{\om, \sigma}\|f\|_{\Hinf(\Sigma_{\sigma})}.
\end{align*}
Secondly, we claim that for $A_\Dir:=\lambda-\delDir$ and $v\in \mc{S}(\RRdh;X)$ with $v(0,\cdot)=0$, we have
\begin{equation*}
  \frac{1}{2\pi \ii} \int_{\Gam_\nu}f(z)A_{\Dir}R^2(z,A_{\Dir})v \dd z = g(A_{\Dir})v,
\end{equation*}
where $g(A_\Dir)$ is defined through the extended $\Hinf$-calculus, see \cite[Section 15.1]{HNVW24}. Define the regulariser
\begin{equation*}
  \zeta_n(z):=\frac{n}{n+z}-\frac{1}{1+nz}\qquad \text{ for } z\in \Sigma_{\om}\text{ and }n\in \NN_1,
\end{equation*}
and for $\nu'\in (0,\nu)$ define the downwards orientated contour $\Gam_{\nu'}=\d(\Sigma_{\nu'}\setminus B(0,\frac{3\lambda}{4}))$. Then using the dominated convergence theorem, the functional calculus for $A_{\Dir}R^2(z,A_{\Dir})$, Fubini's theorem and Cauchy's differentiation formula, we obtain
\begin{align*}
    \frac{1}{2\pi \ii} \int_{\Gam_\nu}f(z)A_{\Dir}R^2(z,A_{\Dir})v \dd z=&\; \lim_{n\to\infty} \frac{1}{2\pi \ii}\int_{\Gam_\nu}\zeta_n(z)f(z)A_{\Dir}R^2(z,A_{\Dir})v \dd z\\
    =&\; \lim_{n\to\infty} \frac{1}{(2\pi \ii)^2}\int_{\Gam_\nu}\int_{\Gam_{\nu'}}\frac{\zeta_n(z)f(z)}{(z-s)^2}sR(s,A_{\Dir})v\dd s \dd z\\
    =&\; \lim_{n\to \infty}\frac{1}{2\pi \ii}\int_{\Gam_{\nu'}}(\zeta_nf)'(s)sR(s,A_{\Dir})v\dd s\\
    =&\; \lim_{n\to \infty}\frac{1}{2\pi \ii}\int_{\Gam_{\nu'}}\zeta_n(s)f'(s)sR(s,A_{\Dir})v\dd s\\
     &\;+\lim_{n\to\infty}\frac{1}{2\pi \ii} \int_{\Gam_{\nu'}}\zeta_n'(s)f(s)sR(s,A_{\Dir})v\dd s\\
    =&\; g(A_{\Dir})v,
\end{align*}
where in the last step we used for the first term \cite[Theorem 10.2.13]{HNVW17} and for the second term we used that $|s\zeta'_n(s)|$ is uniformly bounded on $\Gam_{\nu'}$, the dominated convergence theorem and $\zeta'_n\to 0$ pointwise as $n\to\infty$. This proves the second claim.

Let $u\in \Cc^{\infty}(\RRdh;X)$. Then by invertibility of $A_{\Dir}$, introducing an additional $A_{\Dir}A_{\Dir}^{-1}$ and using the claims, we obtain
\begin{equation}\label{eq:estR^2}
  \begin{aligned}
        \Big\|\int_{\Gam_\nu}f(z)&R^2(z,A_{\Dir})A_{\Dir}A_{\Dir}^{-1}\d_1^2u \dd z\Big\|_{W^{k,p}(\RRdh, w_{\gam+k p};X)} \\
         &=\Big\|\int_{\Gam_\nu}f(z)A_{\Dir}R^2(z,A_{\Dir})A_{\Dir}^{-1}\d_1^2u \dd z\Big\|_{W^{k,p}(\RRdh, w_{\gam+k p};X)}\\
         &=2\pi \|g(A_{\Dir})A_{\Dir}^{-1}\d_1^2u\|_{W^{k,p}(\RRdh, w_{\gam+k p};X)}\\
         &\leq C \|g\|_{\Hinf(\Sigma_{\om})}\|A_{\Dir}^{-1}\d_1^2u\|_{W^{k,p}(\RRdh, w_{\gam+k p};X)}\\
         &\leq C_{\om,\sigma}\|f\|_{\Hinf(\Sigma_{\sigma})}\|A_{\Dir}^{-1}\d_1^2u\|_{W^{k,p}(\RRdh, w_{\gam+k p};X)},
  \end{aligned}
\end{equation}
  where in the penultimate step we used the $\Hinf(\Sigma_\om)$-calculus of $A_{\Dir}$.
  From Lemma \ref{lem:comm}\ref{it:comm2} and Proposition \ref{prop:sect_est} we find
  \begin{equation}\label{eq:est_resol}
  \begin{aligned}
    \|A_{\Dir}^{-1}\d_1^2u\|_{W^{k,p}(\RRdh, w_{\gam+k p};X)}
    &=\|\d_1^2R(\lambda, \delDir)u\|_{W^{k,p}(\RRdh, w_{\gam+k p};X)}\\
    &\leq C g_{k,\gam}(\lambda)\|u\|_{W^{k,p}(\RRdh, w_{\gam+k p};X)},
    \end{aligned}
  \end{equation}
  where $C$ is independent of $\lambda$ and $g_{k,\gam}$ is as in \eqref{eq:K_IH}.
  By density (Lemma \ref{lem:density}) the desired estimate follows.

 Finally, assume that $k=0$. Then we obtain \eqref{eq:estR^2} with a constant independent of $\lambda$. Indeed, this follows from using the $\Hinf$-calculus on $L^p(\RRdh, w_{\gam};X)$, see Theorem \ref{thm:LVresult} and \cite[Proposition 16.2.6]{HNVW24}. Moreover, \eqref{eq:est_resol} also holds with a constant independent of $\lambda$ by \eqref{eq:K_IH}. This finishes the proof.
\end{proof}

\subsection{Preliminary estimates for the Neumann resolvent}\label{subsec:prelim_calcNeu}
In this section, we provide some preliminary estimates required for the proof of the $\Hinf$-calculus for the Neumann Laplacian. This proof will be given in the next section, relying on the $\Hinf$-calculus for the Dirichlet Laplacian on $W^{k,p}(\RRdh, w_{\gam+kp};X)$ with $k\in\NN_1$ and $\gam\in(-1,2p-1)\setminus\{p-1\}$.

We start with a lemma on commutators for the Neumann Laplacian.
\begin{lemma}\label{lem:commNeu}
   Let $p\in(1,\infty)$, $k\in \NN_0\cup\{-1\}$, $\gam\in (-1,2p-1)\setminus\{p-1\}$ and let $X$ be a Banach space. Let $\delNeu$ on $W^{k+1,p}(\RRdh, w_{\gam+kp};X)$ be as in Definition \ref{def:delRRdh}.
Then for all $u\in C_{{\rm c},1}^{\infty}(\overline{\RRdh};X)$ and $z\in\rho(\delNeu)$, we have
   \begin{enumerate}[(i)]
   \item \label{it:commNeu1} \makebox[6cm][l]{$ [ \d_j^n, R(z,\delNeu)]u=0$,}\quad  $n\in \NN_1,\,j\in\{2,\dots,d\}$,
  \item \label{it:commNeu2}\makebox[6cm][l]{$[\d_1^2, R(z,\delNeu)]u=0$,}
   \end{enumerate}
   In addition, let $k\in\NN_0$, $\delDir$ on $W^{k,p}(\RRdh, w_{\gam+kp};X)$. Then, for all  $u\in C_{{\rm c},1}^{\infty}(\overline{\RRdh};X)$ and $z\in\rho(\delNeu)\cap\rho(\delDir)$, we have
   \begin{enumerate}[resume*]
   \item \label{it:commNeu4} \makebox[8.2cm][l]{$\d_1R(z,\delNeu)u=R(z,\delDir)\d_1 u$.}
   \end{enumerate}
\end{lemma}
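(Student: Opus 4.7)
The plan is to model the proof on that of Lemma \ref{lem:comm}, exploiting the fact that for $u\in C^{\infty}_{{\rm c},1}(\overline{\RRdh};X)$ the datum $u$ already satisfies the Neumann boundary condition (since $\partial_1 u\in\Cc^{\infty}(\RRdh;X)$ vanishes near $x_1=0$), and that by Lemma \ref{lem:Schwartz_neu} the resolvent $v:=R(z,\delNeu)u$ is even in $\mc{S}(\RRdh;X)$, so there are no regularity obstructions to differentiating freely. All the key identities will be obtained by applying $(z-\delNeu)$ to the commutator and checking that the resulting function lies in $D(\delNeu)=W^{k+3,p}_{\Neu}(\RRdh,w_{\gam+kp};X)$ (or in $D(\delDir)$ for part \ref{it:commNeu4}).

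For part \ref{it:commNeu1}, I would observe that for $j\in\{2,\dots,d\}$ the operator $\partial_j$ commutes both with $\Delta$ and with $\partial_1$. Hence if $v$ satisfies $\partial_1 v(0,\cdot)=0$, then $\partial_1(\partial_j^n v)(0,\cdot)=\partial_j^n(\partial_1 v)(0,\cdot)=0$, so $\partial_j^n v\in D(\delNeu)$. Applying $(z-\delNeu)$ to $[\partial_j^n,R(z,\delNeu)]u$ and using $\partial_j^n(z-\delNeu)v=\partial_j^n u$ yields the claim, exactly as in Lemma \ref{lem:comm}\ref{it:comm1}. For part \ref{it:commNeu2}, note that $u\in C^{\infty}_{{\rm c},1}(\overline{\RRdh};X)\subseteq D(\delNeu)$, so $\Delta u=\delNeu u$ and the commutator $[\Delta,R(z,\delNeu)]u$ vanishes by a direct computation. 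Writing $\partial_1^2=\Delta-\sum_{j=2}^d\partial_j^2$ and combining with \ref{it:commNeu1} then gives \ref{it:commNeu2}.

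The new ingredient is part \ref{it:commNeu4}. Setting $v:=R(z,\delNeu)u$, one has $(z-\Delta)v=u$ and $\partial_1 v(0,\cdot)=0$. Differentiating the equation in $x_1$ gives $(z-\Delta)\partial_1 v=\partial_1 u$, and the crucial point is that the Neumann condition on $v$ is precisely the Dirichlet condition on $\partial_1 v$: indeed $\Tr(\partial_1 v)=0$ and, since $v\in W^{k+3,p}_{\Neu}(\RRdh,w_{\gam+kp};X)$, $\partial_1 v\in W^{k+2,p}_{\Dir}(\RRdh,w_{\gam+kp};X)=D(\delDir)$. Because $\partial_1 u\in \Cc^{\infty}(\RRdh;X)\subseteq W^{k,p}(\RRdh,w_{\gam+kp};X)$, uniqueness in the Dirichlet resolvent equation (Proposition \ref{prop:sect_est}) yields $\partial_1 v=R(z,\delDir)\partial_1 u$.

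The only potential obstacle is bookkeeping with function spaces, in particular verifying that every differentiated quantity still lies in the relevant domain. This is handled uniformly by working with the dense class $C^{\infty}_{{\rm c},1}(\overline{\RRdh};X)$ (Lemma \ref{lem:density}) together with the $\mc{S}(\RRdh;X)$-regularity of $v$ from Lemma \ref{lem:Schwartz_neu}; consequently all the commutator manipulations are justified pointwise, and the identities extend by density wherever needed later on.
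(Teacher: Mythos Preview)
Your proof is correct and follows essentially the same approach as the paper: parts \ref{it:commNeu1} and \ref{it:commNeu2} are handled exactly as in Lemma \ref{lem:comm} via Lemma \ref{lem:Schwartz_neu}, and for part \ref{it:commNeu4} the paper likewise sets $\tilde{u}=R(z,\delNeu)u$, observes that $(z-\delDir)\d_1\tilde{u}=\d_1 u$ with $\d_1\tilde{u}$ satisfying the Dirichlet condition, and then applies the Dirichlet resolvent. Your additional remarks on domain bookkeeping are accurate and simply flesh out what the paper leaves implicit.
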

\begin{proof}
  The statements \ref{it:commNeu1} and \ref{it:commNeu2} follow similarly as in Lemma \ref{lem:comm} using Lemma \ref{lem:Schwartz_neu}.

  For \ref{it:commNeu4}, set $\tilde{u}:=R(z,\delNeu)u$, then $\tilde{u}\in \mc{S}(\RRdh;X)$ with $(\d_1\tilde{u})(0,\cdot)=0$ and $z\tilde{u}-\delNeu \tilde{u}=u$ by Lemma \ref{lem:Schwartz_neu}. Moreover, it holds that $(z-\delDir)\d_1 \tilde{u}=\d_1(z-\delNeu)\tilde{u} = \d_1 u$ and applying the Dirichlet resolvent gives the result.
\end{proof}

We continue with an estimate for the commutator of $M$ and the Neumann resolvent.
\begin{lemma}\label{lem:comm_est1_Neumann}
 Let $p\in(1,\infty)$, $k\in \NN_0\cup\{-1\}$, $\gam\in (p-1,2p-1)$, $\lambda>0$ and let $X$ be a $\UMD$ Banach space. Let $\delNeu$ on $W^{k+1,p}(\RRdh, w_{\gam+kp};X)$ and $\delDir$ on $W^{k+1,p}(\RRdh, w_{\gam+kp};X)$ be as in Definition \ref{def:delRRdh}. Then, for all $u\in \Cc^{\infty}(\RRdh;X)$ and $z\in\rho(\delNeu)\cap\rho(\delDir)$, we have
 \begin{equation*}
   MR(z,\delNeu)u= R(z, \delDir)Mu -2R(z,\delDir)\d_1R(z,\delNeu)u.
 \end{equation*}
 Moreover, there exists a $C>0$ such that
 \begin{align*}
 \|Mf(\lambda-\delNeu)u\|_{W^{k+1,p}(\RRdh, w_{\gam+kp};X)}\leq &\; \|f(\lambda-\delDir)Mu\|_{W^{k+1,p}(\RRdh, w_{\gam+kp};X)}\\
 &+ C\|f\|_{\Hinf(\Sigma_{\om})}\|u\|_{W^{k+1,p}(\RRdh, w_{\gam+k p};X)},
 \end{align*}
 for all $f\in H^1(\Sigma_{\om})\cap\Hinf(\Sigma_\om)$  with $\om\in(0,\pi)$. Moreover, the constant $C$ only depends on $p,k,\gam,\lambda, \om, \nu, d $ and $X$.
\end{lemma}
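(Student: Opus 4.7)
The plan is to prove the lemma in three stages: first the pointwise resolvent identity, then an analogous identity for the shifted operators that I integrate against $f$ along a contour, and finally a direct estimate on the resulting remainder.

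For the first stage, fix $u\in\Cc^\infty(\RRdh;X)$ and $z\in\rho(\delNeu)\cap\rho(\delDir)$, and set $v:=R(z,\delNeu)u$. By Lemma~\ref{lem:Schwartz_neu}, $v\in\SS(\RRdh;X)$ satisfies $(\d_1v)(0,\cdot)=0$ and $(z-\del)v=u$. Combining the commutation $\del M=M\del+2\d_1$ (cf.\ \eqref{eq:comm_Md_del}) with $(Mv)(0,\cdot)=0$ gives
\[
(z-\del)(Mv)=M(z-\del)v-2\d_1v=Mu-2\d_1v,
\]
and since $Mv$ is Schwartz and vanishes on $\d\RRdh$, it belongs to $D(\delDir)$. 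Applying $R(z,\delDir)$ yields the stated pointwise identity.

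For the second stage, the same computation with $(\lambda-\delNeu,\lambda-\delDir)$ in place of $(\delNeu,\delDir)$ produces
\[
MR(z,\lambda-\delNeu)u=R(z,\lambda-\delDir)Mu+2R(z,\lambda-\delDir)\d_1R(z,\lambda-\delNeu)u
\]
for $z$ in the common resolvent set (the $+$ sign instead of $-$ comes from the sign flip on $\del$ and is immaterial for the final bound). By Theorems~\ref{thm:sect:Dir} and \ref{thm:sect:Neu}, both $\lambda-\delDir$ and $\lambda-\delNeu$ are sectorial of angle $0$ and invertible on $W^{k+1,p}(\RRdh,w_{\gam+kp};X)$ --- the Dirichlet case applies because $\gam+kp=(\gam-p)+(k+1)p$ with $\gam-p\in(-1,p-1)$, together with $\lambda>0$. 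Hence for $\nu\in(0,\om)$ the contour $\Gam_\nu$ lies in the common resolvent set, and integrating the identity against $f(z)/(2\pi i)$ yields
\[
Mf(\lambda-\delNeu)u=f(\lambda-\delDir)Mu+\frac{1}{\pi i}\int_{\Gam_\nu}f(z)R(z,\lambda-\delDir)\d_1R(z,\lambda-\delNeu)u\,\dd z.
\]

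For the third stage, sectoriality provides $\|R(z,\lambda-\delDir)\|_{\mc{L}(W^{k+1,p})}\leq C/|z|$ on $\Gam_\nu$. For the Neumann factor, writing $R(z,\lambda-\delNeu)=-R(\lambda-z,\delNeu)$ and invoking Proposition~\ref{prop:sect_est_neu} with $|\beta|=1$ gives, once one verifies $|\lambda-z|\asymp|z|$ and uniform boundedness of $g_{k+1,\gam}(\lambda-z)$ on $\Gam_\nu$, the bound $\|\d_1R(z,\lambda-\delNeu)\|_{\mc{L}(W^{k+1,p})}\leq C_\lambda/|z|^{1/2}$. Hence the remainder integral is bounded by
\[
C_\lambda\|f\|_{\Hinf(\Sigma_\om)}\|u\|_{W^{k+1,p}}\int_{\Gam_\nu}\frac{|\dd z|}{|z|^{3/2}},
\]
and this integral converges because $|z|\geq\lambda/2$ on $\Gam_\nu$. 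Density of $\Cc^\infty(\RRdh;X)$ in $W^{k+1,p}(\RRdh,w_{\gam+kp};X)$ (Lemma~\ref{lem:density}) extends the identity and estimate to general $u$. The main technical obstacle will be the translation of Proposition~\ref{prop:sect_est_neu} from a sectorial spectral parameter $\mu$ to the contour variable $z$ via $\mu=\lambda-z$, in particular handling the circular arc of radius $\lambda/2$ where $\mu$ can approach the positive real axis; this is where the $\lambda$-dependence of the final constant $C_\lambda$ originates.
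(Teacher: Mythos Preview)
Your proof is correct and follows essentially the same route as the paper: derive the resolvent identity from the commutation $\del M=M\del+2\d_1$, integrate its shifted version over the contour $\Gam_\nu$, and bound the remainder via the $|z|^{-1}$ and $|z|^{-1/2}$ resolvent estimates so that the integrand is $O(|z|^{-3/2})$. The only stylistic difference is that the paper obtains these resolvent bounds directly from the sectoriality of the shifted operators (namely \eqref{eq:resol_est_A} in the proof of Theorems~\ref{thm:sect:Dir} and~\ref{thm:sect:Neu}), which spares you the change of variable $\mu=\lambda-z$ and the accompanying verification that $\mu$ remains in a fixed sector with $g_{k+1,\gam}(\mu)$ bounded; your route via Proposition~\ref{prop:sect_est_neu} is equivalent but slightly more laborious. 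Finally, your density remark is unnecessary since the lemma is only asserted for $u\in\Cc^\infty(\RRdh;X)$.
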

\begin{proof}
Note that $R(z,\delDir )Mu\in D(\delDir)$ by Lemma \ref{lem:Schwartz} and $MR(z,\delNeu)u\in D(\delDir)$ by Lemma \ref{lem:Schwartz_neu}. Therefore, arguing as in Lemma \ref{lem:comm}\ref{it:comm3}, we obtain
\begin{align*}
(z-\delDir)\big[MR(z,\delNeu)u - R(z,\delDir ) Mu\big]=-2\d_1R(z,\delNeu)u.
\end{align*}
To prove the estimate, let $0<\nu<\om<\pi$ and $\Gam_{\nu}$ be the downwards orientated boundary of $\Sigma_{\nu}\setminus B(0, \frac{\lambda}{2})$. Then
\begin{align*}
  \|Mf(&\lambda-\delNeu)u\|_{W^{k+1,p}(\RRdh, w_{\gam+kp};X)}   \\
    \leq &\;\|f(\lambda-\delDir) M  u \|_{W^{k+1,p}(\RRdh, w_{\gam+kp};X)} \\
   &+ 2 \Big\|\int_{\Gam_{\nu}} f(z)R(z,\lambda-\delDir)\d_1R(z,\lambda-\delNeu)u \dd z\Big\|_{W^{k+1,p}(\RRdh, w_{\gam-p+(k+1)p};X)}
\end{align*}
and the latter term can be estimated similarly as in the proof of Lemma \ref{lem:comm_est1} using Theorem \ref{thm:sect:Dir} with $k+1$ and $\gam-p\in(-1,p-1)$, and Theorem \ref{thm:sect:Neu}.
\end{proof}

\subsection{Proofs of Theorems \ref{thm:calc:Dir} and \ref{thm:calc:Neu}}\label{subsec:proofscalc}
Before turning to the proof of Theorem \ref{thm:calc:Dir}, we prove an estimate for the derivatives in directions $2,\dots, d$ of the functional calculus on the spaces $W^{k,p}(\RRdh, w_{\gam+kp};X)$. For this we will use a perturbation argument in the parameter $\gam$ inspired by \cite[Proposition 5.3]{LV18}.
\begin{lemma}\label{lem:tilde_der_f(A)}
Let $p\in(1,\infty)$, $k\in \NN_0$ and let $X$ be a $\UMD$ Banach space. Let $\lambda>0$ and consider either of the following cases:
\begin{enumerate}[(i)]
  \item \label{it:1} $\gam\in (-1,p-1)$ and $A:=\lambda-\delDir$, or,
  \item \label{it:2} $\gam\in (p-1,2p-1)$ and $A:=\lambda-\delDir$, or,
  \item \label{it:3} $\gam\in (-1,p-1)$ and $A:=\lambda-\delNeu$,
\end{enumerate}
where $\delDir$ and $\delNeu$ on $W^{k,p}(\RRdh, w_{\gam+k p};X)$ are as in Definition \ref{def:delRRdh}.
Let $\om\in (0,\frac{\pi}{2})$ and $\alpha=(0,\tilde{\alpha})\in\NN_0\times\NN_0^{d-1}$ with $|\alpha|=k$. Then for all $f\in H^1(\Sigma_{\om})\cap\Hinf(\Sigma_{\om})$ and $u\in \Cc^{\infty}(\RRdh;X)$ we have
 \begin{align*}
   \|\d^{\tilde{\alpha}}f(A)u\|_{L^p(\RRdh, w_{\gam+kp};X)}\leq&\; C \|f\|_{\Hinf(\Sigma_{\om})}\|u\|_{W^{k,p}(\RRdh, w_{\gam+kp};X)},
 \end{align*}
where the constant $C$ only depends on $p,k,\gam,  \om, \alpha,d $ and $X$.
\end{lemma}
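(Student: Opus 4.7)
The plan is to proceed by induction on $k\in\NN_0$. The base case $k=0$ corresponds to $\tilde\alpha=0$ and reduces to the bounded $\Hinf$-calculus of $A$ on $L^p(\RRdh,w_\gamma;X)$, which in cases~(i)--(ii) is supplied by Theorem \ref{thm:LVresult} and in case~(iii) by Theorem \ref{thm:LVresult_Neumann}. For the inductive step I reduce by density (Lemma \ref{lem:density}) to $u\in\Cc^\infty(\RRdh;X)$. Since $\tilde\alpha$ is purely tangential, Lemmas \ref{lem:comm}\ref{it:comm1} and \ref{lem:commNeu}\ref{it:commNeu1} give $[\partial^{\tilde\alpha},R(z,A)]=0$, and passing through the Dunford integral yields $\partial^{\tilde\alpha}f(A)u=f(A)\partial^{\tilde\alpha}u$.

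Since $\|\phi\|_{L^p(\RRdh,w_{\gamma+kp};X)}=\|M^k\phi\|_{L^p(\RRdh,w_\gamma;X)}$ and $M^k$ commutes with tangential derivatives, the target estimate is equivalent to bounding $\|M^k f(A)\partial^{\tilde\alpha}u\|_{L^p(\RRdh,w_\gamma;X)}$. I would use the splitting $M^k f(A)\partial^{\tilde\alpha}u=f(A)\partial^{\tilde\alpha}(M^k u)+[M^k,f(A)]\partial^{\tilde\alpha}u$. For the first summand, Lemma \ref{lem:LV3.13_ext}\ref{it:lem:LV3.13_ext1} gives $M^k u\in W^{k,p}(\RRdh,w_\gamma;X)$, so $\partial^{\tilde\alpha}(M^k u)\in L^p(\RRdh,w_\gamma;X)$ with the required norm control, and the bounded $\Hinf$-calculus of $A$ on $L^p(\RRdh,w_\gamma;X)$ closes the estimate. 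For the commutator, the resolvent identity $(z-A)M^k R(z,A)=M^k+[A,M^k]R(z,A)$ combined with \eqref{eq:comm_Md_del} yields the closed form $[M^k,R(z,A)]=-k(k-1)R(z,A)M^{k-2}R(z,A)-2k\,R(z,A)M^{k-1}\partial_1R(z,A)$ for $k\geq 1$. Substituting this into the Dunford representation of $[M^k,f(A)]$ and rearranging powers of $M$ past $\partial_1$ via $[\partial_1,M^j]=jM^{j-1}$, each resulting term can be estimated by Lemma \ref{lem:comm_est1} in tandem with the inductive hypothesis at order $k-1$ applied to functions of the form $M^j u$ in the adjacent weight class.

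The principal obstacle is that in case~(ii), and partly in case~(iii), the naive inductive weight shift $\gamma\mapsto\gamma+p$ falls outside the lemma's admissible range for $k\geq 2$, so the inductive hypothesis cannot be invoked at the shifted exponent. This is precisely where the ``perturbation in $\gamma$'' borrowed from \cite[Proposition 5.3]{LV18} intervenes: for case~(ii) I would write $u=Mv$ with $v=M^{-1}u\in\Cc^\infty(\RRdh;X)$, employ the isomorphism $M\colon W^{k,p}_0(\RRdh,w_{(\gamma-p)+kp};X)\to W^{k,p}_0(\RRdh,w_{\gamma+kp};X)$ from Lemma \ref{lem:LV3.13_ext}\ref{it:lem:LV3.13_ext2} to transfer the problem to case~(i) at exponent $\gamma-p\in(-1,p-1)$, and then control the extra commutator $\partial^{\tilde\alpha}[f(A),M]v$ via Lemma \ref{lem:comm_est1} applied in the permitted range. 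Case~(iii) is handled in parallel by first converting the relevant Neumann commutators into Dirichlet ones through the intertwiner $\partial_1 R(z,\delNeu)=R(z,\delDir)\partial_1$ from Lemma \ref{lem:commNeu}\ref{it:commNeu4}, so that the available Dirichlet bounds on $L^p(\RRdh,w_\gamma;X)$ in the full range $(-1,2p-1)\setminus\{p-1\}$ supplied by Theorem \ref{thm:LVresult} can be used to finish the proof.
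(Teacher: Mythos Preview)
Your induction setup and the commutation $\partial^{\tilde\alpha}f(A)u=f(A)\partial^{\tilde\alpha}u$ are correct, but the commutator step has a genuine gap. After the splitting $M^k f(A)\partial^{\tilde\alpha}u=f(A)M^k\partial^{\tilde\alpha}u+[M^k,f(A)]\partial^{\tilde\alpha}u$, to bound the commutator in $L^p(\RRdh,w_\gamma;X)$ via Lemma~\ref{lem:comm_est1} (at level $0$) you are ultimately forced to control $\|\partial^{\tilde\alpha}u\|_{L^p(\RRdh,w_\gamma;X)}$. This quantity is \emph{not} dominated by $\|u\|_{W^{k,p}(\RRdh,w_{\gamma+kp};X)}$: the latter only controls $\|\partial^{\tilde\alpha}u\|_{L^p(\RRdh,w_{\gamma+kp};X)}$, and Hardy's inequality cannot trade the weight down by $kp$ on the top-order derivative. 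The obstruction already appears for case~(ii) at $k=1$, so the induction stalls immediately, not only ``for $k\geq2$'' as you write. Your proposed fix $u=Mv$ does not help either: after pulling $M$ through, the leading term $M\partial^{\tilde\alpha}f(A)v$ lives in $L^p(w_{\gamma+(k+1)p})$, which corresponds to case~(ii) at level $k+1$, not case~(i) at level $k$.

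What the paper actually does is not a small add-on but the heart of the argument. It writes, for a carefully chosen $\theta\in(0,1)$,
\[
f(A)u(x)=x_1^{-(k+1)\theta}f(A)\big(\psi(x_1,\cdot)g\big)(x)+\sum_{j=0}^k c_{j,k}\,x_1^{-(k+1-j)\theta}f(A)\big(M^{(k+1-j)\theta}u\big)(x),
\]
with $g=M^{(k+1)\theta}u$ and $\psi(x_1,y_1)=(x_1^\theta/y_1^\theta-1)^{k+1}$. The finite sum is handled by the inductive hypothesis at orders $0,\ldots,k-1$ because the \emph{fractional} multipliers $M^{(k+1-j)\theta}$ shift the weights to exponents $\gamma_j=\gamma+kp-(k+1-j)\theta p$ that, for the chosen $\theta$, land back inside the admissible range at each lower order (this is where Lemma~\ref{lem:M_bound_frac} and the consistency of resolvents are used). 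The remaining $\psi$--term carries the factor $(x_1^\theta/y_1^\theta-1)^{k+1}$, whose $(k+1)$-fold vanishing on the diagonal supplies exactly the extra decay needed; it is estimated by representing the resolvent via the Laplace transform of the heat semigroup and performing an explicit kernel computation with $H^{d,\pm}_t$. None of this is captured by the integer-power commutators $[M^k,f(A)]$ you propose.
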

\begin{proof} Throughout the proof, $C$ denotes a constant only depending on $p,k,\gam, \om,\alpha, d$ and $X$, which may change from line to line. We proceed by induction on $k\geq 0$. The case $k=0$ follows from Theorems \ref{thm:LVresult} and \ref{thm:LVresult_Neumann}. Let $k\in\NN_1$ be fixed. Assume that the statement of the lemma holds with $k$ replaced by $\ell$ for all $\ell\in \{0,\dots, k-1\}$. We prove the estimate as stated in the lemma.

Let $\alpha=(0,\tilde{\alpha})\in\NN_0\times \NN_0^{d-1}$ be such that $|\alpha|=k$. For any $\theta\in (0,1)$ set $g:=M^{(k+1)\theta}u$ and $\psi(x_1,y_1):=\big(\frac{x^\theta_1}{y^\theta_1}-1\big)^{k+1}$. Then for $x\in\RRdh$ we have
\begin{equation}\label{eq:eq_lemtilde}
    \begin{aligned}
f(A)u(x) =&\; x_1^{-(k+1)\theta}f(A)\big((x_1^\theta-M^\theta)^{k+1}u\big)(x)\\
&\;+x_1^{-(k+1)\theta}\sum_{j=0}^k c_{j,k}f(A)
\big(x_1^{j\theta}M^{(k+1-j)\theta}u\big)(x)\\
=&\; x_1^{-(k+1)\theta}f(A)(\psi(x_1,\cdot)g )(x)\\
& \;+ \sum_{j=0}^k c_{j,k} x_1^{-(k+1-j)\theta} f(A)(M^{(k+1-j)\theta}u)(x),
\end{aligned}
\end{equation}
where $c_{j,k} = (-1)^{k+1-j}\binom{k+1}{j}$. For the latter sum, we find
\begin{equation}\label{eq:calc_rest}
  \begin{aligned}
  \Big\|x\mapsto \d^{\tilde{\alpha}} \sum_{j=0}^k c_{j,k}& x_1^{-(k+1-j)\theta}f(A)(M^{(k+1-j)\theta}u)(x)\Big\|_{L^p(\RRdh, w_{\gam+kp};X)}\\
  \leq &\; C \sum_{j=0}^k\|\d^{\tilde{\alpha}} f(A)M^{(k+1-j)\theta}u\|_{L^p(\RRdh,w_{\gam_j};X)},
\end{aligned}
\end{equation}
where $\gam_j:=\gam+kp-(k+1-j)\theta p$. Now choose $\theta\in (0,1)$ such that
\begin{equation}\label{eq:gamj}
  \begin{aligned}
  \gam_0&\in (-1,p-1)&\text{and}&\quad\gam_j\in ((j-1)p-1,jp-1) &&\quad\text{ if }\gam\in (-1,p-1),\\
\gam_0&\in (p-1,2p-1)&\text{and}&\quad\gam_j\in (jp-1,(j+1)p-1) && \quad\text{ if }\gam\in (p-1,2p-1),
\end{aligned}
\end{equation}
The conditions $\gam_0,\gam_1\in (-1,p-1)$ and $\gam_0,\gam_1\in (p-1,2p-1)$, respectively, lead to
\begin{equation*}
  \begin{aligned}
\theta & \in  \big(\tfrac{\gam +(k-1)p+1}{kp}, \tfrac{\gam+kp+1}{kp}\big)\cap\big(\tfrac{\gam +(k-1)p+1}{(k+1)p}, \tfrac{\gam+kp+1}{(k+1)p}\big)\quad &&\text{ if }\gam\in (-1,p-1),\\
\theta  & \in \big(\tfrac{\gam +(k-2)p+1}{kp}, \tfrac{\gam+(k-1)p+1}{kp}\big)\cap\big(\tfrac{\gam +(k-2)p+1}{(k+1)p}, \tfrac{\gam+(k-1)p+1}{(k+1)p}\big)\quad&& \text{ if }\gam\in (p-1,2p-1).
\end{aligned}
\end{equation*}
Therefore, we can take
\begin{equation}\label{eq:theta}
  \begin{aligned}
\theta & \in  \big(\tfrac{\gam +(k-1)p+1}{kp}, \tfrac{\gam+kp+1}{(k+1)p}\big)\quad &&\text{ if }\gam\in (-1,p-1),\\
\theta  & \in \big(\tfrac{\gam +(k-2)p+1}{kp}, \tfrac{\gam+(k-1)p+1}{(k+1)p}\big)\quad&& \text{ if }\gam\in (p-1,2p-1),
\end{aligned}
\end{equation}
and it is straightforward to verify that these intervals are non-empty for the given ranges of $\gam$. Moreover, if \eqref{eq:theta} holds, then the other conditions on $\gam_j$ for $j\in \{2,\dots, k\}$ in \eqref{eq:gamj} are automatically satisfied.

Write $\tilde{\alpha}=\tilde{\beta}+\tilde{\delta}$ for some $\tilde{\beta},\tilde{\delta}\in\NN_0^{d-1}$ such that $|\tilde{\beta}|=k-(j-1)$ and $|\tilde{\delta}|=j-1$. Using that derivatives on $\RR^{d-1}$ commute with the resolvent (Lemmas \ref{lem:comm}\ref{it:comm1} and \ref{lem:commNeu}\ref{it:commNeu1}) and that the resolvents are consistent (Lemmas \ref{lem:consistent_resol_RRdh} and \ref{lem:consistent_resol_RRdh_Neumann}), we can estimate the right-hand side of \eqref{eq:calc_rest} as
\begin{align*}
  \sum_{j=0}^k&\|\d^{\tilde{\alpha}} f(A)M^{(k+1-j)\theta}u\|_{L^p(\RRdh,w_{\gam_j};X)}\\
   & \leq \| f(A) \d^{\tilde{\alpha}} M^{(k+1)\theta}u\|_{L^p(\RRdh,w_{\gam_0};X)} +\sum_{j=1}^k\|\d^{\tilde{\delta}}f(A)\d^{\tilde{\beta}} M^{(k+1-j)\theta}u\|_{L^p(\RRdh, w_{\gam_j};X)} \\
   & \leq C \|f\|_{\Hinf(\Sigma_\om)} \Big(\|\d^{\tilde{\alpha}} M^{(k+1)\theta}u\|_{L^p(\RRdh,w_{\gam_0};X)} + \sum_{j=1}^k \|\d^{\beta} M^{(k+1-j)\theta}u\|_{W^{j-1,p}(\RRdh, w_{\gam_j};X)}\Big)\\
   &\leq C \|f\|_{\Hinf(\Sigma_\om)} \|u\|_{W^{k,p}(\RRdh, w_{\gam+kp};X)},
\end{align*}
using the induction hypothesis in the penultimate step, which is allowed by \eqref{eq:gamj} and \eqref{eq:theta}.
In addition, we have applied Lemma \ref{lem:M_bound_frac} in the last step.

In view of \eqref{eq:eq_lemtilde}, it remains to show the estimate
\begin{equation}\label{eq:est_pert_calc}
  \begin{aligned}
  \|x\mapsto x_1^{-(k+1)\theta} &\d^{\tilde{\alpha}}f(A)(\psi(x_1,\cdot)g )(x)\|_{L^p(\RRdh, w_{\gam+kp};X)}\\
   &\leq C\|f\|_{\Hinf(\Sigma_\om)}\|g\|_{L^p(\RRdh, w_{\gam-(k+1)\theta p};X)}\\
  &\leq C\|f\|_{\Hinf(\Sigma_\om)}\|u\|_{W^{k,p}(\RRdh, w_{\gam+kp};X)},
\end{aligned}
\end{equation}
where the last estimate follows from Hardy's inequality (Corollary \ref{cor:Sob_embRRdh}).

Let $\nu\in (0,\om)$ so that
\begin{equation*}
  f(A)=\frac{1}{2\pi \ii}\int_{\Gam_-\cup \Gam_+}f(z)R(z,A)\dd z,
\end{equation*}
where $\Gam_{\pm}=\{r e^{\pm \ii\nu}: r>0\}$ are both orientated downwards. To prove \eqref{eq:est_pert_calc} we estimate the integrals over $\Gam_\pm$ separately and by symmetry it suffices to consider only $\Gam_+$. We define $\delta=\frac{\pi -\nu}{2}$ and write $z=re^{\ii\nu}$ with $r>0$. Then we have the Laplace transform representation
\begin{equation}\label{eq:resol_laplace}
\begin{aligned}
  R(z,A)&=-(re^{\ii(\nu-\pi)}+A)^{-1}=-e^{\ii\delta}(re^{-\ii\delta}+e^{\ii\delta}A)^{-1}  \\&
  =-e^{\ii\delta}\int_0^\infty e^{-tre^{-\ii\delta}}e^{-t\lambda e^{\ii\delta}}T(te^{\ii\delta})\dd t,
  \end{aligned}
\end{equation}
where $T$ is the Dirichlet or Neumann heat semigroup from \eqref{eq:Tt} and \eqref{eq:Tt_Neu}, respectively. For $x,y\in \RRdh$ and $z\in\CC_+$, recall that $G_z^{d}$ is the standard heat kernel on $\RR^{d}$ (see \eqref{eq:HeatKernelRd}) and the Dirichlet and Neumann heat kernels are given by $H^{d,\pm}_z(x,y):=G^d_z(x_1-y_1,\tilde{x}-\tilde{y})\pm G^d_z(x_1+y_1,\tilde{x}-\tilde{y})$, see \eqref{eq:H}. It holds that $H^{d,\pm}_z(x,y)=H^{1,\pm}_z(x_1,y_1)G^{d-1}_z(\tilde{x}-\tilde{y})$. Moreover, for $|\tilde{\alpha}|=k$ and $z\in \CC_+$ we have the following estimate on derivatives of $G_z^{d-1}$
\begin{equation}\label{eq:est_derG}
  |\d^{\tilde{\alpha}}_{\tilde{x}}G^{d-1}_{z}(\tilde{x}-\tilde{y})|\leq C_{d,k} \Big(|z|^{-\frac{k}{2}}+\frac{|\tilde{x}-\tilde{y}|^k}{|z|^k}\Big) |G^{d-1}_{z}(\tilde{x}-\tilde{y})|,\qquad \tilde{x},\tilde{y}\in \RR^{d-1}.
\end{equation}
Indeed, let $q(s)=e^{-s^2}$ with $s\in\CC$ which satisfies $q^{(n)}(s)=p(s)q(s)$ where $p(s)$ is a polynomial of degree $n\in\NN_0$. Then, $|q^{(n)}(s)|\leq C_n (1+|s|^2)^{\frac{n}{2}}|q(s)|$ and upon noting that $G^1_z(x-y)=(4\pi z)^{-\half}q(\frac{x-y}{2\sqrt{z}})$ for $x,y\in\RR$, we find with the chain rule
\begin{equation*}
  \Big|\frac{\mathrm{d}^n }{\mathrm{d} x^n}G_z^1(x-y)\Big|\leq C_n |z|^{-\frac{n}{2}}\Big(1+\frac{|x-y|^2}{|z|}\Big)^{\frac{n}{2}}|G_z^1(x-y)|,\qquad x,y\in \RR,\; n\in\NN_0.
\end{equation*}
Write $\tilde{\alpha}=(\alpha_2,\dots, \alpha_{d})\in \NN_0^{d-1}$, then from $G_z^{d-1}(\tilde{x}-\tilde{y})=\prod_{j=2}^{d}G_z^1(x_j-y_j)$ and the one dimensional estimate above, we find
\begin{align*}
 |\d^{\tilde{\alpha}}_{\tilde{x}}G^{d-1}_{z}(\tilde{x}-\tilde{y})|&\leq C_{k}\prod_{j=2}^{d}|z|^{-\tilde{\alpha}_j/2}\Big(1+\frac{|x_j-y_j|^2}{|z|}\Big)^{\tilde{\alpha}_j/2}|G_z^1(x_j-y_j)|\\
 &\leq C_{d,k} |z|^{-\frac{k}{2}}\Big(1+\frac{|\tilde{x}-\tilde{y}|^2}{|z|}\Big)^{\frac{k}{2}}|G_z^{d-1}(\tilde{x}-\tilde{y})|\\
 &\leq C_{d,k} \Big(|z|^{-\frac{k}{2}}+\frac{|\tilde{x}-\tilde{y}|^k}{|z|^k}\Big) |G^{d-1}_{z}(\tilde{x}-\tilde{y})|,
\end{align*}
which proves \eqref{eq:est_derG}.

Let $h\in \Cc^{\infty}(\RRdh;X)$. It follows from \eqref{eq:est_derG} that
\begin{equation}\label{eq:der_Tt}
\begin{aligned}
  \|\d^{\tilde{\alpha}}T(te^{\ii\delta})h(x)\|_X&\leq \int_{\RRdh}|H^{1,\pm}_{te^{\ii\delta}}(x_1,y_1)||\d_{\tilde{x}}^{\tilde{\alpha}}G^{d-1}_{te^{\ii\delta}}(\tilde{x}-\tilde{y})|\|h(y)\|_X\dd y\\
  &\leq C_{\delta, d,k} \int_{\RRdh}\Big(t^{-\frac{k}{2}}+\frac{|\tilde{x}-\tilde{y}|^{k}}{t^k} \Big) |H^{1,\pm}_{te^{\ii\delta}}(x_1,y_1)||G^{d-1}_{te^{\ii\delta}}(\tilde{x}-\tilde{y})|\|h(y)\|_X\dd y\\
  &\leq C_{\delta,d,k} \sum_{m\in\{0,1\}}\int_{\RRdh}\frac{|\tilde{x}-\tilde{y}|^{km}}{t^{\frac{(m+1)k}{2}}} H^{d,\pm}_{t\cos^{-1}(\delta)}(x,y)\|h(y)\|_X\dd y,
\end{aligned}
\end{equation}
where it was used that
\begin{align*}
  |H^{1,-}_{te^{\ii\delta}}(x_1,y_1)|&=(4\pi t)^{-\half}\Big|e^{-\frac{e^{-\ii\delta}|x_1-y_1|^2}{4t}}\Big|\Big|1-e^{\frac{-e^{-\ii\delta }x_1y_1}{t}}\Big|\\
  &=(4\pi t)^{-\half}e^{-\frac{\cos(\delta)|x_1-y_1|^2}{4t}}\Big|\int_0^{\frac{x_1y_1}{t}}e^{-e^{-\ii\delta }s}\dd s\Big|\\
  &\leq \frac{1}{\cos(\delta)}\frac{1}{\sqrt{4\pi t}}e^{-\frac{\cos(\delta)|x_1-y_1|^2}{4t}}\big(1-e^{-\frac{\cos(\delta)x_1y_1}{t}}\big)= C_{\delta} H^{1,-}_{t\cos^{-1}(\delta)}(x_1,y_1),\\
  |H^{1,+}_{te^{\ii\delta}}(x_1,y_1)|&= (4\pi t)^{-\half}e^{-\frac{\cos(\delta)|x_1-y_1|^2}{4t}}\big(1+e^{-\frac{\cos(\delta)x_1y_1}{t}}\big)= H^{1,+}_{t\cos^{-1}(\delta)}(x_1,y_1),\\
  |G^{d-1}_{te^{\ii\delta}}(\tilde{x}-\tilde{y})|&=(4\pi t)^{-\frac{(d-1)}{2}}e^{-\frac{\cos(\delta)|\tilde{x}-\tilde{y}|^2}{4t}}= G^{d-1}_{t\cos^{-1}(\delta)}(\tilde{x}-\tilde{y}).
\end{align*}
So by \eqref{eq:resol_laplace}, \eqref{eq:der_Tt} and the substitution $t\mapsto \cos(\delta)t$, we find
\begin{equation}\label{eq:est_Xnorm}
  \begin{aligned}
  &\Big\|x\mapsto \int_{\Gam_+}f(z)\d^{\tilde{\alpha}}R(z,A)(\psi(x_1,\cdot)g)(x)\dd z\Big\|_X\\
  &\leq \|f\|_{\Hinf(\Sigma_{\om})}\int_0^{\infty}\int_0^\infty e^{-tr\cos(\delta)}\|\d^{\tilde{\alpha}}T(te^{\ii\delta})(\psi(x_1,\cdot)g)(x)\|_X\dd t\dd r\\
  &=\frac{1}{\cos(\delta)} \|f\|_{\Hinf(\Sigma_{\om})}\int_0^{\infty}\|\d^{\tilde{\alpha}}T(te^{\ii\delta})(\psi(x_1,\cdot)g)(x)\|_X\ddtt\\
  &\leq C_{\delta,d,k} \|f\|_{\Hinf(\Sigma_{\om})} \sum_{m\in\{0,1\}} \int_0^\infty\int_{\RRdh}\frac{|\tilde{x}-\tilde{y}|^{km}}{t^{\frac{(m+1)k}{2}}}H^{d,\pm}_{t}(x,y)|\psi(x_1,y_1)|\|g(y)\|_X
\dd y \ddtt.
\end{aligned}
\end{equation}
For any $x\in \RRd$ we have
\begin{align*}
  \int_0^\infty t^{-\frac{(m+1)k}{2}}G^d_t(x)\ddtt&=C \int_0^\infty t^{-\frac{(m+1)k}{2}-\frac{d}{2}}e^{-\frac{|x|^2}{4t}}\ddtt\\
  &=C|x|^{-d-(m+1)k}\int_0^\infty s^{\frac{(m+1)k}{2}+\frac{d}{2}}e^{-\frac{s}{4}}\frac{\mathrm{d}s}{s}= C_{d,k}|x|^{-d-(m+1)k}.
\end{align*}
Therefore, after the substitution $\tilde{y}\mapsto\tilde{x}-\tilde{y}$ we obtain for $m\in \{0,1\}$
\begin{align*}
  &\int_0^\infty\int_{\RRdh}\frac{|\tilde{x}-\tilde{y}|^{km}}{t^{\frac{(m+1)k}{2}}}H^{d,\pm}_{t}(x,y)|\psi(x_1,y_1)|\|g(y)\|_X
\dd y \ddtt\\
&=\int_{\RRdh}|\tilde{y}|^{km}\int_0^{\infty}t^{-\frac{(m+1)k}{2}}\big(G^d_t(x_1-y_1,\tilde{y})\pm G^d_t(x_1+y_1,\tilde{y})\big)
\ddtt |\psi(x_1,y_1)|\|g(y_1, \tilde{x}-\tilde{y})\|_X\dd y\\
&= C_{d,k}\int_{\RRdh}|\tilde{y}|^{km}\bigg[\frac{1}{|(\frac{x_1}{y_1}-1,\frac{\tilde{y}}{y_1})|^{d+(m+1)k}} \pm \frac{1}{|(\frac{x_1}{y_1}+1,\frac{\tilde{y}}{y_1})|^{d+(m+1)k}}\bigg]\\
&\qquad\qquad \cdot\Big|\frac{x_1^\theta}{y_1^\theta}-1\Big|^{k+1}\|g(y_1, \tilde{x}-\tilde{y})\|_X\frac{\dd y}{y_1^{d+(m+1)k}}\\
&=\int_{\RRdh}|\tilde{y}|^{km}\ell^{\pm}_m\big(\frac{x_1}{y_1}, \tilde{y}\big)\|g(y_1, \tilde{x}-y_1\tilde{y})\|_X\frac{\dd y}{y_1^{k+1}}=:L_m(x)
\end{align*}
performing the substitution $\tilde{y}\mapsto y_1\tilde{y}$ in the last step, where
\begin{equation*}
  \ell^{\pm}_m(v_1,\tilde{v}):=\bigg[\frac{1}{|(v_1-1,\tilde{v})|^{d+(m+1)k}} \pm \frac{1}{|(v_1+1,\tilde{v})|^{d+(m+1)k}}\bigg]|v_1^\theta-1|^{k+1},\quad (v_1,\tilde{v})\in \RRdh.
\end{equation*}
Recall that $\gam_0=\gam+kp-(k+1)\theta p$ and to show \eqref{eq:est_pert_calc} we estimate the $L^p(\RRdh, w_{\gam_0})$-norm of the left-hand side of \eqref{eq:est_Xnorm}. To this end, it remains to bound $\|L_m\|_{L^p(\RRdh, w_{\gam_0})}$ for $m\in\{0,1\}$.
With Minkowski's inequality and the substitutions $\tilde{x}\mapsto \tilde{x}+y_1\tilde{y}$, $y_1\mapsto x_1y_1^{-1}$ and $x_1\mapsto x_1y_1$, we obtain
\begin{align*}
  \|L_m\|_{L^p(\RRdh, w_{\gam_0})}
  &\lesssim \Big\|x_1\mapsto \int_{\RRdh}|\tilde{y}|^{km}\ell^\pm_m\big(\frac{x_1}{y_1}, \tilde{y}\big)\|g(y_1, \cdot-y_1\tilde{y})\|_{L^p(\RR^{d-1};X)}\frac{\dd y}{y_1^{k+1}}\Big\|_{L^p(\RR_+,w_{\gam_0})}\\
  &= \Big\|x_1\mapsto \int_{\RRdh}|\tilde{y}|^{km}\ell^\pm_m\big(\frac{x_1}{y_1}, \tilde{y}\big)\|g(y_1, \cdot)\|_{L^p(\RR^{d-1};X)}\frac{\dd y}{y_1^{k+1}}\Big\|_{L^p(\RR_+, w_{\gam_0})}\\
  &=\Big\|x_1\mapsto \int_{\RRdh}|\tilde{y}|^{km}\ell^\pm_m\big(y_1, \tilde{y}\big)\big\|g\big(\frac{x_1}{y_1}, \cdot\big)\big\|_{L^p(\RR^{d-1};X)}x_1^{-k}y_1^{k-1}\dd y\Big\|_{L^p(\RR_+, w_{\gam_0})}\\
  &\leq \int_{\RRdh}|\tilde{y}|^{km}\ell^\pm_m\big(y_1, \tilde{y}\big)\Big\|x_1\mapsto x_1^{-k}\big\|g\big(\frac{x_1}{y_1}, \cdot\big)\big\|_{L^p(\RR^{d-1};X)}\Big\|_{L^p(\RR_+, w_{\gam_0})}y_1^{k-1}\dd y\\
  &= \|g\|_{L^p(\RRdh, w_{\gam_0-kp};X)}\int_{\RRdh}|\tilde{y}|^{km}\ell^\pm_m\big(y_1, \tilde{y}\big)y_1^{\frac{\gam+1}{p}-(k+1)\theta+k-1}\dd y,
\end{align*}
which yields \eqref{eq:est_pert_calc} if the latter integral is finite for the given ranges of $\gam$. Indeed, note that for any $a\in\RR$ we have using the substitution $\tilde{y}\mapsto a\tilde{y}$
\begin{equation*}
  \int_{\RR^{d-1}}\frac{|\tilde{y}|^{km}}{|(a,\tilde{y})|^{d+(m+1)k}}\dd\tilde{y} =\frac{1}{|a|^{k+1}}\int_{\RR^{d-1}}\frac{|\tilde{y}|^{km}}{|(1,\tilde{y})|^{d+(m+1)k}}\dd\tilde{y}=\frac{C_{d,k,m}}{|a|^{k+1}},
\end{equation*}
therefore
\begin{align}\label{eq:lasteq}
  \int_{\RRdh}|\tilde{y}|^{km}&\ell^\pm_m\big(y_1, \tilde{y}\big)y_1^{\frac{\gam+1}{p}-(k+1)\theta+k-1}\dd y\nonumber\\
  &= C_{d,k,m} \int_{\RR_+}\bigg[\frac{1}{|y_1-1|^{k+1}}\pm\frac{1}{|y_1+1|^{k+1}}\bigg]|y_1^\theta-1|^{k+1}y_1^{\frac{\gam+1}{p}-(k+1)\theta+k-1}\dd y_1\nonumber\\
  &=:C_{d,k,m}\int_{\RR_+}\ell^\pm(y_1)|y_1^\theta-1|^{k+1}y_1^{\frac{\gam+1}{p}-(k+1)\theta+k-1}\dd y_1.
\end{align}
For $y_1\in(0,\half)$ we have $\ell^{\pm}(y_1)  \leq C$ and $|y_1^\theta-1|^{k+1}\leq C$ since $\theta>0$. Moreover, by \eqref{eq:theta} it follows
\begin{equation*}
  \tfrac{\gam+1}{p}-(k+1)\theta+k-1>-1,
\end{equation*}
so that the integral in \eqref{eq:lasteq} over $y_1\in(0,\half)$ is finite. For $y_1\in (\half,\frac{3}{2})$ it follows by the mean value theorem that there is no singularity at $y_1=1$ and that the integral in \eqref{eq:lasteq} over the interval $(\half,\frac{3}{2})$ is finite. Finally, if $y_1>\frac{3}{2}$, then $(y_1^\theta-1)^{k+1}\leq C y_1^{(k+1)\theta}$ and
\begin{equation*}
  \ell^{\pm}(y_1) =\frac{(y_1+1)^{k+1}\pm(y_1-1)^{k+1}}{(y_1-1)^{k+1}(y_1+1)^{k+1}}\leq C\begin{cases}\frac{1}{y_1^{k+1}} & \mbox{if }A=\lambda-\delNeu\\
                                                                                       \frac{1}{y_1^{k+2}} & \mbox{if }A=\lambda-\delDir 
                                                                                     \end{cases},
\end{equation*}
where we recall that the Dirichlet and Neumann Laplacian correspond to $\ell^-$ and $\ell^+$, respectively.
Therefore, using that $\gam\in(-1,2p-1)\setminus\{p-1\}$ for the Dirichlet Laplacian, we obtain
\begin{equation*}
\int_{\frac{3}{2}}^\infty\ell^{-}(y_1)|y_1^\theta-1|^{k+1}y_1^{\frac{\gam+1}{p}-(k+1)\theta+k-1}\dd y_1\leq C
  \int_{\frac{3}{2}}^\infty y_1^{\frac{\gam+1}{p}-3}\dd y_1<\infty.
\end{equation*}
Similarly, using that $\gam\in(-1,p-1)$ for the Neumann Laplacian, we obtain
\begin{equation*}
  \int_{\frac{3}{2}}^\infty\ell^{+}(y_1)|y_1^\theta-1|^{k+1}y_1^{\frac{\gam+1}{p}-(k+1)\theta+k-1}\dd y_1\leq C\int_{\frac{3}{2}}^\infty y_1^{\frac{\gam+1}{p}-2}\dd y_1<\infty.
\end{equation*}
This proves \eqref{eq:est_pert_calc} and therefore this finishes the proof.
\end{proof}

We can now give the proof of the bounded $\Hinf$-calculus for the Dirichlet Laplacian using Lemmas \ref{lem:comm}-\ref{lem:comm_est2} and \ref{lem:tilde_der_f(A)}.

\begin{proof}[Proof of Theorem \ref{thm:calc:Dir}]
For $\lambda>0$ we show with induction on $k\geq 0$ that $A_\Dir:=\lambda-\Delta_{\operatorname{Dir}}$ on $W^{k,p}(w_{\gam+kp}):=W^{k,p}(\RRdh, w_{\gam+kp};X)$ has a bounded $\Hinf$-calculus of angle $\om_{\Hinf}(A_\Dir)=0$. Throughout the proof, $C$ denotes a constant only depending on $p,k,\gam,\lambda, \om, d$ and $X$, which may change from line to line.

The case $k=0$ is stated in Theorem \ref{thm:LVresult}. For some fixed $k\in\NN_0$ we assume that $A_\Dir$ on $W^{k,p}(w_{\gam+k p})$ has a bounded $\Hinf$-calculus of angle 0 for all $\gam\in (-1,2p-1)\setminus\{p-1\}$. That is, for $\om\in(0,\frac{\pi}{2})$, we have the estimate
\begin{equation}\label{eq:IH_calc}
  \|f(A_\Dir)u\|_{W^{k,p}(w_{\gam+k p})}\leq  C\|f\|_{\Hinf(\Sigma_{\om})}\|u\|_{W^{k,p}(w_{\gam+k p})},
\end{equation}
for all $\gam\in (-1,2p-1)\setminus\{p-1\}$, $f\in H^1(\Sigma_{\om})\cap \Hinf(\Sigma_{\om})$ and $u\in W^{k,p}(w_{\gam+k p})$.
We prove that $A_\Dir$ has a bounded $\Hinf(\Sigma_{\sigma})$-calculus for $\sigma\in(\om,\frac{\pi}{2})$ on $W^{k+1,p}(w_{\gam+(k+1)p})$.

Let $0<\nu<\om<\sigma<\frac{\pi}{2}$, $f\in H^1(\Sigma_{\sigma})\cap \Hinf(\Sigma_{\sigma})$ and $u\in \Cc^{\infty}(\RRdh;X)$. By writing $\alpha=(\alpha_1,\tilde{\alpha})\in \NN_0\times \NN_0^{d-1}$ we obtain
\begin{align*}
  \|f(A_\Dir)u\|_{W^{k+1,p}(w_{\gam+(k+1) p})} =&\; \|f(A_\Dir)u\|_{W^{k,p}(w_{\gam+(k+1) p})} + \sum_{\substack{|\alpha|=k+1\\\alpha_1\geq 1}}\|\d^{\alpha}f(A_\Dir)u\|_{L^p(w_{\gam+(k+1) p})}\\
  &+ \sum_{|(0,\tilde{\alpha})|=k+1}\|\d^{\tilde{\alpha}}f(A_\Dir)u\|_{L^p(w_{\gam+(k+1) p})}=:T_1+T_2+T_3.
\end{align*}
To apply the induction hypothesis we use that all the resolvents on the spaces we consider are consistent by Lemma \ref{lem:consistent_resol_RRdh}. Furthermore, we define the downwards orientated contour $\Gam_{\nu}=\d( \Sigma_{\nu}\setminus B(0, \frac{\lambda}{2}))$, see Remark \ref{rem:inv_calc}.

 By Lemma \ref{lem:LV3.13_ext} (using Remark \ref{rem:W=W_0}), Lemma \ref{lem:comm}\ref{it:comm3} and \ref{lem:comm_est1}, the induction hypothesis \eqref{eq:IH_calc} and Hardy's inequality (Corollary \ref{cor:Sob_embRRdh}), it follows that
\begin{align*}
  T_1=&\;\|M^{-1}Mf(A_\Dir)u\|_{W^{k,p}(w_{\gam+(k+1) p})}\lesssim \|Mf(A_\Dir)u\|_{W^{k,p}(w_{\gam+k p})}
  \\
  \leq &\; \|f(A_\Dir)Mu\|_{W^{k,p}(w_{\gam+k p})} + 2\:\Big\|\int_{\Gam_\nu}f(z)R(z,A_\Dir)\d_1R(z,A_\Dir)u \dd z\Big\|_{W^{k,p}(w_{\gam+k p})}\\
  \leq &\; C \|f\|_{\Hinf(\Sigma_{\om})}\big(\|Mu\|_{W^{k,p}(w_{\gam+k p})}+\|u\|_{W^{k,p}(w_{\gam+k p})}\big)\\
  \leq &\; C \|f\|_{\Hinf(\Sigma_{\om})}\|u\|_{W^{k+1,p}(w_{\gam+(k+1)p})}.
\end{align*}

We continue with $T_2$, so let $\alpha=(\alpha_1,\tilde{\alpha})$ with $|\alpha|=k+1$ and $\alpha_1\geq 1$. Using that $M\d_1\d_1^{\ell}v = \d_1^{\ell}M\d_1 v -\ell \d_1^{\ell}v$ for $\ell=\alpha_1-1$, we compute
\begin{align*}
  \|\d_1^{\alpha_1}\d^{\tilde{\alpha}}&f(A_\Dir)u\|_{L^p(w_{\gam+(k+1)p})}\\
  &= \|M\d_1 \d_1^{\alpha_1-1}\d^{\tilde{\alpha}}f(A_\Dir)u\|_{L^p(w_{\gam+kp})}\\
  &\leq \|M\d_1f(A_\Dir)u\|_{W^{k,p}(w_{\gam+k p})} +(\alpha_1-1)\|f(A_\Dir)u\|_{W^{k,p}(w_{\gam+k p})}.
\end{align*}
The second term can be estimated with the induction hypothesis \eqref{eq:IH_calc} and Hardy's inequality. For the first term, by Lemma \ref{lem:comm}\ref{it:comm2}+\ref{it:comm3}, the induction hypothesis \eqref{eq:IH_calc}, Lemma \ref{lem:comm_est2} and Hardy's inequality, we obtain
\begin{equation}\label{eq:est_T_2}
  \begin{aligned}
 \|M\d_1&f(A_\Dir)u\|_{W^{k,p}(w_{\gam+k p})}\\
 \leq& \;\|f(A_\Dir)M\d_1u\|_{W^{k,p}(w_{\gam+k p})}
  +2 \Big\|\int_{\Gam_\nu}f(z)R^2(z,A_\Dir)\d_1^2u \dd z\Big\|_{W^{k,p}( w_{\gam+k p})}\\
  \leq&\;C\|f\|_{\Hinf(\Sigma_\om)} \|M\d_1 u\|_{W^{k,p}(w_{\gam+k p})}+ C \|f\|_{\Hinf(\Sigma_{\sigma})}\|u\|_{W^{k,p}(w_{\gam+k p})}\\
  \leq&\; C\|f\|_{\Hinf(\Sigma_{\sigma})}\|u\|_{W^{k+1,p}(w_{\gam+(k+1) p})}.
\end{aligned}
\end{equation}
This shows the estimate
\begin{equation*}
  T_2\leq C\|f\|_{\Hinf(\Sigma_{\sigma})}\|u\|_{W^{k+1,p}(w_{\gam+(k+1) p})}.
\end{equation*}
From Lemma \ref{lem:tilde_der_f(A)}\ref{it:1}+\ref{it:2} it immediately follows that
\begin{equation*}
   T_3=\sum_{|(0,\tilde{\alpha})|=k+1}\|\d^{\tilde{\alpha}}f(A_\Dir)u\|_{L^p(w_{\gam+(k+1)p})}\leq C \|f\|_{\Hinf(\Sigma_{\om})}\|u\|_{W^{k+1,p}(w_{\gam+(k+1)p})}.
\end{equation*}

Combining the above estimates for $T_1$, $T_2$ and $T_3$ yields that for any $0<\om<\sigma<\frac{\pi}{2}$
\begin{equation*}
  \|f(A_\Dir)u\|_{W^{k+1,p}(w_{\gam+(k+1) p})}\leq  C\|f\|_{\Hinf(\Sigma_{\sigma})}\|u\|_{W^{k+1,p}(w_{\gam+(k+1) p})},\quad u\in \Cc^{\infty}(\RRdh;X)
\end{equation*}
and by density (Lemma \ref{lem:density}) this estimate extends to $u\in W^{k+1,p}(w_{\gam+(k+1) p})$. Since $\om$ and $\sigma$ were arbitrary, this proves that $A_\Dir$ has a bounded $\Hinf$-calculus on $W^{k+1,p}(w_{\gam+(k+1) p})$ of angle 0. This finishes the induction argument.

Finally, we show that if $\gam+kp\in(-1,2p-1)$, then we can allow for $\lambda=0$. For $k=0$ and $\gam\in(-1,2p-1)\setminus\{p-1\}$ the result is already contained in Theorem \ref{thm:LVresult}. Therefore, it remains to consider $k=1$ and $\gam\in (-1,p-1)$. From here on, let $C>0$ be a constant that is independent of $\lambda$. Note that $\gam+p\in (p-1,2p-1)$, so that by Theorem \ref{thm:LVresult} and \cite[Proposition 16.2.6]{HNVW24}
\begin{equation*}
  T_1=\|f(A_\Dir)u\|_{L^p(w_{\gam+p})} \leq C\|f\|_{\Hinf(\Sigma_{\om})}\|u\|_{L^p(w_{\gam+p})}.
\end{equation*}
Using Theorem \ref{thm:LVresult} instead of \eqref{eq:IH_calc} in \eqref{eq:est_T_2} with $k=0$ yields
  \begin{equation*}
    T_2=\|M\d_1f(A_\Dir)u\|_{L^p(w_{\gam})}\leq C \|f\|_{\Hinf(\Sigma_{\sigma})}\|u\|_{W^{1,p}( w_{\gam+p})},
  \end{equation*}
  using that the constant in Lemma \ref{lem:comm_est2} is independent of $\lambda$.
 Finally, by Lemma \ref{lem:comm}\ref{it:comm1} and Theorem \ref{thm:LVresult} we obtain
  \begin{align*}
    T_3=\sum_{j=2}^d\|f(A_\Dir)\d_ju\|_{L^p(w_{\gam+p})}& \leq C \|f\|_{\Hinf(\Sigma_{\om})}\sum_{j=2}^d\|\d_ju\|_{L^p( w_{\gam+p})}\\
    &\leq C\|f\|_{\Hinf(\Sigma_{\om})}\|u\|_{W^{1,p}( w_{\gam+p})}.
  \end{align*}
 By combining the above estimates, we have for $\gam\in (-1,p-1)$ and for any $\lambda>0$
  \begin{equation*}
    \|f(\lambda-\delDir)u\|_{W^{1,p}( w_{\gam+p})}\leq C\|f\|_{\Hinf(\Sigma_{\sigma})}\|u\|_{W^{1,p}(w_{\gam+p})},
  \end{equation*}
  where the constant $C$ is independent of $\lambda$. Letting $\lambda\downarrow 0$ and using the dominated convergence theorem yields that $-\delDir$ on $W^{1,p}(w_{\gam+p})$ has a bounded $\Hinf$-calculus of angle 0.
\end{proof}

We conclude this section with the proof of the $\Hinf$-calculus for the Neumann Laplacian, which can be derived from Theorem \ref{thm:calc:Dir} for the Dirichlet Laplacian and Lemmas \ref{lem:commNeu}-\ref{lem:tilde_der_f(A)}.
\begin{proof}[Proof of Theorem \ref{thm:calc:Neu}]
We prove that
 $A_\Neu:=\lambda-\Delta_{\Neu}$ on $$W^{k+1,p}(w_{\gam+kp}):=W^{k+1,p}(\RRdh, w_{\gam+kp};X)$$ has a bounded $\Hinf$-calculus of angle $\om_{\Hinf}(A_\Neu)=0$ for $\lambda>0$. Note that if $\gam+kp\in (-1,p-1)$, then the result is already contained in Theorem \ref{thm:LVresult_Neumann} and holds for $\lambda=0$. \\

We first consider the case $k\in \NN_0$ and $\gam\in (p-1,2p-1)$.
Let $0<\nu<\om<\frac{\pi}{2}$, $f\in H^1(\Sigma_{\om})\cap \Hinf(\Sigma_{\om})$ and $u\in \Cc^{\infty}(\RRdh;X)$. By writing $\alpha=(\alpha_1,\tilde{\alpha})\in \NN_0\times \NN_0^{d-1}$ we obtain
\begin{align*}
  \|f(A_\Neu)u\|_{W^{k+1,p}(w_{\gam+k p})} =&\; \|f(A_\Neu)u\|_{W^{k,p}(w_{\gam+k p})}\\
   &\;+ \sum_{\substack{|\alpha|=k+1\\\alpha_1\geq 1}}\|\d^{\alpha}f(A_\Neu)u\|_{L^p(w_{\gam+k p})}\\
  &\;+ \sum_{|(0,\tilde{\alpha})|=k+1}\|\d^{\tilde{\alpha}}f(A_\Neu)u\|_{L^p(w_{\gam+k p})}=:T_1+T_2+T_3.
\end{align*}
We use that all the resolvents on the spaces we consider are consistent by Lemma \ref{lem:consistent_resol_RRdh_Neumann}. Throughout the proof, $C$ denotes a constant only depending on $p,k,\gam,\lambda, \om, d$ and $X$, which may change from line to line. Moreover, we write $A_\Dir:=\lambda-\delDir$.

By Lemma \ref{lem:LV3.13_ext} (using Remark \ref{rem:W=W_0} and $\gam>p-1$), Lemma \ref{lem:comm_est1_Neumann} and Theorem \ref{thm:calc:Dir} with $\gam-p\in (-1,p-1)$, it follows that
\begin{align*}
  T_1=&\;\|M^{-1}Mf(A_\Neu)u\|_{W^{k,p}(w_{\gam+k p})}\lesssim \|Mf(A_\Neu)u\|_{W^{k,p}(w_{\gam-p+k p})}
  \\
  \leq &\; \|f(A_\Dir)Mu\|_{W^{k,p}(w_{\gam-p+k p})} + C\|f\|_{\Hinf(\Sigma_{\om})}\|u\|_{W^{k,p}(w_{\gam-p+k p})}\\
  \leq &\; C \|f\|_{\Hinf(\Sigma_{\om})}\big(\|Mu\|_{W^{k,p}(w_{\gam-p+kp})}+\|u\|_{W^{k,p}(w_{\gam-p+k p})}\big)\\
  \leq &\; C \|f\|_{\Hinf(\Sigma_{\om})}\|u\|_{W^{k+1,p}(w_{\gam+kp})},
\end{align*}
where we used Hardy's inequality (Corollary \ref{cor:Sob_embRRdh}) in the last step.
By Lemma \ref{lem:commNeu}\ref{it:commNeu4} and Theorem \ref{thm:calc:Dir}, we obtain
\begin{align*}
T_2 &=\sum_{|\alpha|=k}\|\d^{\alpha}\d_1f(A_\Neu)u\|_{L^p(w_{\gam+kp})}\leq \|f(A_{\Dir})\d_1 u\|_{W^{k,p}(w_{\gam+kp})}\\
&\leq C \|f\|_{\Hinf(\Sigma_{\om})}\|\d_1 u\|_{W^{k,p}(w_{\gam+kp})}\leq C \|f\|_{\Hinf(\Sigma_{\om})}\| u\|_{W^{k+1,p}(w_{\gam+kp})}.
\end{align*}
Finally, using Lemma \ref{lem:tilde_der_f(A)}\ref{it:3} with $k+1$ and $\gam-p\in (-1,p-1)$, gives
\begin{equation*}
  T_3=\sum_{|(0,\tilde{\alpha})|=k+1}\|\d^{\tilde{\alpha}}f(A_\Neu)u\|_{L^p(w_{\gam-p+(k+1) p})}\leq C \|f\|_{\Hinf(\Sigma_{\om})}\| u\|_{W^{k+1,p}(w_{\gam+kp})}.
\end{equation*}
The above estimates and density (Lemma \ref{lem:density}) prove boundedness of the $\Hinf$-calculus for $A_{\Neu}$ on $W^{k+1,p}(w_{\gam+kp})$ with $k\in\NN_1$ and $\gam\in(p-1,2p-1)$.\\

It remains to consider the case $k\in\NN_1$ and $\gam\in (-1,p-1)$.
Let $\om\in(0,\frac{\pi}{2})$, $f\in H^1(\Sigma_{\om})\cap \Hinf(\Sigma_{\om})$ and $u\in C^{\infty}_{{\rm c},1}(\overline{\RRdh};X)$. Then writing $\alpha=(0,\tilde{\alpha})\in\NN_0\times\NN_0^{d-1}$, gives
\begin{align*}
\|f(A_\Neu)u \|_{W^{k+1,p}(w_{\gam+kp})}&=\sum_{|(0,\tilde{\alpha})|\leq 1} \|\d^{\alpha} f(A_{\Neu})u\|_{W^{k,p}(w_{\gam+kp})} +\|\d_1f(A_\Neu)u\|_{W^{k,p}(w_{\gam+kp})}.
\end{align*}
We estimate these terms separately using that the resolvents on the spaces we consider are consistent by Lemma \ref{lem:consistent_resol_RRdh_Neumann}. Moreover, note that $\gam+p\in (p-1,2p-1)$, so that $W^{k,p}(w_{\gam+kp})=W^{(k-1)+1}(w_{\gam+p+(k-1)p})$ and we can apply the result proved above with $k-1$. Together with Lemma \ref{lem:commNeu}\ref{it:commNeu1}, we obtain
\begin{align*}
\sum_{|(0,\tilde{\alpha})|\leq 1} \|\d^{\alpha} f(A_{\Neu})u\|_{W^{k,p}(w_{\gam+kp})}
    &= \sum_{|(0,\tilde{\alpha})|\leq 1} \| f(A_{\Neu}) \d^{\alpha} u\|_{W^{(k-1)+1,p}(w_{\gam+p+(k-1)p})} \\
   &\leq C \|f\|_{\Hinf(\Sigma_{\om})}\sum_{|(0,\tilde{\alpha})|\leq 1} \|\d^{\alpha} u\|_{W^{(k-1)+1,p}(w_{\gam+p+(k-1)p})} \\
   &\leq C \|f\|_{\Hinf(\Sigma_{\om})} \|u\|_{W^{k+1,p}(w_{\gam+kp})}.
\end{align*}
Moreover, using Lemma \ref{lem:commNeu}\ref{it:commNeu4} and Theorem \ref{thm:calc:Dir} for $\gam\in(-1,p-1)$, yield
\begin{align*}
  \|\d_1f(A_\Neu)u\|_{W^{k,p}(w_{\gam+kp})}&= \|f(A_{\Dir})\d_1 u\|_{W^{k,p}(w_{\gam+kp})}\\
  &\leq C\|f\|_{\Hinf(\Sigma_{\om})} \|\d_1u\|_{W^{k,p}(w_{\gam+kp})}\leq C\|f\|_{\Hinf(\Sigma_{\om})} \|u\|_{W^{k+1,p}(w_{\gam+kp})}.
\end{align*}
The above estimates and density (see Lemma \ref{lem:density}) prove boundedness of the $\Hinf$-calculus for $A_\Neu$ in the case $\gam\in (-1,p-1)$. This completes the proof.
\end{proof}

\section{Elliptic and parabolic maximal regularity}\label{sec:MR}

In this final section, we study the elliptic maximal regularity of the Laplace equation and parabolic maximal regularity of the heat equation on homogeneous and inhomogeneous Sobolev spaces.
As discussed in the introduction, there is extensive literature on maximal regularity for elliptic and parabolic problems on homogeneous weighted Sobolev spaces, which is obtained via completely different techniques. We refer to \cite{Kr99b,Kr01} for Dirichlet boundary conditions and \cite{DK18, DKZ16} for Neumann boundary conditions.

Using our regularity results on inhomogeneous spaces and a scaling argument, we derive maximal regularity on homogeneous weighted Sobolev spaces as well. Moreover, in certain cases our regularity results correspond to results earlier obtained in \cite{DK18, DKZ16, Kr99b,Kr01}.\\

For $p\in(1,\infty)$, $k\in\NN_0$, $\gam\in \RR$ and $X$ a Banach space, we define the homogeneous space
\begin{equation*}
  \dot{W}^{k,p}(\RRdh, w_{\gam+kp};X):=\Big\{f\in \mc{D}'(\RRdh;X): \forall |\alpha|\leq k, \d^{\alpha}f\in L^p(\RRdh,w_{\gam+|\alpha|p};X)\Big\}
\end{equation*}
equipped with the canonical norm.
In comparison to the existing literature, we have for $\gam>-1$ by Hardy's inequality (Corollary \ref{cor:Sob_embRRdh})
\begin{equation*}
  W^{k,p}(\RRdh,w_{\gam+kp};X)\hookrightarrow \dot{W}^{k,p}(\RRdh, w_{\gam+kp};X)=H^{k}_{p,\gam+d}(\RRdh;X),
\end{equation*}
where for $\theta\in \RR$ the homogeneous spaces
\begin{equation*}
  H^{k}_{p,\theta}(\RRdh;X)=\Big\{f\in \mc{D}'(\RRdh;X): \forall |\alpha|\leq k, \d^{\alpha}f\in L^p(\RRdh,w_{\theta-d+|\alpha|p};X)\Big\},
\end{equation*}
are as used in for instance \cite{Kr99b,Kr01} with $X=\CC$. For these homogeneous spaces, we recall the following density result.
\begin{lemma}\label{lem:density_hom}
  Let $p\in(1,\infty)$, $k\in\NN_0$, $\gam\in \RR$ and let $X$ be a Banach space. Then $\Cc^{\infty}(\RRdh;X)$ is dense in $\dot{W}^{k,p}(\RRdh, w_{\gam+kp};X)$.
\end{lemma}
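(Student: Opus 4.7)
The plan is a two-step approximation: first truncate a given $f\in\Wd^{k,p}(\RRdh,w_{\gam+kp};X)$ to have compact support in $\RRdh$ by multiplying by smooth cutoffs that remove a neighbourhood of $\{x_1=0\}$ and a neighbourhood of infinity, then mollify the truncated function. The key technical point is that the homogeneous norm is tailor-made to absorb the derivatives of a cutoff near the boundary: the weight on $\partial^\alpha f$ is $w_{\gam+|\alpha|p}$, and differentiating a boundary cutoff of scale $1/n$ produces factors $n^{|\beta|}$ that are exactly compensated by $x_1^{|\beta|p} \eqsim n^{-|\beta|p}$ on the support of $\partial^\beta \phi_n$.

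More precisely, I would fix $\phi\in C^\infty(\R_+)$ with $\phi=0$ on $(0,1]$ and $\phi=1$ on $[2,\infty)$, and $\psi\in\Cc^\infty(\RRd)$ with $\psi=1$ on $B(0,1)$ and $\supp\psi\subseteq B(0,2)$. For $n\in\NN_1$ set $\phi_n(x_1):=\phi(nx_1)$, $\psi_n(x):=\psi(x/n)$ and $f_n:=\phi_n\psi_n f$. Since $\supp(\phi_n\psi_n)\subseteq\{x\in\RRdh:\tfrac1n\leq x_1,\,|x|\leq 2n\}$ is a compact subset of $\RRdh$ and $f$ is locally integrable on $\RRdh$ (because $w_{\gam}^{-1/(p-1)}\in L^1_{\loc}(\RRdh)$), the product $f_n$ is a well-defined $L^p_{\loc}(\RRdh;X)$-function with compact support away from $\partial\RRdh$.

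The core estimate is the convergence $f_n\to f$ in $\Wd^{k,p}(\RRdh,w_{\gam+kp};X)$. Expanding by the Leibniz rule,
\begin{equation*}
\partial^\alpha f_n=\sum_{\beta+\gamma+\delta=\alpha}c_{\alpha,\beta,\gamma}\,\partial^\beta\phi_n\cdot\partial^\gamma\psi_n\cdot\partial^\delta f.
\end{equation*}
The principal term $\beta=\gamma=0$ converges to $\partial^\alpha f$ in $L^p(w_{\gam+|\alpha|p};X)$ by dominated convergence. For $|\beta|\geq 1$ and $|\gamma|=0$, the bound $|\partial^\beta\phi_n|\lesssim n^{|\beta|}\mathbf{1}_{[1/n,2/n]}(x_1)$ combined with $n^{|\beta|}x_1^{|\beta|}\leq 2^{|\beta|}$ on this support yields
\begin{equation*}
\nrmb{\partial^\beta\phi_n\cdot\psi_n\cdot\partial^\delta f}_{L^p(w_{\gam+|\alpha|p};X)}^p\lesssim \int_{\{1/n\leq x_1\leq 2/n\}}\|\partial^\delta f\|_X^p\,x_1^{\gam+|\delta|p}\dd x\longrightarrow 0,
\end{equation*}
where we use $\partial^\delta f\in L^p(w_{\gam+|\delta|p};X)$ and that the integration domain shrinks. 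The symmetric case $|\gamma|\geq 1$, $|\beta|=0$ is handled analogously: $|\partial^\gamma\psi_n|\lesssim n^{-|\gamma|}\mathbf{1}_{\{n\leq|x|\leq 2n\}}$, and on this set $x_1^{|\gamma|}\leq (2n)^{|\gamma|}$, giving the tail integral of $\|\partial^\delta f\|_X^p w_{\gam+|\delta|p}$ which vanishes as $n\to\infty$. The mixed case $|\beta|,|\gamma|\geq 1$ is trivial for large $n$ since the supports of $\partial^\beta\phi_n$ and $\partial^\gamma\psi_n$ become disjoint. This absorption of the cutoff derivatives by the weight scaling is the only point of the argument I view as requiring care; the rest is routine.

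Finally, for each fixed $n$ the function $f_n$ has compact support in $\RRdh$ on which the weights $w_{\gam+jp}$ are bounded above and below by positive constants, so a standard mollification $f_n*\rho_{\eps}\in\Cc^\infty(\RRdh;X)$ converges to $f_n$ in $W^{k,p}_{\loc}(\RRdh;X)$, hence in $\Wd^{k,p}(\RRdh,w_{\gam+kp};X)$. A diagonal choice $\eps_n\downarrow 0$ then produces a sequence $g_n:=f_n*\rho_{\eps_n}\in\Cc^\infty(\RRdh;X)$ with $g_n\to f$ in $\Wd^{k,p}(\RRdh,w_{\gam+kp};X)$, proving the lemma.
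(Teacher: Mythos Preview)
Your argument is correct in substance and gives a self-contained proof, whereas the paper simply cites Krylov \cite[Theorem 1.19 \& Corollary 2.3]{Kr99b} and notes that the argument extends to the vector-valued setting. Your observation that the scaling of the homogeneous norm exactly absorbs the derivatives of the boundary cutoff is indeed the point of the construction, and the mollification step is routine as you say.

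There is one slip, however. In the mixed case $|\beta|,|\gamma|\geq 1$ you claim that the supports of $\partial^\beta\phi_n$ and $\partial^\gamma\psi_n$ become disjoint for large $n$. This is false: the support of $\partial^\beta\phi_n$ constrains only $x_1\in[1/n,2/n]$, while the support of $\partial^\gamma\psi_n$ constrains $|x|\in[n,2n]$, and these overlap whenever $|\tilde{x}|$ is of order $n$. The fix is immediate: on the intersection one has both $n^{|\beta|}x_1^{|\beta|}\leq 2^{|\beta|}$ and $x_1^{|\gamma|}\leq (2/n)^{|\gamma|}$, so
\begin{equation*}
\nrmb{\partial^\beta\phi_n\cdot\partial^\gamma\psi_n\cdot\partial^\delta f}_{L^p(w_{\gam+|\alpha|p};X)}^p
\lesssim n^{-2|\gamma|p}\int_{\{1/n\leq x_1\leq 2/n\}}\|\partial^\delta f\|_X^p\,x_1^{\gam+|\delta|p}\dd x\longrightarrow 0,
\end{equation*}
which is even stronger than needed. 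With this correction your proof stands.
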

\begin{proof}
  This follows from \cite[Theorem 1.19 \& Corollary 2.3]{Kr99b}, where it should be noted that these results also hold in the vector-valued case.
\end{proof}

For $\gam\in (-1,2p-1)\setminus\{p-1\}$, we define the space with Dirichlet boundary conditions
\begin{equation*}
  \begin{aligned}
  \dot{\WW}^{k+2,p}_{\Dir}&(\RRdh, w_{\gam+kp};X)\\
  &:=\Big\{f\in \mc{D}'(\RRdh;X): \forall |\beta|\leq 2, \d^{\beta}f\in \dot{W}^{k,p}(\RRdh,w_{\gam+kp};X), \Tr(f)=0\Big\}
\end{aligned}
\end{equation*}
equipped with the canonical norm. Similarly, for $\gam\in(-1,p-1)$, we define the space with Neumann boundary conditions
\begin{equation*}
  \begin{aligned}
  \dot{\WW}^{k+2,p}_{\Neu}&(\RRdh, w_{\gam+kp};X)\\
  &:=\Big\{f\in \mc{D}'(\RRdh;X): \forall |\beta|\leq 2, \d^{\beta}f\in \dot{W}^{k,p}(\RRdh,w_{\gam+kp};X), \Tr(\d_1f)=0\Big\}
\end{aligned}
\end{equation*}
equipped with the canonical norm.
Note that for the given ranges of $\gam$ we have that
\begin{equation}\label{eq:dom_embed}
  \dot{\WW}^{k+2,p}_{\Dir}(\RRdh, w_{\gam+kp};X), \dot{\WW}^{k+2,p}_{\Neu}(\RRdh, w_{\gam+kp};X)\hookrightarrow W^{2,p}(\RRdh,w_{\gam};X),
\end{equation}
so that the Dirichlet and Neumann traces are well defined. For Neumann boundary conditions, we will not consider the homogeneous counterpart of $W^{k+1,p}(\RRdh, w_{\gam+kp};X)$ with $\gam\in(-1,p-1)$ to avoid weights $w_{\gam}$ with $\gam<-1$.

\begin{definition}\label{def:delhom}
  Let $p\in(1,\infty)$, $k\in\NN_0$ and let $X$ be a Banach space. The Dirichlet and Neumann Laplacian on homogeneous spaces are defined as follows.
  \begin{enumerate}[(i)]
    \item For $\gam\in(-1,2p-1)\setminus\{p-1\}$ the \emph{Dirichlet Laplacian $\delDir$ on $\dot{W}^{k,p}(\RRdh,w_{\gam+kp};X)$} is defined by
          \begin{equation*}
    \delDir u := \sum_{j=1}^d \d_j^2 u\quad \text{ with }\quad D(\delDir):=\dot{\WW}^{k+2,p}_{\Dir}(\RRdh, w_{\gam+kp};X).
  \end{equation*}
    \item For $\gam\in(-1,p-1)$ the \emph{Neumann Laplacian $\delNeu$ on $\dot{W}^{k,p}(\RRdh,w_{\gam+kp};X)$} is defined by
          \begin{equation*}
    \delNeu u := \sum_{j=1}^d \d_j^2 u\quad \text{ with }\quad D(\delNeu):=\dot{\WW}^{k+2,p}_{\Neu}(\RRdh, w_{\gam+kp};X).
  \end{equation*}
  \end{enumerate}
\end{definition}

\subsection{Elliptic maximal regularity on homogeneous Sobolev spaces}
We recall that elliptic regularity on the inhomogeneous spaces $W^{k,p}(\RRdh,w_{\gam+kp};X)$ has already been studied in Sections \ref{subsec:sectDir}, \ref{subsec:sectNeu} and \ref{subsec:growth}.
By Proposition \ref{prop:sect_est} and a scaling argument, we obtain the following regularity result for the Dirichlet Laplacian.
\begin{corollary}[Homogeneous elliptic regularity for $-\delDir$]\label{cor:hom_ell_reg_Dir} Let $p\in(1,\infty)$, $k\in\NN_0$, $\gam\in (-1,2p-1)\setminus\{p-1\}$, $\om\in (0,\pi)$ and  let $X$ be a $\UMD$ Banach space. Let $\delDir$ on $\dot{W}^{k,p}(\RRdh,  w_{\gam+kp}    ;X)$ be as in Definition \ref{def:delhom}. Then for all $f\in \dot{W}^{k,p}( \RRdh, w_{\gam+kp};X)$ and $\lambda\in \Sigma_{\pi-\om}$, there exists a unique solution $$u\in \dot{\WW}^{k+2,p}_{\Dir}(\RRdh, w_{\gam+kp};X)$$ such that $\lambda u -\delDir u =f$. Moreover, this solution satisfies
\begin{equation*}
  \sum_{|\beta|\leq 2}|\lambda|^{1-\frac{|\beta|}{2}}\|\d^{\beta}u \|_{\dot{W}^{k,p}(\RRdh,w_{\gam+kp};X)}\leq C \|f\|_{\dot{W}^{k,p}(\RRdh,w_{\gam+kp};X)},
\end{equation*}
  where the constant $C$ only depends on $p,k,\gam,\om,d$ and $X$.
\end{corollary}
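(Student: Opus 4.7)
The plan is to deduce the homogeneous elliptic regularity from the inhomogeneous version in Proposition~\ref{prop:sect_est} via density, scaling and induction on $k$. Uniqueness is immediate: any $u \in \dot\WW^{k+2,p}_\Dir(\RRdh, w_{\gam+kp};X)$ satisfying $\lambda u - \delDir u = 0$ lies in $W^{2,p}_\Dir(\RRdh, w_\gam;X)$ by the embedding~\eqref{eq:dom_embed}, so must vanish by Corollary~\ref{cor:LVresult_MR}. For existence and the estimate, by Lemma~\ref{lem:density_hom} it suffices to treat $f \in \Cc^\infty(\RRdh;X)$ with a constant independent of $f$ and $\lambda$; such $f$ automatically lies in $W^{k,p}(\RRdh, w_{\gam+kp};X)$, so Proposition~\ref{prop:sect_est} produces a solution $u \in W^{k+2,p}_\Dir \hookrightarrow \dot\WW^{k+2,p}_\Dir$, and a Cauchy-sequence argument based on the homogeneous estimate then handles general $f$.

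The first reduction is scaling. For $r > 0$, setting $u_r(x) := u(rx)$ and $f_r(x) := r^2 f(rx)$ transforms the equation to $(r^2\lambda)\, u_r - \delDir u_r = f_r$, and a direct change of variables yields
\begin{equation*}
\|\d^\alpha v_r\|_{L^p(\RRdh, w_{\gam+|\alpha|p};X)} = r^{-(d+\gam)/p}\,\|\d^\alpha v\|_{L^p(\RRdh, w_{\gam+|\alpha|p};X)},
\end{equation*}
with the prefactor $r^{-(d+\gam)/p}$ \emph{independent} of the derivative order $|\alpha|$. Hence both sides of the target inequality scale by the common factor $r^{2-(d+\gam)/p}$ under the transformation $(u,f,\lambda)\mapsto(u_r, f_r, r^2\lambda)$. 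Choosing $r = |\lambda|^{-1/2}$ reduces the problem to the normalized case $|\tilde\lambda| = 1$, where the bad factor $g_{k,\gam}(\lambda)$ of Proposition~\ref{prop:sect_est} is a harmless constant.

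It then remains to prove the homogeneous estimate at $|\lambda|=1$, bridging the gap between the inhomogeneous right-hand side of Proposition~\ref{prop:sect_est} and the strictly weaker homogeneous norm $\|f\|_{\dot W^{k,p}}$ in the conclusion. I would proceed by induction on $k$, using the recursive identity
\begin{equation*}
\|g\|_{\dot W^{k,p}(\RRdh, w_{\gam+kp};X)} \simeq \|g\|_{L^p(\RRdh, w_\gam;X)} + \sum_{j=1}^d \|\d_j g\|_{\dot W^{k-1,p}(\RRdh, w_{\gam+p+(k-1)p};X)}
\end{equation*}
and applying the inductive hypothesis to $\d_j u$ at level $k-1$ with the shifted parameter $\gam+p$. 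Tangential derivatives $\d_j u$ with $j \geq 2$ preserve the Dirichlet boundary condition and invoke the inductive hypothesis directly; the normal derivative $\d_1 u$ satisfies a Neumann boundary condition and must be handled via the commutation $\d_1 R(\lambda, \delNeu) = R(\lambda, \delDir)\d_1$ from Lemma~\ref{lem:commNeu}\ref{it:commNeu4}, together with a parallel homogeneous estimate for $-\delNeu$ established in tandem.

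The main obstacle is tracking the admissible range of $\gam$ through the induction: the shift $\gam \mapsto \gam + p$ at each step pushes the weight exponent outside $(-1,2p-1)\setminus\{p-1\}$ as soon as $\gam > p-1$. As in Propositions~\ref{prop:sect_est} and~\ref{prop:sect_est_neu}, this forces the induction to be coupled with the weight-shifting multiplication operator $M^j$ from Lemma~\ref{lem:LV3.13_ext} and the commutator identities already exploited in Sections~\ref{subsec:sectDir} and~\ref{sec:calculus}, so that each step is kept inside the admissible range of weights. Once this bookkeeping is carried out and the $\lambda$-uniform homogeneous estimate holds for smooth data, Lemma~\ref{lem:density_hom} completes the proof for arbitrary $f \in \dot W^{k,p}(\RRdh, w_{\gam+kp};X)$.
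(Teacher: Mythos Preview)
Your uniqueness argument and density reduction match the paper, but you are making the scaling argument work much harder than necessary, and the inductive step you sketch has a real gap.

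The paper uses scaling in a different and much cleaner way. Rather than normalising to $|\lambda|=1$ and then trying to bridge the gap between $\|f\|_{W^{k,p}(w_{\gam+kp})}$ and $\|f\|_{\dot W^{k,p}(w_{\gam+kp})}$ via an induction on $k$, the paper applies Proposition~\ref{prop:sect_est} at \emph{each intermediate level} $\ell\in\{0,\dots,k\}$ with weight $w_{\gam+\ell p}$, and then takes $r\to\infty$. After the substitution $x\mapsto r^{-1}x$, the inhomogeneous estimate at level $\ell$ reads
\[
  \sum_{|\beta|\leq 2}\sum_{|\alpha|\leq \ell}|\lambda|^{1-\frac{|\beta|}{2}}\,r^{|\alpha|-\ell}\|\d^{\alpha+\beta}u\|_{L^p(w_{\gam+\ell p})}
  \leq C\,g_{\ell,\gam}(r^2\lambda)\sum_{|\alpha|\leq \ell} r^{|\alpha|-\ell}\|\d^{\alpha}f\|_{L^p(w_{\gam+\ell p})}.
\]
Since $r^{|\alpha|-\ell}\to 0$ for $|\alpha|<\ell$ and $g_{\ell,\gam}(r^2\lambda)\to\mathrm{const}$, only the top-order terms $|\alpha|=\ell$ survive on both sides. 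Summing over $\ell$ then reconstitutes the homogeneous norms exactly. No induction, no Neumann Laplacian, no $\gam$-shift bookkeeping is required.

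Your proposed induction, by contrast, runs into precisely the obstacle you flag: shifting $\gam\mapsto\gam+p$ leaves the admissible window once $\gam>p-1$, and you do not actually say how the multiplication operators and commutators would resolve this in the homogeneous setting. Furthermore, your claim that $\d_1 u$ satisfies a Neumann condition is only correct because $f\in \Cc^\infty(\RRdh;X)$ vanishes at the boundary (so $\d_1^2 u(0,\cdot)=-f(0,\cdot)=0$); this is fine for smooth data but would need care in the density step. The paper's $r\to\infty$ limit sidesteps all of this.
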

 \begin{proof}
 Note that the uniqueness follows from \eqref{eq:dom_embed} and Corollary \ref{cor:LVresult_MR}. For existence and the estimate we use a scaling argument. Let $f\in\Cc^{\infty}(\RRdh;X)$, then Lemma \ref{lem:Schwartz} yields a smooth solution $u$ to the equation. Let $r>0$ and set $u_r(x):=u(rx)$ and $f_r(x):=f(rx)$. Then $u_r$ satisfies $u_r(0,\cdot)=0$ and the equation $r^2\lambda u_r-\delDir u_r = r^2 f_r$, so by Proposition \ref{prop:sect_est} we have for all $\ell\in\{0,\dots, k\}$ the estimate
   \begin{equation*}
     \sum_{|\beta|\leq 2}|r^2\lambda|^{1-\frac{|\beta|}{2}}\|\d^{\beta}u_r\|_{W^{\ell,p}(\RRdh,w_{\gam+\ell p};X)}\leq C g_{\ell,\gam}(r^2\lambda)\|r^2f_r\|_{W^{\ell,p}(\RRdh,w_{\gam+\ell p};X)},
   \end{equation*}
   where $g_{\ell,\gam}$ is defined in \eqref{eq:K_IH}. After the substitution $x\mapsto r^{-1}x$ we obtain
   \begin{align*}
     \sum_{|\beta|\leq 2}\sum_{|\alpha|\leq \ell} & |\lambda|^{1-\frac{|\beta|}{2}}r^{|\alpha|-\ell}\|\d^{\alpha+\beta}u\|_{L^p(\RRdh,w_{\gam+\ell p};X)}\\
     &\leq C g_{\ell,\gam}(r^2\lambda)\sum_{|\alpha|\leq \ell}r^{|\alpha|-\ell}\|\d^{\alpha}f\|_{L^p(\RRdh,w_{\gam+\ell p};X)}.
   \end{align*}
   Letting $r\to\infty$ and summing over $\ell\in \{0,\dots, k\}$ yields the estimate for $f\in\Cc^{\infty}(\RRdh;X)$. Here it should be noted that
   \begin{equation*}
     \lim_{r\to\infty} g_{\ell,\gam}(r^2\lambda)= \lim_{y\to\infty}g_{\ell,\gam}(y)=C,
   \end{equation*}
   where $C=1$ or $C=2$ depending on $\ell$ and $\gam$.
  By Lemma \ref{lem:density_hom} and a similar density argument as in the proof of Proposition \ref{prop:sect_est}, we obtain that for every $f\in \dot{W}^{k,p}(\RRdh,w_{\gam+kp};X)$ there exists an $u\in \dot{\WW}^{k+2,p}_{\Dir}(\RRdh, w_{\gam+kp};X)$ solving the equation and the required estimate holds.
 \end{proof}

In a similar fashion, we obtain the following elliptic regularity result for the Neumann Laplacian.
\begin{corollary}[Homogeneous elliptic regularity for $-\delNeu$]\label{cor:hom_ell_reg_Neu} Let $p\in(1,\infty)$, $k\in\NN_0$, $\gam\in (-1,p-1)$, $\om\in (0,\pi)$ and let $X$ be a $\UMD$ Banach space.  Let $\delNeu$ on $\dot{W}^{k,p}(\RRdh, w_{\gam+kp};X)$ be as in Definition \ref{def:delhom}. Then for all $f\in \dot{W}^{k,p}(\RRdh, w_{\gam+kp})$ and $\lambda\in \Sigma_{\pi-\om}$, there exists a unique solution $$u\in \dot{\WW}^{k+2,p}_\Neu( \RRdh, w_{\gam+kp} ; X)$$ to $\lambda u -\delNeu u =f$. Moreover, this solution satisfies
\begin{equation*}
  \sum_{|\beta|\leq 2}|\lambda|^{1-\frac{|\beta|}{2}}\|\d^{\beta}u \|_{\dot{W}^{k,p}(\RRdh,w_{\gam+kp};X)}\leq C \|f\|_{\dot{W}^{k,p}(\RRdh,w_{\gam+kp};X)},
\end{equation*}
  where the constant $C$ only depends on $p,k,\gam,\om,d$ and $X$.
\end{corollary}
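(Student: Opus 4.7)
The strategy is to reproduce the scaling argument of Corollary \ref{cor:hom_ell_reg_Dir}, replacing each Dirichlet ingredient by its Neumann counterpart. Uniqueness of $u\in\dot{\WW}^{k+2,p}_\Neu(\RRdh,w_{\gam+kp};X)$ will follow from the embedding \eqref{eq:dom_embed} into $W^{2,p}_\Neu(\RRdh,w_\gam;X)$, which is valid since $\gam\in(-1,p-1)$ implies $w_\gam\in A_p(\RRdh)$, combined with the uniqueness assertion of Corollary \ref{cor:LVresult_MR_neumann}.

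For existence, I would first establish the estimate for smooth data $f\in\Cc^\infty(\RRdh;X)$, a class dense in $\dot W^{k,p}(\RRdh,w_{\gam+kp};X)$ by Lemma \ref{lem:density_hom}. For such $f$, Lemma \ref{lem:Schwartz_neu} produces a unique solution $u\in\mc{S}(\RRdh;X)$ of $\lambda u-\del u=f$ with $(\d_1u)(0,\cdot)=0$; the rapid decay of $u$ at infinity together with $\gam>-1$ makes $u$ admissible as an element of $\dot{\WW}^{k+2,p}_\Neu(\RRdh,w_{\gam+kp};X)$. Setting $u_r(x):=u(rx)$ and $f_r(x):=f(rx)$ for $r>0$, the rescaled function $u_r$ retains the Neumann boundary condition and satisfies $r^2\lambda u_r-\delNeu u_r=r^2 f_r$.

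The heart of the argument is to apply Proposition \ref{prop:sect_est_neu} to $u_r$ at each order $\ell\in\{0,\dots,k\}$. Since Proposition \ref{prop:sect_est_neu} with parameter $k'$ controls the $W^{k'+1,p}$ norm with weight $w_{\gam'+k'p}$, I apply it with the shifted parameters $(k',\gam'):=(\ell-1,\gam+p)$: the hypothesis $\gam'+k'p=\gam+\ell p>-1$ is met, and $\gam+p\in(p-1,2p-1)$ puts us in the second branch of \eqref{eq:K_IH}, giving $g_{\ell,\gam+p}(\lambda)=1+|\lambda|^{-\ell/2}$. This yields
\begin{equation*}
\sum_{|\beta|\leq 2}|r^2\lambda|^{1-\frac{|\beta|}{2}}\|\d^\beta u_r\|_{W^{\ell,p}(\RRdh,w_{\gam+\ell p};X)}\leq C\,g_{\ell,\gam+p}(r^2\lambda)\,\|r^2 f_r\|_{W^{\ell,p}(\RRdh,w_{\gam+\ell p};X)}.
\end{equation*}
Reversing the rescaling via the substitution $x\mapsto r^{-1}x$, cancelling the common power of $r$, and dividing by $r^\ell$ exactly as in the Dirichlet argument, I can then let $r\to\infty$: the lower-order contributions $|\alpha|<\ell$ vanish, $g_{\ell,\gam+p}(r^2\lambda)$ tends to a finite constant, and assembling the resulting top-order estimates over $\ell\in\{0,\dots,k\}$ produces the claimed inequality for $f\in\Cc^\infty(\RRdh;X)$. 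The extension to arbitrary $f\in\dot W^{k,p}(\RRdh,w_{\gam+kp};X)$ is then a routine completeness argument based on Lemma \ref{lem:density_hom}, mirroring the final density step in the proof of Proposition \ref{prop:sect_est_neu}.

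The main obstacle beyond the Dirichlet blueprint is this parameter shift: because Proposition \ref{prop:sect_est_neu} is formulated on the $W^{k+1,p}$ scale rather than the $W^{k,p}$ scale, one cannot simply copy the Dirichlet application orderwise, but must instead apply it at $k'=\ell-1$ and $\gam'=\gam+p$, in particular taking $k'=-1$ for the $\ell=0$ contribution (which is admissible because Proposition \ref{prop:sect_est_neu} allows $k\in\NN_0\cup\{-1\}$). Once this bookkeeping is in place, every remaining step is a direct adaptation of the Dirichlet scaling argument.
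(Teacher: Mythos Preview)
Your proposal is correct and follows essentially the same approach as the paper, which simply states that the proof is analogous to Corollary \ref{cor:hom_ell_reg_Dir} using Proposition \ref{prop:sect_est_neu}. You have correctly identified and handled the one extra piece of bookkeeping: since Proposition \ref{prop:sect_est_neu} is formulated on $W^{k+1,p}(\RRdh,w_{\gam+kp};X)$ rather than $W^{k,p}$, one must apply it with the shifted parameters $(k',\gam')=(\ell-1,\gam+p)$ to land on $W^{\ell,p}(\RRdh,w_{\gam+\ell p};X)$, including the $k'=-1$ case at the bottom of the scale.
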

\begin{proof}
  The proof is analogous to the proof of Corollary \ref{cor:hom_ell_reg_Dir} using Proposition \ref{prop:sect_est_neu}.
\end{proof}
\begin{remark} \hspace{2em}
\begin{enumerate}[(i)]
\item \emph{Second order Riesz transforms}: for $\lambda=0$ we obtain the regularity estimates from Corollaries  \ref{cor:hom_ell_reg_Dir} and \ref{cor:hom_ell_reg_Neu} as well. Indeed, if $u\in \dot{\WW}^{k+2,p}_\Dir(\RRdh,w_{\gam+kp};X)$ is a solution to $-\delDir u =f$, then $u$ also satisfies the equation $\lambda u -\delDir u =\lambda u +f$ with $\lambda>0$. Therefore, by Corollary \ref{cor:hom_ell_reg_Dir} we obtain
  \begin{align*}
  \sum_{|\beta|= 2}\|\d^{\beta}u \|_{\dot{W}^{k,p}(\RRdh,w_{\gam+kp};X)}&\leq C \lim_{\lambda\downarrow 0} \|\lambda u + f\|_{\dot{W}^{k,p}(\RRdh,w_{\gam+kp};X)}\\ &= C \|f\|_{\dot{W}^{k,p}(\RRdh,w_{\gam+kp};X)}.
  \end{align*}
The same proof works for the Neumann Laplacian.

\item The results from Corollaries \ref{cor:hom_ell_reg_Dir} and \ref{cor:hom_ell_reg_Neu} with $k=0$ and $\gam\in (-1,p-1)$ coincide with the inhomogeneous results in Corollaries \ref{cor:LVresult_MR} and \ref{cor:LVresult_MR_neumann}. Moreover, these cases have already been obtained in \cite{DK18,DKZ16} for the scalar-valued case.
\item The regularity result in Corollary \ref{cor:hom_ell_reg_Dir} with $k\in\NN_0$, $\gam\in(p-1,2p-1)$ and $X=\CC$ is already covered in \cite[Theorem 4.1]{Kr99b}.
\end{enumerate}
\end{remark}

\subsection{Maximal \texorpdfstring{$L^q$}{Lq}-regularity}\label{sec:MRRRdh} We now turn to maximal regularity for the heat equation $$\d_t u -\del u =f,$$ with Dirichlet or Neumann boundary conditions and zero initial condition. We will view this as an abstract Cauchy problem. For an introduction to maximal regularity in this setting, we refer to \cite[Chapter 17]{HNVW24}.\\

For any $T\in (0,\infty]$, we define the time interval $I:=(0,T)$. Let $A$ be a linear operator with domain $D(A)$ on a Banach space $Y$ and for some $f\in L^1_\loc(\overline{I};Y)$ consider the abstract Cauchy problem
\begin{equation}\label{eq:Cauchy}
  \begin{aligned}
  \d_t u(t)+A u(t)&=f(t),\qquad  t\in I, \\
    u(0)&=0.
  \end{aligned}
\end{equation}
We call a strongly measurable function $u:I\to Y$ a \emph{strong solution to \eqref{eq:Cauchy}} if $u$ takes values in $D(A)$ almost everywhere, $Au\in L^1_\loc(\overline{I};Y)$ and $u$ solves
\begin{equation}\label{eq:strongsol}
  u(t)+\int_0^t Au(s)\dd s=\int_0^t f(s)\dd s, \quad \text{ for almost all }t\in I.
\end{equation}
Moreover, for $q\in (1,\infty)$, $v\in A_q(I)$ and $f\in L^q(I,v;Y)$ a strong solution $u$ to \eqref{eq:Cauchy} is called an \emph{$L^q(v)$-solution} if $Au \in L^q(I,v;Y)$.

\begin{definition}[Maximal $L^q(v)$-regularity]
  A linear operator $A$ on a Banach space $Y$ has \emph{maximal $L^q(v)$-regularity on $I$} if for all $f\in L^q(I,v;Y)$ the Cauchy problem \eqref{eq:Cauchy} has a unique $L^q(v)$-solution $u$ on $I$ and
  \begin{equation*}
    \|Au\|_{L^q(I,v;Y)}\leq C\|f\|_{L^q(I,v;Y)},
  \end{equation*}
  where the constant $C$ is independent of $f$.
\end{definition}

The following two corollaries on maximal regularity for the Dirichlet and Neumann Laplacian follow immediately from  Theorems \ref{thm:var_setting}, \ref{thm:calc:Dir} and \ref{thm:calc:Neu} together with \cite[Theorems 17.3.18 \& 17.2.39 \& Proposition 17.2.7]{HNVW24}.

\begin{corollary}[Inhomogeneous maximal regularity for $-\delDir$]\label{cor:MR_RRdh}
  Let $p,q\in(1,\infty)$, $k\in\NN_0\cup\{-1\}$, $\gam\in(-1,2p-1)\setminus\{p-1\}$ and let $X$ be a $\UMD$ Banach space.  Let $\delDir$ on $W^{k,p}(\RRdh, w_{\gam+kp};X)$ be as in Definition \ref{def:delRRdh}. Assume that either
  \begin{enumerate}[(i)]
    \item $\gam+kp\in (-1,2p-1)$, $T\in (0,\infty]$ and $v\in A_q(I)$, or,
    \item $\gam+kp>2p-1$, $T\in(0,\infty)$ and $v\in A_q(I)$.
  \end{enumerate}
  Then $-\delDir$ has maximal $L^q(v)$-regularity on $I$.  In particular, for every $T\in (0,\infty)$ and $f\in L^q(I,v;W^{k,p}(\RRdh,w_{\gam+kp};X))$ there exists a unique $L^q(v)$-solution $u$ to \eqref{eq:Cauchy} with $A=-\delDir$, which satisfies
        \begin{equation}\label{eq:est_MR}
            \begin{aligned}
    \|u&\|_{W^{1,q}(I,v; W^{k,p}(\RRdh, w_{\gam+kp};X))} + \| u\|_{L^q(I,v; W^{k+2,p}_{\Dir}(\RRdh, w_{\gam+kp};X))}\\
    &\;\leq C \|f\|_{L^q(I,v; W^{k,p}(\RRdh, w_{\gam+kp};X))},
    \end{aligned}
        \end{equation}
        where the constant $C$ only depends on $p,q,k,\gam,v,T,d$ and $X$.
\end{corollary}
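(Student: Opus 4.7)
The proof is essentially a machinery-application, leveraging three ingredients: (a) the spaces $W^{k,p}(\RRdh,w_{\gam+kp};X)$ are UMD when $X$ is UMD, (b) the operator $\lambda-\delDir$ has a bounded $\Hinf$-calculus of angle $0$, and (c) standard abstract theorems translating bounded $\Hinf$-calculus into weighted maximal regularity.

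First I would verify the UMD property. For $k\in\NN_0$, the space $W^{k,p}(\RRdh,w_{\gam+kp};X)$ embeds as a closed subspace of a finite product $L^p(\RRdh,w_{\gam+kp};X)^N$ via $f\mapsto (\d^{\alpha}f)_{|\alpha|\leq k}$, and weighted Bochner $L^p$ of a UMD space is UMD by Section \ref{subsec:func_cal}. For $k=-1$, the same conclusion holds by realising $W^{-1,p}(\RRdh,w;X)$ as a quotient of $L^p(\RRdh,w;X)^{d+1}$ via the map $(f_0,\dots,f_d)\mapsto f_0+\sum_{j=1}^d\d_jf_j$.

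Next, I would invoke Theorem \ref{thm:calc:Dir} (for $k\in\NN_0$) or Theorem \ref{thm:var_setting} (for $k=-1$): under either hypothesis in the corollary, the operator $\lambda-\delDir$ has a bounded $\Hinf$-calculus of angle $\om_{\Hinf}(\lambda-\delDir)=0$, where $\lambda=0$ is permitted in case (i) and some fixed $\lambda>0$ is used in case (ii). By \cite[Theorem 17.3.18]{HNVW24}, bounded $\Hinf$-calculus of angle less than $\pi/2$ on a UMD space yields $R$-sectoriality of angle less than $\pi/2$, and by \cite[Theorem 17.2.39]{HNVW24} this delivers maximal $L^q(v)$-regularity on $(0,\infty)$ for every $v\in A_q(\R_+)$, with the accompanying two-sided estimate for $\|\d_tu\|+\|(\lambda-\delDir)u\|$ against $\|f\|$.

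Finally I would reduce the corollary's two cases to this abstract statement. Case (i) is immediate with $\lambda=0$, which covers $T\in(0,\infty]$. For case (ii), applying the above to $\lambda-\delDir$ with $\lambda>0$ and then using \cite[Proposition 17.2.7]{HNVW24} (perturbation of the generator by a bounded operator on finite intervals) transfers maximal regularity back to $-\delDir$ on $(0,T)$ with $T<\infty$, absorbing the additional $\lambda u$ term into the right-hand side at the cost of a constant depending on $T$. To upgrade the maximal regularity estimate to \eqref{eq:est_MR}, which also controls the full $W^{k+2,p}_{\Dir}$-norm of $u$ and not merely $\|\delDir u\|$, I would combine it with the elliptic resolvent estimates from Proposition \ref{prop:sect_est} (or equivalently, the sectoriality estimate derived in \eqref{eq:resol_est_A}) applied pointwise in $t$: writing $(\lambda-\delDir)u(t)=f(t)-\d_tu(t)+\lambda u(t)$ and taking the $L^q(I,v)$-norm. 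The only mildly delicate point is ensuring the constant in \eqref{eq:est_MR} depends only on the stated parameters; this is automatic once one uses that the maximal regularity constant depends on $[v]_{A_q}$, which can be absorbed into a $v$-dependence, and that on a finite interval the shift from $\lambda-\delDir$ to $-\delDir$ contributes a factor depending on $T$.
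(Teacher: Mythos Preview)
Your proposal is correct and follows essentially the same route as the paper, which simply records that the corollary is immediate from Theorems \ref{thm:var_setting} and \ref{thm:calc:Dir} combined with \cite[Theorems 17.3.18 \& 17.2.39 \& Proposition 17.2.7]{HNVW24}. Your additional remarks on the UMD property of the underlying spaces and on upgrading the abstract maximal regularity estimate to the full $W^{k+2,p}_{\Dir}$-norm via the elliptic estimates are details the paper leaves implicit but which are indeed needed to justify \eqref{eq:est_MR}.
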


\begin{corollary}[Inhomogeneous maximal regularity for $-\delNeu$]\label{cor:MR_RRdh_Neu}
  Let $p,q\in(1,\infty)$, $k\in\NN_0\cup\{-1\}$, $\gam\in(-1,2p-1)\setminus\{p-1\}$ and let $X$ be a $\UMD$ Banach space.  Let $\delNeu$ on $W^{k+1,p}(\RRdh, w_{\gam+kp};X)$ be as in Definition \ref{def:delRRdh}.
  Assume that either
  \begin{enumerate}[(i)]
    \item $\gam+kp\in (-1,p-1)$, $T\in (0,\infty]$ and $v\in A_q(I)$, or,
    \item $\gam+kp>p-1$, $T\in(0,\infty)$ and $v\in A_q(I)$.
  \end{enumerate}
  Then $-\delNeu$  has maximal $L^q(v)$-regularity on $I$.  In particular, for every $T\in (0,\infty)$ and $f\in L^q(I,v;W^{k+1,p}(\RRdh,w_{\gam+kp};X))$ there exists a unique $L^q(v)$-solution $u$ to \eqref{eq:Cauchy} with $A=-\delNeu$, which satisfies
        \begin{equation*}
            \begin{aligned}
    \|u&\|_{W^{1,q}(I,v; W^{k+1,p}(\RRdh, w_{\gam+kp};X))} + \| u\|_{L^q(I,v; W^{k+3,p}_{\Neu}(\RRdh, w_{\gam+kp};X))}\\
    &\;\leq C \|f\|_{L^q(I,v; W^{k+1,p}(\RRdh, w_{\gam+kp};X))},
    \end{aligned}
        \end{equation*}
        where the constant $C$ only depends on $p,q,k,\gam,v,T,d$ and $X$.
  \end{corollary}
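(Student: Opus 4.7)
The plan is to mirror the argument sketched for the Dirichlet case in Corollary \ref{cor:MR_RRdh}: use the bounded $\Hinf$-calculus of $\lambda-\delNeu$ obtained in Theorem \ref{thm:calc:Neu} and invoke the abstract transfer results quoted from \cite{HNVW24}. First I would check the underlying functional-analytic setup. The space $Y := W^{k+1,p}(\RRdh, w_{\gam+kp};X)$ is a closed subspace of a finite $\ell^p$-product of weighted Bochner spaces $L^p(\RRdh, w_{\gam+(k+1-|\alpha|)p+|\alpha|p};X)$ with the UMD property inherited from $X$, hence $Y$ is itself a UMD Banach space. Thus the machinery behind \cite[Theorem 17.3.18 and Theorem 17.2.39]{HNVW24} applies, which converts a bounded $\Hinf$-calculus of angle strictly less than $\pi/2$ into maximal $L^q(v)$-regularity on $I$ for every $q\in(1,\infty)$ and every $v\in A_q(I)$.

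In case (i), $\gam+kp\in(-1,p-1)$, Theorem \ref{thm:calc:Neu} gives that $-\delNeu$ itself has a bounded $\Hinf$-calculus of angle $0$, so maximal $L^q(v)$-regularity holds directly on $I=(0,T)$ for any $T\in(0,\infty]$ and $v\in A_q(I)$; the remaining estimate follows from the identity $\d_t u = f+\delNeu u$ and closedness of $\delNeu$. In case (ii), $\gam+kp>p-1$, Theorem \ref{thm:calc:Neu} only delivers the bounded $\Hinf$-calculus for $\lambda-\delNeu$ with $\lambda>0$, which gives maximal $L^q(v)$-regularity for the shifted operator. To transfer this to $-\delNeu$ on a bounded time interval, I would fix such a $\lambda>0$ and set $\tilde u(t):=e^{-\lambda t}u(t)$, $\tilde f(t):=e^{-\lambda t}f(t)$; then $u$ solves $\d_t u-\delNeu u=f$ with $u(0)=0$ if and only if $\tilde u$ solves $\d_t\tilde u+(\lambda-\delNeu)\tilde u=\tilde f$ with $\tilde u(0)=0$. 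Since $e^{\pm\lambda t}$ is bounded above and below on $I=(0,T)$ with $T<\infty$, the $L^q(I,v;Y)$-norms of $u$ and $\tilde u$ (and of $\delNeu u$ and $\delNeu\tilde u$) are comparable with constants depending on $\lambda$ and $T$, so maximal $L^q(v)$-regularity of $\lambda-\delNeu$ transfers to maximal $L^q(v)$-regularity of $-\delNeu$ on bounded intervals.

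Uniqueness of the $L^q(v)$-solution follows from \cite[Proposition 17.2.7]{HNVW24} together with closedness of $\lambda-\delNeu$ on $Y$ (which is part of Theorem \ref{thm:sect:Neu}). There is essentially no technical obstacle beyond choosing the correct $\lambda$ and tracking constants: the only conceptually delicate point is that case (ii) genuinely requires $T<\infty$, because by Theorem \ref{thm:Neusemi}\ref{it:Neusemi3} the Neumann semigroup has polynomial growth in this regime and hence $-\delNeu$ cannot be sectorial without the shift, ruling out maximal regularity on the half-line. The estimate \eqref{eq:est_MR}-type bound for $\|u\|_{W^{1,q}(I,v;Y)}$ plus the domain norm of $u$ finally follows from the maximal regularity estimate for $\delNeu u$ combined with the equation and with Theorem \ref{thm:sect:Neu} to recover the full $W^{k+3,p}_{\Neu}$-norm.
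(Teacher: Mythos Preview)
Your proposal is correct and follows essentially the same route as the paper: the authors state that both maximal regularity corollaries follow immediately from Theorems \ref{thm:var_setting}, \ref{thm:calc:Dir} and \ref{thm:calc:Neu} together with \cite[Theorems 17.3.18 \& 17.2.39 \& Proposition 17.2.7]{HNVW24}, which is exactly the $\Hinf$-calculus-to-maximal-regularity transfer you describe. Your explicit exponential shift argument for case (ii) and the observation about polynomial semigroup growth forcing $T<\infty$ are precisely the content of the cited abstract results and of Remark \ref{rem:MR}(i), so you have simply unpacked what the paper leaves implicit (one small slip: in your UMD verification the weight on each factor should be $w_{\gam+kp}$, not $w_{\gam+(k+1)p}$, but this does not affect the conclusion).
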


\begin{remark}\label{rem:MR} \hspace{2em}
    \begin{enumerate}[(i)]
      \item Corollaries \ref{cor:MR_RRdh} and \ref{cor:MR_RRdh_Neu} do not hold for $T=\infty$ in general. This follows from Dore's theorem (see \cite{Do00} or \cite[Theorem 17.2.15]{HNVW24}) and Theorems \ref{thm:Dirsemi}\ref{it:Dirsemi3} and \ref{thm:Neusemi}\ref{it:Neusemi3}, respectively. On the other hand, the shifted operators $\lambda-\delDir$ and $\lambda-\delNeu$ with $\lambda>0$ do have maximal $L^q(v)$-regularity on $\RR_+$, see \cite[Propositions 17.2.27 \& 17.2.29]{HNVW24}.
      \item Corollaries \ref{cor:MR_RRdh} and \ref{cor:MR_RRdh_Neu} only concern the Cauchy problem \eqref{eq:Cauchy} with zero initial data. In addition, maximal regularity can be used to obtain existence and uniqueness for the Cauchy problem with non-zero initial data, see \cite[Section 4.4]{GV17} and \cite[Section 17.2.b]{HNVW24}. In particular, if $v=t^{\eta}$ is a power weight with $\eta\in(-1,q-1)$, then the space for the initial data is the real interpolation space $$\big(W^{k,p}(\RRdh, w_{\gam+kp};X), D(A)\big)_{1-\frac{1+\eta}{p},p}$$ with $A=-\delDir$ or $A=-\delNeu$, see \cite[Example 4.16]{GV17}. Characterisations of real interpolation spaces of weighted Sobolev spaces can be found in \cite[Chapter 3]{Tr78}.
    \end{enumerate}
  \end{remark}

In the following corollaries, we derive maximal regularity on homogeneous spaces.
\begin{corollary}[Homogeneous maximal regularity for $-\delDir$]\label{cor:MR_RRdhKrylov}
  Let $p,q\in(1,\infty)$, $k\in\NN_0$, $\gam\in(-1,2p-1)\setminus\{p-1\}$, $v\in A_q(\RR_+)$ and let $X$ be a $\UMD$ Banach space.  Let $\delDir$ on $\dot{W}^{k,p}(\RRdh,w_{\gam+kp};X)$ be as in Definition \ref{def:delhom}. Then $-\delDir$  has maximal $L^q(v)$-regularity on $\RR_+$.  In particular, for every $f\in L^q(\RR_+,v;\dot{W}^{k,p}(\RRdh,w_{\gam+kp};X))$ there exists a unique $L^q(v)$-solution $u$ to \eqref{eq:Cauchy} with $A=-\delDir$, which satisfies
        \begin{equation}\label{eq:est_hom2}
            \begin{aligned}
    \|\d_t u&\|_{L^q(\RR_+,v; \dot{W}^{k,p}(\RRdh, w_{\gam+kp};X))} + \sum_{|\beta|=2}\|\d^{\beta} u\|_{L^q(\RR_+,v; \dot{W}^{k,p}(\RRdh, w_{\gam+kp};X))}\\
    &\;\leq C \|f\|_{L^q(\RR_+,v; \dot{W}^{k,p}(\RRdh, w_{\gam+kp};X))},
    \end{aligned}
        \end{equation}
        where the constant $C$ only depends on $p,q,k,\gam,v,d$ and $X$.
\end{corollary}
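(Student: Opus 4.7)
Plan: The proof will proceed in two steps. First I would establish a bounded $\Hinf(\Sigma_{\sigma})$-calculus for $-\delDir$ on the homogeneous space $\dot{W}^{k,p}(\RRdh, w_{\gam+kp};X)$ for every $\sigma\in(0,\pi/2)$, and then invoke the standard implication on UMD spaces \cite[Theorems 17.3.18, 17.2.39 and Proposition 17.2.7]{HNVW24}, which yields maximal $L^q(v)$-regularity on $\RR_+$ for every $v\in A_q(\RR_+)$. Note that $\dot{W}^{k,p}(\RRdh,w_{\gam+kp};X)$ is UMD as a closed subspace of $\bigoplus_{|\alpha|\leq k} L^p(\RRdh, w_{\gam+|\alpha|p};X)$.

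Sectoriality of $-\delDir$ on $\dot{W}^{k,p}(\RRdh,w_{\gam+kp};X)$ of angle $\om=0$ is immediate from Corollary~\ref{cor:hom_ell_reg_Dir}, which provides the uniform resolvent bound $\sup_{z\in\CC\setminus\overline{\Sigma_\om}}\|zR(z,-\delDir)\|<\infty$; density of domain and range follow from Lemma~\ref{lem:density_hom} combined with the solvability in Corollary~\ref{cor:hom_ell_reg_Dir}.

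For the $\Hinf$-bound, the strategy is a scaling argument transferring Theorem~\ref{thm:calc:Dir} from the inhomogeneous to the homogeneous setting. For $u\in\Cc^\infty(\RRdh;X)$, $\mu\geq 0$, $r>0$ and $f\in H^1(\Sigma_\sigma)\cap\Hinf(\Sigma_\sigma)$, a direct computation using $\delDir[v(r\cdot)](x)=r^2(\delDir v)(rx)$ gives the resolvent scaling
\begin{equation*}
[R(z,\mu-\delDir)u](rx) = r^2[R(r^2z, r^2\mu-\delDir)u_r](x), \qquad u_r(x):=u(rx),
\end{equation*}
which integrates against the Dunford contour to the key identity
\begin{equation*}
[f(\mu-\delDir)u](rx) = [\tilde f_r(r^2\mu-\delDir)u_r](x), \qquad \tilde f_r(w):=f(w/r^2),
\end{equation*}
with $\|\tilde f_r\|_{\Hinf(\Sigma_\sigma)}=\|f\|_{\Hinf(\Sigma_\sigma)}$. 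Applying Theorem~\ref{thm:calc:Dir} at shift $\lambda:=r^2\mu$ and exploiting that the homogeneous norm scales under dilation by a single power $r^{-(d+\gam)/p}$ (unlike the inhomogeneous norm, which distributes weight across derivatives), the top-order terms dominate in the asymptotic regime $r\to\infty$ and the lower-order contributions are absorbed. This yields
\begin{equation*}
\|f(\mu-\delDir)u\|_{\dot{W}^{k,p}(w_{\gam+kp};X)} \leq C\|f\|_{\Hinf(\Sigma_\sigma)}\|u\|_{\dot{W}^{k,p}(w_{\gam+kp};X)}
\end{equation*}
with $C$ independent of $\mu>0$. Taking $\mu\downarrow 0$ (using that $f\in H^1\cap\Hinf$ decays at $0$ and $\infty$, so dominated convergence applies on the Dunford integral) and density of $\Cc^\infty(\RRdh;X)$ in $\dot{W}^{k,p}(w_{\gam+kp};X)$ (Lemma~\ref{lem:density_hom}) completes the bounded $\Hinf$-calculus of angle $0$.

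The main obstacle will be controlling the limit $r\to\infty$ with $\lambda=r^2\mu$: this requires quantifying how the inhomogeneous $\Hinf$-constant in Theorem~\ref{thm:calc:Dir}\ref{it:calcDir2} grows with $\lambda$ and checking that this growth is exactly cancelled by the scaling factors when passing from the inhomogeneous norm (powers $r^{p|\alpha|-d-\gam-kp}$) to the homogeneous norm (a single factor $r^{-d-\gam}$). This is analogous to the limit handled in the proof of Corollary~\ref{cor:hom_ell_reg_Dir}, where $g_{\ell,\gam}(r^2\lambda)$ stabilized as $r\to\infty$; here a careful inspection of the inductive proof of Theorem~\ref{thm:calc:Dir} (polynomial growth of the constant with exponent tied to $\gam+kp-2p+1$) is what makes the cancellation work uniformly across the full range $\gam\in(-1,2p-1)\setminus\{p-1\}$.
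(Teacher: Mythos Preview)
Your route is genuinely different from the paper's. You try to first establish a bounded $\Hinf$-calculus for $-\delDir$ on the homogeneous space via a spatial scaling argument transferring Theorem~\ref{thm:calc:Dir}, and then invoke the standard implication to maximal $L^q(v)$-regularity. The paper instead bypasses the homogeneous $\Hinf$-calculus entirely: it takes the inhomogeneous maximal regularity estimate of Corollary~\ref{cor:MR_RRdh} on a \emph{fixed} finite interval $(0,1)$ and scales \emph{both time and space} via $u_r(t,x)=u(r^2t,rx)$. The point is that under this parabolic scaling one applies Corollary~\ref{cor:MR_RRdh} with the rescaled time weight $v(r^2\cdot)$, which has the same $A_q$ constant as $v$; hence the maximal regularity constant is manifestly uniform in $r$, and letting $r\to\infty$ kills the lower-order spatial terms exactly as in the proof of Corollary~\ref{cor:hom_ell_reg_Dir}. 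Existence on $\RR_+$ is obtained by gluing solutions on $(0,T)$, and uniqueness from a direct argument using \eqref{eq:strongsol}.

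The gap in your proposal is precisely the ``main obstacle'' you flag but do not resolve: your spatial-only scaling forces you to apply Theorem~\ref{thm:calc:Dir} at shift $\lambda=r^2\mu\to\infty$, and the paper gives no control on how the $\Hinf$-constant in Theorem~\ref{thm:calc:Dir}\ref{it:calcDir2} depends on $\lambda$. The constants in Lemmas~\ref{lem:comm_est1}, \ref{lem:comm_est2} and the inductive step of Theorem~\ref{thm:calc:Dir} all carry an unspecified $\lambda$-dependence, and tracing this through the induction (including Lemma~\ref{lem:tilde_der_f(A)}) is nontrivial work you have not carried out. Your remark that the constant has ``polynomial growth'' would in fact be fatal: any growth in $\lambda$ would spoil the limit $r\to\infty$, since the homogeneous norms on both sides scale by the same factor $r^{-(d+\gam)/p}$ and there is no room for an extra $r^{2a}$. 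What you would actually need is that the constant stays \emph{bounded} as $\lambda\to\infty$; this is plausible (e.g.\ $g_{\ell,\gam}(\lambda)\to 1$), but not established here. The paper's time-and-space scaling neatly avoids the whole issue, because no $\lambda$-shift of the operator ever appears.
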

\begin{proof}
  Let $f\in \Cc^{\infty}(\RR_+\times\RRdh;X)$, then $f\in L^q((0,T),v;W^{k,p}(\RRdh, w_{\gam+kp};X))$ for all $T>0$. Corollary \ref{cor:MR_RRdh} yields that for any $T>0$ there exists a unique $L^q(v)$-solution $u_T$ to \eqref{eq:Cauchy} on $(0,T)$ satisfying the estimate \eqref{eq:est_MR}.
  If $T_1\leq T$, then the restriction of $u_T$ to $(0,T_1)$ coincides with $u_{T_1}$. This allows us to construct a solution $u:\RR_+\to W^{k+2,p}_\Dir(\RRdh,w_{\gam+kp};X)\hookrightarrow \dot{\WW}^{k+2,p}_\Dir(\RRdh,w_{\gam+kp};X)$ that coincides with $u_T$ on $(0,T)$ for every $T>0$. Moreover, $u$ is a strong solution to \eqref{eq:Cauchy} on $\RR_+$ (with $-\delDir$ considered as an operator on the homogeneous space $\dot{W}^{k,p}(\RRdh,w_{\gam+kp};X)$).

  To show the regularity of $u$, let $r>0$ and set $u_r(t,x):=u(r^2t, rx)$ and $f_r(t,x):=f(r^2t, rx)$. Then $u_r$ satisfies the equation $\d_tu_r-\delDir u_r=r^2f_r$, so by \eqref{eq:est_MR} we have for all $\ell\in\{0,\dots, k\}$ the estimate
\begin{align*}
  \|\d_t u_r\|&_{L^q((0,1), v(r^2\cdot);W^{\ell,p}(\RRdh,w_{\gam+\ell p};X))}+\|u_r\|_{L^q((0,1), v(r^2\cdot);W^{\ell+2,p}(\RRdh,w_{\gam+\ell p};X))}\\
  &\leq C \|r^2 f_r\|_{L^q((0,1), v(r^2\cdot);W^{\ell,p}(\RRdh,w_{\gam+\ell p};X))},
\end{align*}
since $v(r^2\cdot)\in A_q(\RR_+)$ with the same $A_q$ constant by \cite[Proposition 7.1.5(1)]{Gr14_classical_3rd}. After the substitutions $x\mapsto r^{-1}x$ and $t\mapsto r^{-2}t$, we obtain
\begin{align*}
  \sum_{|\alpha|\leq \ell}r^{|\alpha|-\ell} &\|\d_t\d^{\alpha}  u \|_{L^q((0,r^2),v;L^p(\RRdh, w_{\gam+\ell p};X))}
 +\sum_{|\alpha|\leq \ell+2}r^{|\alpha|-\ell-2}\|\d^{\alpha}u\|_{L^q((0,r^2),v;L^p(\RRdh, w_{\gam+\ell p};X))}\\
   \leq&\;C\sum_{|\alpha|\leq \ell}r^{|\alpha|-\ell}
    \|\d^{\alpha}f\|_{L^q((0,r^2),v;L^p(\RRdh, w_{\gam+\ell p};X))}.
\end{align*}
Letting $r\to \infty$ and summing over $\ell\in\{0,\dots, k\}$ yields the maximal regularity estimate \eqref{eq:est_hom2}.
Thus $u$ is an $L^q(v)$-solution and we claim that it is also unique on $\RR_+$. Indeed, if $f=0$, then for every $T>0$ we have
\begin{align*}
  \|u\|_{C([0,T]; \dot{W}^{k,p}(\RRdh,w_{\gam+kp};X))}&\leq \|Au \|_{L^1([0,T]; \dot{W}^{k,p}(\RRdh,w_{\gam+kp};X))}\\
  &\leq \|A u \|_{L^q([0,T],v; \dot{W}^{k,p}(\RRdh,w_{\gam+kp};X))}\Big(\int_0^T|v^{-\frac{1}{q-1}}(t)|\dd t\Big)^{\frac{q-1}{q}}\\
  &\leq 0,
\end{align*}
which follows from taking the supremum over $[0,T]$ in \eqref{eq:strongsol}, H\"older's inequality and \eqref{eq:est_hom2}. This yields that $u=0$ on $[0,T]$ for all $T>0$. Therefore $u=0$ on $\RR_+$ as well and this proves the uniqueness.

Finally, from \cite[Lemma 3.5]{LV18} and Lemma \ref{lem:density_hom}, it follows that $\Cc^{\infty}(\RR_+\times \RRdh;X)$ is dense in $L^q(\RR_+, v;\dot{W}^{k,p}(\RRdh, w_{\gam+kp};X))$. Therefore, a density argument (see \cite[Proposition 17.2.10]{HNVW24} using that $-\delDir$ is closed by Corollary \ref{cor:hom_ell_reg_Dir}) gives the result.
\end{proof}

Similarly, we have maximal regularity for the Neumann Laplacian on homogeneous spaces.
\begin{corollary}[Homogeneous maximal regularity for $-\delNeu$]\label{cor:MR_RRdhKrylov_Neu}
Let $p,q\in(1,\infty)$, $k\in\NN_0$, $\gam\in(-1,p-1)$, $v\in A_q(\RR_+)$ and let $X$ be a $\UMD$ Banach space. Let $\delNeu$ on $\dot{W}^{k,p}(\RRdh,w_{\gam+kp};X)$ be as in Definition \ref{def:delhom}. Then $-\delNeu$ has maximal $L^q(v)$-regularity on $\RR_+$. In particular, for every $f\in L^q(\RR_+,v;\dot{W}^{k,p}(\RRdh,w_{\gam+kp};X))$ there exists a unique $L^q(v)$-solution $u$ to \eqref{eq:Cauchy} with $A=-\delNeu$, which satisfies
        \begin{equation*}
            \begin{aligned}
    \|\d_t u&\|_{L^q(\RR_+,v; \dot{W}^{k,p}(\RRdh, w_{\gam+kp};X))} + \sum_{|\beta|=2}\|\d^{\beta} u\|_{L^q(\RR_+,v; \dot{W}^{k,p}(\RRdh, w_{\gam+kp};X))}\\
    &\;\leq C \|f\|_{L^q(\RR_+,v; \dot{W}^{k,p}(\RRdh, w_{\gam+kp};X))},
    \end{aligned}
        \end{equation*}
        where the constant $C$ only depends on $p,q,k,\gam,v,d$ and $X$.
\end{corollary}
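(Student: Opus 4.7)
The plan is to mimic the proof of Corollary \ref{cor:MR_RRdhKrylov} for the Dirichlet case, replacing the inhomogeneous Dirichlet maximal regularity (Corollary \ref{cor:MR_RRdh}) with its Neumann analogue (Corollary \ref{cor:MR_RRdh_Neu}) and using the homogeneous Neumann elliptic regularity (Corollary \ref{cor:hom_ell_reg_Neu}) to ensure that $-\delNeu$ is a closed operator on $\dot{W}^{k,p}(\RRdh, w_{\gam+kp};X)$.

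First, I would pick $f \in \Cc^{\infty}(\RR_+ \times \RRdh; X)$. For each $T>0$, this $f$ lies in $L^q((0,T),v; W^{k+1,p}(\RRdh,w_{\gam+kp};X))$ (note the inhomogeneous Neumann setting requires one extra derivative in space compared with the Dirichlet case), so Corollary \ref{cor:MR_RRdh_Neu} (using that $\gam+kp \in (-1,p-1)$) produces a unique $L^q(v)$-solution $u_T$ on $(0,T)$ of the Cauchy problem with $A=-\delNeu$. By uniqueness, the $u_T$ are consistent as $T$ varies, and we glue them into a strong solution $u: \RR_+ \to W^{k+3,p}_\Neu(\RRdh,w_{\gam+kp};X) \embed \dot{\WW}^{k+2,p}_\Neu(\RRdh,w_{\gam+kp};X)$.

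Next, I would run the scaling argument to upgrade inhomogeneous control into homogeneous control. Setting $u_r(t,x):=u(r^2 t, r x)$ and $f_r(t,x):=f(r^2 t, r x)$, the equation becomes $\d_t u_r - \delNeu u_r = r^2 f_r$ on $\RRdh$ with Neumann boundary conditions preserved. Apply the inhomogeneous estimate from Corollary \ref{cor:MR_RRdh_Neu} on $(0,1)$ with weight $v(r^2 \bcdot)$, whose $A_q$-constant equals that of $v$ by \cite[Proposition 9.1.5(1)]{Gr09}, on each space $W^{\ell+1,p}(\RRdh, w_{\gam+\ell p};X)$ with $\ell \in \{0,\dots,k\}$. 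Undoing the substitution $x \mapsto r^{-1} x$, $t \mapsto r^{-2} t$ and letting $r \to \infty$ kills the lower-order terms, and summing over $\ell$ produces the desired homogeneous estimate.

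Uniqueness on $\RR_+$ follows exactly as in the Dirichlet case: if $f=0$, then from the integral identity \eqref{eq:strongsol}, Hölder's inequality with the $A_q$-weight and the homogeneous estimate we just proved, $u$ vanishes on every $[0,T]$, hence on $\RR_+$. Finally, density of $\Cc^{\infty}(\RR_+\times\RRdh;X)$ in $L^q(\RR_+, v; \dot{W}^{k,p}(\RRdh, w_{\gam+kp};X))$ (by \cite[Lemma 3.5]{LV18} and Lemma \ref{lem:density_hom}) combined with closedness of $-\delNeu$ on $\dot{W}^{k,p}(\RRdh, w_{\gam+kp};X)$ from Corollary \ref{cor:hom_ell_reg_Neu} extends the result, via \cite[Proposition 17.2.10]{HNVW24}, to arbitrary $f \in L^q(\RR_+, v; \dot{W}^{k,p}(\RRdh, w_{\gam+kp};X))$. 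The only mild subtlety, compared to the Dirichlet case, is bookkeeping the extra derivative in the inhomogeneous Neumann setting when passing from Corollary \ref{cor:MR_RRdh_Neu} to the scaled estimate on each $W^{\ell,p}$-layer; this is purely notational since the scaling is isotropic in the spatial variables.
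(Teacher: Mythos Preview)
Your overall plan matches the paper's intended argument, but the scaling step contains a genuine error, not merely a notational one. Applying Corollary~\ref{cor:MR_RRdh_Neu} with parameters $(k,\gam)=(\ell,\gam)$ lands you on $W^{\ell+1,p}(\RRdh,w_{\gam+\ell p};X)$; after undoing the substitutions and letting $r\to\infty$, the surviving top-order terms sit at order $|\alpha|=\ell+1$ in $L^p(w_{\gam+\ell p})$, not at order $|\alpha|=\ell$. Summing over $\ell\in\{0,\dots,k\}$ therefore does \emph{not} reproduce the $\dot{W}^{k,p}(\RRdh,w_{\gam+kp};X)$-norm: the weight exponent is off by $p$ at every level, and in particular the zeroth-order piece $\|\d_t u\|_{L^p(w_{\gam})}$ is never captured. (A small related slip: for $k\ge 1$ one has $\gam+kp>p-1$, so you are in case~(ii) of Corollary~\ref{cor:MR_RRdh_Neu}, not case~(i); this is harmless since you only use finite $T$.)

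The fix is a different reparametrisation: invoke Corollary~\ref{cor:MR_RRdh_Neu} with $(k,\gam)$ replaced by $(\ell-1,\gam+p)$ for each $\ell\in\{0,\dots,k\}$. Since $\gam\in(-1,p-1)$ one has $\gam+p\in(p-1,2p-1)$ and $(\gam+p)+(\ell-1)p=\gam+\ell p>-1$, so these parameters are admissible (including $\ell=0$, where $k'=-1$). The resulting space is then exactly $W^{\ell,p}(\RRdh,w_{\gam+\ell p};X)$, and the scaling argument proceeds verbatim as in the Dirichlet proof. With this correction, the rest of your outline (gluing the $u_T$, uniqueness via \eqref{eq:strongsol} and H\"older, density plus closedness from Corollary~\ref{cor:hom_ell_reg_Neu}) is correct.
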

\begin{proof}
  The proof is analogous to the proof of Corollary \ref{cor:MR_RRdhKrylov}, using Corollary \ref{cor:MR_RRdh_Neu}.
\end{proof}
We conclude with some remarks concerning Corollaries \ref{cor:MR_RRdhKrylov} and \ref{cor:MR_RRdhKrylov_Neu}.
\begin{remark}\label{rem:Dir_MR} \hspace{2em}
  \begin{enumerate}[(i)]
    \item The maximal regularity estimate from Corollary \ref{cor:MR_RRdhKrylov} for $\gam\in(p-1,2p-1)$ and $v=1$ is already obtained in \cite[Theorem 3.2]{Kr01}.
\item For $k=0$ and $\gam\in(-1,p-1)$ similar regularity results as those in Corollaries \ref{cor:MR_RRdhKrylov} and \ref{cor:MR_RRdhKrylov_Neu} are contained in \cite[Theorem 6.4]{DK18} and \cite[Theorem 2.3]{DKZ16} for the shifted operators $\lambda-\delDir$ and $\lambda-\delNeu$ with $\lambda>0$. The time weights for $k\geq 1$ appear to be new in the homogeneous setting. These time weights play an important role in nonlinear evolution equations, see, e.g., \cite{DK18}, \cite[Chapter 18]{HNVW24} and \cite{PS16}.
    \item More general elliptic operators with time and space-dependent coefficients are treated in for example \cite[Theorem 2.1]{DK15}, \cite[Theorem 6.4]{DK18} or \cite[Theorem 3.2]{Kr01}. We only proved results for the Laplacian by taking limits in our maximal regularity estimates for inhomogeneous spaces.
  \end{enumerate}
\end{remark}

\bibliographystyle{plain} 
\bibliography{referencesLLRV_v2}

\end{document}